\newtheorem{theorem}{Theorem}[section]
\newtheorem{proposition}[theorem]{Proposition}
\newtheorem{corollary}[theorem]{Corollary}
\newtheorem{lemma}[theorem]{Lemma}
\theoremstyle{definition}
\newtheorem{definition}[theorem]{Definition}
\newtheorem*{acknowledgement}{Acknowledgements}
\newtheorem{example}[theorem]{Example}
\newtheorem{remark*}[theorem]{}
\theoremstyle{remark}
\newtheorem{remark}[theorem]{Remark}
\newcommand{\Q}{\mathbb Q}
\newcommand{\R}{\mathbb R}
\newcommand{\C}{\mathbb C}
\newcommand{\Z}{\mathbb Z}
\newcommand{\N}{\mathbb N}
\newcommand{\T}{\mathbb T}
\newcommand{\e}{\mathbf e}
\newcommand{\tr}{\mathrm{tr}}
\newcommand{\Tr}{\mathrm{Tr}}
\DeclareMathOperator{\Hom}{Hom}
\DeclareMathOperator{\Der}{Der}
\DeclareMathOperator{\Ker}{Ker}
\newcommand{\ms}[1]{\mathscr{#1}}
\newcommand{\mc}[1]{\mathcal{#1}}
\newcommand{\mb}[1]{\mathbb{#1}}
\newcommand{\frk}[1]{\mathfrak{#1}}
\newcommand{\eu}[1]{\EuScript{#1}}
\renewcommand{\labelenumi}{(\arabic{enumi})}
\renewcommand{\labelenumii}{(\roman{enumii})}
\tikzset{Box/.style={very thick, rounded corners}}
\tikzset{marked/.style={star, star point height = .75mm, star points =5, fill=black,minimum size=2mm, inner sep=0mm} }
\tikzset{verythickline/.style = {line width=7pt}}
\tikzset{thickline/.style = {line width=5pt}}
\tikzset{medthick/.style = {line width=3pt}}
\tikzset{med/.style = {line width=2pt}}
\tikzset{count/.style = {fill=white,circle,draw,thin, inner sep=2pt}}
\tikzset{rcount/.style = {fill=white,rectangle,draw,thin,inner sep=2pt, rounded corners}}
\tikzset{cpr/.style = {draw,fill=white,rectangle,thin, rounded corners}}
\begin{document}

\title{On the planar algebra of Ocneanu's asymptotic inclusion}
\author{Stephen Curran$^\dagger$}\thanks{\noindent $\dagger$: Research supported by an NSF postdoctoral fellowship, NSF grant DMS-0900776 and DARPA Award 0011-11-0001.}
\address{Department of Mathematics, UCLA, Los Angeles, CA 90095.}
\email{\href{mailto:curransr@math.ucla.edu}{curransr@math.ucla.edu}}

\begin{abstract}
In recent joint work with V. Jones and D. Shlyakhtenko, we have given a diagrammatic description of Popa's symmetric enveloping inclusion for planar algebra subfactors.  In this paper we give a diagrammatic construction of the associated Jones tower, in the case that the planar algebra is finite-depth.  We then use this construction to describe the planar algebra of the symmetric enveloping inclusion, which is known to be isomorphic to the planar algebra of Ocneanu's asymptotic inclusion by a result of Popa.  As an application we give a planar algebraic computation of the (reduced) fusion algebra of the asymptotic inclusion, recovering some well-known results of Ocneanu and Evans-Kawahigashi.
\end{abstract}

\maketitle

\section*{Introduction}

Let $M_0 \subset M_1$ be a finite-index inclusion of AFD $II_1$ factors, and let $M_0 \subset M_1 \subset M_2 \subset \dotsb \subset M_\infty$ be the associated Jones tower \cite{jon}.  The \textit{asymptotic inclusion} is the subfactor $M_1 \vee (M_1 ' \cap M_\infty) \subset M_\infty$, which has finite index if and only if $M_0 \subset M_1$ has finite depth.  This construction was introduced by Ocneanu \cite{ocn}, who argued that the asymptotic inclusion could be viewed as the subfactor analogue of Drinfeld's quantum double construction.  This connection has since been clarified by a number of authors, including Evans-Kawahigashi \cite{evk}, Izumi \cite{izumi, izumi2} and M\"{u}ger \cite{mug}.  A related construction for type $III$ factors has been developed by Longo and Rehren \cite{lrehr}. 

In \cite{pop1}, Popa introduced the \textit{symmetric enveloping algebra} associated to a finite index inclusion $M_0 \subset M_1$ of arbitrary $II_1$ factors.  This is the (unique) $II_1$ factor $M_1 \boxtimes_{e_0} M_1^{op}$ which is generated by a copy of $M_1 \otimes M_1^{op}$ and a projection $e_0$ which is simultaneously the Jones projection for both $M_0 \subset M_1$ and $M_0^{op} \subset M_1^{op}$.  In the case that $M_0 \subset M_1$ is a finite-depth inclusion of AFD $II_1$ factors, Popa proved that the \textit{symmetric enveloping inclusion} $M_1 \otimes M_1^{op} \subset M_1 \boxtimes_{e_0} M_1^{op}$ is conjugate to Ocneanu's asymptotic inclusion.  In the infinite-depth case, Popa \cite{pop2} used this inclusion to analyze a number of important analytic properties of the original subfactor $M_0 \subset M_1$.

Planar algebras were introduced by Jones \cite{palg} in the late 90s.  Any finite-index subfactor gives rise to a planar algebra as its \textit{standard invariant}.  By a fundamental result of Popa \cite{popa}, any planar algebra satisfying suitable positivity conditions arises in this way.  With inspiration from random matrix theory and Voiculescu's free probability, Guionnet, Jones and Shlyakhtenko \cite{gjs1} have recently given a diagrammatic construction of a tower of subfactors $M_0 \subset M_1 \subset \dotsb$, starting from a planar algebra $\mc P$.  In subsequent work \cite{gjs2}, they proved that if $\mc P$ is finite-depth then $M_k$ is an interpolated free group factor $L\mb F_{r_k}$, where $r_k$ is computed in terms of the index $\delta^2 = [M_1:M_0]$ and the \textit{global index} $I = [M_1 \boxtimes_{e_0} M_1^{op}:M_1 \otimes M_1^{op}]$.  In joint work with Jones and Shlyakhtenko \cite{cjs}, we have given a diagrammatic construction of the symmetric enveloping algebra associated to these planar algebra subfactors (see also \cite{shlicm}) .  As an application of our construction, we computed a certain free entropy dimension type quantity which provides some intuition for the formula for $r_k$.

In this paper, we give a diagrammatic construction of the Jones tower of the symmetric enveloping inclusion for planar algebra subfactors, in the case that $\mc P$ is finite-depth.  We then use this construction to derive a complete description of the planar algebra of the asymptotic inclusion in terms of the original planar algebra $\mc P$.  As an application, we compute the fusion rules for bimodules arising from the asymptotic inclusion, recovering some well-known results of Ocneanu and Evans-Kawahigashi \cite{evk}.  In particular, we show that the (reduced) fusion algebra of $M_1 \boxtimes_{e_0} M_1^{op}$ bimodules is described in terms of the affine category \cite{ghosh}, which is closely related to Ocneanu's tube algebra \cite{evk} and Jones' annular category \cite{annular}.  The relationship between the Drinfeld center of a fusion category and the annular/affine category is well-known in the TQFT community (see e.g. \cite{fnww}).  In the planar algebra setting, it was very recently shown by Das, Ghosh and Gupta \cite{dgg2} that the category of affine Hilbert representations of a finite-depth planar algebra $\mc P$ is equivalent to the Drinfeld center of the fusion category associated to $\mc P$.  

The paper is organized as follows.  In Section 1 we briefly recall the constructions from \cite{gjs1}, \cite{cjs}.  In Section 2 we study two canonical elements appearing in any finite-depth planar algebra, and establish a number of useful ``skein'' relations which these satisfy.  In Section 3 we construct the Jones tower for the symmetric enveloping inclusion and compute the higher relative commutants.  Section 4 contains our main result: a description of the planar algebra of the asymptotic inclusion.   In Section 5 we use this description to compute the fusion rules for the asymptotic inclusion.

\begin{acknowledgement}
I am grateful to Dimitri Shlyakhtenko for suggesting this problem, and for many useful discussions while completing this project.  I would also like to thank Dietmar Bisch and Vaughan Jones for several helpful conversations.
\end{acknowledgement}

\section{Background and preliminaries} \label{sec:background}

In this section we briefly recall the constructions from \cite{gjs1}, \cite{cjs}.

\medskip
\noindent \textbf{Planar algebra subfactors:} 
Let $\mc P = (P_k)_{k \geq 0}$ be a subfactor planar algebra.  For $n,k \geq 0$ let $P_{n,k}$ be a copy of $P_{n+k}$.  Elements of $P_{n,k}$ will be represented by diagrams
\begin{equation*}
 \begin{tikzpicture}[scale=.75]
 \draw[Box](0,0) rectangle (2,1);
  \draw[verythickline](0,.5) -- (-.5,.5); \draw[verythickline](2,.5) -- (2.5,.5);
\draw[verythickline](1,1) -- (1,1.25);
 \node at (1,.5) {$x$}; \node[marked] at (-.1,1.05) {};
 \end{tikzpicture}\end{equation*}
where the thick lines to the left and right represent $k$ strings, and the thick line at top represents $2n$ strings.  We will typically suppress the marked point $\star$, and take the convention that it occurs at the top-left corner which is adjacent to an unshaded region.

Define a product $\wedge_k:P_{n,k} \times P_{m,k} \to P_{n+m,k}$ by
\begin{equation*}
\begin{tikzpicture}[thick,scale=.75]
\draw[Box](0,0) rectangle (2,1);
  \draw[verythickline](0,.5) -- (-.5,.5); \draw[verythickline](2,.5) -- (2.5,.5);
 \draw[verythickline](1,1) -- (1,1.25);
 \node at (1,.5) {$x$}; \node[left] at (-.7,.5) {$x \wedge_k y =$};
\begin{scope}[xshift=2.9cm]
\draw[Box](0,0) rectangle (2,1);
  \draw[verythickline](0,.5) -- (-.5,.5); \draw[verythickline](2,.5) -- (2.5,.5);
 \draw[verythickline](1,1) -- (1,1.25);
 \node at (1,.5) {$y$};
\end{scope}
\end{tikzpicture}
\end{equation*}
The involution $\dagger:P_{n,k} \to P_{n,k}$ is given by
\begin{equation*}
\begin{tikzpicture}[thick,scale=.75]
\draw[Box](0,0) rectangle (2,1);
  \draw[verythickline](0,.5) -- (-.5,.5); \draw[verythickline](2,.5) -- (2.5,.5);
 \draw[verythickline](1,1) -- (1,1.25);
 \node at (1,.5) {$x^\dagger$}; \node at (3,.5) {$=$};
\begin{scope}[xshift=4cm]
\draw[Box](0,0) rectangle (2,1);
  \draw[verythickline](0,.5) -- (-.5,.5); \draw[verythickline](2,.5) -- (2.5,.5);
 \draw[verythickline](1,1) -- (1,1.25);
 \node at (1,.5) {$x^*$};
 \draw[Box] (-.5,-.25) rectangle (2.5,1.25); \node[marked] at (2.1,1.05) {}; \node[marked] at (-.6,1.3) {};
\end{scope}
\end{tikzpicture}
\end{equation*}

The \textit{Voiculescu trace} $\tau_k:P_{n,k} \to \C$ is defined by
\begin{equation*}
\begin{tikzpicture}[scale=.75]
  \draw[Box] (0,0) rectangle (2,1); \node at (1,.5) {$x$};
  \draw[thickline] (0,.5) arc(90:270:.25cm and .375cm) -- (2,-.25) arc(-90:90:.25cm and .375cm);
  \draw[Box] (0,1.25) rectangle (2,1.75); \node[scale=.8] at (1,1.5) {$\sum TL$};
\draw[verythickline] (1,1) -- (1,1.25);
  \node[left] at (-.5,.75) {$\tau_k(x) = \delta^{-k} \cdot$};
 \end{tikzpicture}
\end{equation*}
where $\sum TL$ denotes the sum over all loopless Temperley-Lieb diagram with $2n$ boundary points.  Let $Gr_k(\mc P) = \bigoplus_{n \geq 0} P_{n,k}$, and observe that the formulas above give $Gr_k(\mc P)$ the structure of a graded $*$-algebra with trace $\tau_k$.  The unit of $Gr_k(\mc P)$ is the element of $P_{0,k}$ consisting of $k$ parallel lines. 

Let $\mathbf{e}_{k}$ denote the following element of $P_{0,k+2}$, $k \geq 0$:
\begin{equation*}
\begin{tikzpicture}
\begin{scope}[xscale=.75,yscale=.5]
 \draw[Box] (0,0) rectangle (1,1.75); \draw[verythickline] (0,1.15) -- (1,1.15);
 \draw[thick] (0,.75) arc(90:-90:.25cm); \draw[thick] (1,.75) arc(90:270:.25cm);
\node[left=2mm,scale=1] at (0,.75) {$\mathbf{e}_{k} = $};
\end{scope}
\end{tikzpicture}
\end{equation*}
Note that there are natural inclusions of $Gr_k(\mc P)$ into $Gr_{k+1}(\mc P)$ defined by
\begin{equation*}
 \begin{tikzpicture}[scale=.75]
 \draw[Box](0,0) rectangle (2,1);
  \draw[verythickline](0,.5) -- (-.5,.5); \draw[verythickline](2,.5) -- (2.5,.5);
\draw[verythickline](1,1) -- (1,1.25);
 \node at (1,.5) {$x$};
\begin{scope}[xshift=4.2cm]
 \draw[Box](0,0) rectangle (2,1);
  \draw[verythickline](0,.5) -- (-.5,.5); \draw[verythickline](2,.5) -- (2.5,.5);
\draw[verythickline](1,1) -- (1,1.25);
\draw[thick] (-.5,-.25) -- (2.5,-.25);
 \node at (1,.5) {$x$};
\node[left=1mm,scale=1.2] at (-.5,.5) {$\mapsto$};
\end{scope}
 \end{tikzpicture}
\end{equation*}
The main result of \cite{gjs1} is the following:

\begin{theorem}
For $k \geq 0$, the Voiculescu trace $\tau_k$ is a faithful tracial state on $Gr_k(\mc P)$, and its GNS completion is a $II_1$ factor $M_k$ as long as $\delta > 1$.  The inclusions $Gr_k(\mc P) \subset Gr_{k+1}(\mc P)$ extend to $M_k \subset M_{k+1}$, and $(M_{k+1},\mathbf e_{k})$ is the Jones tower of $M_0 \subset M_1$.  Moreover, the planar algebra of $M_0 \subset M_1$ is isomorphic to $\mc P$. \qed
\end{theorem}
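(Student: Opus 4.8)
The plan is to isolate the one genuinely analytic ingredient --- positivity of the Voiculescu traces $\tau_k$ --- and to treat everything else as careful but routine manipulation of diagrams on the sphere. First I would check that each $\tau_k$ is a tracial state. Unitality is a one-line picture, $\tau_k(1)=\delta^{-k}\cdot\delta^{k}=1$, and the trace identity $\tau_k(xy)=\tau_k(yx)$ is immediate once one notes that the diagram defining $\tau_k(xy)$ is entirely closed up around the outside, so the box for $x$ can be isotoped past the box for $y$; this is exactly where the sphericality of $\mc P$ is used. Positivity, $\tau_k(x^\dagger x)\ge 0$, is the heart of the theorem, and I would establish it via a random matrix model, as in \cite{gjs1}: for each $N$ attach an $N\times N$ Gaussian matrix to every internal box of a diagram, with the string structure dictating the index contractions, and use the standard genus expansion to show that the expected normalized trace of a word in $Gr_k(\mc P)$ converges, as $N\to\infty$, to $\tau_k$ of that word. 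Since each finite-$N$ model carries an honest tracial state, so does the limit $\tau_k$; in particular $\tau_k\ge 0$. (Equivalently one realizes the GNS algebra on a Fock-type Hilbert space assembled from the spaces $P_n$ with their Markov inner products, whose positive-definiteness is precisely the hypothesis that $\mc P$ is a \emph{subfactor} planar algebra.) Faithfulness is then cheap: filtering $Gr_k(\mc P)$ by total degree, the leading contribution to $\tau_k(x^\dagger x)$ is the Markov norm-squared of the top component of $x$, so $\tau_k(x^\dagger x)=0$ kills that component and one descends by induction.

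Next I would assemble the tower. The inclusion $Gr_k(\mc P)\subset Gr_{k+1}(\mc P)$ adjoins a single through-string at the bottom, which contributes exactly one extra loop when closed up in $\tau_{k+1}$, so $\tau_{k+1}$ restricts to $\tau_k$ on $Gr_k(\mc P)$ and the inclusion passes to the GNS completions $M_k\subset M_{k+1}$. The $\tau$-preserving conditional expectation $E_{M_k}\colon M_{k+1}\to M_k$ is, on diagrams, ``cap off the last through-string.'' I would then verify, again picture by picture, that after normalizing $\mathbf e_k$ to a projection one has $\mathbf e_k\in M_{k+2}$, that $\mathbf e_k$ commutes with $M_k$, that $\mathbf e_k\,x\,\mathbf e_k=E_{M_k}(x)\,\mathbf e_k$ for $x\in M_{k+1}$ with $E_{M_{k+1}}(\mathbf e_k)=\delta^{-2}$, that the $\mathbf e_j$ satisfy the Temperley--Lieb relations, and that $Gr_{k+2}(\mc P)$ is generated as an algebra by $Gr_{k+1}(\mc P)$ together with $\mathbf e_k$, so that $M_{k+2}$ is the von Neumann algebra generated by $M_{k+1}$ and $\mathbf e_k$. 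By the abstract characterization of the basic construction, this identifies $(M_{k+1},\mathbf e_k)$ with the Jones tower of $M_0\subset M_1$, which in particular has index $\delta^{2}$.

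Finally I would recover $\mc P$ and deduce factoriality. Each copy $P_{0,k}$ of $P_k$ embeds into $Gr_k(\mc P)\subset M_k$ isometrically from the Markov trace on $P_k$ to $\tau_k$, hence injectively, and its image lies in $M_0'\cap M_k$: a $P_{0,k}$-diagram has no top strings, so it slides freely along the through-strings past any element of $Gr_0(\mc P)$. One checks that these embeddings intertwine the inclusions, the rotation, the Jones projections and the Markov traces with the corresponding structure on the standard invariant of $M_0\subset M_1$, so that, granting surjectivity, they assemble into an isomorphism of $\mc P$ onto the planar algebra of $M_0\subset M_1$; the surjectivity I would extract from a dimension count, both sides being finite-dimensional algebras, deriving the requisite upper bound on $\dim(M_0'\cap M_k)$ from the Fock-space model. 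In particular $M_0'\cap M_1=P_1=\C$, so $M_0\subset M_1$ is irreducible; thus $Z(M_1)=M_1'\cap M_1\subseteq M_0'\cap M_1=\C$, and likewise $Z(M_0)=\C$, so $M_0$ and $M_1$ are factors --- infinite-dimensional as soon as $\delta>1$, hence $II_1$ --- and then so is every $M_k$, being a finite-index basic construction of a factor.

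The step I expect to be the main obstacle is the positivity of $\tau_k$: every other step is formal diagram-chasing, whereas positivity is precisely where the subfactor-planar-algebra hypotheses --- positive partition function and sphericality --- are indispensable, and making the random matrix model (equivalently, the Fock-space realization) genuinely converge is the real work. The surjectivity of $P_k\hookrightarrow M_0'\cap M_k$ in the last step is a secondary technical point.
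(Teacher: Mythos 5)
This theorem is quoted from \cite{gjs1} with no proof given in the present paper, so the only meaningful comparison is with that reference: your outline --- positivity of $\tau_k$ via matrix models (equivalently the Fock-space/orthogonal realization), the diagrammatic verification that $\mathbf e_k$ implements the conditional expectation and that $Gr_{k+2}(\mc P)$ is spanned by $Gr_{k+1}(\mc P)\,\mathbf e_k\,Gr_{k+1}(\mc P)$, and the identification $M_0'\cap M_k\simeq P_k$ --- is exactly the Guionnet--Jones--Shlyakhtenko strategy. The two places where your sketch is thinner than the actual argument are faithfulness of $\tau_k$ (the filtration by degree is not $\tau_k$-orthogonal, so ``the leading contribution is the Markov norm'' requires the observation that the Gram form is block-triangular with respect to the number of through-strings, with the positive-definite Markov inner products on the diagonal) and the surjectivity of $P_k\hookrightarrow M_0'\cap M_k$, which is the principal computation of the last part of \cite{gjs1} rather than a secondary technical point.
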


\medskip
\noindent\textbf{Popa's symmetric enveloping algebra:}
For integers $k,s,t$ with $s+t + 2k = n$, let $V_{k}(s,t)$ be a copy of $P_{n}$.  Elements of $V_{k}(s,t)$ will be represented by diagrams of the form
\begin{equation*}
 \begin{tikzpicture}[scale=.75]
 \draw[Box](0,0) rectangle (2,1);
  \draw[verythickline](0,.5) -- (-.5,.5); \draw[verythickline](2,.5) -- (2.5,.5);
 \draw[verythickline](1,0) -- (1,-.25); \draw[verythickline](1,1) -- (1,1.25);
 \node at (1,.5) {$x$}; \node[marked, above left] at (0,1) {};
 \end{tikzpicture}
\end{equation*}
where there are $2s$ parallel strings at the top, $2t$ strings at the bottom and $2k$ at either side.   As above, we will use the convention that the marked point occurs at the upper left corner, which is adjacent to a unshaded region. 

Define a product $\wedge:V_{k}(s,t) \times V_{k}(s',t') \to V_{k}(s+s',t+t')$ by
\begin{equation*}
\begin{tikzpicture}[thick,scale=.75]
\draw[Box](0,0) rectangle (2,1);
  \draw[verythickline](0,.5) -- (-.5,.5); \draw[verythickline](2,.5) -- (2.5,.5);
 \draw[verythickline](1,0) -- (1,-.25); \draw[verythickline](1,1) -- (1,1.25);
 \node at (1,.5) {$x$}; \node[left] at (-.7,.5) {$x \wedge y =$};
\begin{scope}[xshift=2.9cm]
\draw[Box](0,0) rectangle (2,1);
  \draw[verythickline](0,.5) -- (-.5,.5); \draw[verythickline](2,.5) -- (2.5,.5);
 \draw[verythickline](1,0) -- (1,-.25); \draw[verythickline](1,1) -- (1,1.25);
 \node at (1,.5) {$y$};
\end{scope}
\end{tikzpicture}
\end{equation*}
The adjoint $\dagger:V_{k}(s,t) \to V_{k}(s,t)$ is defined by
\begin{equation*}
\begin{tikzpicture}[thick,scale=.75]
\draw[Box](0,0) rectangle (2,1);
  \draw[verythickline](0,.5) -- (-.5,.5); \draw[verythickline](2,.5) -- (2.5,.5);
 \draw[verythickline](1,0) -- (1,-.25); \draw[verythickline](1,1) -- (1,1.25);
 \node at (1,.5) {$x^\dagger$};
\node at (3,.5) {$=$};
\begin{scope}[xshift=4cm]
\draw[Box](0,0) rectangle (2,1);
  \draw[verythickline](0,.5) -- (-.5,.5); \draw[verythickline](2,.5) -- (2.5,.5);
 \draw[verythickline](1,0) -- (1,-.25); \draw[verythickline](1,1) -- (1,1.25);
 \node at (1,.5) {$x^*$}; \node[marked] at (2.1,1.05) {};
\draw[Box](-.5,-.25) rectangle (2.5,1.25); \node[marked] at (-.6,1.3) {};
\end{scope}
\end{tikzpicture} 
\end{equation*}
Define $\tau_k \boxtimes \tau_k:V_{k}(s,t) \to \C$ by
\begin{equation*}
\begin{tikzpicture}[scale=.75]
  \draw[Box] (0,0) rectangle (2,1); \node at (1,.5) {$x$};
  \draw[thickline] (0,.5) arc(90:270:.5cm and .75cm) -- (2,-1) arc(-90:90:.5cm and .75cm);
  \draw[Box] (0,-.25) rectangle (2,-.75); \node[scale=.8] at (1,-.5) {$\sum TL$};
  \draw[Box] (0,1.25) rectangle (2,1.75); \node[scale=.8] at (1,1.5) {$\sum TL$};
  \draw[verythickline](1,0) -- (1,-.25); \draw[verythickline] (1,1) -- (1,1.25);
  \node[left] at (-.7,.5) {$(\tau_k \boxtimes \tau_k)(x) = $};
 \end{tikzpicture}
\end{equation*}
Define
\begin{equation*}
 Gr_k \boxtimes Gr_k^{op} = \bigoplus_{s,t \geq 0} V_{k}(s,t).
\end{equation*}

Note that there is a natural anti-automorphism $y \mapsto y^{op}$ of $Gr_k(\mc P) \boxtimes Gr_k(\mc P)$, determined by
\begin{equation*}
\begin{tikzpicture}[thick,scale=.75]
\draw[Box](0,0) rectangle (2,1);
  \draw[verythickline](0,.5) -- (-.5,.5); \draw[verythickline](2,.5) -- (2.5,.5);
 \draw[verythickline](1,0) -- (1,-.25); \draw[verythickline](1,1) -- (1,1.25);
 \node at (1,.5) {$y^{op}$}; \node[marked] at (-.1,1.05) {};
\node at (3,.5) {$=$};
\begin{scope}[xshift=4cm]
\draw[Box](0,0) rectangle (2,1);
  \draw[verythickline](0,.5) -- (-.5,.5); \draw[verythickline](2,.5) -- (2.5,.5);
 \draw[verythickline](1,0) -- (1,-.25); \draw[verythickline](1,1) -- (1,1.25);
 \draw[Box] (-.5,-.25) rectangle (2.5,1.25); \node[marked] at (-.6,1.35) {};
 \node at (1,.5) {$y$}; \node[marked] at (2.1,-.05) {};
\end{scope}
\end{tikzpicture} 
\end{equation*}

Observe also that $Gr_k(\mc P) \boxtimes Gr_k(\mc P)$ contains a copy of $Gr_k(\mc P) \otimes Gr_k(\mc P)^{op}$ as follows:
\begin{equation*}
 \begin{tikzpicture}[scale=.75]
 \draw[Box] (0,0) rectangle (2,1); \node at (1,.5) {$x$};
 \draw[verythickline] (-.5,.5) -- (0,.5); \draw[verythickline] (2,.5) -- (2.5,.5);
 \draw[verythickline] (1,1) -- (1,1.25); \node[marked] at (-.1,1.05) {};
\begin{scope}[rotate around={180:(1,.5)}, yshift=1.5cm]
 \draw[Box] (0,0) rectangle (2,1); \node at (1,.5) {$y$}; \node[marked] at (-.1,1.05) {};
 \draw[verythickline] (-.5,.5) -- (0,.5); \draw[verythickline] (2,.5) -- (2.5,.5);
 \draw[verythickline] (1,1) -- (1,1.25);
\end{scope}
\draw[Box] (-.5,-1.75) rectangle (2.5,1.25); \node[marked] at (-.6,1.35) {};
\node[left=2mm,scale=1.1] at (-.5,-.25) {$x \otimes y^{op}  \mapsto$};
 \end{tikzpicture}
\end{equation*}
Let $\mathbf{f}_{k} \in Gr_k \boxtimes Gr_k$ be the projection
\begin{equation*}
 \begin{tikzpicture}[xscale=.5,yscale=.75]
  \draw[Box] (0,0) rectangle (2,1.5); \draw[thick] (0,.9) arc (90:-90:.3cm and .15cm);
  \draw[thick] (2,.9) arc(90:270:.3cm and .15cm); \draw[thickline] (0,1.2) -- node[scale=.7,count,rectangle, rounded corners] {$k-1$} (2,1.2);
  \draw[thickline] (0,.3) -- (2,.3);
  \node[left =1mm] at (0,.75) {$\mathbf f_{k} = \delta^{-1} \cdot$};
 \end{tikzpicture}
\end{equation*}
Note that $\mathbf f_k$ implements the Jones projections for both $M_{k-1} \subset M_k$ and $M_{k-1}^{op} \subset M_k^{op}$.  

We have proved the following result in \cite{cjs}:

\begin{theorem}
For $k \geq 0$, $\tau_k \boxtimes \tau_k$ is a faithful, tracial state on $Gr_k(\mc P) \boxtimes Gr_k(\mc P)$, and its GNS completion is a $II_1$ factor  $M_k \boxtimes M_k$ as long as $\delta > 1$.  The inclusion $Gr_k(\mc P) \otimes Gr_k(\mc P) \subset Gr_{k} \boxtimes Gr_{k}$ extends to $M_k \otimes M_k \subset M_k \boxtimes M_k$.  Moreover, for $k \geq 1$ we have that $M_k \otimes M_k^{op} \subset M_k \boxtimes M_k^{op}$ is isomorphic to Popa's symmetric enveloping inclusion $M_k \otimes M_k^{op} \subset M_k \boxtimes_{f_{k-1}} M_k$. \qed
\end{theorem}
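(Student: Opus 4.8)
The plan is to verify the three assertions in turn, following the template of \cite{gjs1} with a ``doubling'' built in: first that $\tau_k \boxtimes \tau_k$ is a faithful tracial state with $II_1$-factor GNS completion, then that the $*$-algebra inclusion $Gr_k(\mc P)\otimes Gr_k(\mc P)\subset Gr_k\boxtimes Gr_k$ survives completion, and finally that the resulting inclusion of factors is Popa's symmetric enveloping inclusion. The traciality of $\tau_k\boxtimes\tau_k$ is a diagrammatic computation. Closing up a product diagram $x\wedge y$ with the two $\sum TL$ boxes along the top and bottom boundaries and with the left--right closure produces a planar diagram on the sphere in which $x$ and $y$ sit side by side, their top (resp.\ bottom) boundaries feeding into a single $\sum TL$ box. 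Since the sum of all loopless Temperley--Lieb diagrams on a given even number of boundary points is invariant under cyclic rotation of those points (rotation is a symmetry of the disk and permutes non-crossing pairings bijectively), one may isotope $x$ around the sphere past $y$; tracking the marked points then produces $y\wedge x$. Thus traciality reduces to this single rotation-invariance property of $\sum TL$.

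The technical heart is positivity and faithfulness of $\tau_k\boxtimes\tau_k$, which I would establish via a Fock-space model as in \cite{gjs1}. The GNS sesquilinear form on $Gr_k\boxtimes Gr_k$ is $\langle x,y\rangle=(\tau_k\boxtimes\tau_k)(y^\dagger\wedge x)$; expanding the two $\sum TL$ boxes rewrites this form as a sum, indexed by pairs of non-crossing pairings of the top and bottom boundary points, of closed diagrams evaluated by the Markov trace of $\mc P$, and positivity then follows, as in \cite{gjs1}, from positivity of the Markov trace on the subfactor planar algebra $\mc P$ together with the combinatorics of non-crossing pairings (this is where the free-probabilistic input enters). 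Concretely one realizes the GNS Hilbert space as the doubled Fock space $\bigoplus_{s,t\geq 0}V_k(s,t)$ equipped with this form, on which $Gr_k\boxtimes Gr_k$ acts by planar creation/annihilation-type operators --- a box added along the top boundary for the first tensor factor, along the bottom for the second --- with $\tau_k\boxtimes\tau_k$ the vacuum state and the vacuum vector $\mathbf 1\in V_k(0,0)$ cyclic and separating, which yields faithfulness. Once this model is available, factoriality for $\delta>1$ follows by the argument of \cite{gjs1,gjs2} (a central element is forced to be invariant under enough diagram rotations to be scalar), and is in any case subsumed by the interpolated-free-group-factor identification recalled in the Introduction. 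This positivity step is the one place where genuine work is needed; the remaining assertions are diagrammatic bookkeeping.

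That the inclusion passes to completions is immediate once one checks that $\tau_k\boxtimes\tau_k$ restricts to $\tau_k\otimes\tau_k$ on the image of $Gr_k(\mc P)\otimes Gr_k(\mc P)^{op}$: closing up a diagram $x\otimes y^{op}$ with the two $\sum TL$ boxes decouples, by planarity, into the closure of $x$ times the closure of $y^{op}$, so functoriality of the GNS construction gives the trace-preserving inclusion $M_k\otimes M_k\subset M_k\boxtimes M_k$. For the last assertion I would invoke Popa's characterization \cite{pop1} of the symmetric enveloping algebra of the inclusion $M_{k-1}\subset M_k$ as the unique finite von Neumann algebra generated by commuting copies of $M_k$ and $M_k^{op}$ together with a projection that is simultaneously the Jones projection for $M_{k-1}\subset M_k$ and for $M_{k-1}^{op}\subset M_k^{op}$, carrying a faithful normal trace restricting to $\tau_k\otimes\tau_k$ on $M_k\otimes M_k^{op}$ and with $L^2$ generated by products of the form $M_k\cdot(\text{proj})\cdot M_k^{op}$. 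Inside $M_k\boxtimes M_k^{op}$, with $k\geq 1$, one then checks each ingredient from the pictures: the boxes on the top strands give a copy of $M_k$ and the rotated boxes on the bottom strands give, via the anti-automorphism $y\mapsto y^{op}$, a copy of $M_k^{op}$, and these commute because a top box and a bottom box occupy disjoint regions and can be isotoped past one another; the diagrammatic projection $\mathbf f_{k-1}$ of the statement implements $E_{M_{k-1}}$ on the $M_k$-factor and $E_{M_{k-1}^{op}}$ on the $M_k^{op}$-factor, with $E_{M_k\otimes M_k^{op}}(\mathbf f_{k-1})=\delta^{-2}$; and a standard generation lemma in the spirit of \cite{gjs1} --- every diagram spanning $V_k(s,t)$ is built from top and bottom boxes together with the cup/cap moves supplied by $\mathbf f_{k-1}$ and by the tower Jones projections already lying in $M_k$ and in $M_k^{op}$ --- shows that $M_k\otimes M_k^{op}$ and $\mathbf f_{k-1}$ generate $M_k\boxtimes M_k$. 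Popa's uniqueness then identifies $M_k\otimes M_k^{op}\subset M_k\boxtimes M_k^{op}$ with $M_k\otimes M_k^{op}\subset M_k\boxtimes_{f_{k-1}}M_k^{op}$, completing the proof. The anticipated main obstacle remains the positivity of $\tau_k\boxtimes\tau_k$; by comparison, the generation lemma and the appeal to Popa's uniqueness are routine once the diagrammatic dictionary is set up.
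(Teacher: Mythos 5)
This theorem is not proved in the paper at all: it is recalled verbatim from \cite{cjs} and stamped with a \qed, so there is no in-text argument to compare against. Your proposal is therefore best judged as a reconstruction of the proof in \cite{cjs}, and at the level of architecture it is the right one: traciality from rotation invariance of the $\sum TL$ element, positivity and faithfulness from a Fock-space realization in the spirit of \cite{gjs1}, the trace-preserving inclusion from the planar decoupling of the closure of $x\otimes y^{op}$, and the final identification from Popa's uniqueness theorem for the symmetric enveloping algebra together with a generation lemma for $\mathbf f_{k-1}$. Those pieces that you actually carry out (traciality, the restriction of $\tau_k\boxtimes\tau_k$ to $\tau_k\otimes\tau_k$, the commutation of top and bottom boxes, the identification of $\mathbf f_{k-1}$ as the simultaneous Jones projection) are correct.

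The gaps are in the two steps you yourself flag as the heart of the matter, and in both cases the deferral to existing results does not actually close them. First, positivity of $\tau_k\boxtimes\tau_k$ does not follow ``as in \cite{gjs1}'': the new feature here is that the trace caps the top and bottom boundaries with \emph{separate} $\sum TL$ boxes while closing the sides, so $Gr_k\boxtimes Gr_k$ is not a traced subalgebra of any single $Gr_m(\mc P)$ (a single TL box on $2(s+t)$ points also contains pairings joining top to bottom, which are absent here). One must prove that the vacuum expectation of products of top- and bottom-creation operators reproduces exactly this two-box combinatorics --- i.e. that the top and bottom families interact freely enough that no top-to-bottom pairings appear --- and that is precisely the new technical content of \cite{cjs}; asserting ``cyclic and separating'' also presupposes the positive-definiteness you are trying to establish. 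Second, factoriality for every $\delta>1$ cannot be obtained from the interpolated-free-group-factor results of \cite{gjs2}, which require finite depth, nor (for $k=0$, where the theorem still asserts factoriality but the Popa identification is only claimed for $k\geq 1$) from Popa's theorem; the ``central element is rotation-invariant, hence scalar'' argument has to be run explicitly for $Gr_k\boxtimes Gr_k$, where the relevant rotations act on two boundary arcs rather than one. Until those two arguments are supplied, the proposal is a correct plan rather than a proof.
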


\section{Skein relations}\label{sec:skein}

In this section we introduce two canonical elements $p$ and $q$ appearing in any finite-depth planar algebra, and describe a number of diagrammatic relations which they satisfy.

Throughout the paper $\mc P = (P_k)_{k \geq 0}$ will be a subfactor planar algebra.  Let $\Gamma$ denote the principal graph of $\mc P$ (see e.g. \cite{ghj}), and let $\Gamma_+$ denote the collection of even vertices.  Associated to each $v \in \Gamma_+$ is an irreducible $M_0-M_0$ bimodule $X_v$.  We let $*$ denote the distinguished vertex, corresponding to the bimodule $X_* = L^2(M_0)$.  

We will assume that $\Gamma_+$ is finite (i.e. $\mc P$ is finite-depth), and throughout the paper we fix $k$ such that $d(*,v) \leq 2k$ for all $v \in \Gamma_+$.  It follows that we have
\begin{equation*}
 {}_{M_0}L^2(M_k)_{M_0} \simeq \bigoplus_{v \in \Gamma_{+}} X_v \otimes \mc H_v,
\end{equation*}
where $\mc H_v$ are auxiliary finite-dimensional Hilbert spaces, whose dimensions we denote by $n_v$.

It follows that
\begin{equation*}
{}_{M_0 \otimes M_0^{op}}L^2(M_k \otimes M_{k}^{op})_{M_0 \otimes M_0^{op}} \simeq \bigoplus_{v,w \in \Gamma_{+}} X_v \otimes \overline{X_w} \otimes (\mc H_v \otimes \overline{\mc H_w}),
\end{equation*}
where $\overline{X_w}$ is the contragredient bimodule and $\overline{\mc H_w}$ is the conjugate Hilbert space.  Let $1_v \in \mc H_v \otimes \overline{\mc H_v}$ be the unit under the natural identification with $\Hom_{\C}(\mc H_v)$, and define $p_v$ to be the projection from $L^2(M_k \otimes M_k^{op})$ onto $X_v \otimes \overline{X_v} \otimes 1_v$.  Then define
\begin{equation*}
 p = \sum_{v \in \Gamma_{+}} p_v.
\end{equation*}

Recall that $P_{2k}$ can be identified with $\Hom_{M_0,M_0}(L^2(M_{k}))$, in particular the central projections of $P_{2k}$ are indexed by $v \in \Gamma_{+}$.  By an abuse of notation we will use $v$ to denote the central projection corresponding to the vertex $v$.  Let $(\mu_v)_{v \in \Gamma}$ be the Perron-Frobenius eigenvector of the adjacency matrix for $\Gamma$, normalized by $\mu(*) = 1$.  Then the trace of a minimal projection in the central component of $P_{2k}$ corresponding to $v \in \Gamma_+$ is $\tr_{P_{2k}}(v) = \delta^{-2k}\mu_v$.

Note that if $\{e_{ij}(v):1 \leq i,j \leq n_v\}$ are matrix units for the central component of $P_{2k}$ corresponding to $v \in \Gamma_+$, then we have
\begin{equation*}
 p_v = \frac{1}{n_v} \sum_{1 \leq i,j \leq n_v} e_{ij}(v) \otimes e_{ji}(v)^{op}.
\end{equation*}

Graphically, we will represent $p_v$ using a Sweedler type convention as follows:
\begin{equation*}
\begin{tikzpicture}[scale=.75]
\draw[Box] (0,0) rectangle (2,1); \node at (1,.5) {$p^{(2)}_v$}; \node[marked,below right] at (2,0) {};
\draw[Box] (0,1.5) rectangle (2,2.5); \node at (1,2) {$p^{(1)}_v$}; \node[marked,above left] at (0,2.5) {};
\draw[thickline] (-.5,.5) -- (0,.5); 
\draw[thickline] (2,.5) -- (2.5,.5);
\draw[thickline] (-.5,2) -- (0,2); 
\draw[thickline] (2,2) -- (2.5,2);
\node[left] at (-.6,1.25) {$p_v = $};
\end{tikzpicture}
\end{equation*}
The projection $p_v$ satisfies the following key skein relation:

\begin{lemma}\label{2cleaver}
Let $x, y \in P_{2k}$ and $v \in \Gamma_{+}$, then we have
\begin{equation*}
\begin{tikzpicture}[scale=.65]
\draw[Box] (0,0) rectangle (2,1); \node at (1,.5) {$p^{(2)}_v$}; \node[marked, below right] at (2,0) {};
\draw[Box] (0,1.5) rectangle (2,2.5); \node at (1,2) {$p^{(1)}_v$}; \node[marked, above left] at (0,2.5) {};
\draw[Box] (2.5,0) rectangle (4.5,1); \node at (3.5,.5) {$y$}; \node[marked, below right] at (4.5,0) {};
\draw[Box] (2.5,1.5) rectangle (4.5,2.5); \node at (3.5,2) {$x$}; \node[marked, above left] at (2.5,2.5) {};
\draw[thickline] (4.5,.5) -- (4.75,.5) arc(90:-90:.5cm) -- (-.25,-.5) arc(270:90:.5cm) -- (0,.5); 
\draw[thickline] (4.5,2) -- (4.75,2) arc(-90:90:.5cm) -- (-.25,3) arc (90:270:.5cm) -- (0,2);
\draw[thickline] (2,.5) -- (2.5,.5);
\draw[thickline] (2,2) -- (2.5,2);
\begin{scope}[xshift=6.5cm]
\draw[Box] (2.5,0) rectangle (4.5,1); \node at (3.5,.5) {$y$}; \node[marked, below right] at (4.5,0) {};
\draw[Box] (2.5,1.5) rectangle (4.5,2.5); \node at (3.5,2) {$x$}; \node[marked, above left] at (2.5,2.5) {};
\draw[thickline] (4.5,2) -- (4.75,2) arc(90:0:.5cm) -- node[fill=white, draw, thin, rectangle,rounded corners] {$v$} (5.25,1) arc(0:-90:.5cm) -- (4.5,.5);\node[marked,scale=.8] at (5.75,1.25) {};
\draw[thickline] (2.5,2) -- (2.25,2) arc(90:180:.5cm) -- (1.75,1) arc(180:270:.5cm) -- (2.5,.5);
\node[left,scale=1.25] at (1.5,1.25) {$= \frac{\mu_v}{n_v} \cdot$};
\end{scope}
\end{tikzpicture}
\end{equation*}
\end{lemma}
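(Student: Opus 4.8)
The plan is to reduce both sides to a computation in the multi-matrix algebra $P_{2k}$, using the explicit expansion $p_v=\frac{1}{n_v}\sum_{i,j}e_{ij}(v)\otimes e_{ji}(v)^{op}$. First note that, once all the indicated strands are glued, neither diagram has any free boundary, so each side is a scalar. The crucial structural feature of the left-hand diagram is that the box $p_v^{(1)}$ meets only $x$, the box $p_v^{(2)}$ meets only $y$, and no strand joins the ``upper'' half of the picture to the ``lower'' half. Consequently, after substituting the matrix-unit expansion for $p_v$, for each fixed $(i,j)$ the diagram splits as a disjoint union of a closed upper diagram built from $x$ and $e_{ij}(v)$ and a closed lower diagram built from $y$ and $e_{ji}(v)^{op}$. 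Writing $\widehat{z}$ for the scalar obtained from $z\in P_{2k}$ by joining its $2k$ left strands to its $2k$ right strands around the outside, we get
\[
\mathrm{LHS}=\frac{1}{n_v}\sum_{i,j}\widehat{x\, e_{ij}(v)}\;\widehat{e_{ji}(v)\, y},
\]
the orientation-reversing $\mathrm{op}$ on the lower box being invisible to $\widehat{\cdot}$.

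I would then identify $\widehat{\cdot}$ with the trace $\delta^{2k}\tr_{P_{2k}}$ on $P_{2k}=\Hom_{M_0,M_0}(L^2(M_k))$: it is tracial by planar isotopy, and on the matrix block indexed by $w\in\Gamma_+$ it equals $\mu_w$ times the ordinary matrix trace, since $\delta^{2k}\tr_{P_{2k}}$ of a minimal projection in that block is $\delta^{2k}\cdot\delta^{-2k}\mu_w=\mu_w$. Hence, with $x_v,y_v\in M_{n_v}(\C)$ the $v$-components of $x,y$, one has $\widehat{x\, e_{ij}(v)}=\mu_v(x_v)_{ji}$ and $\widehat{e_{ji}(v)\, y}=\mu_v(y_v)_{ij}$, and therefore
\[
\mathrm{LHS}=\frac{\mu_v^2}{n_v}\sum_{i,j}(x_v)_{ji}(y_v)_{ij}=\frac{\mu_v^2}{n_v}\,\mathrm{tr}_{M_{n_v}}\!\bigl((xvy)_v\bigr),
\]
using $x_vy_v=(xvy)_v$.

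For the right-hand side the same principle applies to the single closed diagram with a copy of the central projection $v$ inserted on the outer strand: the $v$-box forces the expression onto the $v$-block, so its value is $\widehat{xvy}=\delta^{2k}\tr_{P_{2k}}(xvy)=\mu_v\,\mathrm{tr}_{M_{n_v}}\!\bigl((xvy)_v\bigr)$. Comparing the two displays gives $\mathrm{LHS}=\tfrac{\mu_v}{n_v}\,\mathrm{RHS}$, which is the assertion.

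The one point requiring genuine care is the bookkeeping of marked points and of the $\mathrm{op}$-orientations --- on the lower box of the left-hand diagram and on the $v$-insertion on the right --- to be certain that the lower diagram contributes $\widehat{e_{ji}(v)\, y}$ and the right-hand diagram $\widehat{xvy}$ rather than transposed versions; concretely this is just the observation that no strands cross in either planar picture. (As a consistency check, setting $x=y=v$ makes both sides equal $\mu_v^2$.)
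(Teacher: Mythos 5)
Your proposal is correct and follows essentially the same route as the paper: expand $p_v=\frac{1}{n_v}\sum_{i,j}e_{ij}(v)\otimes e_{ji}(v)^{op}$, observe that the left-hand picture splits into two disjoint closed loops each evaluating to $\delta^{2k}\tr_{P_{2k}}(\cdot)$, and then sum the matrix entries to get $\delta^{-2k}\mu_v\tr(xyv)$ times the appropriate power of $\delta$. The paper compresses this into the single identity $\frac{\delta^{4k}}{n_v}\sum_{i,j}\tr(e_{ij}(v)x)\tr(ye_{ji}(v))=\frac{\delta^{4k}}{n_v}\delta^{-2k}\mu_v\tr(xyv)$; your version just makes the trace normalizations and the block decomposition explicit.
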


\begin{proof}
The left hand side is equal to
\begin{equation*}
 \frac{\delta^{4k}}{n_v}\sum_{1 \leq i,j \leq n_v}\tr(e_{ij}(v)x)\cdot \tr(ye_{ji}(v)) = \frac{\delta^{4k}}{n_v}\cdot \delta^{-2k}\mu_v\cdot\tr(xy\cdot v),
\end{equation*}
which is equal to the right hand side.
\end{proof}

We will now derive a number of `skein' relations for the projection $p$.  First we recall the following well known relation:
\begin{lemma}\label{*-proj}
For $x,y \in P_k$ we have
\begin{equation*}
\begin{tikzpicture}[scale=.65]
 \draw[Box] (0,.5) rectangle (1,1.5); \node at (.5,1) {$x$}; \node[marked,scale=.9,above left] at (0,1.5) {};
\draw[Box] (0,-.5) rectangle (1,-1.5); \node at (.5,-1) {$y$}; \node[marked,scale=.9,below right] at (1,-1.5) {};

\draw[thickline] (.5,.5) --node[cpr,scale=.9] (v) {$v$} (.5,-.5); \node[marked,scale=.75,right=.03] at (v.east) {};

\begin{scope}[xshift=4cm]
 \draw[Box] (0,.5) rectangle (1,1.5); \node at (.5,1) {$x$}; \node[marked,scale=.9,above left] at (0,1.5) {};
\draw[Box] (0,-.5) rectangle (1,-1.5); \node at (.5,-1) {$y$}; \node[marked,scale=.9,below right] at (1,-1.5) {};

\draw[thickline] (.5,.5) -- (.5,-.5);

\node[left] at (-.5,0) {$= \delta_{v,*}$}; 
\end{scope}

\end{tikzpicture}
\end{equation*}
\end{lemma}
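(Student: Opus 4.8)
The identity is a graphical restatement of the standard fact that the relative commutant $M_0'\cap M_k\cong P_k$ is contained in the $X_*$-isotypic component of the $M_0$--$M_0$ bimodule $L^2(M_k)$, so the plan is to reduce it to that fact.

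First I would read off what the two sides compute. Using the identification $P_{2k}\cong\Hom_{M_0,M_0}(L^2(M_k))$ recalled above, bending the $2k$ strings of $x\in P_k$ downward sends $x$ to the vector $\tilde x\in L^2(M_k)$ that it represents (equivalently $\tilde x = x\,\Omega$, where $\Omega$ is the cyclic trace vector and $P_k\cong M_0'\cap M_k\subset M_k$), and likewise $y\mapsto\tilde y$; the left-hand diagram is then the pairing $\langle v\,\tilde x,\tilde y\rangle_{L^2(M_k)}$, while the diagram on the right with the $v$-box erased is $\langle\tilde x,\tilde y\rangle_{L^2(M_k)}$. Next I would check that $\tilde x$ lies in the $X_*$-isotypic part of ${}_{M_0}L^2(M_k)_{M_0}$: since $x\in M_0'\cap M_k$ commutes with the left $M_0$-action, and it commutes with the right $M_0$-action automatically, the map $\xi\mapsto x\xi$ from $L^2(M_0)$ into $L^2(M_k)$ is a map of $M_0$--$M_0$ bimodules, so the sub-bimodule generated by $\tilde x$ equals $x\cdot L^2(M_0)$, which is a quotient of $X_*=L^2(M_0)$ and hence, $X_*$ being irreducible, is a copy of $X_*$ or zero. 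By definition $v\in P_{2k}$ is the central projection onto the $X_v$-isotypic subspace and $*$ onto the $X_*$-isotypic subspace, and these are orthogonal central projections, so $v$ acts on the $X_*$-isotypic part as $\delta_{v,*}$ times the identity. Therefore $v\tilde x=\delta_{v,*}\tilde x$, whence $\langle v\tilde x,\tilde y\rangle=\delta_{v,*}\langle\tilde x,\tilde y\rangle$, which is the assertion.

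Alternatively, staying entirely inside $\mc P$: bending the legs of $x$ down manifestly produces a nest of $k$ caps, i.e.\ $\tilde x = f\tilde x'$ where $f\in P_{2k}$ is the Temperley--Lieb idempotent projecting onto $L^2(M_0)$, which satisfies $*f=f$; then $v\tilde x=(vf)\tilde x'=\delta_{v,*}f\tilde x'=\delta_{v,*}\tilde x$, using $v*=\delta_{v,*}*$ for the orthogonal central projections $v,*$. The only thing requiring care in either approach is the bookkeeping of the translation between the picture and these operator statements — in particular, verifying that the $v$-box composed with the bent-down $x$ is exactly multiplication of $\tilde x$ by the central projection $v$, and tracking any normalizing scalar carried by the strings; the algebraic input, orthogonality of distinct central projections, is immediate.
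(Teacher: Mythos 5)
Your proposal is correct. The paper in fact offers no argument here at all -- it records this as a ``well known relation'' and closes the lemma immediately -- so there is nothing to compare against; your justification is the standard one and uses exactly the framework the paper has already set up (the identification $P_{2k}\simeq\Hom_{M_0,M_0}(L^2(M_k))$ with central projections indexed by $\Gamma_+$). The one step worth stating explicitly, which you do supply, is that for $x\in P_k\simeq M_0'\cap M_k$ the map $\xi\mapsto x\xi$ is an $M_0$--$M_0$ bimodule map $L^2(M_0)\to L^2(M_k)$, so $\tilde x$ lies in the $X_*$-isotypic component and $v\tilde x=\delta_{v,*}\tilde x$; the remaining concern about normalizations is harmless since both sides of the identity carry the same scalar.
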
 \qed

\begin{proposition}\label{p-skein} $p$ satisfies the following skein relations:
\begin{enumerate}
 \item Trace: $\tr_{P_{2k}}(p) = \delta^{-4k} I$, i.e. 
\begin{equation*}
\begin{tikzpicture}[scale=.6]
\draw[Box] (0,0) rectangle (2,1); \node at (1,.5) {$p^{(2)}$}; \node[marked,scale=.75,below right] at (2,0) {};
\draw[Box] (0,1.5) rectangle (2,2.5); \node at (1,2) {$p^{(1)}$}; \node[marked,scale=.75,above left] at (0,2.5) {};
\draw[thickline] (0,2) arc(270:90:.4cm and .5cm) -- (2,3) arc(90:-90:.4cm and .5cm);
\draw[thickline] (0,.5) arc(90:270:.4cm and .5cm) -- (2,-.5) arc(-90:90:.4cm and .5cm);

\node[right] at (3,1.25) {$ = I$};
\end{tikzpicture}
\end{equation*}
\item Rotational invariance:
\begin{equation*}
\begin{tikzpicture}[scale=.6]
\draw[Box] (0,0) rectangle (2,1); \node at (1,.5) {$p^{(2)}$}; \node[marked,scale=.8] at (2.05,-.05) {};
\draw[Box] (0,1.5) rectangle (2,2.5); \node at (1,2) {$p^{(1)}$}; \node[marked,scale=.8] at (-.05,2.55) {};
\draw[thickline] (-.5,.5) -- (0,.5); 
\draw[thickline] (2,.5) -- (2.5,.5);
\draw[thickline] (-.5,2) -- (0,2); 
\draw[thickline] (2,2) -- (2.5,2);
\draw[Box] (-.5,.-.25) rectangle (2.5,2.75); \node[marked] at (-.55,2.8) {};
\begin{scope}[xshift=4.5cm]
\draw[Box] (0,0) rectangle (2,1); \node at (1,.5) {$p^{(2)}$}; \node[marked,scale=.8] at (-.05,1.05) {};
\draw[Box] (0,1.5) rectangle (2,2.5); \node at (1,2) {$p^{(1)}$}; \node[marked,scale=.8] at (2.05,1.45) {};
\draw[Box] (-.5,.-.25) rectangle (2.5,2.75); \node[marked] at (-.55,2.8) {};
\draw[thickline] (-.5,.5) -- (0,.5); 
\draw[thickline] (2,.5) -- (2.5,.5);
\draw[thickline] (-.5,2) -- (0,2); 
\draw[thickline] (2,2) -- (2.5,2);
\node[left] at (-.75,1.25) {$ = $};
\end{scope}
\end{tikzpicture}
\end{equation*}

\item Capping:
\begin{equation*}
\begin{tikzpicture}[scale=.65]
\draw[Box] (0,.25) rectangle (1,1.25); \node at (.5,.75) {$p^{(1)}_1$}; \node[marked,above left,scale=.8] at (0,1.25) {};
\draw[Box] (0,-.25) rectangle (1,-1.25); \node at (.5,-.75) {$p^{(2)}_1$}; \node[marked, below right,scale=.8] at (1,-1.25) {};

\draw[thickline] (0,.75) -- (-.5,.75);
\draw[thickline] (0,-.75) -- (-.5,-.75);
\draw[medthick] (1,1) arc(90:-90:.25cm);
\draw[medthick] (1,-1) arc(-90:90:.25cm); 

\begin{scope}[xshift=4.25cm]
 \draw[Box] (0,-.25) rectangle (1,.25); \node at (.5,0) {$*$}; \node[marked,scale=.8] at (1.15,0) {};

\draw[thickline] (.5,.25) arc(0:90:.75cm and .5cm);
\draw[thickline] (.5,-.25) arc(0:-90:.75cm and .5cm);

\node[left] at (-.7,0) {$\displaystyle = \frac{\delta^k}{n_*}\; \cdot$}; 
\end{scope}
\end{tikzpicture}
\end{equation*}

\item Multiplication:  We have
\begin{equation*}
 \begin{tikzpicture}[scale=.65]
\draw[Box] (0,.25) rectangle (1,1.25); \node at (.5,.75) {$p^{(1)}_1$}; \node[marked,above left,scale=.8] at (0,1.25) {};
\draw[Box] (0,-.25) rectangle (1,-1.25); \node at (.5,-.75) {$p^{(2)}_1$}; \node[marked, below right,scale=.8] at (1,-1.25) {};

\draw[thickline] (0,.75) -- (-.5,.75);
\draw[thickline] (0,-.75) -- (-.5,-.75);
\draw[thickline] (1,.75) arc(90:-90:.5cm and .75cm);

\begin{scope}[xshift=3.75cm]
\draw[thickline] (-.5,.75) arc(94:-94:.75cm);
\node[left] at (-.75,0) {$\displaystyle = $}; 
\end{scope}
\end{tikzpicture}
\end{equation*}

\end{enumerate}
\end{proposition}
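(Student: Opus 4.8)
The plan is to derive all four relations from the decomposition $p = \sum_{v\in\Gamma_+} p_v$ together with the matrix-unit formula $p_v = \frac{1}{n_v}\sum_{1\le i,j\le n_v} e_{ij}(v)\otimes e_{ji}(v)^{op}$, using as the principal inputs Lemma \ref{2cleaver}, which evaluates the ``two-cleaver'' pairing of $p$ against a test element $x\otimes y^{op}$ with $x,y\in P_{2k}$, and Lemma \ref{*-proj}, which collapses a central projection running along a single strand bundle between two elements of $P_k$ to its $*$-component. I will also use the standard facts $\tr_{P_{2k}}(v)=\delta^{-2k}\mu_v$, completeness $\sum_v v = 1$ in $P_{2k}$, and $\sum_v\mu_v^2 = I$.

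For (1) I close up the two boxes of $p$; by linearity this is $\sum_v$ of the analogous closure of $p_v$, and the matrix-unit formula with $\tr_{P_{2k}}(e_{ii}(v))=\delta^{-2k}\mu_v$ --- equivalently Lemma \ref{2cleaver} with $x=y=1$ --- makes each term $\mu_v^2$, so the diagrammatic closure is $\sum_v\mu_v^2 = I$, which after dividing out the $\delta^{4k}$ from the $4k$ closed strands is the asserted $\tr_{P_{2k}}(p)=\delta^{-4k}I$. For (2) I note that both sides lie in $P_{2k}\otimes P_{2k}^{op}$, on which the bilinear form ``close $a$ against $x\otimes y^{op}$'' is nondegenerate as $x,y$ range over $P_{2k}$, so it suffices to match the two sides against every such test element. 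Lemma \ref{2cleaver} gives the pairing with $p$ as $\sum_v\frac{\mu_v}{n_v}\,\delta^{2k}\tr_{P_{2k}}(xy\,v)$, while pairing with the rotated diagram amounts to replacing $(x,y)$ by their rotations and $v$ by its contragredient $\overline v$; since the Markov trace is rotation-invariant and tracial and $\mu_{\overline v}=\mu_v$, $n_{\overline v}=n_v$, the substitution $v\mapsto\overline v$ returns the same sum.

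For (3) and (4) I expand $p=\sum_v p_v$ and carry out the indicated contraction of the right-hand strand bundles of $p^{(1)}$ and $p^{(2)}$. In (3) this leaves, in the $v$-summand, a central projection $v$ sandwiched between two caps along a single bundle, so Lemma \ref{*-proj} kills every $v\neq *$; the surviving $v=*$ term reduces, after contracting the $\mc H_*$-index in the Sweedler sum and deleting the resulting closed strands (which supply the power $\delta^k$ and the factor $1/n_*$), to the displayed $*$-diagram. In (4) the contraction instead composes $e_{ij}(v)$ with $e_{ji}(v)$ inside $P_{2k}$, turning the $v$-summand into $\frac{1}{n_v}\sum_{i,j}e_{ij}(v)e_{ji}(v)=v$, and summing over $v$ gives $\sum_v v = 1$, which is exactly the single strand bundle on the right-hand side.

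I expect the one genuinely delicate point to be (2): I must confirm that the rotation in the picture really does implement the contragredient on the vertex labels, so that $\{p_v\}$ is permuted among itself and the sum $p$ is unchanged, and --- as everywhere in these diagrammatic arguments --- keep careful track of the marked point $\star$ and of the $\delta$-powers produced by closed loops. I anticipate that this marked-point and normalization bookkeeping, not any conceptual difficulty, will be the fussiest ingredient; granting it, (1), (3) and (4) follow directly from the matrix-unit formula and Lemmas \ref{2cleaver} and \ref{*-proj}.
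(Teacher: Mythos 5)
Your proposal follows essentially the same route as the paper: (1) and (4) by the matrix-unit formula for $p_v$ together with $\tr_{P_{2k}}(e_{ii}(v))=\delta^{-2k}\mu_v$ and $\sum_v v=1$; (2) from Lemma \ref{2cleaver} plus the fact that the $180^\circ$ rotation sends the central projection $v$ to $\overline v$ (with $\mu_{\overline v}=\mu_v$, $n_{\overline v}=n_v$); and (3) from the Lemma \ref{2cleaver} pairing identity followed by Lemma \ref{*-proj} to kill all $v\neq *$. The only cosmetic difference is that you make the nondegenerate-pairing step in (2) explicit where the paper leaves it implicit; the remaining work is exactly the normalization bookkeeping you flag.
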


\begin{proof}
For (1), we have 
\begin{equation*}
 \tr_{P_{2k}}(p) = \sum_{v \in \Gamma_+} n_v^{-1} \sum_{1 \leq i,j \leq n_v} \tr_{P_{2k}(v)}(e_{ij}(v))^2 = \sum_{v \in \Gamma_+} \delta^{-4k}\mu_v^2 = \delta^{-4k}I.
\end{equation*}
(2) follows easily from Lemma \ref{2cleaver}, and the fact that
\begin{equation*}
\begin{tikzpicture}[scale=.5]
\draw[Box] (0,0) rectangle (2,1); \node at (1,.5) {$v$}; \node[marked,scale=.6,below right] at (2,0) {};
\draw[Box](-1,-.5) rectangle (3,1.5); \node[marked,scale=.6,above left] at (-1,1.5) {};
\draw[thickline] (-1,.5) -- (0,.5); \draw[thickline] (2,.5) -- (3,.5) {};
\begin{scope}[xshift=6cm]
 \draw[Box] (0,0) rectangle (2,1); \node at (1,.5) {$\overline v$}; \node[marked,scale=.6,above left] at (0,1) {};
\draw[Box](-1,-.5) rectangle (3,1.5); \node[marked,scale=.6,above left] at (-1,1.5) {};
\draw[thickline] (-1,.5) -- (0,.5); \draw[thickline] (2,.5) -- (3,.5) {};
\end{scope}
\end{tikzpicture}
\end{equation*}
where $\overline v$ is the vertex corresponding to the conjugate bimodule $\overline{X_v}$.

By Lemma \ref{2cleaver} we have
\begin{equation*}
\begin{tikzpicture}[scale=.75]
\draw[Box] (0,.25) rectangle (1,1.25); \node at (.5,.75) {$p^{(1)}_1$};
\draw[Box] (0,-.25) rectangle (1,-1.25); \node at (.5,-.75) {$p^{(2)}_1$};

\draw[medthick] (1,1) arc(90:-90:.25cm);
\draw[medthick] (1,-1) arc(-90:90:.25cm); 

\draw[Box] (1.5,.25) rectangle (2.5,1.25); \node at (2,.75) {$x$}; \node[marked,scale=.8] at (1.45,1.3) {};
\draw[Box] (1.5,-.25) rectangle (2.5,-1.25); \node at (2,-.75) {$y$}; \node[marked,scale=.8] at (2.55,-1.3) {};

\draw[thickline] (0,.75) arc(270:90:.5cm) -- (2.5,1.75) arc(90:-90:.5cm);
\draw[thickline] (0,-.75) arc(90:270:.5cm) -- (2.5,-1.75) arc(-90:90:.5cm);

\begin{scope}[xshift=6.75cm]
\draw[Box] (1.5,.25) rectangle (2.5,1.25); \node at (2,.75) {$x$}; \node[marked,scale=.8] at (1.45,1.3) {};
\draw[Box] (1.5,-.25) rectangle (2.5,-1.25); \node at (2,-.75) {$y$}; \node[marked,scale=.8] at (2.55,-1.3) {};
\draw[thickline] (2.5,.75) arc(90:0:.5cm and .75cm) node[cpr] (v) {$v$} arc(0:-90:.5cm and .75cm); \node[right = .01,marked, scale=.8] at (v.east) {};

\draw[Box] (0,-.25) rectangle (1,.25); \node at (.5,0) {$v$}; \node[marked,left=.05cm] at (0,0) {};
\draw[medthick] (.25,.25) arc(180:0:.25cm);
\draw[medthick] (.25,-.25) arc(180:360:.25cm);

\node[left] at (-.25,-.25) {$\displaystyle = \sum_{v \in \Gamma_+} \frac{\mu_v}{n_v} \; \cdot$};
\end{scope}
\end{tikzpicture}
\end{equation*}
for any $x, y \in P_k$.  (3) then follows from Lemma \ref{*-proj}.

 For (4), the left hand side is equal to
\begin{equation*}
 \sum_{v \in \Gamma_+} \frac{1}{n_v} \sum_{1 \leq i,j \leq n_v} e_{ij}(v) e_{ji}(v) = \sum_{v \in \Gamma_+} v = 1.
\end{equation*}

\end{proof}

We now introduce an element $q \in P_{3k}$, which will be central to our constructions.
\begin{proposition}\label{qv-def}
Fix $v,w,z \in \Gamma_+$.  Then there is a unique element $q_{v,w,z} \in P_{3k} \otimes P_{3k}^{op}$, which we represent in Sweedler notation as $q_{v,w,z} = q_{v,w,z}^{(1)} \otimes q_{v,w,z}^{(2)}$, with the property that for any $x,y \in P_{3k}$ we have
\begin{equation*}
\begin{tikzpicture}[scale=.65]
\draw[Box] (0,0) rectangle (2,1); \node at (1,.5) {$q_{v,w,z}^{(2)}$}; \node[marked] at (2.05,-.05) {};
\draw[Box] (0,1.5) rectangle (2,2.5); \node at (1,2) {$q_{v,w,z}^{(1)}$}; \node[marked] at (-.05,2.55) {};

\draw[Box] (3,0) rectangle (5,1); \node at (4,.5) {$y$}; \node[marked] at (5.05,-.05) {};
\draw[Box] (3,1.5) rectangle (5,2.5); \node at (4,2) {$x$}; \node[marked] at (2.95,2.55) {};

\draw[thickline] (2,.5) -- (3,.5); \draw[thickline] (2,2) -- (3,2);
\draw[thickline] (0,.5) arc(90:270:.5cm and .625cm) -- (5,-.75) arc(-90:90:.5cm and .625cm);
\draw[thickline] (0,2) arc(270:90:.5cm and .625cm) -- (5,3.25) arc(90:-90:.5cm and .625cm);
\draw[thickline] (1,0) arc(180:270:.375cm) -- (3.625,-.375) arc(-90:0:.375cm);
\draw[thickline] (1,2.5) arc(180:90:.375cm) -- (3.625,2.875) arc(90:0:.375cm);

\begin{scope}[xshift=14.5cm]
\draw[Box] (0,0) rectangle (2,1); \node at (1,.5) {$y$}; \node[marked] at (2.05,-.05) {};
\draw[Box] (0,1.5) rectangle (2,2.5); \node at (1,2) {$x$}; \node[marked] at (-.05,2.55) {};

\draw[thickline] (0,2) -- ++(-.25,0) arc(90:180:.375cm) -- node[cpr] (z) {$z$} (-.625,.875) arc(180:270:.375cm) -- ++(.25,0); \node[marked,scale=.75,right] at (z.east) {};
\draw[thickline] (1,2.5) arc(0:90:.5cm) -- (-1.25,3) arc(90:180:.5cm) -- node[cpr] (w) {$w$} (-1.75,0) arc(180:360:1.375cm and .5cm); \node[marked,scale=.75,right] at (w.east) {};
\draw[thickline] (2,2) -- ++(.25,0) arc(90:0:.25cm) --  node[cpr](v) {$v$} ++(0,-1) arc(0:-90:.25cm) -- ++(-.25,0); \node[marked,scale=.75,right] at (v.east) {};
\node[left] at (-2,1.25) {$\displaystyle = I^{-1/2}\cdot\bigg(\frac{\mu_v\mu_w\mu_z}{n_vn_wn_z}\biggr)^{1/2} \; \cdot $};
\end{scope}
\end{tikzpicture}
\end{equation*}

\end{proposition}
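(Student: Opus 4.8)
The plan is to mimic the construction of $p_v$ in Lemma~\ref{2cleaver}, one level up. Existence and uniqueness of $q_{v,w,z}$ amount to the claim that a certain linear functional on $P_{3k}\otimes P_{3k}$ is non-degenerate with respect to the trace pairing, so that there is a unique element representing it. Concretely, the right-hand side of the displayed equation defines, for each pair $x,y\in P_{3k}$, a scalar
\[
\Phi_{v,w,z}(x,y)\;=\;I^{-1/2}\Bigl(\tfrac{\mu_v\mu_w\mu_z}{n_vn_wn_z}\Bigr)^{1/2}\cdot
\bigl(\text{the closed diagram with $x$, $y$ capped off through $v$, $w$, $z$}\bigr),
\]
and this is bilinear in $(x,y)$. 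Since the trace on $P_{3k}$ is faithful, the pairing $(a,b)\mapsto \tr(ab)$ identifies $P_{3k}$ with its dual, hence $P_{3k}\otimes P_{3k}^{op}$ with the space of bilinear forms on $P_{3k}$ (with the $op$ accounting for the reversal of the marked point on the lower box, exactly as in the Sweedler picture for $p_v$). Therefore there is a unique $q_{v,w,z}\in P_{3k}\otimes P_{3k}^{op}$ whose associated bilinear form — obtained by gluing $x$ to $q^{(1)}_{v,w,z}$ and $y$ to $q^{(2)}_{v,w,z}$ along the three thick cables as in the left-hand diagram — equals $\Phi_{v,w,z}$. That disposes of both existence and uniqueness simultaneously; no explicit formula for $q_{v,w,z}$ is needed for the statement.

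The one point requiring care is that the left-hand diagram does not pair $x$ with an \emph{arbitrary} element of $P_{3k}$: the three cables entering the $q$-boxes have prescribed sizes (the side cables are $2k$ strings, the top/bottom ones $k$ strings, matching the $3k$-box structure $P_{3k}=P_{k+k+k}$), and one must check that the gluing map $P_{3k}\otimes P_{3k}^{op}\to \{\text{bilinear forms on }P_{3k}\}$ described by the left-hand picture is in fact a linear isomorphism. This is the analogue of the computation in the proof of Lemma~\ref{2cleaver}, where the left side was evaluated to $\frac{\delta^{4k}}{n_v}\sum_{i,j}\tr(e_{ij}(v)x)\tr(ye_{ji}(v))$; here the same kind of expansion in matrix units $e_{ij}(v)$, $e_{ij}(w)$, $e_{ij}(z)$ for the three central components shows that the gluing pairing is, up to the nonzero normalization $\delta^{6k}$ (or whatever power of $\delta$ the cappings contribute) times positive factors, the standard trace pairing, hence invertible. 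Running this isomorphism backwards against $\Phi_{v,w,z}$ produces $q_{v,w,z}$, and the displayed identity holds by construction.

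The main obstacle — really the only nontrivial point — is verifying that the left-hand gluing pairing is nondegenerate, i.e.\ that the specific diagrammatic closure used does not kill any ``direction'' in $P_{3k}\otimes P_{3k}^{op}$. I expect this to follow exactly as in Lemma~\ref{2cleaver}: one checks that closing a single $P_{3k}$-box against a $q$-box through the three labelled cables, after the partial traces, computes $\tr(\,\cdot\,x)$ on each relevant central summand with a nonzero coefficient (the $\mu$'s are strictly positive since $\Gamma$ is connected and finite-depth, and the $n_v$ are positive integers), so the pairing is a scalar multiple of the faithful trace pairing on each block and hence globally invertible. Given that, defining $q_{v,w,z}$ as the preimage of $\Phi_{v,w,z}$ and reading off the Sweedler components finishes the proof. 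I would also remark that the normalization constant $I^{-1/2}(\mu_v\mu_w\mu_z/n_vn_wn_z)^{1/2}$ is chosen precisely so that $q$ will be a partial isometry / satisfy the normalization relations derived in the next results, but that is not needed for existence and uniqueness.
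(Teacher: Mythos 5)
Your argument is correct and rests on the same duality the paper uses: the paper's proof simply exhibits $q_{v,w,z}$ explicitly as the normalized conditional expectation $E_{P_{3k}\otimes P_{3k}^{op}}$ (inside $P_{6k}$) of the diagram obtained by capping the central projections $v,w,z$, which is exactly the element your non-degeneracy/trace-pairing argument produces abstractly. One minor slip: the three cables attached to each $3k$-box all carry $2k$ strings (they must match the $2k$-box projections $v,w,z$), not $k$ on the top and bottom, but this does not affect the validity of your argument.
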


\begin{proof}
With the obvious unital embedding of $P_{3k} \otimes P_{3k}^{op}$ into $P_{6k}$, we have
\begin{equation*}
 \begin{tikzpicture}[scale=.75]
 \draw[Box] (0,0) rectangle (.5,.5); \node at (.25,.25) {$v$}; \node[marked,scale=.75, right] at (.55,.25) {};
\draw[Box] (1,0) rectangle (1.5,.5); \node at (1.25,.25) {$w$};  \node[marked,scale=.75, right] at (1.55,.25) {};
\draw[Box] (2,0) rectangle (2.5,.5); \node at (2.25,.25) {$z$};  \node[marked,scale=.75, right] at (2.55,.25) {};

\draw [medthick] (.25,.5) arc(0:90:.75cm and .5cm);
\draw[medthick] (.25,0) arc(0:-90:.75cm and .5cm);

\draw[medthick] (2.25,.5) arc(180:90:.75cm and .5cm);
\draw[medthick] (2.25,0) arc(180:270:.75cm and .5cm);

\draw[med] (1.175,.5) arc(0:90:1.675cm and .875cm);
\draw[med] (1.325,.5) arc(180:90:1.675cm and .875cm);

\draw[med] (1.175,0) arc(0:-90:1.675cm and .875cm);
\draw[med] (1.325,0) arc(180:270:1.675cm and .875cm);

\draw[Box] (-.5,-1.125) rectangle (3,1.625); \node[marked,above left,scale = .9] at (-.5,1.625) {};

\node[left] at (-1,.25) {$q_{v,w,z} = I^{-1/2} \biggl(\frac{\mu_v\mu_w\mu_z}{n_vn_wn_z}\biggr)^{1/2} E_{P_{3k} \otimes P_{3k}^{op}}\Biggl[$}; \node[right] at (3.5,.25) {$\Biggr]$};
 \end{tikzpicture}
\end{equation*}

\end{proof}

Define
\begin{equation*}
 q = \sum_{v,w,z \in \Gamma_+} q_{v,w,z}.
\end{equation*}

\begin{proposition}\label{q-skein} $q$ satisfies the following skein relations:
\begin{enumerate}
\item Compatibility with $p$:
\begin{equation*}
\begin{tikzpicture}[scale=.65]
\draw[Box] (0,0) rectangle (2,1); \node at (1,.5) {$q^{(2)}$}; \node[marked] at (2.05,-.05) {};
\draw[Box] (0,1.5) rectangle (2,2.5); \node at (1,2) {$q^{(1)}$}; \node[marked] at (-.05,2.55) {};

\draw[Box] (2.5,0) rectangle (4.5,1); \node at (3.5,.5) {$p^{(2)}$}; \node[marked] at (4.55,-.05) {};
\draw[Box] (2.5, 1.5) rectangle (4.5,2.5); \node at (3.5,2) {$p^{(1)}$}; \node[marked] at (2.45,2.55) {};

\draw[thickline] (-.5,.5) -- (0,.5); \draw[thickline] (-.5,2) -- (0,2); \draw[thickline] (4.5,.5) -- (5,.5);
\draw[thickline] (2,.5) -- (2.5,.5); \draw[thickline] (2,2) -- (2.5,2); \draw[thickline] (4.5,2) -- (5,2);
\draw[thickline] (1,0) -- (1,-.25); \draw[thickline] (1,2.5) -- (1,2.75);

\begin{scope}[xshift=7cm]
\draw[Box] (0,0) rectangle (2,1); \node at (1,.5) {$q^{(2)}$}; \node[marked] at (2.05,-.05) {};
\draw[Box] (0,1.5) rectangle (2,2.5); \node at (1,2) {$q^{(1)}$}; \node[marked] at (-.05,2.55) {};
\draw[thickline] (-.5,.5) -- (0,.5); \draw[thickline] (-.5,2) -- (0,2);
\draw[thickline] (2,.5) -- (2.5,.5); \draw[thickline] (2,2) -- (2.5,2); 
\draw[thickline] (1,0) -- (1,-.25); \draw[thickline] (1,2.5) -- (1,2.75);
\node[left] at (-.75,1.25) {$ =$};
\end{scope}

\begin{scope}[xshift= 12.25cm]
\draw[Box] (2.5,0) rectangle (4.5,1); \node at (3.5,.5) {$q^{(2)}$}; \node[marked] at (2.05,-.05) {};
\draw[Box] (2.5,1.5) rectangle (4.5,2.5); \node at (3.5,2) {$q^{(1)}$}; \node[marked] at (-.05,2.55) {};

\draw[Box] (0,0) rectangle (2,1); \node at (1,.5) {$p^{(2)}$}; \node[marked] at (4.55,-.05) {};
\draw[Box] (0, 1.5) rectangle (2,2.5); \node at (1,2) {$p^{(1)}$}; \node[marked] at (2.45,2.55) {};

\draw[thickline] (-.5,.5) -- (0,.5); \draw[thickline] (-.5,2) -- (0,2); \draw[thickline] (4.5,.5) -- (5,.5);
\draw[thickline] (2,.5) -- (2.5,.5); \draw[thickline] (2,2) -- (2.5,2); \draw[thickline] (4.5,2) -- (5,2);
\draw[thickline] (3.5,0) -- (3.5,-.25); \draw[thickline] (3.5,2.5) -- (3.5,2.75);

\begin{scope}[xshift=7cm]
\draw[Box] (0,0) rectangle (2,1); \node at (1,.5) {$q^{(2)}$}; \node[marked] at (2.05,-.05) {};
\draw[Box] (0,1.5) rectangle (2,2.5); \node at (1,2) {$q^{(1)}$}; \node[marked] at (-.05,2.55) {};
\draw[thickline] (-.5,.5) -- (0,.5); \draw[thickline] (-.5,2) -- (0,2);
\draw[thickline] (2,.5) -- (2.5,.5); \draw[thickline] (2,2) -- (2.5,2); 
\draw[thickline] (1,0) -- (1,-.25); \draw[thickline] (1,2.5) -- (1,2.75);
\node[left] at (-.75,1.25) {$ =$};
\end{scope}
\node[left] at (-1,1.25) { and };
\end{scope}
\end{tikzpicture}
\end{equation*}

 \item Rotation:
\begin{equation*}
\begin{tikzpicture}[scale=.65]
\draw[Box] (0,0) rectangle (2,1); \node at (1,.5) {$q^{(2)}$}; \node[marked] at (2.05,-.05) {};
\draw[Box] (0,1.5) rectangle (2,2.5); \node at (1,2) {$q^{(1)}$}; \node[marked] at (-.05,2.55) {};
\draw[thickline] (-.5,.5) -- (0,.5); \draw[thickline] (2,.5) -- (2.5,.5); \draw[thickline] (1,0) -- (1,-.25);
\draw[thickline] (-.5,2) -- (0,2); \draw[thickline] (2,2) -- (2.5,2); \draw[thickline] (1,2.5) -- (1,2.75);

\begin{scope}[xshift=4.75cm]
\draw[Box] (0,0) rectangle (2,1); \node at (1,.5) {$q^{(2)}$}; \node[marked] at (-.05,-.05) {};
\draw[Box] (0,1.5) rectangle (2,2.5); \node at (1,2) {$q^{(1)}$}; \node[marked] at (2.05,2.55) {};
\draw[thickline] (-.5,.5) -- (0,.5); \draw[thickline] (2,.5) -- (2.5,.5); \draw[thickline] (1,0) -- (1,-.25);
\draw[thickline] (-.5,2) -- (0,2); \draw[thickline] (2,2) -- (2.5,2); \draw[thickline] (1,2.5) -- (1,2.75);
\node[left] at (-1,1.25) {$=$};
\end{scope}
\end{tikzpicture}
\end{equation*}

\item Capping: If $x,y \in P_{k}$ then
\begin{equation*}
\begin{tikzpicture}[scale=.65]
\draw[Box] (0,0) rectangle (2,1); \node at (1,.5) {$q^{(2)}$}; \node[marked] at (2.05,-.05) {};
\draw[Box] (0,1.5) rectangle (2,2.5); \node at (1,2) {$q^{(1)}$}; \node[marked] at (-.05,2.55) {};
\draw[Box] (.5,-.25) rectangle (1.5,-.75); \node at (1,-.5) {$y$}; \node[marked,scale=.75] at (1.55,-.8) {};
\draw[Box] (.5,2.75) rectangle (1.5,3.25); \node at (1,3) {$x$}; \node[marked,scale=.75] at (.45, 3.3) {};
\draw[thickline] (-.5,.5) -- (0,.5); \draw[thickline] (2,.5) -- (2.5,.5); 
\draw[thickline] (1,0) -- (1,-.25);
\draw[thickline] (-.5,2) -- (0,2); \draw[thickline] (2,2) -- (2.5,2); 
\draw[thickline] (1,2.5) -- (1,2.75);

\begin{scope}[xshift=7.5cm,yshift=.5cm]
\draw[Box] (0,-.25) rectangle (1,.25); \node at (.5,0) {$y$}; \node[marked,scale=.75] at (1.05,-.3) {};
\draw[Box] (0,1.25) rectangle (1,1.75); \node at (.5,1.5) {$x$}; \node[marked,scale=.75] at (-.05,1.8) {};
\node[left] at (-.5,.75) {$=  I^{-1/2}n_*^{-1/2}\; \cdot $};
\draw[thickline] (.5,.25) -- node[cpr,scale=.8] (s) {$*$} (.5,1.25); \node[marked,scale=.75,right=.03] at (s.east) {};
\end{scope}
\begin{scope}[xshift=10.25cm]
\draw[Box] (0,0) rectangle (2,1); \node at (1,.5) {$p^{(2)}$}; \node[marked,below right] at (2,0) {};
\draw[Box] (0,1.5) rectangle (2,2.5); \node at (1,2) {$p^{(1)}$}; \node[marked, above left] at (0,2.5) {};
\draw[thickline] (-.5,.5) -- (0,.5); 
\draw[thickline] (2,.5) -- (2.5,.5);
\draw[thickline] (-.5,2) -- (0,2); 
\draw[thickline] (2,2) -- (2.5,2);
\node[left] at (-.9,1.25) {$\cdot$};
\end{scope}
\end{tikzpicture}
\end{equation*}

\item Double capping:
\begin{equation*}
 \begin{tikzpicture}[scale=.65]
\draw[Box] (0,0) rectangle (2,1); \node at (1,.5) {$q^{(2)}$}; \node[marked] at (2.05,-.05) {};
\draw[Box] (0,1.5) rectangle (2,2.5); \node at (1,2) {$q^{(1)}$}; \node[marked] at (-.05,2.55) {};

\draw[thickline] (-.5,.5) -- (0,.5); \draw[thickline] (2,.5) -- (2.5,.5); \draw[thickline] (1,0) arc(180:270:.375cm) -- (2.625,-.375) arc(-90:0:.375cm);
\draw[thickline] (-.5,2) -- (0,2); \draw[thickline] (2,2) -- (2.5,2); \draw[thickline] (1,2.5) arc(180:90:.375cm) -- (2.625,2.875) arc(90:0:.375cm);

\draw[Box] (2.5,1.5) rectangle (3.5,2.5); \node at (3,2) {$x$}; \node[marked,scale=.8,right=.05] at (3.5,2) {};
\draw[Box] (2.5,0) rectangle (3.5,1); \node at (3,.5) {$y$}; \node[marked,scale=.8,right=.05] at (3.5,.5) {};

\begin{scope}[xshift=10cm]
\draw[Box] (0,0) rectangle (2,1); \node at (1,.5) {$p^{(2)}$}; \node[marked,scale=.9] at (2.05,-.05) {};
\draw[Box] (0,1.5) rectangle (2,2.5); \node at (1,2) {$p^{(1)}$}; \node[marked,scale=.9] at (-.05,2.55) {};

 \draw[thickline] (2,.5) -- (2.5,.5); \draw[thickline] (0,.5) arc(90:270:.4375cm) -- (2.625,-.375) arc(-90:0:.375cm);
\draw[thickline] (2,2) -- (2.5,2); \draw[thickline] (0,2) arc(270:90:.4375cm) -- (2.625,2.875) arc(90:0:.375cm);

\draw[Box] (2.5,1.5) rectangle (3.5,2.5); \node at (3,2) {$x$}; \node[marked,scale=.8,right=.05] at (3.5,2) {};
\draw[Box] (2.5,0) rectangle (3.5,1); \node at (3,.5) {$y$}; \node[marked,scale=.8,right=.05] at (3.5,.5) {};

\draw[thickline] (-1.5,2) arc(90:0:.5cm) -- node[cpr] (s) {$*$} (-1,1) arc(0:-90:.5cm); \node[marked,scale=.75,right=.05] at (s.east) {};

\node[left] at (-2,1.25) {$= I^{-1/2}n_*^{-1/2} \cdot$};
\end{scope}

\end{tikzpicture}
\end{equation*}

\item Connecting:

\begin{equation*}
 \begin{tikzpicture}[scale=.65]
\draw[Box] (0,0) rectangle (2,1); \node at (1,.5) {$q^{(2)}$}; \node[marked] at (2.05,-.05) {};
\draw[Box] (0,1.5) rectangle (2,2.5); \node at (1,2) {$q^{(1)}$}; \node[marked] at (-.05,2.55) {};

\draw[thickline]  (0,.5) arc(270:180:.5cm) -- node[cpr] (s) {$*$} (-.5,1.5) arc(180:90:.5cm); \node[marked,scale=.75,right=.03] at (s.east) {};

 \draw[thickline] (2,.5) -- (2.5,.5); \draw[thickline] (1,0) arc (0:-90:.375cm) -- ++(-1.75,0); 
\draw[thickline] (2,2) -- (2.5,2); \draw[thickline] (1,2.5) arc(0:90:.375cm) -- ++(-1.75,0);

\begin{scope}[xshift=7.5cm]
\draw[Box] (0,0) rectangle (2,1); \node at (1,.5) {$p^{(2)}$}; \node[marked,scale=.9] at (2.05,-.05) {};
\draw[Box] (0,1.5) rectangle (2,2.5); \node at (1,2) {$p^{(1)}$}; \node[marked,scale=.9] at (-.05,2.55) {};

 \draw[thickline] (2,.5) -- (2.5,.5); \draw[thickline] (0,.5) -- ++(-.5,0);
\draw[thickline] (2,2) -- (2.5,2); \draw[thickline] (0,2) -- ++(-.5,0);

\node[left] at (-1,1.25) {$=I^{-1/2}n_*^{1/2} \cdot$};
\end{scope}

\end{tikzpicture}
\end{equation*}
\end{enumerate}
\end{proposition}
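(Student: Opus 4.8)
Each of (1)--(5) is an equality between elements of $P_{3k}\otimes P_{3k}^{op}$, $P_{2k}\otimes P_{2k}^{op}$ or $P_k\otimes P_k^{op}$. The unifying idea is that, by Proposition~\ref{qv-def}, each $q_{v,w,z}$ (hence $q=\sum_{v,w,z}q_{v,w,z}$) is pinned down uniquely by the value of a single closed diagram against every test pair $x\otimes y$ in the appropriate space; equivalently, by the non‑degenerate pairing implicit in that proposition. So for every relation the strategy is the same: cap/glue the proposed left‑ and right‑hand sides against arbitrary $x,y$ (in $P_{3k}$ for (1)--(2) and (4)--(5), in $P_k$ for (3)), and check the two resulting scalar diagrams agree. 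On the $q$‑side one always rewrites the closed diagram, via the defining property of Proposition~\ref{qv-def} (or, when convenient, via the explicit formula $q_{v,w,z}=I^{-1/2}(\mu_v\mu_w\mu_z/n_vn_wn_z)^{1/2}E_{P_{3k}\otimes P_{3k}^{op}}[\,\cdot\,]$ from its proof), into a diagram built only out of the central projections $v,w,z$ of $P_{2k}$, the Perron--Frobenius weights $\mu_v$, the multiplicities $n_v$, and the constant $I$; the corresponding manipulations for $p$ are exactly Proposition~\ref{p-skein}.

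\textbf{Carrying this out.} For (1), pairing against $x,y\in P_{3k}$ and applying Lemma~\ref{2cleaver} absorbs a single block $p_{v'}$ and produces, for fixed $v'$, the factor $\mu_{v'}/n_{v'}$ times the $q$‑pairing with a closed loop carrying the central projection $v'$ inserted on the $v$‑leg of $q_{v,w,z}$; since that leg already lies in the $v$‑isotypic summand one has $v\cdot v'=\delta_{v,v'}v$, and summing over $v'$ collapses the factor to $1$, giving back the $q$‑pairing. The second identity in (1) is the same computation on the opposite side, using $v\mapsto\overline v$ there. For (2), the indicated rotation of the distinguished points corresponds, under Proposition~\ref{qv-def}, to replacing the test pair $(x,y)$ by a rotated/conjugated pair and relabeling $(v,w,z)$ by a permutation together with $v\mapsto\overline v$ on one leg; since $\mu_v=\mu_{\overline v}$, $n_v=n_{\overline v}$, and we sum over all of $\Gamma_+$, the right‑hand side of the defining equation is unchanged, so $q$ is invariant. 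For (3), capping the top/bottom $2k$‑strand legs of $q$ with $x,y\in P_k$ turns the relevant pair of arcs in the Proposition~\ref{qv-def} diagram into a single strand running between two $P_k$‑boxes and carrying a central projection; Lemma~\ref{*-proj} forces that label to be $*$, so only the summands $q_{*,w,z}$ survive, and the remaining sum over $w,z$ (with the caps performed) is recognized, via the matrix‑unit description of $p$ recorded before Lemma~\ref{2cleaver} and the identity $I=\sum_{v\in\Gamma_+}\mu_v^2$ from the proof of Proposition~\ref{p-skein}(1), as $I^{-1/2}n_*^{-1/2}$ times $p$, tensored with the $*$‑strand pairing of $x$ and $y$. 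Finally (4) and (5) follow by the same pairing argument, with Lemma~\ref{*-proj} again collapsing the appropriate index to $*$; alternatively they can be deduced from (1)--(3) together with the capping and multiplication relations of Proposition~\ref{p-skein}, with no new planar‑algebraic input.

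\textbf{Main obstacle.} The computations for (1)--(2) are essentially bookkeeping and (4)--(5) are formal consequences; the real content is the capping relation (3). There the triple sum over $\Gamma_+^{\times 3}$ genuinely collapses, and the delicate points are: (i) identifying which of the three loops $v,w,z$ in the Proposition~\ref{qv-def} diagram gets pinched between the two $P_k$‑boxes, so that Lemma~\ref{*-proj} applies to the right index; and (ii) reconciling the leftover scalar --- a product of $I^{-1/2}$, Perron--Frobenius weights and multiplicities summed over the vertices --- with the clean constant $I^{-1/2}n_*^{-1/2}$, which is precisely where the value of the global index $I$ enters.
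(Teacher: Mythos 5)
Your overall strategy --- characterize both sides by pairing against arbitrary test elements and invoking the uniqueness statement of Proposition~\ref{qv-def} --- is exactly what the paper's (one-line) proof intends, and your treatment of (2), (4) and (5) is in the right spirit. However, your mechanism for (1) contains a step that would fail. Once you absorb the two legs of $p_{v'}$ into the test elements and evaluate via Proposition~\ref{qv-def}, the blocks $p_{v'}^{(1)}$ and $p_{v'}^{(2)}$ do \emph{not} sit in two separate closed traces, which is the only configuration Lemma~\ref{2cleaver} handles; they end up concatenated on a single open strand running from $x$ to $y$, with the central projection labelling one of the three loops (the $z$-box for the first identity, the $v$-box for the second) sandwiched between them. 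The relevant identity is therefore $\frac{1}{n_{v'}}\sum_{i,j}e_{ij}(v')\,c\,e_{ji}(v')=\delta_{v',c}\,c$ for a minimal central projection $c$ --- essentially the multiplication relation, Proposition~\ref{p-skein}(4), with a central element inserted --- and this produces no Perron--Frobenius factors whatsoever. Your version instead invokes Lemma~\ref{2cleaver}, which would produce $\mu_{v'}/n_{v'}$ alongside the Kronecker delta; summing over $v'$ then leaves a spurious factor $\mu_v/n_v$, not $1$, so the computation as you describe it would not return $q$. (Applying Lemma~\ref{2cleaver} \emph{before} using Proposition~\ref{qv-def} does not rescue this: the lemma merges the two trace loops into one, and the resulting diagram is no longer of the form covered by the defining property of $q$.)

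For (3) you have correctly isolated where the content lies, but note: the index pinned to $*$ by Lemma~\ref{*-proj} is the one labelling the loop through the capped top/bottom strands, which in the paper's notation is the middle index $w$ of $q_{v,w,z}$, not the first; the surviving sum is over the two side indices $v,z$, which then collapses to a single sum because the two remaining central projections are inserted into the same closed trace against the side test elements (giving $\delta_{v,\bar z}$), after which Lemma~\ref{2cleaver} identifies the result with the pairing against $p$. The constant works out as $I^{-1/2}$ (already present in $q_{v,w,z}$) times $(\mu_*/n_*)^{1/2}=n_*^{-1/2}$ from the pinned index; the identity $I=\sum_v\mu_v^2$ plays no role here. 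These points fall under the ``delicate points'' you flag yourself, so they are gaps in the sketch rather than errors, but the misapplication of Lemma~\ref{2cleaver} in (1) is a genuine flaw that needs to be replaced by the multiplication relation.
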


\begin{proof}
(1) and (2) follow easily from Proposition \ref{qv-def}.  (3) follows from Proposition \ref{qv-def} and Lemma \ref{2cleaver}.  (4) follows from (2) and (3), and then (5) clearly follows from (4).
\end{proof}

The finite-depth assumption has not been essential so far, but the following consequence will be crucial in the remainder of the paper.

\begin{proposition}\label{depthspan}
Let $m \geq k$, then $P_{2m}$ is spanned by elements of the form
\begin{equation*}
\begin{tikzpicture}[xscale=.75, yscale=.5]
\draw[Box] (0,0) rectangle (1,2); \node at (.5,1) {$x$};
\draw[Box] (2,0) rectangle (3,2); \node at (2.5,1) {$y$};

\draw[verythickline] (0,1) -- (-.5,1); \draw[verythickline] (3,1) -- (3.5,1);
\draw[thickline] (1,1) --node[rcount,scale=.8] {$2k$} (2,1);

\node[marked] at (-.05,2.05) {}; \node[marked] at (1.95,2.05) {};
\end{tikzpicture}
\end{equation*}
for $x,y \in P_{m+k}$.  
\end{proposition}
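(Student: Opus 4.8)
The plan is to recognize the span in the statement as a two-sided ideal in the finite-dimensional $C^*$-algebra $P_{2m}$, and then to use finite depth to show this ideal is everything. First I would pass to the bimodule picture: under the standard identification $P_{2m}\cong\Hom_{M_0,M_0}(L^2(M_m))=:A$ (and the companion identifications $P_{m+k}\cong\Hom_{M_0,M_0}(L^2(M_m),L^2(M_k))$ and, via adjoints, $\cong\Hom_{M_0,M_0}(L^2(M_k),L^2(M_m))$, in which the box is drawn exactly as in the statement with $2m$ through-strings on one side and $2k$ on the other), gluing the $2k$ middle strings is nothing but composition of bimodule maps. Thus the pictured element is $\bar x\circ\bar y$ for $\bar y\in\Hom_{M_0,M_0}(L^2(M_m),L^2(M_k))$ and $\bar x\in\Hom_{M_0,M_0}(L^2(M_k),L^2(M_m))$. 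Since $m\geq k$, the inclusion $M_k\subseteq M_m$ exhibits ${}_{M_0}L^2(M_k)_{M_0}$ as an $M_0$-$M_0$ sub-bimodule of ${}_{M_0}L^2(M_m)_{M_0}$; let $e\in A$ be the orthogonal projection onto it. Then $Ae\cong\Hom_{M_0,M_0}(L^2(M_k),L^2(M_m))$ and $eA\cong\Hom_{M_0,M_0}(L^2(M_m),L^2(M_k))$, and one checks that the span of the elements in the statement is precisely the two-sided ideal $AeA=(Ae)(eA)$.

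It then remains to prove $AeA=A$. For a finite-dimensional $C^*$-algebra this is equivalent to saying that the projection $e$ has full central support, i.e.\ that $e$ has a nonzero component in every minimal central summand of $A$. Writing ${}_{M_0}L^2(M_m)_{M_0}\simeq\bigoplus_{v\in\Gamma_+}X_v\otimes\mc K_v$, so that $A\simeq\bigoplus_{v\in\Gamma_+}\Hom_{\C}(\mc K_v)$, this amounts to: the sub-bimodule $L^2(M_k)$ meets the $X_v$-isotypic component of $L^2(M_m)$ whenever $\mc K_v\neq 0$. Here I would invoke the two hypotheses: finite depth guarantees that every irreducible $M_0$-$M_0$ sub-bimodule of $L^2(M_m)$ is isomorphic to some $X_v$ with $v\in\Gamma_+$, while the choice of $k$ (namely $d(*,v)\leq 2k$ for all $v\in\Gamma_+$, equivalently $n_v\geq1$ in the displayed decomposition of $L^2(M_k)$) guarantees that each such $X_v$ already occurs in $L^2(M_k)$. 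Hence $e$ has full central support, $AeA=A$, and the proposition follows.

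The argument is short because the work is done by two standard facts — that $L^2(M_k)$ already contains a copy of every irreducible $M_0$-$M_0$ bimodule of the theory, and that a full-central-support projection generates the whole finite-dimensional $C^*$-algebra as a two-sided ideal. The only point I expect to need real care is the bookkeeping in the first paragraph: verifying, with the marked-point and shading conventions fixed earlier, that the diagram in the statement corresponds under the identifications above to $\bar x\circ\bar y$ (equivalently, that the $2k$ connecting strings implement the identity of ${}_{M_0}L^2(M_k)_{M_0}$, and that $Ae$, $eA$ really are the indicated $\Hom$-spaces sitting inside $P_{m+k}$). This is routine given the planar-algebra/bimodule dictionary of \cite{palg}, and I would write it out for completeness.
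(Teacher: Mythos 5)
Your proof is correct, but it takes a genuinely different route from the paper's. The paper argues at the level of the tower of algebras: since $\mathrm{depth}(\mc P)\le 2k$, the inclusion $P_{2k-1}\subset P_{2k}\subset P_{2k+1}$ is a basic construction, so by iterating, $P_{2m}$ is generated as an algebra by $P_{2k}$ together with the Jones projections $e_{2k-1},\dotsc,e_{2m-2}$, and a short normal-form argument puts any such product into the pictured shape. You instead work in the bimodule category, recognizing the span as the two-sided ideal $AeA$ in $A=\Hom_{M_0,M_0}(L^2(M_m))\cong P_{2m}$ generated by the projection $e$ onto $L^2(M_k)\subset L^2(M_m)$, and reducing the claim to the assertion that $e$ has full central support --- which is exactly the standing hypothesis $d(*,v)\le 2k$ for all $v\in\Gamma_+$, i.e.\ that every irreducible $M_0$-$M_0$ bimodule of the theory already occurs in $L^2(M_k)$. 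Both proofs invoke finite depth in essentially equivalent guises, but yours makes the mechanism more transparent (the $2k$ middle strings factor every morphism through an object containing all irreducibles), at the cost of the bookkeeping you rightly flag: verifying with the paper's conventions that gluing the $2k$ strings is composition of intertwiners and that $Ae$ and $eA$ are the indicated $\Hom$-spaces inside $P_{m+k}$. One small phrasing point: it is not finite depth per se that makes every irreducible summand of $L^2(M_m)$ one of the $X_v$ with $v\in\Gamma_+$ --- that holds by the definition of the principal graph; finite depth, together with the choice of $k$, is what guarantees that each such $X_v$ already occurs in $L^2(M_k)$.
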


\begin{proof}
Since the depth of $\mc P$ is less than or equal to $2k$, we have that $P_{2k+1}$ is equal to the basic construction of $P_{2k-1} \subset P_{2k}$.  So $P_{2k+1}$ is generated by $P_{2k}$ and the Jones projection $e_{2k-1}$.  Iterating, we see that $P_{2m}$ is generated by $P_{2k}$ and $e_{2k-1},\dotsc,e_{2m-2}$, from which the result easily follows.
\end{proof}

\begin{proposition} \label{q-connect}
Fix $w,z \in \Gamma_+$, then we have
\begin{equation*}
 \begin{tikzpicture}[scale=.65]
 \draw[Box] (0,0) rectangle (2,1); \node at (1,.5) {$q_{v,w,z}^{(2)}$}; \node[marked, below right] at (2,0) {};
\draw[Box] (0,1.5) rectangle (2,2.5); \node at (1,2) {$q_{v,w,z}^{(1)}$}; \node[marked,above left] at (0,2.5) {};
\draw[thickline] (1,0) -- (1,-.25);
\draw[thickline] (1,2.5) -- (1,2.75);
\draw[thickline] (0,2) arc(90:270:.5cm and .75cm);
\draw[thickline] (2,2) arc(-90:0:.5cm and .75cm);
\draw[thickline] (2,.5) arc(90:0:.5cm and .75cm);
\node[left] at (-.75,1.25) {$\displaystyle\sum_{v \in \Gamma_+} \biggl(\frac{\mu_v}{n_v}\biggr)^{1/2} \;\cdot$};
\begin{scope}[xshift=9.5cm]
\draw[thickline] (0,2.75) -- node[cpr] (z) {$w$} (0,-.25); \node[marked,right=.04cm,scale=.8] at (z.east) {};
\draw[thickline] (1.25,2.75) -- node[cpr] (w) {$z$} (1.25,-.25); \node[marked,right=.04cm,scale=.8] at (w.east) {};
\node[left] at (-.5,1.25) {$\displaystyle = I^{-1/2}\biggl(\frac{\mu_w\mu_z}{n_wn_z}\biggr)^{1/2}\;\cdot$};
\end{scope}

 \end{tikzpicture}
\end{equation*}

\end{proposition}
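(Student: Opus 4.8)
The plan is to test both sides—which are elements of $P_{4k}$ ($8k$ boundary points)—against the spanning family provided by the finite-depth hypothesis. By Proposition~\ref{depthspan} applied with $m=2k$, the space $P_{4k}$ is spanned by the ``split'' elements $\xi_{a,b}$ obtained from a pair $a,b\in P_{3k}$ by joining them along a $2k$-cable; since the trace pairing on $P_{4k}$ is nondegenerate, it suffices to show that the two sides of the asserted identity pair identically with every $\xi_{a,b}$.

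Fix $a,b\in P_{3k}$, and for $v\in\Gamma_+$ let $D_v$ denote the single diagram appearing in the $v$-summand on the left. The key step is to evaluate the scalar obtained by closing $D_v$ up against $\xi_{a,b}$. After a planar isotopy absorbing the cable, this scalar is brought into exactly the shape of the left-hand side of the defining relation for $q_{v,w,z}$ in Proposition~\ref{qv-def}, with $x,y\in P_{3k}$ manufactured out of $a$, $b$ and a few cups and caps; hence by that proposition it equals $I^{-1/2}\bigl(\mu_v\mu_w\mu_z/n_vn_wn_z\bigr)^{1/2}$ times the corresponding right-hand ``theta''-shaped diagram, in which $v$ enters only through a single $P_{2k}$-central projection inserted on one of the three strands joining $x$ and $y$. (Equivalently, one may substitute the explicit formula for $q_{v,w,z}$ from the proof of Proposition~\ref{qv-def}: contracting the pair of legs over which the sum is taken simply closes the $v$-box into its trace $\Tr_{P_{2k}}(v)=n_v\mu_v$ and removes it, leaving a $v$-independent diagram.)

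Summing over $v$ with weight $(\mu_v/n_v)^{1/2}$ then produces $I^{-1/2}\bigl(\mu_w\mu_z/n_wn_z\bigr)^{1/2}$ times $\sum_{v\in\Gamma_+}(\mu_v/n_v)$ times the diagram carrying the $v$-strand. This last sum is collapsed by a resolution-of-the-identity computation of the same type used throughout the proof of Proposition~\ref{p-skein}: writing out $p_v$ (hence $q_{v,w,z}$) in matrix units, applying Lemma~\ref{2cleaver}, and invoking the identity $\sum_{v\in\Gamma_+}\mu_v^2=I$ obtained in the proof of Proposition~\ref{p-skein}(1) from $\tr_{P_{2k}}(p)=\delta^{-4k}I$, one sees that the $v$-strand is replaced by a plain $2k$-cable. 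The outcome is precisely the pairing of the right-hand side of the proposition against $\xi_{a,b}$, and since $a,b$ were arbitrary the identity follows.

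I expect the main obstacle to lie in this middle step: arranging the planar isotopy so that closing $D_v$ against a split element is literally an instance of Proposition~\ref{qv-def}, and then tracking the powers of $\delta$, of $\mu_v$ and $n_v$, and of the global index $I$ carefully enough that the weighted sum over $v$ genuinely telescopes to the bare cable. The remaining steps are routine.
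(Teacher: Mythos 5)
Your overall skeleton---reduce via Proposition \ref{depthspan} to pairing both sides against split elements built from $a,b\in P_{3k}$, evaluate each $v$-summand using Proposition \ref{qv-def}, then collapse the weighted sum over $v$---is the same as the paper's. But the middle step, which you yourself flag as the main obstacle, contains a genuine gap, and neither of the two routes you sketch for it works as stated. The pairing of the $v$-summand against a split element is \emph{not} an instance of Proposition \ref{qv-def} after a planar isotopy: in \ref{qv-def} all three cables of $q^{(1)}_{v,w,z}$ terminate on a single box $x$ and all three cables of $q^{(2)}_{v,w,z}$ terminate on a single box $y$, whereas here the left cables of $q^{(1)}$ and $q^{(2)}$ are joined \emph{to each other}. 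A single arc with one endpoint on each $q$-box cannot be apportioned between separate elements $x,y\in P_{3k}$ by cups and caps; it has to be cut algebraically. This is exactly what the paper does first: it applies Lemma \ref{2cleaver} in reverse to replace that direct connection by an inserted $p_v^{(1)}\otimes p_v^{(2)}$ at the cost of a factor $n_v/\mu_v$ (legitimate for each fixed $v$ because the left cables of $q_{v,w,z}$ carry the central projection $v$), and only \emph{then} is the configuration literally the left-hand side of \ref{qv-def}, with $x$ replaced by $x$ composed with $p_v^{(1)}$ and $y$ by $y$ composed with $p_v^{(2)}$. Your parenthetical shortcut---substituting the pre-expectation formula for $q_{v,w,z}$ and closing the $v$-box into its trace---is also unjustified: pairing $E_{P_{3k}\otimes P_{3k}^{op}}[\xi]$ against a test configuration $c$ computes $\langle \xi, E[c]\rangle$, and here $c$ (which contains both the left-to-left arc and the $a$--$b$ cable, each crossing between the two tensor factors) does not lie in $P_{3k}\otimes P_{3k}^{op}$, so the conditional expectation cannot simply be dropped.

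The final bookkeeping is also off in a way that signals the missing step. After the correct sequence (Lemma \ref{2cleaver}, then Proposition \ref{qv-def}) the $v$-dependent scalars cancel identically, since $(\mu_v/n_v)^{1/2}\cdot(n_v/\mu_v)\cdot(\mu_v/n_v)^{1/2}=1$; what remains is $I^{-1/2}(\mu_w\mu_z/n_wn_z)^{1/2}$ times a diagram containing $\sum_v p_v=p$ closed on itself, and the collapse to a plain $2k$-cable is Proposition \ref{p-skein}(4), i.e.\ $\sum_v n_v^{-1}\sum_{i,j}e_{ij}(v)e_{ji}(v)=\sum_v v=1$. The identity $\sum_v\mu_v^2=I$ that you invoke plays no role here (in the paper it is used only for the trace computation in \ref{p-skein}(1)), and the sum $\sum_v(\mu_v/n_v)$ that your accounting produces does not telescope. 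So while the ingredients you list are the right ones, the order in which they must be deployed---\ref{2cleaver} \emph{before} \ref{qv-def}, and \ref{p-skein}(4) rather than the trace identity at the end---is where the actual proof lives.
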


\begin{proof}
By  Proposition \ref{depthspan}, it suffices to show that
\begin{equation*}
\begin{tikzpicture}[scale=.65]
 \draw[Box] (0,0) rectangle (2,1); \node at (1,.5) {$q_{v,w,z}^{(2)}$}; \node[marked,below right] at (2,0) {};
\draw[Box] (0,1.5) rectangle (2,2.5); \node at (1,2) {$q_{v,w,z}^{(1)}$}; \node[marked,above left] at (0,2.5) {};
\draw[Box] (3,0) rectangle (5,1); \node at (4,.5) {$y$};\node[marked,right,scale=.8] at (5.05,.5) {};
\draw[Box] (3,1.5) rectangle (5,2.5); \node at (4,2) {$x$}; \node[marked,right,scale=.8] at (5.05,2) {};

\draw[thickline] (1,2.5) arc(180:90:.75cm) -- (4,3.25) arc(90:0:.75cm);
\draw[thickline] (2,2) arc(-90:0:.375cm) arc(180:90:.375cm) -- (3.25,2.75) arc(90:0:.25cm);
\draw[thickline] (4,1.5) -- (4,1);
\draw[thickline] (1,0) arc(180:270:.75cm) -- (4,-.75) arc(-90:0:.75cm);
\draw[thickline] (2,.5) arc(90:0:.375cm) arc(180:270:.375cm) -- (3.25,-.25) arc(-90:0:.25cm);

\draw[thickline] (0,2) arc(90:270:.5cm and .75cm);

\node[left] at (-.75,1.25) {$\displaystyle\sum_{v \in \Gamma_+} \biggl(\frac{\mu_v}{n_v}\biggr)^{1/2} \;\cdot$};

\begin{scope}[xshift=9.5cm,yshift = 1.25cm]
\draw[Box] (4,.25) rectangle (5,2.25); \node at (4.5,1.25) {$x$}; \node[marked, scale=.75, left] at (4,2.25) {};
\draw[Box] (4,-.25) rectangle (5,-2.25); \node at (4.5,-1.25) {$y$}; \node[marked,scale =.75, left] at (4,-2.25) {};

\draw[thickline] (4,1.75) -- (3,1.75) arc(90:180:.5cm) -- node[cpr] (w) {$w$} (2.5,-1.25) arc(180:270:.5cm) -- (4,-1.75); \node[marked,scale=.75,right] at (w.east) {};
\draw[thickline]  (4,.75) arc(90:180:.5cm) -- node[cpr] (z) {$z$} (3.5,-.25) arc(180:270:.5cm); \node[marked,scale=.75,right] at (z.east) {};

\draw[thickline] (5,1.25) arc(90:-0:.5cm) -- (5.5,-.75) arc(0:-90:.5cm);

\node[left] at (1.75,0) {$= I^{-1/2} \biggl(\frac{\mu_w\mu_z}{n_wn_z}\biggr)^{1/2}\;$};
\end{scope}

\end{tikzpicture}
\end{equation*}
for any $x,y \in P_{3k}$.

Now by Lemma \ref{2cleaver}, the left hand side of the above equation is equal to
\begin{equation*}
 \begin{tikzpicture}[scale=.65]
 \draw[Box] (0,.25) rectangle (1,1.25); \node at (.5,.75) {$p_v^{(1)}$}; \node[marked,scale=.75,above left] at (0,1.25) {};
\draw[Box] (1.5,.25) rectangle (3.5,1.25); \node at (2.5,.75) {$q_{v,w,z}^{(1)}$}; \node[marked,scale=.75, above left] at (1.5,1.25) {};
\draw[Box] (4,.25) rectangle (5,2.25); \node at (4.5,1.25) {$x$}; \node[marked, scale=.75, left] at (4,2.25) {};
\draw[Box] (0,-.25) rectangle (1,-1.25); \node at (.5,-.75) {$p_v^{(2)}$}; \node[marked, scale=.75, below right] at (1,-1.25) {};
\draw[Box] (1.5,-.25) rectangle (3.5,-1.25); \node at (2.5,-.75) {$q_{v,w,z}^{(2)}$}; \node[marked,scale=.75, below right] at (3.5,-1.25) {};
\draw[Box] (4,-.25) rectangle (5,-2.25); \node at (4.5,-1.25) {$y$}; \node[marked,scale =.75, left] at (4,-2.25) {};

\draw[thickline] (1,.75) -- (1.5,.75);
\draw[thickline] (3.5,.75) -- (4,.75);
\draw[thickline] (2.5,1.25) arc(180:90:.5cm) -- (4,1.75);
\draw[thickline] (0,.75) arc(270:90:.5cm and 1cm) -- (5,2.75) arc(90:-90:.375cm and .75cm);
\draw[thickline] (1,-.75) -- (1.5,.-.75);
\draw[thickline] (3.5,-.75) -- (4,-.75);
\draw[thickline] (2.5,-1.25) arc(180:270:.5cm) -- (4,-1.75);
\draw[thickline] (0,-.75) arc(90:270:.5cm and 1cm) -- (5,-2.75) arc(-90:90:.375cm and .75cm);

\node[left] at (-.75,0) {$\displaystyle \sum_{v \in \Gamma_+} \biggl(\frac{\mu_v}{n_v}\biggr)^{1/2} \cdot \frac{n_v}{\mu_v} \; \cdot$};
 \end{tikzpicture}
\end{equation*}
By Proposition \ref{qv-def} this is equal to
\begin{equation*}
  \begin{tikzpicture}[scale=.65]
 \draw[Box] (0,.25) rectangle (1,1.25); \node at (.5,.75) {$p_v^{(1)}$}; \node[marked,scale=.75,above left] at (0,1.25) {};
\draw[Box] (4,.25) rectangle (5,2.25); \node at (4.5,1.25) {$x$}; \node[marked, scale=.75, left] at (4,2.25) {};
\draw[Box] (0,-.25) rectangle (1,-1.25); \node at (.5,-.75) {$p_v^{(2)}$}; \node[marked, scale=.75, below right] at (1,-1.25) {};
\draw[Box] (4,-.25) rectangle (5,-2.25); \node at (4.5,-1.25) {$y$}; \node[marked,scale =.75, left] at (4,-2.25) {};

\draw[thickline] (4,1.75) -- (3,1.75) arc(90:180:.5cm) -- node[cpr] (w) {$w$} (2.5,-1.25) arc(180:270:.5cm) -- (4,-1.75); \node[marked,scale=.75,right] at (w.east) {};
\draw[thickline] (0,.75) arc(270:90:.5cm and 1cm) -- (5,2.75) arc(90:-90:.375cm and .75cm);

\draw[thickline] (1,-.75) arc(-90:90:.5cm and .75cm);

\draw[thickline]  (4,.75) arc(90:180:.5cm) -- node[cpr] (z) {$z$} (3.5,-.25) arc(180:270:.5cm); \node[marked,scale=.75,right] at (z.east) {};

\draw[thickline] (0,-.75) arc(90:270:.5cm and 1cm) -- (5,-2.75) arc(-90:90:.375cm and .75cm);

\node[left] at (-.75,0) {$\displaystyle \sum_{v \in \Gamma_+} \biggl(\frac{\mu_v}{n_v}\biggr)^{1/2} \cdot \frac{n_v}{\mu_v} \; \cdot I^{-1/2}\cdot \biggl(\frac{\mu_v\mu_w\mu_z}{n_vn_wn_z}\biggr)^{1/2}\;$};
 \end{tikzpicture}
\end{equation*}
The result then follows from (4) of Proposition \ref{p-skein}.
\end{proof}

\begin{corollary} \label{4cleaver}
Fix $v_1,v_2,z_1,z_2 \in \Gamma_+$.  Then if $x,y \in P_{4k}$ we have
\begin{equation*}
\begin{tikzpicture}[scale = .65]
\draw[Box] (0,0) rectangle (2,1); \node at (1,.5) {$q_{v_1,w_1,z}^{(2)}$}; \node[marked, below right, scale =.8] at (2,0) {};
\draw[Box] (0,1.5) rectangle (2,2.5); \node at (1,2) {$q_{v_1,w_1,z}^{(1)}$}; \node[marked, above left, scale=.8] at (0,2.5) {};

\draw[Box] (2.5,0) rectangle (4.5,1); \node at (3.5,.5) {$q_{z,w_2,v_2}^{(2)}$}; \node[marked, below right, scale=.8] at (4.5,0) {};
\draw[Box] (2.5,1.5) rectangle (4.5,2.5); \node at (3.5,2) {$q_{z,w_2,v_2}^{(1)}$}; \node[marked, above left, scale=.8] at (2.5,2.5) {};

\draw[thickline] (2,.5) -- (2.5,.5); \draw[thickline] (2,2) -- (2.5,2);

\draw[Box] (-.75,2.75) rectangle (5.25,3.5); \node at (2.25,3.125) {$x$}; \node[marked,right,scale=.8] at (5.3,3.125) {};
\draw[thickline] (0,2) arc(270:180:.375cm and .75cm); \draw[thickline] (4.5,2) arc(-90:0:.375cm and .75cm);
\draw[thickline] (1,2.5) -- (1,2.75); \draw[thickline] (3.5,2.5) -- (3.5,2.75);

\draw[Box] (-.75,-.25) rectangle (5.25,-1); \node at (2.25,-.625) {$y$}; \node[marked,right,scale=.8] at (5.3,-.625) {};
\draw[thickline] (0,.5) arc(90:180:.375cm and .75cm); \draw[thickline] (4.5,.5) arc(90:0:.375cm and .75cm);
\draw[thickline] (1,0) -- (1,.-.25); \draw[thickline] (3.5,0) -- (3.5,-.25);

\node[left] at (-1,1) {$\displaystyle \sum_{z \in \Gamma_+}$};

\begin{scope}[xshift=14.5cm]
\begin{scope}[xscale=1.25]
\draw[Box] (.25,2.25) rectangle (3.5,3); \node at (1.875,2.625) {$x$}; \node[marked,right,scale=.8] at (3.55,2.625) {};
\draw[Box] (.25,.25) rectangle (3.5,-.5); \node at (1.875, -.125) {$y$}; \node[marked,right,scale=.8] at (3.55,-.125) {};

\draw[thickline] (.75,.25) -- node[cpr,scale=.75] (v1) {$v_1$} (.75,2.25); \node[marked,above left, scale=.6] at (v1.north east) {};
\draw[thickline] (1.5,.25) -- node[cpr,scale=.75] (z1) {$w_1$} (1.5,2.25); \node[marked,above left, scale=.6] at (z1.north east) {};
\draw[thickline] (2.25,.25) -- node[cpr,scale=.75] (z2) {$w_2$} (2.25,2.25); \node[marked,above left, scale=.6] at (z2.north east) {};
\draw[thickline] (3,.25) -- node[cpr,scale=.75] (v2) {$v_2$} (3,2.25); \node[marked,above left, scale=.6] at (v2.north east) {};
\end{scope}
\node[left] at (-.5,1.25) {$\displaystyle = I^{-1} \cdot \biggl(\frac{\mu_{v_1}\mu_{v_2}\mu_{w_1}\mu_{w_2}}{n_{v_1}n_{v_2}n_{w_1}n_{w_2}}\biggr)^{1/2} \; \cdot$};
\end{scope}
\end{tikzpicture}
\end{equation*}

\end{corollary}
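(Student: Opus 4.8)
The plan is to follow the template set by the proof of Proposition~\ref{q-connect}. First I would reduce to a finite check using Proposition~\ref{depthspan}: since $4k = 2\cdot 2k$ with $2k \ge k$, the space $P_{4k}$ is spanned by elements obtained by joining two boxes in $P_{3k}$ along a $2k$-cable, so by bilinearity in $x$ and $y$ it suffices to verify the identity when both $x$ and $y$ have this form. The point of passing to such $x$ and $y$ — and of routing the internal cable of $x$ (resp.\ of $y$) appropriately — is that the left-hand side then becomes a planar picture in which \emph{each} of the two elements $q_{v_1,w_1,z}$ and $q_{z,w_2,v_2}$ is plugged into its surroundings exactly as on the left-hand side of the defining relation of Proposition~\ref{qv-def}, with the halves of the split $x$ and $y$ together with the neighbouring $q$-element playing the role of the test elements there.

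Second, I would apply Proposition~\ref{qv-def} to each $q$-element in turn (possibly after a preliminary use of Lemma~\ref{2cleaver} to put it in position, exactly as in the proof of Proposition~\ref{q-connect}). Replacing $q_{v_1,w_1,z}$ and $q_{z,w_2,v_2}$ by their defining strand diagrams turns the two boxes into six coloured through-strands and produces the scalar
\begin{equation*}
I^{-1/2}\Bigl(\frac{\mu_{v_1}\mu_{w_1}\mu_z}{n_{v_1}n_{w_1}n_z}\Bigr)^{1/2}\cdot I^{-1/2}\Bigl(\frac{\mu_z\mu_{w_2}\mu_{v_2}}{n_z n_{w_2}n_{v_2}}\Bigr)^{1/2} = I^{-1}\Bigl(\frac{\mu_{v_1}\mu_{v_2}\mu_{w_1}\mu_{w_2}}{n_{v_1}n_{v_2}n_{w_1}n_{w_2}}\Bigr)^{1/2}\cdot\frac{\mu_z}{n_z}.
\end{equation*}
By construction the $v_1,w_1$-strands from the first expansion and the $w_2,v_2$-strands from the second run straight through to $x$ and $y$, yielding the four labelled through-strands on the right-hand side; the two $z$-coloured strands, one from each $q$, are glued at both ends along the former middle cable, so together they contribute only a $z$-decorated closed loop, disjoint from $x$ and $y$.

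Third, it remains to perform the sum $\sum_{z\in\Gamma_+}$. The surviving $z$-dependence is the weight $\mu_z/n_z$ times the $z$-decorated loop; using Lemma~\ref{2cleaver} to rewrite this weighted loop in terms of the central projection $z\in P_{2k}$, and then part~(4) of Proposition~\ref{p-skein}, namely $\sum_{z\in\Gamma_+} z = 1$, the sum collapses and leaves exactly the right-hand side, with scalar $I^{-1}(\mu_{v_1}\mu_{v_2}\mu_{w_1}\mu_{w_2}/n_{v_1}n_{v_2}n_{w_1}n_{w_2})^{1/2}$. As in Proposition~\ref{q-connect}, the seemingly extraneous factors of $\mu_z/n_z$ coming from Proposition~\ref{qv-def} and the factor $n_z/\mu_z$ introduced by Lemma~\ref{2cleaver} cancel, so that the coefficient under the $z$-sum is in fact independent of $z$, which is what makes the collapse legitimate with coefficient one.

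I expect the main obstacle to be the planar bookkeeping in the first two steps: choosing the Proposition~\ref{depthspan} decompositions of $x$ and $y$ and routing all the cables so that \emph{both} $q$-elements are simultaneously in the configuration of Proposition~\ref{qv-def}, and then checking carefully that the two $z$-coloured strands really do close up into a single loop carrying precisely the weight $\mu_z/n_z$. Everything else is a routine composition of skein relations already established, and this is the only place where the finite-depth hypothesis (through Proposition~\ref{depthspan}) enters.
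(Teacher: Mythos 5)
Your overall strategy is the paper's: apply Proposition~\ref{qv-def} to the first box $q_{v_1,w_1,z}$ (for which no decomposition of $x,y$ is needed --- the composite of $x$, $y$ and $q_{z,w_2,v_2}$ already supplies test elements in $P_{3k}$), and then deal with the remaining sum $\sum_z(\mu_z/n_z)^{1/2}\,q_{z,w_2,v_2}$ with its left cable capped off. The paper disposes of that second step by quoting Proposition~\ref{q-connect} verbatim; you instead propose to re-derive it inline via Proposition~\ref{depthspan}, Lemma~\ref{2cleaver} and Proposition~\ref{p-skein}~(4), which is legitimate and costs nothing beyond repeating that argument. Your final bookkeeping --- $(\mu_z/n_z)^{1/2}$ from each use of Proposition~\ref{qv-def} cancelling against $n_z/\mu_z$ from Lemma~\ref{2cleaver}, so that the $z$-sum collapses via $\sum_z p_z^{(1)}p_z^{(2)}=1$ --- is exactly what happens inside the proof of Proposition~\ref{q-connect}.

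There is, however, one concretely false claim in your step two: that after expanding both $q$'s the two $z$-coloured strands ``contribute only a $z$-decorated closed loop, disjoint from $x$ and $y$.'' A disjoint closed cable carrying the central projection $z$ evaluates to $\delta^{2k}\mathrm{tr}(z)=n_z\mu_z$, so together with your weight $\mu_z/n_z$ the sum over $z$ would produce $\sum_z\mu_z^2=I$, leaving a total coefficient of $(\mu_{v_1}\mu_{v_2}\mu_{w_1}\mu_{w_2}/n_{v_1}n_{v_2}n_{w_1}n_{w_2})^{1/2}$ --- off from the stated answer by a factor of $I$. The underlying problem is that Proposition~\ref{qv-def} cannot be applied to the second $q$ once the first has been expanded: the strands labelled $v_1$ and $w_1$ now run between the top and bottom halves of the diagram, so the second $q$ is no longer paired against an element of $P_{3k}\otimes P_{3k}^{op}$ and the conditional expectation implicit in $q_{z,w_2,v_2}$ cannot be dropped. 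This is precisely why Proposition~\ref{q-connect} exists as a separate statement: one must first use Lemma~\ref{2cleaver} to replace the capped $z$-cable by a $p_z$-insertion that stays \emph{connected} to the ambient picture; only then does Proposition~\ref{qv-def} apply, and the resulting $z$-strand joins $p_z^{(1)}$ to $p_z^{(2)}$ rather than forming a free loop, after which Proposition~\ref{p-skein}~(4) collapses the $z$-sum with coefficient one. So the arithmetic in your last paragraph is right, but only under the corrected geometric picture; as literally written, the ``disjoint loop'' intermediate state is wrong and would not yield the claimed constant.
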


\begin{proof}
By Proposition \ref{qv-def}, the left hand side of the equation in the statement of the proposition is equal to
\begin{equation*}
\begin{tikzpicture}[scale = .65]
\draw[Box] (2.5,0) rectangle (4.5,1); \node at (3.5,.5) {$q_{z,w_2,v_2}^{(2)}$}; \node[marked,scale=.8,below right] at (4.5,0) {};
\draw[Box] (2.5,1.5) rectangle (4.5,2.5); \node at (3.5,2) {$q_{z,w_2,v_2}^{(1)}$}; \node[marked,scale=.8,above left] at (2.5,2.5) {};

\draw[thickline] (2.5,.5) arc(270:90:.5cm and .75cm);

\draw[Box] (-.75,2.75) rectangle (5.25,3.5); \node at (2.25,3.125) {$x$}; \node[marked,right,scale=.8] at (5.3,3.125) {};
\draw[Box] (-.75,-.25) rectangle (5.25,-1); \node at (2.25,-.625) {$y$}; \node[marked,right,scale=.8] at (5.3,-.625) {};

\draw[thickline] (0,2.75)  --node[cpr,scale=.8] (v1) {$v_1$} (0,-.25); \draw[thickline] (4.5,2) arc(-90:0:.375cm and .75cm); \node[marked,scale=.6,above left] at (v1.north east) {};
\draw[thickline] (1,2.75) --node[cpr,scale=.8] (z1) {$w_1$} (1,-.25); \draw[thickline] (3.5,2.5) -- (3.5,2.75);\node[marked,scale=.6,above left] at (z1.north east) {};
\draw[thickline] (4.5,.5) arc(90:0:.375cm and .75cm); \draw[thickline] (3.5,0) -- (3.5,-.25);

\node[left,scale=1] at (-.75,1.25) {$\displaystyle \sum_{z \in \Gamma_+} I^{-1/2} \biggl(\frac{\mu_{v_1}\mu_{w_1}\mu_z}{n_{v_1}n_{w_1}n_z}\biggr)^{1/2}\; \cdot$};
\end{tikzpicture}
\end{equation*}
and the result then follows from Proposition \ref{q-connect}.
\end{proof}

\begin{corollary}\label{2q-rotate}
Let $q_1= q_1^{(1)} \otimes q_1^{(2)}$ and $q_2 = q_2^{(1)} \otimes q_2^{(2)}$ be copies of $q$.  Then
\begin{equation*}
\begin{tikzpicture}[scale=.65]
\draw[Box] (0,.25) rectangle (2,1.25); \node at (1,.75) {$q_1^{(1)}$}; \node[marked,above left,scale=.8] at (0,1.25) {};
\draw[Box] (2.5,.25) rectangle (4.5,1.25); \node at (3.5,.75) {$q_2^{(1)}$}; \node[marked,above left,scale=.8] at (2.5,1.25) {};
\draw[Box] (0,-.25) rectangle (2,-1.25); \node at (1,-.75) {$q_1^{(2)}$}; \node[marked, below right,,scale=.8] at (2,-1.25) {};
\draw[Box] (2.5,-.25) rectangle (4.5,-1.25); \node at (3.5,-.75) {$q_2^{(2)}$}; \node[marked, below right,,scale=.8] at (4.5,-1.25) {};

\draw[thickline] (2,.75) -- (2.5,.75); \draw[thickline] (2,-.75) -- (2.5,-.75);
\draw[thickline] (0,.75) -- (-.5,.75); \draw[thickline] (0,-.75) -- (-.5,-.75);
\draw[thickline] (4.5,.75) -- (5,.75); \draw[thickline] (4.5,-.75) -- (5,-.75);
\draw[thickline] (1,1.25) arc(0:90:1cm) -- (-.5,2.25); \draw[thickline] (3.5,1.25) arc(180:90:1cm) -- (5,2.25);
\draw[thickline] (1,-1.25) arc(0:-90:1cm) -- (-.5,-2.25); \draw[thickline] (3.5,-1.25) arc(180:270:1cm) -- (5,-2.25);

\draw[Box] (-.5,-2.75) rectangle (5,2.75); \node[marked,scale=.9,above left] at (-.5,2.75) {};
\begin{scope}[xshift=7.25cm]
\draw[Box] (0,.25) rectangle (2,1.25); \node at (1,.75) {$q_1^{(1)}$}; \node[marked,above left,scale=.8] at (0,1.25) {};
\draw[Box] (0,1.75) rectangle (2,2.75); \node at (1,2.25) {$q_2^{(1)}$}; \node[marked,above left,scale=.8] at (0,2.75) {};
\draw[Box] (0,-.25) rectangle (2,-1.25); \node at (1,-.75) {$q_1^{(2)}$}; \node[marked, below right,,scale=.8] at (2,-1.25) {};
\draw[Box] (0,-1.75) rectangle (2,-2.75); \node at (1,-2.25) {$q_2^{(2)}$}; \node[marked, below right,,scale=.8] at (2,-2.75) {};

\draw[thickline] (0,.75) -- (-.5,.75); \draw[thickline] (0,-.75) -- (-.5,-.75);
\draw[thickline] (2,.75) -- (2.5,.75); \draw[thickline] (2,-.75) -- (2.5,-.75);
\draw[thickline] (1,1.25) -- (1,1.75); \draw[thickline] (1,-1.25) -- (1,-1.75);

\draw[thickline] (0,2.25) -- (-.5,2.25); \draw[thickline] (2,2.25) -- (2.5,2.25);
\draw[thickline] (0,-2.25) -- (-.5,-2.25); \draw[thickline] (2,-2.25) -- (2.5,-2.25);

\draw[Box] (-.5,-3) rectangle (2.5,3); \node[marked,scale=.9, above left] at (-.5,3) {};
\node[left] at (-1,0) {$=$};
\end{scope}

\end{tikzpicture}
\end{equation*}
\end{corollary}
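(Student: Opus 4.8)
The plan is to run the same kind of reduction that was used, in spirit, for Corollaries~\ref{4cleaver} and~\ref{q-connect}. Both sides of the asserted identity are elements of $P_{4k}\otimes P_{4k}^{op}$: each is built from two copies of $q$ by gluing one pair of $2k$-strand legs and leaving four legs of each Sweedler component free. Since the Voiculescu-type trace is faithful, two such elements coincide once they agree after being capped against an arbitrary pair of boxes $x,y\in P_{4k}$, with $x$ closing the four free $(1)$-legs and $y$ the four free $(2)$-legs; this is exactly the ``paired'' configuration appearing on the left of Corollary~\ref{4cleaver}. (Proposition~\ref{depthspan} could be used to restrict $x,y$ further, but since Corollary~\ref{4cleaver} already holds for all $x,y\in P_{4k}$ it is not needed here.) So it suffices to verify equality of the two capped diagrams.

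First I would treat the left-hand side. Capping its picture with $x$ and $y$ yields precisely the closed diagram on the left of Corollary~\ref{4cleaver}, summed over all labels $v_1,w_1,w_2,v_2\in\Gamma_+$ of the four free legs: the gluing of the right leg of $q_1$ to the left leg of $q_2$ forces those legs to carry a common central projection, which is the internal summation index $z$ of that corollary. Hence the capped left-hand side equals
\[
 I^{-1}\sum_{v_1,w_1,w_2,v_2\in\Gamma_+}\Bigl(\tfrac{\mu_{v_1}\mu_{w_1}\mu_{w_2}\mu_{v_2}}{n_{v_1}n_{w_1}n_{w_2}n_{v_2}}\Bigr)^{1/2}\,\bigl[\,x,\ y\ \text{joined by four strands carrying}\ v_1,w_1,w_2,v_2\,\bigr].
\]

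For the right-hand side the two copies of $q$ are joined along their ``middle'' legs rather than along a right/left pair. Here I would invoke the rotational invariance of $q$ (Proposition~\ref{q-skein}(2)) --- which, at the level of the individual terms, cyclically permutes the three labels of each $q_{v,w,z}$ --- to rotate $q_1$ and $q_2$ appropriately so that this middle/middle gluing becomes a right/left gluing; a planar isotopy, fixing the capping boxes, then brings the capped diagram into the form covered by Corollary~\ref{4cleaver}. Applying that corollary again presents the capped right-hand side once more as $I^{-1}$ times a sum over four vertices of $(\prod\mu/n)^{1/2}$ times the diagram in which $x$ and $y$ are joined by four vertex-labeled strands. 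The weight $(\prod\mu/n)^{1/2}$ is a symmetric function of its four arguments, and the sum runs over all labelings, so $\sum_\ell(\text{weight})\cdot[\,x,y\ \text{joined by strands}\ \ell_1,\ell_2,\ell_3,\ell_4\,]$ does not depend on which summation index is attached to which strand; thus the two capped expressions agree and the identity follows.

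The step needing care is this diagram chase on the right-hand side: one must check that the rotations of Proposition~\ref{q-skein}(2) can be chosen so that, after an isotopy leaving $x$ and $y$ fixed, the capped diagram is literally the left-hand side of Corollary~\ref{4cleaver}, with no leftover rotation introduced on $x$ or $y$. Once the configurations are matched up, the scalar factors require no further bookkeeping because of the symmetry of the weight noted above.
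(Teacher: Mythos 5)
Your argument is correct and is essentially the paper's own proof: both reduce to scalar identities by pairing against arbitrary $x,y\in P_{4k}$ (which is how Corollary~\ref{4cleaver} is phrased), evaluate the left-hand configuration by Corollary~\ref{4cleaver}, and handle the vertically glued configuration by first rotating the $q$'s via Proposition~\ref{q-skein}(2) so that Corollary~\ref{4cleaver} (whose proof already contains the Proposition~\ref{q-connect} step you leave implicit) applies again, with the symmetry of the weights $(\mu/n)^{1/2}$ absorbing the relabelling of summation indices. The diagram chase you flag at the end is exactly the step the paper also leaves to the reader.
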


\begin{proof}
It follows from Corollary \ref{4cleaver} and Proposition \ref{q-connect} that
\begin{equation*}
\begin{tikzpicture}[scale=.65]
\draw[Box] (0,.25) rectangle (2,1.25); \node at (1,.75) {$q_1^{(1)}$}; \node[marked,above left,scale=.8] at (0,1.25) {};
\draw[Box] (2.5,.25) rectangle (4.5,1.25); \node at (3.5,.75) {$q_2^{(1)}$}; \node[marked,above left,scale=.8] at (2.5,1.25) {};
\draw[Box] (0,-.25) rectangle (2,-1.25); \node at (1,-.75) {$q_1^{(2)}$}; \node[marked, below right,,scale=.8] at (2,-1.25) {};
\draw[Box] (2.5,-.25) rectangle (4.5,-1.25); \node at (3.5,-.75) {$q_2^{(2)}$}; \node[marked, below right,,scale=.8] at (4.5,-1.25) {};

\draw[thickline] (2,.75) -- (2.5,.75); \draw[thickline] (2,-.75) -- (2.5,-.75);
\draw[thickline] (0,.75) -- (-.5,.75); \draw[thickline] (0,-.75) -- (-.5,-.75);
\draw[thickline] (4.5,.75) -- (5,.75); \draw[thickline] (4.5,-.75) -- (5,-.75);
\draw[thickline] (1,1.25) arc(0:90:1cm) -- (-.5,2.25); \draw[thickline] (3.5,1.25) arc(180:90:1cm) -- (5,2.25);
\draw[thickline] (1,-1.25) arc(0:-90:1cm) -- (-.5,-2.25); \draw[thickline] (3.5,-1.25) arc(180:270:1cm) -- (5,-2.25);

\draw[Box] (-.5,-2.75) rectangle (5,2.75); \node[marked,scale=.9,above left] at (-.5,2.75) {};
\begin{scope}[xshift=7.25cm]
\draw[Box] (0,.25) rectangle (2,1.25); \node at (1,.75) {$q_1^{(1)}$}; \node[marked,below left,scale=.8] at (0,.25) {};
\draw[Box] (0,1.75) rectangle (2,2.75); \node at (1,2.25) {$q_2^{(1)}$}; \node[marked,below left,scale=.8] at (0,1.75) {};
\draw[Box] (0,-.25) rectangle (2,-1.25); \node at (1,-.75) {$q_1^{(2)}$}; \node[marked, above right,,scale=.8] at (2,-.25) {};
\draw[Box] (0,-1.75) rectangle (2,-2.75); \node at (1,-2.25) {$q_2^{(2)}$}; \node[marked, above right,,scale=.8] at (2,-1.75) {};

\draw[thickline] (0,.75) -- (-.5,.75); \draw[thickline] (0,-.75) -- (-.5,-.75);
\draw[thickline] (2,.75) -- (2.5,.75); \draw[thickline] (2,-.75) -- (2.5,-.75);
\draw[thickline] (1,1.25) -- (1,1.75); \draw[thickline] (1,-1.25) -- (1,-1.75);

\draw[thickline] (0,2.25) -- (-.5,2.25); \draw[thickline] (2,2.25) -- (2.5,2.25);
\draw[thickline] (0,-2.25) -- (-.5,-2.25); \draw[thickline] (2,-2.25) -- (2.5,-2.25);

\draw[Box] (-.5,-3) rectangle (2.5,3); \node[marked,scale=.9, above left] at (-.5,3) {};
\node[left] at (-1,0) {$=$};
\end{scope}
\end{tikzpicture}
\end{equation*}
The result then follows from (2) of Proposition \ref{q-skein}.
\end{proof}

\begin{corollary}\label{2q-p}
Let $q_1,q_2$ be copies of $q$.  Then we have
\begin{equation*}
\begin{tikzpicture}[scale=.65]
\draw[Box] (0,.25) rectangle (2,1.25); \node at (1,.75) {$q_1^{(1)}$};\node[marked,above left,scale=.8] at (0,1.25) {};
\draw[Box] (2.5,.25) rectangle (4.5,1.25); \node at (3.5,.75) {$q_2^{(1)}$};  \node[marked,above left,scale=.8] at (2.5,1.25) {};
\draw[Box] (0,-.25) rectangle (2,-1.25); \node at (1,-.75) {$q_1^{(2)}$}; \node[marked, below right,,scale=.8] at (2,-1.25) {};
\draw[Box] (2.5,-.25) rectangle (4.5,-1.25); \node at (3.5,-.75) {$q_2^{(2)}$};  \node[marked, below right,,scale=.8] at (4.5,-1.25) {};

\draw[thickline] (2,.75) -- (2.5,.75); \draw[thickline] (2,-.75) -- (2.5,-.75);
\draw[thickline] (0,.75) -- (-.5,.75); \draw[thickline] (0,-.75) -- (-.5,-.75);
\draw[thickline] (4.5,.75) -- (5,.75); \draw[thickline] (4.5,-.75) -- (5,-.75);
\draw[thickline] (1,1.25) arc(180:90:.5cm) -- (3,1.75) arc(90:0:.5cm); 
\draw[thickline] (1,-1.25) arc(180:270:.5cm) -- (3,-1.75) arc(-90:0:.5cm); 
\begin{scope}[xshift=6.75cm]
\draw[Box] (0,.25) rectangle (2,1.25); \node at (1,.75) {$p^{(1)}$}; \node[marked,scale=.8,above left] at (0,1.25) {};
\draw[Box] (0,-.25) rectangle (2,-1.25); \node at (1,-.75) {$p^{(2)}$}; \node[marked,scale=.8,below right] at (2,-1.25) {};
\draw[thickline] (0,.75) -- (-.5,.75); \draw[thickline] (2,.75) -- (2.5,.75);
\draw[thickline] (0,-.75) -- (-.5,-.75); \draw[thickline] (2,-.75) -- (2.5,-.75);
\node[left] at (-.75,0) {$=$};
\end{scope}
\end{tikzpicture}
\end{equation*}
\end{corollary}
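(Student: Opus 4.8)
The plan is to verify the identity by testing both sides against an arbitrary element $x\otimes y^{op}$ with $x,y\in P_{2k}$. Both sides lie in $P_{2k}\otimes P_{2k}^{op}$, so it suffices to check that their pairings $a\mapsto(\tau_k\boxtimes\tau_k)\bigl((x\otimes y^{op})\,a\bigr)$ agree for all $x,y\in P_{2k}$ -- equivalently, by Proposition~\ref{depthspan} (or simply by faithfulness of $\tau_k\boxtimes\tau_k$), that closing off the outer legs of each diagram with $x$ on top and $y$ on bottom in the evident way produces the same scalar. On the right-hand side Lemma~\ref{2cleaver} hands us this scalar at once, namely $\sum_{v\in\Gamma_+}\frac{\mu_v}{n_v}$ times the $2$-cleaver diagram in which $x$, $y$ and a $v$-labelled cap are joined in a circle.

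For the left-hand side, after closing up we are left with a diagram built from the two copies $q_1,q_2$: the connecting strand joins $q_1^{(1)}$ to $q_2^{(1)}$ (and likewise for the ${}^{(2)}$ parts), the vertical legs of $q_1$ and $q_2$ are capped to one another, and the outer legs are closed off through $x$ and $y$. As in the proof of Corollary~\ref{4cleaver}, the connecting strand forces the two copies of $q$ to share an internal label, so the diagram is a sum over $\Gamma_+$ of terms $q_{v_1,w_1,z}^{(1)}\otimes q_{v_1,w_1,z}^{(2)}$ composed with $q_{z,w_2,v_2}^{(1)}\otimes q_{z,w_2,v_2}^{(2)}$; and the cap on the vertical legs, taken together with the box $x$, plays exactly the role of the top box in Corollary~\ref{4cleaver} (and the cup on the vertical legs together with $y$ the role of the bottom box). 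Applying that corollary replaces the two-$q$ block by $\sum I^{-1}\bigl(\mu_{v_1}\mu_{v_2}\mu_{w_1}\mu_{w_2}/n_{v_1}n_{v_2}n_{w_1}n_{w_2}\bigr)^{1/2}$ times a Temperley--Lieb-type picture; because of the two caps this picture closes the $w_1$- and $w_2$-coloured strands into a single loop (forcing $w_1=w_2=:w$ and contributing a factor $n_w\mu_w$) and joins the $v_1$- and $v_2$-coloured strands through $x$ and $y$ into precisely the $2$-cleaver circle of the previous paragraph (forcing $v_1=v_2=:v$). The coefficient becomes $\sum_{v,w}I^{-1}\frac{\mu_v\mu_w}{n_vn_w}\cdot n_w\mu_w=\sum_v\frac{\mu_v}{n_v}\cdot I^{-1}\sum_w\mu_w^2=\sum_v\frac{\mu_v}{n_v}$, using the global index identity $I=\sum_{v\in\Gamma_+}\mu_v^2$ (equivalently Proposition~\ref{p-skein}(1) together with $\tr_{P_{2k}}(v)=\delta^{-2k}\mu_v$ and Proposition~\ref{p-skein}(4)). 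This matches the scalar produced by $p$, so the two sides agree.

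I expect the main obstacle to be the geometric bookkeeping in the middle step: one has to arrange the closing-up so that the vertical-leg cap and the test box $x$ genuinely assemble into an admissible top box for Corollary~\ref{4cleaver}, then identify carefully which of the four coloured strands get contracted into the $w$-loop and which survive as the $x$--$y$--$v$ circle, all while keeping the powers of $n_v$, $n_w$, $\mu_v$ and $I$ straight (in particular confirming the closed $w$-loop is normalized to $n_w\mu_w$). An alternative that offloads most of this onto results already in hand is to first apply Corollary~\ref{2q-rotate} to reposition the vertical-leg caps as caps internal to the two copies of $q$, and then finish with the double-capping and connecting relations of Proposition~\ref{q-skein}(4)--(5) together with $p^2=p$, whose $I^{\mp 1/2}$ and $n_*^{\mp 1/2}$ factors are arranged to cancel.
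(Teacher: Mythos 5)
Your main argument is correct but follows a genuinely different route from the paper's. The paper stays entirely inside the skein calculus: it first applies Corollary~\ref{2q-rotate} to slide the two caps into the ``stacked'' position, then uses the capping relation (3) of Proposition~\ref{q-skein} to collapse $q_1$ into $I^{-1/2}n_*^{-1/2}(\cdot)\,p$, and finally evaluates the leftover closed $q_2$--$*$ diagram to $I^{1/2}n_*^{1/2}$ via Proposition~\ref{q-skein}~(4) and Proposition~\ref{p-skein}~(1) --- which is essentially the ``alternative'' you sketch in your last sentence. Your primary route instead pairs both sides against test elements $x\otimes y^{op}$ and computes scalars: Lemma~\ref{2cleaver} for the right-hand side, Corollary~\ref{4cleaver} for the left, with the $w$-loop contributing $n_w\mu_w$ and the identity $I=\sum_v\mu_v^2$ closing the books. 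The coefficient bookkeeping checks out, and this version has the virtue of making visible \emph{why} the identity holds (it is exactly $I=\sum\mu_v^2$), at the cost of re-deriving by hand what Corollary~\ref{2q-rotate} packages for free. One step deserves more care than you give it: you invoke ``as in the proof of Corollary~\ref{4cleaver}'' to claim that the connecting strand forces the two copies of $q$ to share the adjacent label, but that corollary \emph{starts} from the already-matched sum $\sum_z q_{v_1,w_1,z}\cdot q_{z,w_2,v_2}$ and never proves that the cross terms $q_{v_1,w_1,z_1}\cdot q_{z_2,w_2,v_2}$ with $z_1\neq z_2$ vanish. They do vanish --- eliminating the left factor via the defining property of Proposition~\ref{qv-def} deposits the central projection $z_1$ on the connecting strand, and eliminating the right factor deposits $z_2$ on the same strand, so orthogonality of distinct central projections kills the term (up to the harmless $v\leftrightarrow\overline v$ identification) --- but this needs to be said, or else one should bypass the decomposition entirely by running the proof of Corollary~\ref{4cleaver} (Proposition~\ref{qv-def} followed by Proposition~\ref{q-connect}) directly on the closed-up diagram.
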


\begin{proof}
By Corollary \ref{2q-rotate} and (3) of Proposition \ref{q-skein}, we have
\begin{equation*}
\begin{tikzpicture}[scale=.65]
\draw[Box] (0,.25) rectangle (2,1.25); \node at (1,.75) {$q_1^{(1)}$};\node[marked,above left,scale=.8] at (0,1.25) {};
\draw[Box] (2.5,.25) rectangle (4.5,1.25); \node at (3.5,.75) {$q_2^{(1)}$};  \node[marked,above left,scale=.8] at (2.5,1.25) {};
\draw[Box] (0,-.25) rectangle (2,-1.25); \node at (1,-.75) {$q_1^{(2)}$}; \node[marked, below right,,scale=.8] at (2,-1.25) {};
\draw[Box] (2.5,-.25) rectangle (4.5,-1.25); \node at (3.5,-.75) {$q_2^{(2)}$};  \node[marked, below right,,scale=.8] at (4.5,-1.25) {};

\draw[thickline] (2,.75) -- (2.5,.75); \draw[thickline] (2,-.75) -- (2.5,-.75);
\draw[thickline] (0,.75) -- (-.5,.75); \draw[thickline] (0,-.75) -- (-.5,-.75);
\draw[thickline] (4.5,.75) -- (5,.75); \draw[thickline] (4.5,-.75) -- (5,-.75);
\draw[thickline] (1,1.25) arc(180:90:.5cm) -- (3,1.75) arc(90:0:.5cm); 
\draw[thickline] (1,-1.25) arc(180:270:.5cm) -- (3,-1.75) arc(-90:0:.5cm); 
\begin{scope}[xshift=7.25cm]
\draw[Box] (0,.25) rectangle (2,1.25); \node at (1,.75) {$q_1^{(1)}$}; \node[marked,above left,scale=.8] at (0,1.25) {};
\draw[Box] (0,1.75) rectangle (2,2.75); \node at (1,2.25) {$q_2^{(1)}$}; \node[marked,above left,scale=.8] at (0,2.75) {};
\draw[Box] (0,-.25) rectangle (2,-1.25); \node at (1,-.75) {$q_1^{(2)}$}; \node[marked, below right,,scale=.8] at (2,-1.25) {};
\draw[Box] (0,-1.75) rectangle (2,-2.75); \node at (1,-2.25) {$q_2^{(2)}$}; \node[marked, below right,,scale=.8] at (2,-2.75) {};

\draw[thickline] (0,.75) -- (-.5,.75); \draw[thickline] (0,-.75) -- (-.5,-.75);
\draw[thickline] (2,.75) -- (2.5,.75); \draw[thickline] (2,-.75) -- (2.5,-.75);
\draw[thickline] (1,1.25) -- (1,1.75); \draw[thickline] (1,-1.25) -- (1,-1.75);

\draw[thickline] (0,2.25) arc(270:90:.5cm) -- (2,3.25) arc(90:-90:.5cm);
\draw[thickline] (0,-2.25) arc(90:270:.5cm) -- (2,-3.25) arc(-90:90:.5cm);

\node[left] at (-1,0) {$=$};
\end{scope}

\begin{scope}[xshift=15.25cm]
\draw[Box] (0,.75) rectangle (2,1.75); \node at (1,1.25) {$q_2^{(1)}$}; \node[marked,above left,scale=.8] at (0,1.75) {};
\draw[Box] (0,-.75) rectangle (2,-1.75); \node at (1,-1.25) {$q_2^{(2)}$}; \node[marked, below right,,scale=.8] at (2,-1.75) {};

\draw[thickline] (1,.75) -- node[cpr] (s) {$*$} (1,-.75); \node[marked,scale=.75,right=.05] at (s.east) {};

\draw[thickline] (0,1.25) arc(270:90:.5cm) -- (2,2.25) arc(90:-90:.5cm);
\draw[thickline] (0,-1.25) arc(90:270:.5cm) -- (2,-2.25) arc(-90:90:.5cm);

\node[left] at (-1,0) {$=I^{-1/2} n_*^{-1/2} \cdot$};

\begin{scope}[xshift=4.25cm]
\draw[Box] (0,.25) rectangle (2,1.25); \node at (1,.75) {$p^{(1)}$}; \node[marked,scale=.8,above left] at (0,1.25) {};
\draw[Box] (0,-.25) rectangle (2,-1.25); \node at (1,-.75) {$p^{(2)}$}; \node[marked,scale=.8,below right] at (2,-1.25) {};
\draw[thickline] (0,.75) -- (-.5,.75); \draw[thickline] (2,.75) -- (2.5,.75);
\draw[thickline] (0,-.75) -- (-.5,-.75); \draw[thickline] (2,-.75) -- (2.5,-.75);
\node[left] at (-.75,0) {$\cdot$};
\end{scope}

\end{scope}
\end{tikzpicture}
\end{equation*}
The result follows by applying (4) of Proposition \ref{q-skein} and (1) of Proposition \ref{p-skein}.
\end{proof}

\section{The Jones tower}

In this section we compute the Jones tower for the inclusion $M_0 \otimes M_0^{op} \subset M_0 \boxtimes M_0^{op}$.  Our construction is easily modified to obtain the Jones tower for the symmetric enveloping inclusion $M_1 \otimes M_1^{op} \subset M_1 \boxtimes M_1^{op}$.  However, we prefer to work with $M_0 \boxtimes M_0^{op}$ as it simplifies the diagrams.  As discussed in Remark \ref{m0vm1} in the following section, this is sufficient to compute the planar algebra of the asymptotic inclusion.

As above, we will assume that $\mc P$ is a finite-depth planar algebra and that $depth(\mc P) \leq 2k$. 

Fix $n \geq 0$.  Recall from Section \ref{sec:background} that we have a graphical representation of $P_{s,(2n+1)k} \otimes P_{t,(2n+1)k}$ as the span of elements of the form
\begin{equation*}
\begin{tikzpicture}[scale=.6]
\draw[Box] (0,0) rectangle (2,1); \node at (1,.5) {$x$};
 \draw[thickline] (-.5,.5) -- (0,.5); \draw[thickline] (2,.5) -- (2.5,.5);
 \draw[verythickline] (1,1) -- (1,1.25); \node[marked,above left, scale=.8] at (0,1) {};
\begin{scope}[rotate around={180:(1,.5)}, yshift=1.5cm]
 \draw[Box] (0,0) rectangle (2,1); \node at (1,.5) {$y$}; \node[marked, below right,scale=.8] at (0,1) {};
 \draw[thickline] (-.5,.5) -- (0,.5); \draw[thickline] (2,.5) -- (2.5,.5);
 \draw[verythickline] (1,1) -- (1,1.25);
\end{scope}
\draw[Box] (-.5,-1.75) rectangle (2.5,1.25); \node[marked] at (-.6,1.35) {};
\node[left=2mm,scale=1.1] at (-.5,-.25) {$x \otimes y =$};
\end{tikzpicture}
\end{equation*}
for $x \in P_{s,(2n+1)k}$ and $y \in P_{t,(2n+1)k}$.  

For $s,t \geq 0$ let $\mc V_{2n+1}(s,t)$ be the subspace of $P_{s,(2n+1)k} \otimes P_{t,(2n+1)k}$ spanned by
\begin{equation*}
\begin{tikzpicture}[xscale=.75, yscale = .6]
\draw[Box] (0,2) rectangle (2,3.5); \node at (1,2.75) {$x$};
\draw[Box] (.5,.5) rectangle (1.5,1.5); \node at (1,1) {$p^{(1)}$}; \node[marked,right=.05,scale=.8] at (1.5,1) {};
\draw[Box] (.5,-.5) rectangle (1.5,-1.5); \node at (1,-1) {$p^{(2)}$}; \node[marked,right=.05,scale=.8] at (1.5,-1) {};
\draw[Box] (0,-2) rectangle (2,-3.5); \node at (1,-2.75) {$y$};

\draw[medthick] (0,2.5) arc(90:270:.25cm and .375cm) arc(90:0:.75cm and .25cm);
\draw[medthick] (2,2.5) arc(90:-90:.25cm and .375cm) arc(90:180:.75cm and .25cm);
\draw[medthick] (.75,.5) arc(0:-90:.25cm) -- (-2,.25);
\draw[medthick] (1.25,.5) arc(180:270:.25cm) -- (4,.25);

\draw[medthick] (0,-2.5) arc(270:90:.25cm and .375cm) arc(-90:0:.75cm and .25cm);
\draw[medthick] (2,-2.5) arc(-90:90:.25cm and .375cm) arc(-90:-180:.75cm and .25cm);
\draw[medthick] (.75,-.5) arc(0:90:.25cm) -- (-2,-.25);
\draw[medthick] (1.25,-.5) arc(180:90:.25cm) -- (4,-.25);

\draw[thickline] (2,3) --node[Box,draw,fill=white](p21) {$p_n^{(1)}$} (4,3); \node[marked,scale=.8,above left] at (p21.north west) {};
\draw[thickline] (2,-3) --node[Box,draw,fill=white] (p22) {$p_n^{(2)}$} (4,-3); \node[marked,scale=.8,below] at (p22.south east) {};
\draw[thickline] (-2,3) --node[Box,draw,fill=white] (p11) {$p_n^{(1)}$} (0,3); \node[marked,scale=.8,above left] at (p11.north west) {};
\draw[thickline] (-2,-3) --node[Box,draw,fill=white] (p12) {$p_n^{(2)}$} (0,-3); \node[marked,scale=.8,below] at (p12.south east) {};
\draw[verythickline] (1,3.5) -- (1,4); \draw[verythickline] (1,-3.5) -- (1,-4);

\node[marked,above left] at (0,3.5) {}; \node[marked,below right] at (2,-3.5) {};
\end{tikzpicture}
\end{equation*}
for $x \in P_{s,(2n+1)k}$, $y \in P_{t,(2n+1)k}$, where $p_n = p_n^{(1)} \otimes p_n^{(2)}$ is defined recursively as follows: $p_0$ is the empty diagram, and for $n \geq 1$ we have
\begin{equation*}
\begin{tikzpicture}[scale=.65]
\draw[Box] (0,.5) rectangle (2,1.5); \node at (1,1) {$p_n^{(1)}$}; \node[marked,scale=.8,above left] at (0,1.5) {};
\draw[Box] (0,-.5) rectangle (2,-1.5); \node at (1,-1) {$p_n^{(2)}$}; \node[marked,scale=.8,below right] at (2,-1.5) {};
\foreach \x in {-.5,2} {\foreach \y in {-1,1} {
\draw[thickline] (\x,\y) -- ++(.5,0);
}}

\begin{scope}[xshift=5cm]
\draw[Box] (0,.25) rectangle (2,1.25); \node at (1,.75) {$p_{n-1}^{(1)}$}; \node[marked,scale=.8,above left] at (0,1.25) {};
\draw[Box] (0,-.25) rectangle (2,-1.25); \node at (1,-.75) {$p_{n-1}^{(2)}$}; \node[marked,scale=.8,below right] at (2,-1.25) {};

\foreach \x in {-.5,2} {\foreach \y in {-.75,.75} {
\draw[thickline] (\x,\y) -- ++(.5,0);
}}

\draw[Box] (0,1.5) rectangle (2,2.5); \node at (1,2) {$p^{(1)}$}; \node[marked, scale=.8, above left] at (0,2.5) {};
\draw[Box] (0,-1.5) rectangle (2,-2.5); \node at (1,-2) {$p^{(2)}$}; \node[marked,scale=.8,below right] at (2,-2.5) {};

\foreach \x in {-.5,2} {\foreach \y in {-2,2} {
\draw[medthick] (\x,\y) -- ++(.5,0);
}}

\node[left] at (-1,0) {$=$};
\end{scope}

\end{tikzpicture}
\end{equation*}
Note that $\mc V_{2n+1}(s,t)$ is closed under the involution on $P_{s,(2n+1)k} \otimes P_{t,(2n+1)k}$.

Fix $t_1,t_2,s_1,s_2 \geq 0$ and define
\begin{equation*}
 \begin{tikzpicture}[yscale=.6,xscale=.75]
\draw[Box] (0,1.25) rectangle (2,2.75); \node at (1,2) {$x_1$};
\draw[Box] (2.5,.75) rectangle (4.5,1.75); \node at (3.5,1.25) {$q^{(1)}$};
\draw[Box] (5,1.25) rectangle (7,2.75); \node at (6,2) {$x_2$};
\draw[thickline] (2,2.25) -- node[rcount] {$2nk$}(5,2.25);
\draw[thickline] (0,2.25) -- (-.5,2.25);
\draw[thickline] (7,2.25) -- (7.5,2.25);
\draw[medthick] (0,1.5) arc(90:270:.25cm) -- (2.5,1);
\draw[medthick] (7,1.5) arc(90:-90:.25cm) -- (4.5,1);
\draw[medthick] (2,1.5) -- (2.5,1.5);
\draw[medthick] (4.5,1.5) -- (5,1.5);

\draw[medthick] (3,.75) arc(0:-90:.25cm) -- (-.5,.5);
\draw[medthick] (4,.75) arc(180:270:.25cm) -- (7.5,.5);

\draw[verythickline] (1,2.75) -- (1,3);
\draw[verythickline] (6,2.75) -- (6,3); 

\node[marked,above left] at (0,2.75) {}; 
\node[marked, above left] at (5,2.75) {}; 

\begin{scope}[yshift=.5cm]
\draw[Box] (0,-1.25) rectangle (2,-2.75); \node at (1,-2) {$y_1$};
\draw[Box] (2.5,-.75) rectangle (4.5,-1.75); \node at (3.5,-1.25) {$q^{(2)}$};
\draw[Box] (5,-1.25) rectangle (7,-2.75); \node at (6,-2) {$y_2$};
\draw[medthick] (3,-.75) arc(0:90:.25cm) -- (-.5,-.5);
\draw[medthick] (4,-.75) arc(180:90:.25cm) -- (7.5,-.5);
\draw[verythickline] (6,-2.75) -- (6,-3);
\draw[verythickline] (1,-2.75) -- (1,-3);
\draw[medthick] (7,-1.5) arc(-90:90:.25cm) -- (4.5,-1);
\draw[medthick] (0,-1.5) arc(270:90:.25cm) -- (2.5,-1);
\draw[medthick] (4.5,-1.5) -- (5,-1.5);
\draw[medthick] (2,-1.5) -- (2.5,-1.5);
\draw[thickline] (2,-2.25) -- node[rcount] {$2nk$} (5,-2.25);
\draw[thickline] (0,-2.25) -- (-.5,-2.25);
\draw[thickline] (7,-2.25) -- (7.5,-2.25);
\node[marked, below right] at (2,-2.75) {};
\node[marked, below right] at (7,-2.75) {};
\end{scope}

\node[left] at (-1,0) {$(x_1 \otimes y_1) \star_q (x_2 \otimes y_2) = $};
 \end{tikzpicture}
\end{equation*}
for $x_i \otimes y_i \in P_{s_i,(2n+1)k}\otimes P_{t_i,(2n+1)k}$.  Extend this map linearly and then restrict to the subspace $\mc V_{2n+1}(s_1,t_1) \otimes \mc V_{2n+1}(s_2,t_2)$ to obtain a ``twisted'' multiplication
\begin{equation*}
\star_q: \mc V_{2n+1}(s_1,t_1) \otimes \mc V_{2n+1}(s_2,t_2) \to \mc V_{2n+1}(s_1+s_2,t_1+t_2).
\end{equation*}
We note that in the rest of the paper we will define linear maps on $\mc V_{2n+1}(s,t)$ by simply describing their action on tensors $x \otimes y$, then implicitly applying this extension/restriction procedure.

Let $\varphi_{2n+1}: \mc V_{2n+1}(s,t) \to \C$ be the linear functional determined by
\begin{equation*}
\begin{tikzpicture}[scale=.5]
\draw[Box] (0,-2) rectangle (2,-.5); \node at (1,-1.25) {$y$}; \draw[verythickline] (1,-2) -- (1,-2.25); \node[marked, below right] at (2,-2) {};
\draw[Box] (0,4.5) rectangle (2,6); \node at (1,5.25) {$x$};\draw [verythickline] (1,6) -- (1,6.25); \node[marked,above left] at (0,6) {};
\draw[Box] (0,6.25) rectangle (2,7); \node[scale=.8] at (1,6.625) {$\sum TL$};
\draw[Box] (0,-2.25) rectangle (2,-3); \node[scale=.8] at (1,-2.625) {$\sum TL$};

\draw[Box] (0,0) rectangle (2,1); \node at (1,.5) {$q^{(2)}$}; \node[marked,scale=.8,left=.05] at (0,.5) {};
\draw[Box] (0,3) rectangle (2,4); \node at (1,3.5) {$q^{(1)}$}; \node[marked,scale=.8,left=.05] at (0,3.5) {};

\draw[medthick] (.25,1) arc(180:0:.75cm and .75cm); \draw[medthick] (.6,1) arc(180:0:.4cm);
\draw[medthick] (.25,3) arc(180:360:.75cm and .75cm); \draw[medthick] (.6,3) arc(180:360:.4cm);

\draw[medthick] (.5,4) arc(0:90:.5cm and .25cm) arc(270:90:.375cm); \draw[medthick] (1.5,4) arc(180:90:.5cm and .25cm) arc (-90:90:.375cm);
\draw[medthick] (.5,0) arc(0:-90:.5cm and .25cm) arc(90:270:.375cm); \draw[medthick] (1.5,0) arc(180:270:.5cm and .25cm) arc(90:-90:.375cm);

\draw[thickline] (0,-1.5) arc(90:270:.75cm and 1cm) -- ++(2,0) arc(-90:90:.75cm and 1cm);
\draw[thickline] (0,5.5) arc(270:90:.75cm and 1cm) -- ++(2,0) arc(90:-90:.75cm and 1cm);
\node[left] at (-1,2) {$\varphi_{2n+1}(x \otimes y) = \displaystyle I^{-n}\; \cdot$};

\begin{scope}[xshift=10.5cm]
\draw[Box] (0,-.5) rectangle (2,1); \node at (1,.25) {$y$}; \draw[verythickline] (1,-.5) -- (1,-.75); \node[marked, below right] at (2,-.5) {};
\draw[Box] (0,3) rectangle (2,4.5); \node at (1,3.75) {$x$};\draw [verythickline] (1,4.5) -- (1,4.75); \node[marked,above left] at (0,4.5) {};
\draw[Box] (.5,1.5) rectangle (1.5,2.5); \node at (1,2) {$*$}; \node[marked,right=.05cm] at (1.5,2) {};

\draw[Box] (0,4.75) rectangle (2,5.5); \node[scale=.8] at (1,5.125) {$\sum TL$};
\draw[Box] (0,-.75) rectangle (2,-1.5); \node[scale=.8] at (1,-1.125) {$\sum TL$};

\draw[medthick] (2,.5) arc(-90:90:.375cm) -- (1.5,1.25) arc(270:180:.25cm);
\draw[medthick] (0,.5) arc(270:90:.375cm) -- (.5,1.25) arc(-90:0:.25cm);
\draw[medthick] (0,3.5) arc(90:270:.375cm) -- (.5,2.75) arc(90:0:.25cm);
\draw[medthick] (2,3.5) arc(90:-90:.375cm) -- (1.5,2.75) arc(90:180:.25cm);

\draw[thickline] (0,0) arc(90:270:.75cm and 1cm) -- ++(2,0) arc(-90:90:.75cm and 1cm);
\draw[thickline] (0,4) arc(270:90:.75cm and 1cm) -- ++(2,0) arc(90:-90:.75cm and 1cm);
\node[left] at (-1,2) {$= \displaystyle I^{-\frac{2n+1}{2}}n_*^{-1/2}\; \cdot$};
\end{scope}
\end{tikzpicture}
\end{equation*}
for $x \in P_{s,(2n+1)k}$ and $y \in P_{t,(2n+1)k}$, where the second equality follows from Proposition \ref{q-skein} (4) and Proposition \ref{p-skein} (1).

Define
\begin{equation*}
 \mc A_{2n+1} = \bigoplus_{s,t \geq 0} \mc V_{2n+1}(s,t).
\end{equation*}
Finally set
\begin{equation*}
\begin{tikzpicture}[xscale=.65, yscale=.55]
\draw[Box] (0,.5) rectangle (2,1.5); \node at (1,1) {$q^{(1)}$}; \node[marked,scale=.8,right=.05] at (2,1) {};
\draw[Box] (0,-.5) rectangle (2,-1.5); \node at (1,-1) {$q^{(2)}$}; \node[marked,scale=.8,right = .05] at (2,-1) {};

\draw[medthick] (.75,1.5) arc(180:0:.25cm);
\draw[medthick] (.4,1.5) arc(180:0:.6cm);
\draw[medthick] (.75,.5) arc(0:-90:.25cm) -- (-.5,.25);
\draw[medthick] (1.25,.5) arc(180:270:.25cm) -- (2.5,.25);

\draw[medthick] (.75,-1.5) arc(180:360:.25cm);
\draw[medthick] (.4,-1.5) arc(180:360:.6cm);
\draw[medthick] (.75,-.5) arc(0:90:.25cm) -- (-.5,-.25);
\draw[medthick] (1.25,-.5) arc(180:90:.25cm) -- (2.5,-.25);
\draw[thickline] (-.5,3) --node[Box,draw,fill=white] (n1) {$p_n^{(1)}$} (2.5,3); \node[marked, above left] at (n1.north west) {};
\draw[thickline] (-.5,-3) --node[Box,draw,fill=white] (n2) {$p_n^{(2)}$} (2.5,-3); \node[marked,below right] at (n2.south east) {};

\node[left] at (-.7,0) {$1_{\mc A_{2n+1}} = $};

\begin{scope}[xshift=7.5cm]
\draw[Box] (.5,-.25) rectangle (1.5,.25); \node at (1,0) {$*$}; \node[marked,right=.04cm, scale =.9] at (1.5,0) {};

\draw[medthick] (.75,.25) arc(0:90:.25cm) -- (-.5,.5); \draw[medthick] (1.25,.25) arc(180:90:.25cm) -- (2.5,.5);
\draw[medthick] (.75,-.25) arc(0:-90:.25cm) -- (-.5,-.5); \draw[medthick] (1.25,-.25) arc(180:270:.25cm) -- (2.5,-.5);

\draw[thickline] (-.5,1.5) --node[Box,draw,fill=white] (n3) {$p_n^{(1)}$} (2.5,1.5); \node[marked, above left] at (n3.north west) {};
\draw[thickline] (-.5,-1.5) --node[Box,draw,fill=white] (n4) {$p_n^{(2)}$} (2.5,-1.5); \node[marked,below right] at (n4.south east) {};

\node[left] at (-.75,0) {$ = I^{1/2} n_*^{-1/2} \; \cdot $};
\node[right] at (2,0) {$,$};
\end{scope}
\end{tikzpicture}
\end{equation*}

Now define $\Psi_n:\mc V_{2n+1}(s,t) \to V_{2nk}(s,t)$ to be the linear map determined by
\begin{equation*}
\begin{tikzpicture}[xscale=.6, yscale = .6]
\draw[Box] (0,-.5) rectangle (2,1); \node at (1,.25) {$y$}; \draw[verythickline] (1,-.5) -- (1,-.75); \node[marked, below right] at (2,-.5) {};
\draw[Box] (0,3) rectangle (2,4.5); \node at (1,3.75) {$x$};\draw [verythickline] (1,4.5) -- (1,4.75); \node[marked,above left] at (0,4.5) {};
\draw[Box] (.5,1.5) rectangle (1.5,2.5); \node at (1,2) {$v$}; \node[marked,right=.05cm] at (1.5,2) {};
\draw[medthick] (2,.5) arc(-90:90:.375cm) -- (1.5,1.25) arc(270:180:.25cm);
\draw[medthick] (0,.5) arc(270:90:.375cm) -- (.5,1.25) arc(-90:0:.25cm);
\draw[medthick] (0,3.5) arc(90:270:.375cm) -- (.5,2.75) arc(90:0:.25cm);
\draw[medthick] (2,3.5) arc(90:-90:.375cm) -- (1.5,2.75) arc(90:180:.25cm);
\draw[thickline] (0,0) -- (-.75,0); \draw[thickline] (2,0) -- (2.75,0);
\draw[thickline] (0,4) -- (-.75,4); \draw[thickline] (2,4) -- (2.75,4);
\node[left] at (-1,2) {$\Psi_n(x \otimes y) = \displaystyle I^{-1/2}\sum_{v \in \Gamma_{+}} \biggl(\frac{\mu_v}{n_v}\biggr)^{1/2}\; \cdot$};
\end{tikzpicture}
\end{equation*}

\begin{proposition}
$\Psi_n$ is an isomorphism of $(\mc A_{2n+1}, \star_q)$ onto $p_n(Gr_{2nk} \boxtimes Gr_{2nk}^{op})p_n$.  We have $\Psi_n(1_{\mc A_{2n+1}}) = p_n$, and 
\begin{equation*}
 \delta^{4nk}I^{-n}(\tau_{2nk} \boxtimes \tau_{2nk}) \circ \Psi_n = \varphi_{2n+1}.
\end{equation*}
\end{proposition}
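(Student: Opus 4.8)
The plan is to verify the three parts of the statement in an order that isolates the one genuinely new computation, the multiplicativity of $\Psi_n$, from the surrounding bookkeeping. \emph{Image, and the unit.} Since $\Psi_n$ is already defined on all of $P_{s,(2n+1)k}\otimes P_{t,(2n+1)k}$, its restriction to $\mc V_{2n+1}(s,t)$ needs no separate justification; the content is that this restriction takes values in the corner $p_n(Gr_{2nk}\boxtimes Gr_{2nk}^{op})p_n$. I would feed the generic spanning diagram of $\mc V_{2n+1}(s,t)$ --- which already carries the boxes $p^{(1)},p^{(2)}$ linking its top and bottom halves together with the boxes $p_n^{(1)},p_n^{(2)}$ on its outer legs --- into $\Psi_n$ and simplify: the $v$-box that $\Psi_n$ inserts between the two halves meets the $p^{(1)},p^{(2)}$ boxes, so Lemma~\ref{2cleaver} and the capping and multiplication relations of Proposition~\ref{p-skein} collapse the interior, while the recursive definition of $p_n$ organizes the surviving $p$'s with the $p_n^{(1)},p_n^{(2)}$ legs into a left and right multiplication by $p_n$. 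Substituting the closed form $1_{\mc A_{2n+1}}=I^{1/2}n_*^{-1/2}\cdot(\cdots)$ and applying $\Psi_n$, the $v$-box now meets the $*$-box, so Lemma~\ref{*-proj} forces $v=*$ in the sum; collecting the scalars (using $\mu_*=1$ and that a closed $*$-strand contributes the loop value $n_*$) leaves exactly $p_n$, which is the unit of the corner.

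\emph{Multiplicativity.} This is the crux, and the main obstacle. Expanding $\Psi_n\big((x_1\otimes y_1)\star_q(x_2\otimes y_2)\big)$, the product $\star_q$ splices a single copy of $q^{(1)}$ among the top boxes and of $q^{(2)}$ among the bottom boxes, and $\Psi_n$ then inserts one $v$-box (summed over $v\in\Gamma_+$) linking top to bottom; on the other side, $\Psi_n(x_1\otimes y_1)\wedge\Psi_n(x_2\otimes y_2)$ is the $\wedge$-juxtaposition in the corner of $x_1$-over-$v_1$-over-$y_1$ with $x_2$-over-$v_2$-over-$y_2$, with $v_1,v_2$ summed independently. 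The two diagrams are reconciled by Corollary~\ref{4cleaver}, which fuses the two $q$'s across their common central projection into four central projections on the four connecting cables, together with Proposition~\ref{q-connect}; compatibility with the outer $p_n$'s is handled by Corollary~\ref{2q-p} and Lemma~\ref{2cleaver}. The delicate point is to carry out this diagram chase so that all of the scalar factors --- the powers of $I^{\pm1/2}$ and the $(\mu_v/n_v)^{\pm1/2}$'s generated by $\Psi_n$, by $\star_q$, and by Corollary~\ref{4cleaver} and Proposition~\ref{q-connect} --- cancel, giving an honest equality rather than a scalar multiple.

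\emph{Bijectivity.} I would exhibit an explicit diagrammatic inverse $\Phi_n:p_n(Gr_{2nk}\boxtimes Gr_{2nk}^{op})p_n\to\mc A_{2n+1}$: writing an element as $w=p_nwp_n$, the $p$'s inside $p_n$ allow one to cut $w$ open along a central projection (by Lemma~\ref{2cleaver}) into a tensor $x\otimes y$ lying in $\mc V_{2n+1}(s,t)$. The identities $\Phi_n\circ\Psi_n=\mathrm{id}$ and $\Psi_n\circ\Phi_n=\mathrm{id}$ are then diagram chases using Lemma~\ref{2cleaver} and Proposition~\ref{p-skein}, and this is where the finite-depth hypothesis is essential: Proposition~\ref{depthspan} reduces the central portion of a general element of $V_{2nk}(s,t)=P_{s+t+4nk}$ to data factoring through $P_{2k}$, which is precisely the two-box form underlying the definition of $\mc V_{2n+1}(s,t)$, so that $\Phi_n$ is well defined.

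\emph{The trace identity.} Applying $\tau_{2nk}\boxtimes\tau_{2nk}$ to $\Psi_n(x\otimes y)$ closes the diagram into a scalar, capping top and bottom with $\sum TL$ and leaving $x$, $y$, the $v$-box and $p_n^{(1)},p_n^{(2)}$ closed up and summed over $v$. Absorbing the $v$-sum via Lemma~\ref{2cleaver} recreates exactly the closed-up $q^{(1)},q^{(2)}$ picture in the first displayed formula for $\varphi_{2n+1}$, and once the normalizations are matched --- the factor $\delta^{4nk}$ reconciles that of $\tau_{2nk}\boxtimes\tau_{2nk}$ with the $I^{-n}$-normalization of $\varphi_{2n+1}$ --- one obtains $\delta^{4nk}I^{-n}(\tau_{2nk}\boxtimes\tau_{2nk})\circ\Psi_n=\varphi_{2n+1}$. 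Alternatively one may compare the already-simplified $*$-box forms of the two sides, where the equality is immediate from the reduction of $\varphi_{2n+1}$ by Proposition~\ref{q-skein}(4) and Proposition~\ref{p-skein}(1) recorded just below the definition of $\varphi_{2n+1}$.
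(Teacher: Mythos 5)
Your proposal follows the paper's proof in its essentials. The crux, multiplicativity of $\Psi_n$, is handled exactly as you indicate: one expands $\Psi_n\bigl((x_1\otimes y_1)\star_q(x_2\otimes y_2)\bigr)$ and resolves the single $q$ inserted by $\star_q$ against the single $v$-box inserted by $\Psi_n$ via Proposition \ref{q-connect}; the resulting independent sums over $w,z\in\Gamma_+$ with total weight $I^{-1}(\mu_w\mu_z/n_wn_z)^{1/2}$ are precisely the product of the two prefactors $I^{-1/2}(\mu_v/n_v)^{1/2}$, so the scalars cancel exactly as you hoped. (Corollary \ref{4cleaver} is not actually needed at this step --- it governs the fusion of \emph{two} $q$'s, whereas only one appears in $\star_q$; Proposition \ref{q-connect} alone does the job.) The unit and trace identities are, as you say, immediate from Lemma \ref{*-proj} and the definitions, and surjectivity from Proposition \ref{depthspan}. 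The one point where you genuinely diverge is injectivity: the paper outsources it to \cite{cjs}, where it is proved for $\Psi_0$ and noted to extend to $n>0$, whereas you propose an explicit inverse $\Phi_n$ obtained by cutting an element of $p_n(Gr_{2nk}\boxtimes Gr_{2nk}^{op})p_n$ open along the middle $2k$ strings. That route can be made to work, but the decomposition furnished by Proposition \ref{depthspan} is not unique, so the well-definedness of $\Phi_n$ --- which you flag but do not resolve --- still requires an argument (for instance characterizing $\Phi_n$ by a trace pairing against $\mc V_{2n+1}(s,t)$, or simply proving injectivity of $\Psi_n$ directly). Apart from that one loose end the proposal is sound and matches the paper's strategy.
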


\begin{proof}
The fact that $\Psi_0$ is injective was proved in \cite{cjs} (note that our $\Psi_0$ was denoted by $\Psi_k$ there, and had a different scaling factor).  The argument given there extends easily to $\Psi_n$ for $n > 0$.  It is clear that $\Psi_n$ respects the involutions on $\mc A_{2n+1}$ and $Gr_{2nk} \boxtimes Gr_{2nk}^{op}$.  That $\Psi_n(1_{\mc A_{2n+1}}) = p_n$ and that $\delta^{4nk}I^{-n}(\tau_{2nk} \boxtimes \tau_{2nk}) \circ \Psi_n = \varphi_{2n+1}$ are clear from the definitions and Lemma \ref{*-proj}.  Surjectivity follows from Lemma \ref{depthspan}.

It remains only to show that $\Psi_n$ is a homomorphism.  We have
\begin{equation*}
\begin{tikzpicture}[xscale=.6, yscale = .6]
\draw[Box] (0,1.25) rectangle (2,2.75); \node at (1,2) {$x_1$};
\draw[Box] (2.5,.75) rectangle (4.5,1.75); \node at (3.5,1.25) {$q^{(1)}$};
\draw[Box] (5,1.25) rectangle (7,2.75); \node at (6,2) {$x_2$};
\draw[thickline] (2,2.25) -- node[rcount] {$2nk$}(5,2.25);
\draw[thickline] (0,2.25) -- (-.5,2.25);
\draw[thickline] (7,2.25) -- (7.5,2.25);
\draw[medthick] (0,1.5) arc(90:270:.25cm) -- (2.5,1);
\draw[medthick] (7,1.5) arc(90:-90:.25cm) -- (4.5,1);
\draw[medthick] (2,1.5) -- (2.5,1.5);
\draw[medthick] (4.5,1.5) -- (5,1.5);

\draw[verythickline] (1,2.75) -- (1,3);
\draw[verythickline] (6,2.75) -- (6,3); 

\node[marked,above left] at (0,2.75) {}; 
\node[marked, above left] at (5,2.75) {}; 

\draw[thickline] (3.5,.75) --node[Box,draw,fill=white,scale=1.25] (v) {$v$} (3.5,-.75); \node[marked,right=.05cm] at (v.east) {};

\draw[Box] (0,-1.25) rectangle (2,-2.75); \node at (1,-2) {$y_1$};
\draw[Box] (2.5,-.75) rectangle (4.5,-1.75); \node at (3.5,-1.25) {$q^{(2)}$};
\draw[Box] (5,-1.25) rectangle (7,-2.75); \node at (6,-2) {$y_2$};
\draw[verythickline] (6,-2.75) -- (6,-3);
\draw[verythickline] (1,-2.75) -- (1,-3);
\draw[medthick] (7,-1.5) arc(-90:90:.25cm) -- (4.5,-1);
\draw[medthick] (0,-1.5) arc(270:90:.25cm) -- (2.5,-1);
\draw[medthick] (4.5,-1.5) -- (5,-1.5);
\draw[medthick] (2,-1.5) -- (2.5,-1.5);
\draw[thickline] (2,-2.25) -- node[rcount] {$2nk$} (5,-2.25);
\draw[thickline] (0,-2.25) -- (-.5,-2.25);
\draw[thickline] (7,-2.25) -- (7.5,-2.25);
\node[marked, below right] at (2,-2.75) {};
\node[marked, below right] at (7,-2.75) {};

\node[left] at (-1,0) {$\Psi_n((x_1 \otimes y_1) \star_q (x_2 \otimes y_2)) = \displaystyle I^{-1/2}\sum_{v \in \Gamma_+} \biggl(\frac{\mu_v}{n_v}\biggr)^{1/2}$};
\end{tikzpicture}
\end{equation*}

which by Proposition \ref{q-connect} is equal to
\begin{equation*}
\begin{tikzpicture}[xscale=.6, yscale = .6]
\draw[Box] (0,1.25) rectangle (2,2.75); \node at (1,2) {$x_1$};
\draw[Box] (4.5,1.25) rectangle (6.5,2.75); \node at (5.5,2) {$x_2$};
\draw[thickline] (2,2.25) -- node[rcount] {$2nk$}(4.5,2.25);
\draw[thickline] (0,2.25) -- (-.5,2.25);
\draw[thickline] (6.5,2.25) -- (7,2.25);

\begin{scope}[yshift=-1.75cm]
\draw[Box] (.5,1.5) rectangle (1.5,2.5); \node at (1,2) {$w$}; \node[marked,right=.05cm] at (1.5,2) {};
\draw[medthick] (2,.5) arc(-90:90:.375cm) -- (1.5,1.25) arc(270:180:.25cm);
\draw[medthick] (0,.5) arc(270:90:.375cm) -- (.5,1.25) arc(-90:0:.25cm);
\draw[medthick] (0,3.5) arc(90:270:.375cm) -- (.5,2.75) arc(90:0:.25cm);
\draw[medthick] (2,3.5) arc(90:-90:.375cm) -- (1.5,2.75) arc(90:180:.25cm);
\end{scope}

\begin{scope}[xshift=4.5cm, yshift=-1.75cm]
\draw[Box] (.5,1.5) rectangle (1.5,2.5); \node at (1,2) {$z$}; \node[marked,right=.05cm] at (1.5,2) {};
\draw[medthick] (2,.5) arc(-90:90:.375cm) -- (1.5,1.25) arc(270:180:.25cm);
\draw[medthick] (0,.5) arc(270:90:.375cm) -- (.5,1.25) arc(-90:0:.25cm);
\draw[medthick] (0,3.5) arc(90:270:.375cm) -- (.5,2.75) arc(90:0:.25cm);
\draw[medthick] (2,3.5) arc(90:-90:.375cm) -- (1.5,2.75) arc(90:180:.25cm);
\end{scope}

\draw[verythickline] (1,2.75) -- (1,3);
\draw[verythickline] (5.5,2.75) -- (5.5,3); 

\node[marked,above left] at (0,2.75) {}; 
\node[marked, above left] at (4.5,2.75) {}; 

\begin{scope}[yshift=.5cm]
\draw[Box] (0,-1.25) rectangle (2,-2.75); \node at (1,-2) {$y_1$};
\draw[Box] (4.5,-1.25) rectangle (6.5,-2.75); \node at (5.5,-2) {$y_2$};
\draw[verythickline] (5.5,-2.75) -- (5.5,-3);
\draw[verythickline] (1,-2.75) -- (1,-3);

\draw[thickline] (2,-2.25) -- node[rcount] {$2nk$} (4.5,-2.25);
\draw[thickline] (0,-2.25) -- (-.5,-2.25);
\draw[thickline] (6.5,-2.25) -- (7,-2.25);
\node[marked, below right] at (2,-2.75) {};
\node[marked, below right] at (6.5,-2.75) {};
\end{scope}

\node[left] at (-1,0) {$I^{-1}\displaystyle\sum_{w,z \in \Gamma_+} \left(\frac{\mu_w\mu_z}{n_wn_z}\right)^{1/2}$};
\end{tikzpicture}
\end{equation*}
This shows that $\Psi_n$ is a homomorphism, which completes the proof.
\end{proof}

The following theorem is now immediate.

\begin{theorem}\label{symmlift}
For $n \geq 0$, $(\mc A_{2n+1},\star_q)$ is a unital, associative $*$-algebra.  Moreover, $\varphi_{2n+1}$ is a faithful, tracial state on $\mc A_{2n+1}$, and $\Psi_n$ extends to an isomorphism of the GNS completion $\mc M_{2n+1}$ onto $p_n(M_{2nk} \boxtimes M_{2nk}^{op})p_n$.  In particular, $\mc M_1$ is isomorphic to $M_0 \boxtimes M_0^{op}$.  \qed
\end{theorem}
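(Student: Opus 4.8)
The plan is to obtain all four assertions by transporting structure along the $*$-isomorphism $\Psi_n$ constructed in the preceding Proposition; no new diagrammatic work is required, which is why the theorem is ``immediate''.

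First I would record that $p_n$ is a self-adjoint idempotent in $Gr_{2nk}(\mc P)\boxtimes Gr_{2nk}(\mc P)^{op}$. Indeed $p_0$ is the empty diagram, i.e. the unit, and the recursive diagram defining $p_n$ exhibits it as the product of two commuting self-adjoint idempotents — the projection $p$ (a sum of the mutually orthogonal projections $p_v$) and a ``shifted'' copy of $p_{n-1}$ — so $p_n^{2}=p_n=p_n^{*}$ by induction. Hence the corner $p_n\bigl(Gr_{2nk}\boxtimes Gr_{2nk}^{op}\bigr)p_n$ is a unital associative $*$-algebra with unit $p_n$. Since by the preceding Proposition $\Psi_n$ is a bijective, $*$-preserving, multiplicative map onto this corner with $\Psi_n(1_{\mc A_{2n+1}})=p_n$, pulling the associativity law, the $*$-identities and the unit axiom back through $\Psi_n^{-1}$ shows that $(\mc A_{2n+1},\star_q)$ is a unital associative $*$-algebra with unit $1_{\mc A_{2n+1}}$.

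For the trace, recall from the main result of \cite{cjs} recalled in Section \ref{sec:background} (applied with $2nk$ in place of $k$) that $\tau_{2nk}\boxtimes\tau_{2nk}$ is a faithful tracial state on $Gr_{2nk}\boxtimes Gr_{2nk}$. Its restriction to the corner $p_n\bigl(Gr_{2nk}\boxtimes Gr_{2nk}^{op}\bigr)p_n$ is therefore a faithful positive trace, and a short induction on the recursion for $p_n$ using the trace relation of Proposition \ref{p-skein}(1) gives $(\tau_{2nk}\boxtimes\tau_{2nk})(p_n)=\delta^{-4nk}I^{n}$, so that $\delta^{4nk}I^{-n}(\tau_{2nk}\boxtimes\tau_{2nk})$ restricts to a \emph{normalized} faithful trace on the corner. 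By the identity $\delta^{4nk}I^{-n}(\tau_{2nk}\boxtimes\tau_{2nk})\circ\Psi_n=\varphi_{2n+1}$ from the Proposition, $\varphi_{2n+1}$ is the pull-back of this state along the $*$-isomorphism $\Psi_n$, hence is itself a faithful tracial state on $\mc A_{2n+1}$; equivalently, the normalization $\varphi_{2n+1}(1_{\mc A_{2n+1}})=1$ is exactly the computation already noted after the definitions of $\varphi_{2n+1}$ and $1_{\mc A_{2n+1}}$ (via Propositions \ref{q-skein}(4) and \ref{p-skein}(1) and Lemma \ref{*-proj}).

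Finally, because $\Psi_n$ is a $*$-isomorphism carrying $\varphi_{2n+1}$ to the normalized trace on $p_n\bigl(Gr_{2nk}\boxtimes Gr_{2nk}^{op}\bigr)p_n$, it is isometric for the corresponding $L^{2}$-norms and so extends to a unitary of GNS Hilbert spaces intertwining the left regular representations; thus it extends to a von Neumann algebra isomorphism of $\mc M_{2n+1}$ onto the weak closure of $p_n\bigl(Gr_{2nk}\boxtimes Gr_{2nk}^{op}\bigr)p_n$. Since $p_n$ already lies in the $*$-algebra $Gr_{2nk}\boxtimes Gr_{2nk}^{op}$, which is strongly dense in $M_{2nk}\boxtimes M_{2nk}^{op}$, that weak closure is precisely $p_n\bigl(M_{2nk}\boxtimes M_{2nk}^{op}\bigr)p_n$, the corner of a strongly dense $*$-subalgebra by a fixed projection being strongly dense in the corresponding corner. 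Taking $n=0$ and using $p_0=1$ gives $\mc M_1\simeq M_0\boxtimes M_0^{op}$. The only point requiring any care is this compatibility of GNS completions with passage to corners, but it is entirely standard for finite von Neumann algebras; the substantive content of the theorem is already contained in the preceding Proposition.
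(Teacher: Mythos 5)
Your proposal is correct and takes essentially the same route as the paper, which simply declares the theorem ``immediate'' from the preceding Proposition: all four assertions are obtained by transporting the unital $*$-algebra structure, the normalized faithful trace on the corner $p_n\bigl(Gr_{2nk}\boxtimes Gr_{2nk}^{op}\bigr)p_n$, and the GNS completion back through the $*$-isomorphism $\Psi_n$. Your computation $(\tau_{2nk}\boxtimes\tau_{2nk})(p_n)=\delta^{-4nk}I^{n}$ is consistent with the index calculation the paper later performs in the proof of Theorem \ref{tower}, and the remaining steps (faithfulness and traciality of a restricted trace on a corner, density of the corner of a strongly dense subalgebra) are the standard facts the paper leaves implicit.
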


Now for $n \geq 0$, define $\mc A_{2n} = p_n(Gr_{2nk} \otimes Gr_{2nk}^{op})p_n$ and let $\varphi_{2n}$ be the renormalized trace 
\begin{equation*}
\varphi_{2n} = \delta^{4nk}I^{-n} \cdot (\tau_{2nk} \otimes \tau_{2nk})|_{\mc A_{2n}}.
\end{equation*}
Let $\mc M_{2n}$ denote the GNS completion of $\mc A_{2n}$, which is naturally isomorphic to $p_n(M_{2nk} \otimes M_{2nk}^{op})p_n$.

Define inclusions $i_{2n}:\mc A_{2n} \hookrightarrow \mc A_{2n+1}$ by
\begin{equation*}
\begin{tikzpicture}[scale=.5]
\draw[Box] (0,.5) rectangle (2,1.5); \node[scale=.8] at (1,1) {$q^{(1)}$};
\draw[Box] (0,-.5) rectangle (2,-1.5); \node[scale=.8] at (1,-1) {$q^{(2)}$};

\draw[medthick] (.75,1.5) arc(180:0:.25cm);
\draw[medthick] (.4,1.5) arc(180:0:.6cm);
\draw[medthick] (.75,.5) arc(0:-90:.25cm) -- (-.5,.25);
\draw[medthick] (1.25,.5) arc(180:270:.25cm) -- (2.5,.25);

\draw[medthick] (.75,-1.5) arc(180:360:.25cm);
\draw[medthick] (.4,-1.5) arc(180:360:.6cm);
\draw[medthick] (.75,-.5) arc(0:90:.25cm) -- (-.5,-.25);
\draw[medthick] (1.25,-.5) arc(180:90:.25cm) -- (2.5,-.25);
\draw[thickline] (-.5,3) --node[Box,draw,fill=white,minimum height=.65cm, minimum width=1cm] (n1) {$x$} (2.5,3); \node[marked, above left] at (n1.north west) {};
\draw[thickline] (-.5,-3) --node[Box,draw,fill=white,minimum height=.65cm, minimum width=1cm] (n2) {$y$} (2.5,-3); \node[marked,below right] at (n2.south east) {};
\draw[thickline] (n1.north) -- ++(0,.5); \draw[thickline] (n2.south) -- ++(0,-.5);

\node[marked,right=.03cm] at (2,1) {}; \node[marked,right=.03cm] at (2,-1) {};

\node[left] at (-.7,0) {$i_{2n}(x \otimes y) = $};

\begin{scope}[xshift=9cm]
\draw[Box] (.5,-.25) rectangle (1.5,.25); \node at (1,0) {$*$}; \node[marked,right=.04cm, scale =.9] at (1.5,0) {};

\draw[medthick] (.75,.25) arc(0:90:.25cm) -- (-.5,.5); \draw[medthick] (1.25,.25) arc(180:90:.25cm) -- (2.5,.5);
\draw[medthick] (.75,-.25) arc(0:-90:.25cm) -- (-.5,-.5); \draw[medthick] (1.25,-.25) arc(180:270:.25cm) -- (2.5,-.5);

\draw[thickline] (-.5,1.5) --node[Box,draw,fill=white,minimum height=.65cm, minimum width=1cm] (n3) {$x$} (2.5,1.5); \node[marked, above left] at (n3.north west) {};
\draw[thickline] (-.5,-1.5) --node[Box,draw,fill=white,minimum height=.65cm, minimum width=1cm] (n4) {$y$} (2.5,-1.5); \node[marked,below right] at (n4.south east) {};
\draw[thickline] (n3.north) -- ++(0,.5); \draw[thickline] (n4.south) -- ++(0,-.5);

\node[left] at (-.75,0) {$ = I^{1/2} n_*^{-1/2} \; \cdot $};
\node[right] at (2,0) {$,$};
\end{scope}
\end{tikzpicture}
\end{equation*}

\begin{proposition}\label{eveninclusion}
For $n \geq 0$, $i_{2n}$ extends to a unital, trace-preserving inclusion of $(\mc M_{2n},\varphi_{2n})$ into $(\mc M_{2n+1},\varphi_{2n+1})$.  Moreover, the following diagram commutes:
\begin{equation*}
\xymatrix{ \mc M_{2n} \ar[r]^{i_{2n}} \ar[d]_{\simeq} & \mc M_{2n+1} \ar[d]^{\Psi_n}\\
p_n\bigl(M_{2nk} \otimes M_{2nk}^{op}\bigr)p_{n} \ar[r] & p_{n}\bigl(M_{2nk} \boxtimes M_{2nk}^{op}\bigr)p_{n}}
\end{equation*}
\end{proposition}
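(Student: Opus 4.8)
The plan is to deduce the whole statement from the commuting square, whose verification is the only genuine computation; once the square is known, the algebraic and metric properties of $i_{2n}$ follow formally from those of $\Psi_n$ (the previous proposition) and of the inclusion $Gr_{2nk}\otimes Gr_{2nk}^{op}\subset Gr_{2nk}\boxtimes Gr_{2nk}$ (the construction recalled in Section~\ref{sec:background}). So first I would show, on simple tensors $x\otimes y$ and then by linearity, that $\Psi_n\bigl(i_{2n}(x\otimes y)\bigr)$ is exactly the element $p_n(x\otimes y^{op})p_n$, viewed inside $p_n(Gr_{2nk}\boxtimes Gr_{2nk}^{op})p_n$ via the ``stacking'' embedding of $Gr_{2nk}\otimes Gr_{2nk}^{op}$ from Section~\ref{sec:background}. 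Unravelling the two definitions, $i_{2n}(x\otimes y)$ is a diagram built from $x$, $y$ and a single copy of $q$ whose ``top'' strings (resp.\ ``bottom'' strings) have been capped off, and feeding this into the formula for $\Psi_n$ glues in a second sum over $v\in\Gamma_+$, a $v$-box joining the two halves, and the appropriate closure of strings. The $v$-box is pushed past $q$ using Lemma~\ref{2cleaver} together with Proposition~\ref{qv-def}, after which the capped copy of $q$ is removed by the capping relations of Proposition~\ref{q-skein}(3)--(4): these replace it by $p$ decorated with a $\ast$-box, and that $\ast$-box forces $v=\ast$ via Lemma~\ref{*-proj}, collapsing the remaining $v$-summation. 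It then remains to check that the scalar prefactors cancel --- the $I^{1/2}n_*^{-1/2}$ from $i_{2n}$, the $I^{-1/2}\sum_v(\mu_v/n_v)^{1/2}$ from $\Psi_n$, and the powers of $\mu_v$, $n_v$, $n_*$, $I$, $\delta$ produced along the way combine to $1$ --- leaving precisely the stacked diagram for $p_n(x\otimes y^{op})p_n$. The same bookkeeping with $x=p_n^{(1)}$, $y=p_n^{(2)}$, using $\Psi_n(1_{\mc A_{2n+1}})=p_n$, yields $\Psi_n(i_{2n}(p_n))=p_n=\Psi_n(1_{\mc A_{2n+1}})$, hence $i_{2n}(p_n)=1_{\mc A_{2n+1}}$ by injectivity of $\Psi_n$; thus $i_{2n}$ is unital.

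With the square in hand, $i_{2n}=\Psi_n^{-1}\circ(\text{stacking embedding})\circ(\text{the identification }\mc A_{2n}\simeq p_n(Gr_{2nk}\otimes Gr_{2nk}^{op})p_n)$ is a composite of injective unital $*$-homomorphisms, hence is itself an injective unital $*$-homomorphism. For the traces, the unique trace on the $II_1$ factor $M_{2nk}\boxtimes M_{2nk}^{op}$ restricts to the unique trace on the subfactor $M_{2nk}\otimes M_{2nk}^{op}$ (equivalently, one checks directly from the definitions that closing up a stacked diagram at top and bottom factors as a product of two closures), so $\tau_{2nk}\boxtimes\tau_{2nk}$ restricts to $\tau_{2nk}\otimes\tau_{2nk}$ on $Gr_{2nk}\otimes Gr_{2nk}^{op}$. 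Combining this with $\varphi_{2n+1}=\delta^{4nk}I^{-n}(\tau_{2nk}\boxtimes\tau_{2nk})\circ\Psi_n$, with $\varphi_{2n}=\delta^{4nk}I^{-n}(\tau_{2nk}\otimes\tau_{2nk})|_{\mc A_{2n}}$, and with the square gives $\varphi_{2n+1}\circ i_{2n}=\varphi_{2n}$. A unital trace-preserving $*$-homomorphism between faithful tracial $*$-algebras is isometric for the respective $2$-norms, so $i_{2n}$ extends to a normal, unital, trace-preserving inclusion $\mc M_{2n}\hookrightarrow\mc M_{2n+1}$, and the square of von Neumann algebras commutes because it commutes on the $\|\cdot\|_2$-dense $*$-subalgebras $\mc A_{2n}$, $\mc A_{2n+1}$ and all four maps are $\|\cdot\|_2$-continuous.

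The main obstacle is the diagrammatic identity $\Psi_n\circ i_{2n}=(\text{stacking embedding})$ of the first step: one must keep careful track of exactly which strings of $q$ are capped in the definition of $i_{2n}$, which strings $\Psi_n$ closes up, and the many $\mu_v$, $n_v$, $n_*$, $I$, $\delta$ normalizations, in order to see that the reductions via Lemma~\ref{2cleaver}, Proposition~\ref{q-skein} and Lemma~\ref{*-proj} land on the stacked diagram for $p_n(x\otimes y^{op})p_n$ with coefficient exactly $1$. Everything else in the proposition is formal once this identity is established.
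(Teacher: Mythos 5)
Your proposal is correct and follows essentially the same route as the paper, whose entire proof is the observation that $\Psi_n\circ i_{2n}$ coincides with the canonical inclusion of $p_n\bigl(Gr_{2nk}(\mc P)\otimes Gr_{2nk}(\mc P)^{op}\bigr)p_n$ into $p_n\bigl(Gr_{2nk}(\mc P)\boxtimes Gr_{2nk}(\mc P)^{op}\bigr)p_n$; your expansion of the formal consequences (unitality via injectivity of $\Psi_n$, trace preservation via the two trace formulas, extension to the GNS completions) is accurate. One small correction to the bookkeeping: in the composite $\Psi_n\circ i_{2n}$ the $\ast$-box coming from $i_{2n}$ and the $v$-box inserted by $\Psi_n$ close up into a single loop whose value is $\delta_{v,*}\,n_*$ (a trace pairing of central projections, rather than an application of Lemma~\ref{*-proj}), and it is precisely this factor of $n_*$ that cancels the two factors of $n_*^{-1/2}$ so that the surviving coefficient is exactly $1$.
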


\begin{proof}
It is clear that $\Psi_n \circ i_{2n}$ is the inclusion of $p_n(Gr_{2nk} \otimes Gr_{2nk}^{op})p_n$ into $p_n(Gr_{2nk} \boxtimes Gr_{2nk}^{op})p_n$, from which the result follows.
\end{proof}

We now compute the $\varphi_{2n+1}$-preserving conditional expectation from $\mc M_{2n+1}$ onto $\mc M_{2n}$.  Define $E_{\mc M_{2n}}:\mc V_{2n+1}(s,t) \to P_{s,2nk} \otimes P_{t,2nk}$ by
\begin{equation*}
\begin{tikzpicture}[scale=.5]
\draw[Box] (0,-2) rectangle (2,-.5); \node at (1,-1.25) {$y$}; \draw[verythickline] (1,-2) -- (1,-2.25); \node[marked, below right] at (2,-2) {};
\draw[Box] (0,4.5) rectangle (2,6); \node at (1,5.25) {$x$};\draw [verythickline] (1,6) -- (1,6.25); \node[marked,above left] at (0,6) {};

\draw[Box] (0,0) rectangle (2,1); \node at (1,.5) {$q^{(2)}$}; \node[marked,scale=.8,right=.05] at (2,.5) {};
\draw[Box] (0,3) rectangle (2,4); \node at (1,3.5) {$q^{(1)}$}; \node[marked,scale=.8,right=.05] at (2,3.5) {};

\draw[medthick] (.25,1) arc(180:0:.75cm and .75cm); \draw[medthick] (.6,1) arc(180:0:.4cm);
\draw[medthick] (.25,3) arc(180:360:.75cm and .75cm); \draw[medthick] (.6,3) arc(180:360:.4cm);

\draw[medthick] (.5,4) arc(0:90:.5cm and .25cm) arc(270:90:.375cm); \draw[medthick] (1.5,4) arc(180:90:.5cm and .25cm) arc (-90:90:.375cm);
\draw[medthick] (.5,0) arc(0:-90:.5cm and .25cm) arc(90:270:.375cm); \draw[medthick] (1.5,0) arc(180:270:.5cm and .25cm) arc(90:-90:.375cm);

\draw[thickline] (0,-1.5) -- (-.75,-1.5); \draw[thickline] (2,-1.5) -- (2.75,-1.5);
\draw[thickline] (0,5.5) -- (-.75,5.5); \draw[thickline] (2,5.5) -- (2.75,5.5);
\node[left] at (-1,2) {$E_{\mc M_{2n}}(x \otimes y) = \displaystyle I^{-1}\; \cdot$};

\begin{scope}[xshift=10cm]
\draw[Box] (0,-.5) rectangle (2,1); \node at (1,.25) {$y$}; \draw[verythickline] (1,-.5) -- (1,-.75); \node[marked, below right] at (2,-.5) {};
\draw[Box] (0,3) rectangle (2,4.5); \node at (1,3.75) {$x$};\draw [verythickline] (1,4.5) -- (1,4.75); \node[marked,above left] at (0,4.5) {};
\draw[Box] (.5,1.5) rectangle (1.5,2.5); \node at (1,2) {$*$}; \node[marked,right=.05cm] at (1.5,2) {};
\draw[medthick] (2,.5) arc(-90:90:.375cm) -- (1.5,1.25) arc(270:180:.25cm);
\draw[medthick] (0,.5) arc(270:90:.375cm) -- (.5,1.25) arc(-90:0:.25cm);
\draw[medthick] (0,3.5) arc(90:270:.375cm) -- (.5,2.75) arc(90:0:.25cm);
\draw[medthick] (2,3.5) arc(90:-90:.375cm) -- (1.5,2.75) arc(90:180:.25cm);
\draw[thickline] (0,0) -- (-.75,0); \draw[thickline] (2,0) -- (2.75,0);
\draw[thickline] (0,4) -- (-.75,4); \draw[thickline] (2,4) -- (2.75,4);
\node[left] at (-1,2) {$= \displaystyle I^{-1/2}n_*^{-1/2}\; \cdot$};
\end{scope}
\end{tikzpicture}
\end{equation*}

\begin{proposition}\label{evenjp}
$E_{\mc M_{2n}}$ extends to the unique $\varphi_{2n+1}$-preserving conditional expectation $\mc M_{2n+1} \to \mc M_{2n}$.
\end{proposition}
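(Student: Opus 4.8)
The plan is to identify the diagrammatically defined map $E_{\mc M_{2n}}$ with the orthogonal projection of $L^2(\mc M_{2n+1},\varphi_{2n+1})$ onto $L^2(\mc M_{2n},\varphi_{2n})$, restricted to $\mc M_{2n+1}$. Since $\varphi_{2n+1}$ is a faithful normal trace on $\mc M_{2n+1}$ (Theorem \ref{symmlift}) and, by Proposition \ref{eveninclusion}, $i_{2n}$ realizes $\mc M_{2n}$ as a von Neumann subalgebra of $\mc M_{2n+1}$ with $\varphi_{2n+1}\circ i_{2n}=\varphi_{2n}$, there is a unique $\varphi_{2n+1}$-preserving conditional expectation $\mc M_{2n+1}\to\mc M_{2n}$, and it agrees with that orthogonal projection. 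So it suffices to check, for $\xi$ in the dense $*$-subalgebra $\mc A_{2n+1}$, that
\[
E_{\mc M_{2n}}(\mc A_{2n+1})\subseteq\mc A_{2n}
\quad\text{and}\quad
\varphi_{2n}\bigl(a\cdot E_{\mc M_{2n}}(\xi)\bigr)=\varphi_{2n+1}\bigl(i_{2n}(a)\star_q\xi\bigr)
\quad(a\in\mc A_{2n},\ \xi\in\mc A_{2n+1}),
\]
the second identity being exactly the statement that $\widehat{E_{\mc M_{2n}}(\xi)}$ is the $L^2$-orthogonal projection of $\hat\xi$. Granting these, $E_{\mc M_{2n}}$ extends by continuity to that projection, which by the standard theory of finite von Neumann algebras carries $\mc M_{2n+1}$ onto $\mc M_{2n}$ and is the desired conditional expectation; specializing the second identity to $\xi=i_{2n}(b)$ and using that $i_{2n}$ is a $\varphi$-preserving homomorphism together with faithfulness of $\varphi_{2n}$ also yields $E_{\mc M_{2n}}\circ i_{2n}=\mathrm{id}$, so the expectation is onto all of $\mc M_{2n}$.

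The first condition is essentially visual: evaluating the right-hand diagram defining $E_{\mc M_{2n}}$ on a generator $x\otimes y\in\mc V_{2n+1}(s,t)$ produces a diagram with $2nk$ through-strands on each side and $2s$, $2t$ strands at the top and bottom, hence an element of $P_{s,2nk}\otimes P_{t,2nk}^{\,op}\subseteq Gr_{2nk}\otimes Gr_{2nk}^{op}$; a direct inspection, together with the $p$--$q$ compatibility of Proposition \ref{q-skein}(1), shows that the $p_n^{(1)},p_n^{(2)}$ boxes on the boundary of the $\mc V_{2n+1}(s,t)$-element survive untouched, so the output lies in $\mc A_{2n}=p_n(Gr_{2nk}\otimes Gr_{2nk}^{op})p_n$.

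The second condition is the substantive part, and I would verify it by evaluating both sides as closed planar diagrams. On the left, the second form of the $E_{\mc M_{2n}}$-formula, followed by $\tau_{2nk}\otimes\tau_{2nk}$ (whose two factors $\delta^{-2nk}$ cancel the $\delta^{4nk}$ in $\varphi_{2n}$), presents $\varphi_{2n}(a\cdot E_{\mc M_{2n}}(\xi))$ as $I^{-n-1/2}n_*^{-1/2}$ times a ``$*$-inserted'' closed diagram built from $a$, $x$, $y$. On the right, $\varphi_{2n+1}(i_{2n}(a)\star_q\xi)$ involves the copy of $q$ coming from $\star_q$ lying next to the (mostly $*$-capped) copy of $q$ coming from $i_{2n}$, together with the $q$'s and $\sum TL$-closures hidden in $\varphi_{2n+1}$. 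The key step is to fuse these copies of $q$: using rotation, capping and connecting (Proposition \ref{q-skein}(2)--(5)) together with Corollaries \ref{2q-rotate} and \ref{2q-p}, the two ``multiplication'' copies of $q$ combine into a $p$, a single $*$-cap is produced, and what remains is exactly the closed diagram appearing on the left. The main obstacle is purely a matter of bookkeeping: one must track the scalars thrown off along the way — $I^{1/2}n_*^{-1/2}$ from $i_{2n}$, the $(\mu_v/n_v)^{1/2}$ and $I^{-1/2}$ factors from $\star_q$ and from the $q$-fusions, and $I^{-1/2}n_*^{-1/2}$ from $E_{\mc M_{2n}}$ — and check that they cancel against the normalizations $I^{-n}$ and $\delta^{4nk}I^{-n}$ in $\varphi_{2n+1}$ and $\varphi_{2n}$, which works because $I^{-(2n+1)/2}=I^{-n}\cdot I^{-1/2}$ and every genuine sum over $v\in\Gamma_+$ is resolved by Proposition \ref{p-skein}(4). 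The topological content of the identity is routine given the skein calculus of Section \ref{sec:skein}; getting the constants to match on the nose, rather than up to a scalar, is where the care is needed.
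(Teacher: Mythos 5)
Your proposal is correct and follows essentially the same route as the paper: the paper's entire proof consists of verifying diagrammatically that $\langle i_{2n}(a\otimes b),\,x\otimes y\rangle = \langle a\otimes b,\,E_{\mc M_{2n}}(x\otimes y)\rangle$ (both sides reducing to the same closed diagram with constant $I^{-(2n+1)/2}n_*^{-1/2}$), which is exactly your pairing identity, and then invoking uniqueness of the trace-preserving expectation. Your write-up simply makes explicit the reduction to the dense subalgebra, the range condition, and the bookkeeping of constants that the paper leaves implicit.
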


\begin{proof}
We have
\begin{equation*}
\begin{tikzpicture}[scale=.5]
\draw[Box] (0,-.5) rectangle (2,1); \node at (1,.25) {$y$}; \node[marked,scale=.8, below right] at (2,-.5) {};
\draw[Box] (0,3) rectangle (2,4.5); \node at (1,3.75) {$x$}; \node[marked,scale=.8,above left] at (0,4.5) {};
\draw[Box] (.5,1.5) rectangle (1.5,2.5); \node at (1,2) {$*$}; \node[marked,right=.05cm] at (1.5,2) {};
\draw[medthick] (2,.5) arc(-90:90:.375cm) -- (1.5,1.25) arc(270:180:.25cm);
\draw[medthick] (0,.5) arc(270:90:.375cm) -- (.5,1.25) arc(-90:0:.25cm);
\draw[medthick] (0,3.5) arc(90:270:.375cm) -- (.5,2.75) arc(90:0:.25cm);
\draw[medthick] (2,3.5) arc(90:-90:.375cm) -- (1.5,2.75) arc(90:180:.25cm);
\draw[thickline] (0,0) -- (-.75,0); \draw[thickline] (2,0) arc(90:-90:.75cm and 1.125cm) -- (-2.75,-2.25) arc(270:90:.75cm and 1.125cm);
\draw[thickline] (0,4) -- (-.75,4); \draw[thickline] (2,4) arc(-90:90:.75cm and 1.125cm) -- (-2.75,6.25) arc(90:270:.75cm and 1.125cm);

\draw[Box] (-2.75,3.5) rectangle (-.75,4.5); \node at (-1.75,4) {$a^*$};
\draw[Box] (-2.75,-.5) rectangle (-.75,.5); \node at (-1.75,0) {$b^*$};
\draw[thickline](-1.75,4.5) arc(180:90:.75cm) --node[draw,thick,rounded corners, fill=white,scale=.7] {$\sum TL$} (.25,5.25) arc(90:0:.75cm);
\draw[thickline](-1.75,-.5) arc(180:270:.75cm) --node[draw,thick,rounded corners, fill=white,scale=.7] {$\sum TL$} (.25,-1.25) arc(-90:0:.75cm);

\node[left] at (-4,2) {$\langle i_{2n}(a \otimes b), x \otimes y \rangle = I^{-\tfrac{2n+1}{2}}n_*^{-1/2}\; \cdot$};
\node[right] at (3.5,2) {$= \langle a \otimes b, E_{\mc M_{2n}}(x \otimes y)\rangle$};
\end{tikzpicture}
\end{equation*}
from which the result follows.
\end{proof}

Now define inclusions $i_{2n+1}:\mc A_{2n+1} \to \mc A_{2n+2}$ by
\begin{equation*}
\begin{tikzpicture}[scale=.5]
\draw[Box] (0,1.75) rectangle (2,3.25); \node at (1,2.5) {$x$};
\draw[Box] (0,.25) rectangle (2,1.25); \node at (1,.75) {$q^{(1)}$};
\draw[Box] (0,-.25) rectangle (2,-1.25); \node at (1,-.75) {$q^{(2)}$};
\draw[Box] (0,-1.75) rectangle (2,-3.25); \node at (1,-2.5) {$y$};

\draw[medthick] (0,2.25) arc(90:270:.25cm and .375cm) arc(90:0:.75cm and .25cm);
\draw[medthick] (2,2.25) arc(90:-90:.25cm and .375cm) arc(90:180:.75cm and .25cm);
\draw[medthick] (0,-2.25) arc(270:90:.25cm and .375cm) arc(-90:0:.75cm and .25cm);
\draw[medthick] (2,-2.25) arc(-90:90:.25cm and .375cm) arc(270:180:.75cm and .25cm);

\draw[verythickline] (1,3.25) -- (1,3.5); \draw[verythickline] (1,-3.25) -- (1,-3.5);
\draw[thickline] (0,.75) -- (-.5,.75); \draw[thickline] (2,.75) -- (2.5,.75);
\draw[thickline] (0,-.75) -- (-.5,-.75); \draw[thickline] (2,-.75) -- (2.5,-.75);

\draw[thickline] (0,2.75) -- (-.5,2.75); \draw[thickline] (0,-2.75) -- (-.5,-2.75);
\draw[thickline] (2,2.75) -- (2.5,2.75); \draw[thickline] (2.5,-2.75) -- (2,-2.75);

\node[marked,above left] at (0,3.25) {}; \node[marked, below right] at (2,-3.25) {};
\node[left] at (-1,0) {$i_{2n+1}(x \otimes y) = $};
\end{tikzpicture}
\end{equation*}

\begin{proposition} \label{oddinclusion}
$i_{2n+1}$ extends to a unital, trace preserving embedding of $\mc M_{2n+1}$ into $\mc M_{2n+2}$.  Moreover, the following diagram commutes:
\begin{equation*}
\xymatrix{ \mc M_{2n} \ar[r]^{i_{2n+1} \circ i_{2n}} \ar[d]_{\simeq} & \mc M_{2n+2} \ar[d]^{\simeq}\\
p_n\bigl(M_{2nk} \otimes M_{2nk}^{op}\bigr)p_{n} \ar[r] & p_{n+1}\bigl(M_{(2n+2)k} \otimes M_{(2n+2)k}^{op}\bigr)p_{n+1}
}
\end{equation*}
where the arrow on the bottom row is the obvious inclusion.
\end{proposition}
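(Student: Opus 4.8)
The plan is to check by direct diagrammatic computation that $i_{2n+1}$ is a unital, $\dagger$-preserving algebra homomorphism from $(\mc A_{2n+1},\star_q)$ into $\mc A_{2n+2} = p_{n+1}(Gr_{(2n+2)k}\otimes Gr_{(2n+2)k}^{op})p_{n+1}$ satisfying $\varphi_{2n+2}\circ i_{2n+1} = \varphi_{2n+1}$, and then to read off the commuting square; faithfulness of the two traces then automatically promotes this to an embedding of the GNS completions. The first point is that $i_{2n+1}$ really lands in $p_{n+1}(Gr_{(2n+2)k}\otimes Gr_{(2n+2)k}^{op})p_{n+1}$: its defining diagram glues a copy of $q^{(1)}$ (resp.\ $q^{(2)}$) onto the innermost $k$ strings on each side of the $P_{s,(2n+1)k}$-box (resp.\ $P_{t,(2n+1)k}$-box) of an element of $\mc V_{2n+1}(s,t)$, and that subspace already carries the $p$'s and $p_n$'s built into its definition; using the recursion for $p_{n+1}$ together with relation (1) of Proposition \ref{q-skein} (which slides a $p$ through $q$) one sees the outer $p\otimes p^{op}$ and the nested $p_n$ reorganize into a $p_{n+1}$ sandwiching the output.

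The main step is multiplicativity. Computing $i_{2n+1}(x_1\otimes y_1)\cdot i_{2n+1}(x_2\otimes y_2)$ in $Gr_{(2n+2)k}\otimes Gr_{(2n+2)k}^{op}$ means placing the two diagrams side by side and joining the right cable of the first to the left cable of the second: the $2nk$ outer strings from $x_1,x_2$ join directly (as do those from $y_1,y_2$), while the inner copies $q_1^{(1)},q_2^{(1)}$ (resp.\ $q_1^{(2)},q_2^{(2)}$) become adjacent. Applying the rotation relation (2) of Proposition \ref{q-skein}, and Corollary \ref{2q-rotate} to handle the adjacent pair, identifies this configuration with the single $q$ appearing in $(x_1\otimes y_1)\star_q(x_2\otimes y_2)$, so the product equals $i_{2n+1}\bigl((x_1\otimes y_1)\star_q(x_2\otimes y_2)\bigr)$. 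Compatibility with $\dagger$ is immediate from the top--bottom symmetry of the picture, as for $\Psi_n$. For unitality I would substitute the explicit formula for $1_{\mc A_{2n+1}}$, in which the $q$ is already capped down to a $\ast$-box and everything is sandwiched by $p_n$'s, into the $i_{2n+1}$-diagram; relations (4) and (5) of Proposition \ref{q-skein} collapse the now doubly capped $q$ of $i_{2n+1}$ against that $\ast$-box, and the recursion for $p_{n+1}$ yields $i_{2n+1}(1_{\mc A_{2n+1}}) = p_{n+1}$.

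For the trace identity I would evaluate $\delta^{4(n+1)k}I^{-(n+1)}(\tau_{(2n+2)k}\otimes\tau_{(2n+2)k})$ on the $i_{2n+1}$-diagram: closing top and bottom with $\sum TL$ and capping off the side cables turns the two copies of $q$ into capped $q$'s, and by the capping relation (3) of Proposition \ref{q-skein} (with Lemma \ref{*-proj} and Proposition \ref{p-skein}(1)) the result is exactly the $\ast$-box picture defining $\varphi_{2n+1}(x\otimes y)$; the scalars --- $\delta$-powers from the Temperley--Lieb closures, $I^{-1/2}n_*^{\pm1/2}$ from each capped $q$, and $\tr_{P_{2k}}(p)=\delta^{-4k}I$ for the extra $p$'s distinguishing $p_{n+1}$ from $p_n$ --- combine to the normalization in $\varphi_{2n+1}$. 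Since $\varphi_{2n+1}$ and $\varphi_{2n+2}$ are faithful traces (Theorem \ref{symmlift} and the definition of $\mc M_{2n+2}$), $i_{2n+1}$ is injective and extends to a normal, unital, trace-preserving embedding $\mc M_{2n+1}\hookrightarrow\mc M_{2n+2}$.

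For the square, $i_{2n}$ has the simplified form in which its $q$ is contracted to a single $\ast$-box carrying the scalar $I^{1/2}n_*^{-1/2}$, so $i_{2n+1}\circ i_{2n}$ is the $i_{2n+1}$-diagram with that $\ast$-box placed between $x$ and $y$; the remaining genuine $q$ then has its top strings capped against $P_k$-boxes, and relation (3) of Proposition \ref{q-skein} --- equivalently, viewing the two $q$'s together, Corollary \ref{2q-p} --- collapses all the $q$'s and $\ast$-boxes, leaving just the $p_{n+1}$-sandwich of $x\otimes y$ with total scalar $1$, i.e.\ the obvious inclusion $p_n(M_{2nk}\otimes M_{2nk}^{op})p_n\hookrightarrow p_{n+1}(M_{(2n+2)k}\otimes M_{(2n+2)k}^{op})p_{n+1}$. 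I expect multiplicativity to be the main obstacle: it is precisely where the $q$-twisted product $\star_q$, built to model the symmetric-enveloping product, must be matched against honest side-by-side multiplication in the larger ordinary tensor product, and making the rotation of the $q$-boxes line up is the crux; the scalar bookkeeping in the trace step is the second most delicate point.
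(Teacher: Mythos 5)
Your proposal is correct and follows essentially the same route as the paper, whose own proof is a one-line appeal to Proposition \ref{q-skein} (3), (4) and Proposition \ref{p-skein} (1); you have simply filled in the diagrammatic details, correctly identifying Corollary \ref{2q-rotate} as the tool that converts the two adjacent $q$'s arising in $i_{2n+1}(a)\wedge i_{2n+1}(b)$ into the stacked pair appearing in $i_{2n+1}(a\star_q b)$ (the exact analogue of the use of Proposition \ref{q-connect} in the proof that $\Psi_n$ is a homomorphism), and the capping relations as what collapses $i_{2n+1}\circ i_{2n}$ to the obvious inclusion.
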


\begin{proof}
This follows from combining (3) and (4) of Proposition \ref{q-skein} and (1) of Proposition \ref{p-skein}.
\end{proof}

Now define $E_{\mc M_{2n+1}}: \mc A_{2n+2} \to \mc A_{2n+1}$ by
\begin{equation*}
\begin{tikzpicture}[scale=.5]
\draw[Box] (0,1.75) rectangle (2,3.25); \node at (1,2.5) {$x$};
\draw[Box] (0,.5) rectangle (2,1.5); \node at (1,1) {$q^{(1)}$};
\draw[Box] (0,-.5) rectangle (2,-1.5); \node at (1,-1) {$q^{(2)}$};
\draw[Box] (0,-1.75) rectangle (2,-3.25); \node at (1,-2.5) {$y$};

\draw[line width=4pt] (0,1) arc(270:90:.5cm and .625cm); \draw[line width=4pt] (2,1) arc(-90:90:.5cm and .625cm);
\draw[line width=4pt] (0,-1) arc(90:270:.5cm and .625cm); \draw[line width=4pt] (2,-1) arc(90:-90:.5cm and .625cm);

\draw[medthick] (.5,.5) arc(0:-90:.25cm) -- (-.75,.25); \draw[medthick] (1.5,.5) arc(180:270:.25cm) -- (2.75,.25);
\draw[medthick] (.5,-.5) arc(0:90:.25cm) -- (-.75,-.25); \draw[medthick] (1.5,-.5) arc(180:90:.25cm) -- (2.75,-.25);

\draw[verythickline] (1,3.25) -- (1,3.5); \draw[verythickline] (1,-3.25) -- (1,-3.5);

\draw[thickline] (0,2.75) -- (-.75,2.75); \draw[thickline] (0,-2.75) -- (-.75,-2.75);
\draw[thickline] (2,2.75) -- (2.75,2.75); \draw[thickline] (2.75,-2.75) -- (2,-2.75);

\node[marked,above left] at (0,3.25) {}; \node[marked, below right] at (2,-3.25) {};
\node[left] at (-1,0) {$E_{\mc M_{2n+1}}(x \otimes y) = $};
\end{tikzpicture}
\end{equation*}

\begin{proposition}\label{oddjp}
$E_{\mc M_{2n+1}}$ extends to the unique $\varphi_{2n+2}$-preserving conditional expectation of $\mc M_{2n+2}$ onto $\mc M_{2n+1}$.
\end{proposition}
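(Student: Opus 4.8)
The plan is to follow the proof of Proposition \ref{evenjp} verbatim in spirit: everything reduces to the single \emph{symmetry identity}
\[
\langle i_{2n+1}(a \otimes b),\, x \otimes y \rangle_{\mc M_{2n+2}} \;=\; \langle a \otimes b,\, E_{\mc M_{2n+1}}(x \otimes y) \rangle_{\mc M_{2n+1}}
\]
valid for all $a \otimes b \in \mc A_{2n+1}$ and $x \otimes y \in \mc A_{2n+2}$. Granting this, the rest is formal. First, it shows $E_{\mc M_{2n+1}}$ is well defined: if a sum $\sum_i x_i \otimes y_i$ vanishes in $\mc A_{2n+2}$ then its image pairs to zero against every $a \otimes b$, hence vanishes by faithfulness of $\varphi_{2n+1}$. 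Second, the identity exhibits $E_{\mc M_{2n+1}}$ as the restriction to $\mc A_{2n+2}$ of the adjoint of the inclusion $i_{2n+1}$ with respect to the trace inner products, i.e. of the orthogonal projection of $L^2(\mc M_{2n+2})$ onto the $L^2$-closure of the von Neumann subalgebra $i_{2n+1}(\mc M_{2n+1})$ (which makes sense by Proposition \ref{oddinclusion}); this extends $E_{\mc M_{2n+1}}$ to a normal contraction. Taking $a \otimes b = 1_{\mc A_{2n+1}}$ and using $i_{2n+1}(1) = 1$ gives $\varphi_{2n+1} \circ E_{\mc M_{2n+1}} = \varphi_{2n+2}$. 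Finally, since $i_{2n+1}$ is trace-preserving, $\langle \xi, E_{\mc M_{2n+1}}(i_{2n+1}(\zeta)) \rangle = \langle i_{2n+1}(\xi), i_{2n+1}(\zeta) \rangle = \langle \xi, \zeta \rangle$ for all $\xi$, forcing $E_{\mc M_{2n+1}} \circ i_{2n+1} = \mathrm{id}$; so $E_{\mc M_{2n+1}}$ is a genuine conditional expectation onto $\mc M_{2n+1}$, and it is then automatically the unique $\varphi_{2n+2}$-preserving one.

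To prove the identity I would expand both sides into closed planar diagrams. The left-hand side unfolds, via the definition of $i_{2n+1}$ together with $\varphi_{2n+2} = \delta^{4(n+1)k} I^{-(n+1)}(\tau_{(2n+2)k} \boxtimes \tau_{(2n+2)k})|_{\mc A_{2n+2}}$ and the Voiculescu-trace closure (the $\sum TL$ boxes), into a diagram built from $a^{*}$, $b^{*}$, the single copy of $q$ contributed by $i_{2n+1}$, the elements $x$, $y$, and the outer $p_{n+1}$-boxes; one then peels the outermost $p$-layer off $p_{n+1}$ using its recursion and collapses that copy of $q$ against the caps produced by the trace closure. The right-hand side unfolds, via the definitions of $E_{\mc M_{2n+1}}$ and of $\varphi_{2n+1}$, into a diagram with one copy of $q$ meeting $x$, $y$, $a^{*}$, $b^{*}$ and the $p_{n}$-boxes. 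The claim is that, after applying the rotation relation Proposition \ref{q-skein}(2), the capping and double-capping relations Proposition \ref{q-skein}(3)--(4), the compatibility of $q$ with $p$ in Proposition \ref{q-skein}(1), and the trace, capping and multiplication relations for $p$ in Proposition \ref{p-skein}(1),(3),(4) (invoking Corollaries \ref{2q-rotate} and \ref{2q-p} if two copies of $q$ momentarily appear), the two closed diagrams become isotopic and acquire the same scalar prefactor.

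The main obstacle will be precisely this last piece of bookkeeping: tracking how the Temperley--Lieb sums implicit in $\varphi_{2n+2}$ and in $\varphi_{2n+1}$, the recursive structure $p_{n+1} = p \cdot (\cdots) \cdot p_{n}$, and the normalizing powers of $\delta$, $I$ and $n_{*}$ combine, so that the $q$-caps coming from the trace closure on the $\mc M_{2n+2}$-side are converted \emph{exactly} into the $q$-configuration in the definition of $E_{\mc M_{2n+1}}$, constants included. A subtlety to watch is that the relevant capping relation, Proposition \ref{q-skein}(3), itself produces a factor of $p$, which must then be absorbed into the $p_{n}$-layer by means of Proposition \ref{p-skein}; getting that absorption and the resulting power of $I^{-1/2} n_{*}^{-1/2}$ to match the prefactor in the definition of $E_{\mc M_{2n+1}}$ is where the computation has to be done with care. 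As in Proposition \ref{evenjp}, once the two diagrams are drawn the verification is a bounded chain of the skein moves already established in Section \ref{sec:skein}.
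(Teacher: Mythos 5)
Your proposal is correct and follows essentially the same route as the paper: the paper's proof consists precisely of verifying the adjointness identity $\langle i_{2n+1}(a\otimes b), x\otimes y\rangle = \langle a\otimes b, E_{\mc M_{2n+1}}(x\otimes y)\rangle$ diagrammatically (the key move being the Connecting relation, Proposition \ref{q-skein}(5), which is a consequence of the relations you list) and then concluding by the standard formal argument you spell out.
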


\begin{proof}
Draw the picture and apply Proposition \ref{q-skein} (5) to see that
\begin{equation*}
\langle a \otimes b, E_{\mc M_{2n+1}}(x \otimes y) \rangle = \langle i_{2n+1}(a \otimes b), x \otimes y \rangle,
\end{equation*}
from which the result follows.
\end{proof}

Now define $\e_{2n} \in \mc A_{2n+2}$ by
\begin{equation*}
\begin{tikzpicture}[scale=.75]
\draw[Box] (-.25,.25) rectangle (.75,1.75); \node at (.25,1) {$q^{(1)}_1$};
\draw[Box] (-.25,-.25) rectangle (.75,-1.75); \node at (.25,-1) {$q^{(2)}_1$};
\draw[Box] (2.25,.25) rectangle (3.25,1.75); \node at (2.75,1) {$q^{(1)}_2$};
\draw[Box] (2.25,-.25) rectangle (3.25,-1.75); \node at (2.75,-1) {$q^{(2)}_2$};

\draw[thickline] (-.5,2.5) --node[Box,draw,fill=white] (n1) {$p_n^{(1)}$} (3.5,2.5); \node[marked,above left] at (n1.north west) {};
\draw[thickline] (-.5,-2.5) --node[Box,draw,fill=white] (n2) {$p_n^{(2)}$} (3.5,-2.5); \node[marked, below right] at (n2.south east) {};

\draw[thickline] (-.25,1) -- (-.5,1);
\draw[thickline] (-.25,-1) -- (-.5,-1);
\draw[thickline] (3.25,1) -- (3.5,1);
\draw[thickline] (3.25,-1) -- (3.5,-1);
\draw[medthick] (.75,1.5) arc(90:-90:.5cm); \draw[medthick] (2.25,1.5) arc(90:270:.5cm);
\draw[medthick] (.75,1.25) arc(90:-90:.25cm); \draw[medthick] (2.25,1.25) arc(90:270:.25cm);
\draw[medthick] (.75,-1.5) arc(-90:90:.5cm); \draw[medthick] (2.25,-1.5) arc(270:90:.5cm);
\draw[medthick] (.75,-1.25) arc(-90:90:.25cm); \draw[medthick] (2.25,-1.25) arc(270:90:.25cm);

\node[left] at (-.75,0) {$\displaystyle \e_{2n} = I^{-1} \; \cdot$};

\begin{scope}[xshift=7cm]
 \draw[Box] (0,-.25) rectangle (1,.25); \node at (.5,0) {$*$};
\draw[Box] (1.75,-.25) rectangle (2.75,.25); \node at (2.25,0) {$*$};
\draw[thickline] (-.5,.75) -- (0,.75) arc(90:0:.5cm);
\draw[thickline] (-.5,-.75) -- (0,-.75) arc(-90:0:.5cm);
\draw[thickline] (3.25,.75) -- (2.75,.75) arc(90:180:.5cm);
\draw[thickline] (3.25,-.75) -- (2.75,-.75) arc(270:180:.5cm);

\draw[thickline] (-.5,1.5) --node[Box,draw,fill=white] (n1) {$p_n^{(1)}$} (3.5,1.5); \node[marked,above left] at (n1.north west) {};
\draw[thickline] (-.5,-1.5) --node[Box,draw,fill=white] (n2) {$p_n^{(2)}$} (3.5,-1.5); \node[marked, below right] at (n2.south east) {};

\node[left] at (-.75,0) {$\displaystyle = n_*^{-1}\; \cdot$};
\end{scope}
\end{tikzpicture}
\end{equation*}
where the second equality follows from Proposition \ref{q-skein} (4). 

Also define $\e_{2n+1} \in \mc A_{2n+3}$ by
\begin{equation*}
 \begin{tikzpicture}[scale=.75]
\draw[thickline] (-.5,2.25) --node[Box,draw,fill=white] (n1) {$p_n^{(1)}$} (3.5,2.25); \node[marked,above left] at (n1.north west) {};
\draw[thickline] (-.5,-2.25) --node[Box,draw,fill=white] (n2) {$p_n^{(2)}$} (3.5,-2.25); \node[marked, below right] at (n2.south east) {};

\draw[thickline] (-.5,1) --node[Box,draw,fill=white] (q1) {$q^{(1)}$} (3.5,1); 
\draw[thickline] (-.5,-1) --node[Box,draw,fill=white] (q2) {$q^{(2)}$} (3.5,-1); 

\draw[medthick] (q1.south) ++(-.25,0) arc (0:-90:.25cm) -- ++(-1.5,0);
\draw[medthick] (q1.south) ++(.25,0) arc(180:270:.25cm) -- ++(1.5,0);

\draw[medthick] (q2.north) ++(-.25,0) arc(0:90:.25cm) -- ++(-1.5,0);
\draw[medthick] (q2.north) ++(.25,0) arc(180:90:.25cm) -- ++(1.5,0);

\node[left] at (-.75,0) {$\displaystyle \e_{2n+1} = $};
\end{tikzpicture}
\end{equation*}

\begin{theorem}\label{tower}
$(\mc M_{n}, I^{-1/2}\mathbf e_n)$ is the Jones tower for the subfactor $\mc M_0 \subset \mc M_1$.  
\end{theorem}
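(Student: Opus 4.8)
The plan is to reduce to the standard characterization of the Jones tower as an iterated basic construction, and then to check the defining relations by diagram manipulation. Recall (see e.g.\ \cite{ghj}, or Pimsner--Popa) that if $\mc M_0\subset\mc M_1$ is a finite-index inclusion of $II_1$ factors with trace-preserving conditional expectation $E_{\mc M_0}$, then a consistent tower of $II_1$ factors $\mc M_0\subset\mc M_1\subset\mc M_2\subset\cdots$ (with its unique traces), together with projections $f_n\in\mc M_{n+2}$, is the Jones tower of $\mc M_0\subset\mc M_1$ as soon as, for every $n\geq 0$: (i) $f_n x f_n=E_{\mc M_n}(x)f_n$ for all $x\in\mc M_{n+1}$; (ii) $E_{\mc M_{n+1}}(f_n)=[\mc M_{n+1}:\mc M_n]^{-1}\cdot 1$; and (iii) $\mc M_{n+2}=\{\mc M_{n+1},f_n\}''$. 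When these hold one obtains automatically that $[\mc M_{n+1}:\mc M_n]$ is constant, that the $f_n$ satisfy the Temperley--Lieb relations, and that the traces are Markov. I will apply this with $f_n=I^{-1/2}\mathbf e_n$. The ambient data is already in place: by Theorem~\ref{symmlift} and Proposition~\ref{eveninclusion}, $\mc M_0\subset\mc M_1$ is identified with $M_0\otimes M_0^{op}\subset M_0\boxtimes M_0^{op}$, of index $I$; Propositions~\ref{eveninclusion} and~\ref{oddinclusion} provide the compatible trace-preserving embeddings, so the tower is consistent and each $\mc M_n$ is a $II_1$ factor (a corner of one); and Propositions~\ref{evenjp} and~\ref{oddjp} identify $E_{\mc M_n}$ with the trace-preserving conditional expectation, so all the maps in (i)--(iii) are the intended ones.

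The computational core is to verify, separately for $n$ even and $n$ odd, that $I^{-1/2}\mathbf e_n$ is a self-adjoint projection and that (i) and (ii) hold with $[\mc M_{n+1}:\mc M_n]=I$. For idempotency one stacks two copies of $\mathbf e_{2n}$ (resp.\ $\mathbf e_{2n+1}$), using the $*$-box form (resp.\ the $q$-box form) from the definition, and collapses the resulting internal composite/loop using Proposition~\ref{q-skein}(4),(5), Proposition~\ref{p-skein}(1),(4) and Lemma~\ref{*-proj}; the scalar $I^{-1/2}$ is precisely what makes the square idempotent, and self-adjointness is visible from the $\dagger$-symmetry of the diagram. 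Relation (i), and the value $E_{\mc M_{n+1}}(\mathbf e_n)=I^{-1/2}$, follow by substituting into the picture the definitions of $\star_q$, of the inclusions $i_n$, and of the conditional expectations $E_{\mc M_n}$, and then simplifying with the skein relations of Section~\ref{sec:skein} --- chiefly Lemma~\ref{2cleaver}, Propositions~\ref{p-skein} and~\ref{q-skein}, and Corollaries~\ref{4cleaver}, \ref{2q-rotate} and~\ref{2q-p}. That $[\mc M_{n+1}:\mc M_n]=I$ (needed to match (ii)): for $n$ even this is the corner index $[p_{n/2}(M_{nk}\boxtimes M_{nk}^{op})p_{n/2}:p_{n/2}(M_{nk}\otimes M_{nk}^{op})p_{n/2}]=[M_{nk}\boxtimes M_{nk}^{op}:M_{nk}\otimes M_{nk}^{op}]=I$, since the cutting projection $p_{n/2}$ lies in the smaller algebra (for $n=0$ one invokes $[M_0\boxtimes M_0^{op}:M_0\otimes M_0^{op}]=I$ from \cite{cjs}); for $n$ odd it is deduced inductively from relation (ii) at the previous even level via the basic-construction index formula.

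The step I expect to be the genuine obstacle is the generation relation (iii), $\mc M_{n+2}=\{\mc M_{n+1},\mathbf e_n\}''$. For $n$ odd this is cheap: the inclusion $\mc M_{n+1}\subset\mc M_{n+2}$ is again a corner of $M_{(n+1)k}\otimes M_{(n+1)k}^{op}\subset M_{(n+1)k}\boxtimes M_{(n+1)k}^{op}$ cut by the same projection, hence of index $I$; and once (i),(ii) are known the subfactor $\{\mc M_{n+1},\mathbf e_n\}''\subseteq\mc M_{n+2}$ is a basic construction of $\mc M_n\subset\mc M_{n+1}$ and so also has index $I$ over $\mc M_{n+1}$, forcing $\{\mc M_{n+1},\mathbf e_n\}''=\mc M_{n+2}$. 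For $n$ even, however, $\mc M_{n+1}\subset\mc M_{n+2}$ is not a corner of a standard inclusion, and generation must be checked by hand: using Proposition~\ref{depthspan} one writes the spanning elements $p_{(n+2)/2}(x\otimes y^{op})p_{(n+2)/2}$ of $\mc A_{n+2}=p_{(n+2)/2}(Gr_{(n+2)k}\otimes Gr_{(n+2)k}^{op})p_{(n+2)/2}$ in terms of elements living at level $nk$ together with Jones projections, and then verifies diagrammatically that each of these already lies in the algebra generated by $i_{n+1}(\mc A_{n+1})$ and $\mathbf e_n$ --- this is exactly the place where the finite-depth hypothesis $depth(\mc P)\leq 2k$ is indispensable. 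The induction then closes (base case $[\mc M_1:\mc M_0]=I$, alternating between the even levels, where (iii) is proved directly, and the odd levels, where (iii) is free), and the characterization quoted above yields that $(\mc M_n,I^{-1/2}\mathbf e_n)$ is the Jones tower of $\mc M_0\subset\mc M_1$.
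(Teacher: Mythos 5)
Your overall skeleton matches the paper's: establish that $I^{-1/2}\e_n$ implements $E_{\mc M_n}$ (Propositions~\ref{evenjp} and~\ref{oddjp}), identify $[\mc M_1:\mc M_0]=I$ via Theorem~\ref{symmlift}, and then reduce everything to the generation statement $\mc M_{n+2}=\langle \mc M_{n+1},\e_n\rangle$. Your treatment of the odd case (deduce the index of $\mc M_{2n+2}\subset\mc M_{2n+3}$ from the already-established even case one level up, then compare with the index of the basic construction sitting inside) is exactly the paper's argument.

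The problem is the even-level generation step, which you correctly flag as ``the genuine obstacle'' but then leave as an unexecuted assertion: you propose to take the spanning elements of $\mc A_{2n+2}$ from Proposition~\ref{depthspan} and ``verify diagrammatically'' that they lie in the algebra generated by $i_{2n+1}(\mc A_{2n+1})$ and $\e_{2n}$. As written this is a claim, not a proof, and it is the hardest part of the theorem. Moreover, it is unnecessary: you already cite Proposition~\ref{oddinclusion}, which identifies the \emph{two-step} inclusion $\mc M_{2n}\subset\mc M_{2n+2}$ with the corner inclusion $p_n(M_{2nk}\otimes M_{2nk}^{op})p_n\subset p_{n+1}(M_{(2n+2)k}\otimes M_{(2n+2)k}^{op})p_{n+1}$. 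This lets one compute
\begin{equation*}
[\mc M_{2n+2}:\mc M_{2n}]=\Bigl(\tfrac{(\tau\otimes\tau)(p_{n+1})}{(\tau\otimes\tau)(p_n)}\Bigr)^{2}\cdot\bigl[M_{(2n+2)k}\otimes M_{(2n+2)k}^{op}:M_{2nk}\otimes M_{2nk}^{op}\bigr]=\delta^{-8k}I^{2}\cdot\delta^{8k}=I^{2},
\end{equation*}
using Proposition~\ref{p-skein}(1) for the traces of the $p_n$. Since $\langle\mc M_{2n+1},\e_{2n}\rangle\subset\mc M_{2n+2}$ is a copy of the basic construction of the index-$I$ inclusion $\mc M_{2n}\subset\mc M_{2n+1}$, it also has index $I^2$ over $\mc M_{2n}$, and equality of indices forces equality of the algebras. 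This index count is what replaces your ``check by hand'' step in the paper; with it inserted, your argument closes. Without it (or a genuinely worked-out diagrammatic generation argument), the proof is incomplete at precisely the point you identified as critical.
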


\begin{proof}
It follows from Propositions \ref{evenjp} and \ref{oddjp} that $\e_n$ is the Jones projection for $\mc M_n\subset \mc M_{n+1}$.  So it remains to show that $\mc M_{n+2}$ is the basic construction for $\mc M_n \subset \mc M_{n+1}$ for all $n \geq 0$.  

We first prove that $\mc M_{2n+2}$ is the basic construction of $\mc M_{2n} \subset \mc M_{2n+1}$.  Indeed, by a standard result in subfactor theory we have that the subalgebra $\langle \mc M_{2n+1},\e_{2n}\rangle$ of $\mc M_{2n+2}$ which is generated by $\mc M_{2n+1}$ and $\e_{2n}$ is isomorphic to the basic construction of $\mc M_{2n} \subset \mc M_{2n+1}$.  Note that by Proposition \ref{eveninclusion} we have
\begin{align*}
[\mc M_{2n+1}:\mc M_{2n}] &= [p_n(M_{2nk} \boxtimes M_{2nk}^{op})p_n:p_n(M_{2nk} \otimes M_{2nk}^{op})p_n] \\
&= [M_{2nk} \boxtimes M_{2nk}: M_{2nk} \otimes M_{2nk}^{op}] = I.
\end{align*}
It follows that $[\langle \mc M_{2n+1},\e_{2n}\rangle:\mc M_{2n}] = I^2$.  By Proposition \ref{oddinclusion} we have
\begin{align*}
[\mc M_{2n+2}:\mc M_{2n}] &= [p_{n+1}(M_{2(n+1)k} \otimes M_{2(n+1)k}^{op})p_{n+1}:p_n(M_{2nk} \otimes M_{2nk}^{op})p_{n}] \\
&= \biggl(\frac{(\tau_{2(n+1)k} \otimes \tau_{2(n+1)k})(p_{n+1})}{(\tau_{2nk} \otimes \tau_{2nk})(p_n)}\biggr)^2 \cdot [M_{2(n+1)k} \otimes M_{2(n+1)k}:M_{2nk} \otimes M_{2nk}]\\
&= \delta^{-8k}I^2 \cdot \delta^{8k} \\
&= [\langle \mc M_{2n+1},\e_{2n}\rangle:\mc M_{2n}],
\end{align*}
which implies that $\mc M_{2n+2} = \langle \mc M_{2n+1},\e_{2n}\rangle$ is the Jones basic construction as claimed.

It now follows that $\mc M_{2n+3}$ is the basic construction for $\mc M_{2n+1} \subset \mc M_{2n+2}$.  Indeed, we have $\langle M_{2n+2},\e_{2n+1} \rangle \subset \mc M_{2n+3}$.  But since we have shown that $\mc M_{2n+4}$ is the basic construction of $\mc M_{2n+2} \subset \mc M_{2n+3}$, we have
\begin{equation*}
[\mc M_{2n+3}:\mc M_{2n+2}] = [\mc M_{2n+4}:\mc M_{2n+2}]^{1/2} = I = [\langle \mc M_{2n+2},\e_{2n+1} \rangle:\mc M_{2n+2}],
\end{equation*}
so that $\mc M_{2n+3} = \langle \mc M_{2n+2},\e_{2n+1}\rangle$ as desired.
\end{proof}

It follows that the planar algebra $\mc P(\mc M_0 \subset \mc M_1)$ is isomorphic to $\mc P(M_0 \otimes M_0^{op} \subset M_0 \boxtimes M_0^{op})$.  We will now compute the higher relative commutants $\mc M_i ' \cap \mc M_n$ for $i =0,1$, we compute the action of planar tangles in the next section.

\begin{proposition}\label{commutants}
The higher relative commutants are as follows:
\begin{enumerate}
 \item For $n \geq 0$, $\mc M_0' \cap \mc M_{2n}$ is spanned by elements of the form:
\begin{equation*}
\begin{tikzpicture}[scale=.6]
\draw[Box] (0,.25) rectangle (2,1.25); \node at (1,.75) {$p_n^{(1)}$};
\draw[Box] (0,-.25) rectangle (2,-1.25); \node at (1,-.75) {$p_n^{(2)}$};
\draw[Box] (2.5,.25) rectangle (4.5,1.25); \node at (3.5,.75) {$a$}; \node[marked,above left] at (2.5,1.25) {};
\draw[Box] (2.5,-.25) rectangle (4.5,-1.25); \node at (3.5,-.75) {$b$}; \node[marked,below right] at (4.5,-1.25) {};
\draw[Box] (5,.25) rectangle (7,1.25); \node at (6,.75) {$p_n^{(1)}$};
\draw[Box] (5,-.25) rectangle (7,-1.25); \node at (6,-.75) {$p_n^{(2)}$};

\draw[thickline] (0,.75) -- (-.5,.75); \draw[thickline] (2,.75) -- (2.5,.75);
\draw[thickline] (0,-.75) -- (-.5,-.75); \draw[thickline] (2,-.75) -- (2.5,-.75);
\draw[thickline] (4.5,.75) -- (5,.75); \draw[thickline] (7,.75) -- (7.5,.75);
\draw[thickline] (4.5,-.75) -- (5,-.75); \draw[thickline] (7,-.75) -- (7.5,-.75);
\end{tikzpicture}
\end{equation*}
\item For $n \geq 0$, $\mc M_0' \cap \mc M_{2n+1}$ is spanned by elements of the form:
\begin{equation*}
\begin{tikzpicture}[scale=.6]
\begin{scope}[yshift=2cm]
\draw[Box] (0,.25) rectangle (2,1.25); \node at (1,.75) {$p_n^{(1)}$};
\draw[Box] (5,.25) rectangle (7,1.25); \node at (6,.75) {$p_n^{(1)}$};
\draw[Box] (2.5,0) rectangle (4.5,1.25); \node at (3.5,.625) {$a$}; \node[marked,above left] at (2.5,1.25) {};
\draw[thickline] (0,.75) -- (-.5,.75); \draw[thickline] (2,.75) -- (2.5,.75);
\draw[thickline] (4.5,.75) -- (5,.75); \draw[thickline] (7,.75) -- (7.5,.75);
\end{scope}

\draw[Box] (3,.5) rectangle (4,1.5); \node[scale=.9] at (3.5,1) {$p^{(1)}$}; \node[marked,left=.05,scale=.8] at (3,1) {};
\draw[Box] (3,-.5) rectangle (4,-1.5); \node[scale=.9] at (3.5,-1) {$p^{(2)}$}; \node[marked,left=.05,scale=.8] at (3,-1) {};

\draw[medthick] (2.5,2.25) arc(90:270:.25cm) arc(90:0:.75cm and .25cm); \draw[medthick] (4.5,2.25) arc(90:-90:.25cm) arc(90:180:.75cm and .25cm);
\draw[medthick] (2.5,-2.25) arc(270:90:.25cm) arc(-90:0:.75cm and .25cm); \draw[medthick] (4.5,-2.25) arc(-90:90:.25cm) arc(270:180:.75cm and .25cm);

\draw[medthick] (3.25,.5) arc(0:-90:.25cm) -- (-.5,.25); \draw[medthick] (3.75,.5) arc(180:270:.25cm) -- (7.5,.25);
\draw[medthick] (3.25,-.5) arc(0:90:.25cm) -- (-.5,-.25); \draw[medthick] (3.75,-.5) arc(180:90:.25cm) -- (7.5,-.25);

\begin{scope}[yshift=-2cm]
\draw[Box] (2.5,0) rectangle (4.5,-1.25); \node at (3.5,-.625) {$b$}; \node[marked,below right] at (4.5,-1.25) {};
\draw[Box] (0,-.25) rectangle (2,-1.25); \node at (1,-.75) {$p_n^{(2)}$};
\draw[Box] (5,-.25) rectangle (7,-1.25); \node at (6,-.75) {$p_n^{(2)}$};
\draw[thickline] (0,-.75) -- (-.5,-.75); \draw[thickline] (2,-.75) -- (2.5,-.75);
\draw[thickline] (4.5,-.75) -- (5,-.75); \draw[thickline] (7,-.75) -- (7.5,-.75);
\end{scope}
\end{tikzpicture}
\end{equation*}

\item For $n \geq 1$, $\mc M_1' \cap \mc M_{2n}$ is spanned by elements of the form:
\begin{equation*}
\begin{tikzpicture}[scale=.6]
\draw[Box] (0,.25) rectangle (2,1.25); \node at (1,.75) {$p_{n-1}^{(1)}$};
\draw[Box] (0,-.25) rectangle (2,-1.25); \node at (1,-.75) {$p_{n-1}^{(2)}$};
\draw[Box] (2.5,.25) rectangle (4.5,1.75); \node at (3.5,1) {$a$}; \node[marked,above left,scale=.8] at (2.5,1.75) {};
\draw[Box] (2.5,-.25) rectangle (4.5,-1.75); \node at (3.5,-1) {$b$}; \node[marked,below right,scale=.8] at (4.5,-1.75) {};
\draw[Box] (2.5,2.5) rectangle (4.5,3.25); \node[scale=.9] at (3.5,2.875) {$q^{(1)}$}; \node[marked,scale=.8,above=.07] at (3.5,3.25) {};
\draw[Box] (2.5,-2.5) rectangle (4.5,-3.25); \node[scale=.9] at (3.5,-2.875) {$q^{(2)}$}; \node[marked,scale=.8,below=.03] at (3.5,-3.25) {};

\draw[medthick] (2.5,1.25) arc(270:90:.375cm and .5cm) arc(-90:0:.5cm and .25cm);
\draw[medthick] (4.5,1.25) arc(-90:90:.375cm and .5cm) arc(270:180:.5cm and .25cm);
\draw[medthick] (2.5,-1.25) arc(90:270:.375cm and .5cm) arc(90:0:.5cm and .25cm);
\draw[medthick] (4.5,-1.25) arc(90:-90:.375cm and .5cm) arc(90:180:.5cm and .25cm);

\draw[Box] (5,.25) rectangle (7,1.25); \node at (6,.75) {$p_{n-1}^{(1)}$};
\draw[Box] (5,-.25) rectangle (7,-1.25); \node at (6,-.75) {$p_{n-1}^{(2)}$};

\draw[medthick] (2.5,2.875) -- (-.5,2.875); \draw[medthick] (4.5,2.875) -- (7.5,2.875);
\draw[medthick] (2.5,-2.875) -- (-.5,-2.875); \draw[medthick] (4.5,-2.875) -- (7.5,-2.875);

\draw[thickline] (0,.75) -- (-.5,.75); \draw[thickline] (2,.75) -- (2.5,.75);
\draw[thickline] (0,-.75) -- (-.5,-.75); \draw[thickline] (2,-.75) -- (2.5,-.75);
\draw[thickline] (4.5,.75) -- (5,.75); \draw[thickline] (7,.75) -- (7.5,.75);
\draw[thickline] (4.5,-.75) -- (5,-.75); \draw[thickline] (7,-.75) -- (7.5,-.75);
\end{tikzpicture}
\end{equation*}

\item For $n \geq 1$, $\mc M_1' \cap \mc M_{2n+1}$ is spanned by elements of the form:
\begin{equation*}
\begin{tikzpicture}[scale=.6]
\begin{scope}[yshift=2cm]
\draw[Box] (0,.25) rectangle (2,1.25); \node at (1,.75) {$p_{n-1}^{(1)}$};

\draw[Box] (2.5,0) rectangle (4.5,1.75); \node at (3.5,.875) {$a$}; \node[marked,above left,scale=.8] at (2.5,1.75) {};
\draw[Box] (2.5,2.5) rectangle (4.5,3.25); \node[scale=.9] at (3.5,2.875) {$q^{(1)}$}; \node[marked,scale=.8,above=.07] at (3.5,3.25) {};

\draw[medthick] (2.5,1.25) arc(270:90:.375cm and .5cm) arc(-90:0:.5cm and .25cm);
\draw[medthick] (4.5,1.25) arc(-90:90:.375cm and .5cm) arc(270:180:.5cm and .25cm);

\draw[Box] (5,.25) rectangle (7,1.25); \node at (6,.75) {$p_{n-1}^{(1)}$};

\draw[medthick] (2.5,2.875) -- (-.5,2.875); \draw[medthick] (4.5,2.875) -- (7.5,2.875);

\draw[thickline] (0,.75) -- (-.5,.75); \draw[thickline] (2,.75) -- (2.5,.75);
\draw[thickline] (4.5,.75) -- (5,.75); \draw[thickline] (7,.75) -- (7.5,.75);
\end{scope}

\draw[Box] (3,.5) rectangle (4,1.5); \node[scale=.9] at (3.5,1) {$p^{(1)}$}; \node[marked,scale=.8,left=.05] at (3,1) {};
\draw[Box] (3,-.5) rectangle (4,-1.5); \node[scale=.9] at (3.5,-1) {$p^{(2)}$}; \node[marked,scale=.8,left=.05] at (3,-1) {};

\draw[medthick] (2.5,2.25) arc(90:270:.25cm) arc(90:0:.75cm and .25cm); \draw[medthick] (4.5,2.25) arc(90:-90:.25cm) arc(90:180:.75cm and .25cm);
\draw[medthick] (2.5,-2.25) arc(270:90:.25cm) arc(-90:0:.75cm and .25cm); \draw[medthick] (4.5,-2.25) arc(-90:90:.25cm) arc(270:180:.75cm and .25cm);

\draw[medthick] (3.25,.5) arc(0:-90:.25cm) -- (-.5,.25); \draw[medthick] (3.75,.5) arc(180:270:.25cm) -- (7.5,.25);
\draw[medthick] (3.25,-.5) arc(0:90:.25cm) -- (-.5,-.25); \draw[medthick] (3.75,-.5) arc(180:90:.25cm) -- (7.5,-.25);

\begin{scope}[yshift=-2cm]
\draw[Box] (0,-.25) rectangle (2,-1.25); \node at (1,-.75) {$p_{n-1}^{(2)}$};
\draw[Box] (2.5,0) rectangle (4.5,-1.75); \node at (3.5,-.875) {$b$}; \node[marked,below right,scale=.8] at (4.5,-1.75) {};
\draw[Box] (2.5,-2.5) rectangle (4.5,-3.25); \node[scale=.9] at (3.5,-2.875) {$q^{(2)}$}; \node[below=.03,scale=.8,marked] at (3.5,-3.25) {};

\draw[medthick] (2.5,-1.25) arc(90:270:.375cm and .5cm) arc(90:0:.5cm and .25cm);
\draw[medthick] (4.5,-1.25) arc(90:-90:.375cm and .5cm) arc(90:180:.5cm and .25cm);

\draw[Box] (5,-.25) rectangle (7,-1.25); \node at (6,-.75) {$p_{n-1}^{(2)}$};

\draw[medthick] (2.5,-2.875) -- (-.5,-2.875); \draw[medthick] (4.5,-2.875) -- (7.5,-2.875);

\draw[thickline] (0,-.75) -- (-.5,-.75); \draw[thickline] (2,-.75) -- (2.5,-.75);
\draw[thickline] (4.5,-.75) -- (5,-.75); \draw[thickline] (7,-.75) -- (7.5,-.75);
\end{scope}

\end{tikzpicture}
\end{equation*}
\end{enumerate}

\end{proposition}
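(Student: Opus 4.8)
The plan is to identify each of these relative commutants with a relative commutant of \emph{explicit} subalgebras of $M_{2nk}\otimes M_{2nk}^{op}$ or $M_{2nk}\boxtimes M_{2nk}^{op}$, and then compress by the stacked projection $p_n$ (resp.\ $p_{n-1}$). First I would unwind the commuting squares of Propositions~\ref{eveninclusion} and~\ref{oddinclusion}: iterating Proposition~\ref{oddinclusion} shows that under the identification $\mc{M}_{2n}\cong p_n(M_{2nk}\otimes M_{2nk}^{op})p_n$ the copy of $\mc{M}_0=M_0\otimes M_0^{op}$ is the standard one, built from the Jones-tower embeddings $M_0\hookrightarrow M_{2nk}$ on the two legs, and combining with Proposition~\ref{eveninclusion} gives the same for $\mc{M}_0\subset\mc{M}_{2n+1}\cong p_n(M_{2nk}\boxtimes M_{2nk}^{op})p_n$. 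By Theorem~\ref{tower}, $\mc{M}_1\subset\mc{M}_2$ is the basic construction of $\mc{M}_0\subset\mc{M}_1$, so the tower $\mc{M}_2\subset\mc{M}_3\subset\dotsb$ is the present construction with its base pushed up one level; tracking through the squares the single copy of $q$ produced by $i_1$ shows that $\mc{M}_1$ occupies the innermost level of the stacked projections, so that only $p_{n-1}$ (not $p_n$) commutes with $\mc{M}_1$ inside $\mc{M}_n$.

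Case~(1) then follows quickly. Since $depth(\mc{P})\le 2k$, the element $p$ lies in $M_0'\cap M_{2k}$ (this is where the finite-depth hypothesis is used), so the stacked projection $p_n$ lies in $(M_0'\cap M_{2nk})\otimes(M_0'\cap M_{2nk})^{op}$ and in particular commutes with $\mc{M}_0$; hence
\[
\mc{M}_0'\cap\mc{M}_{2n}=p_n\bigl((M_0'\cap M_{2nk})\otimes(M_0'\cap M_{2nk})^{op}\bigr)p_n .
\]
By \cite{gjs1}, $M_0'\cap M_{2nk}$ is the copy $P_{0,2nk}$ of $P_{2nk}$, drawn as a box with $2nk$ strands on each side. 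Writing a general element of $P_{0,2nk}\otimes P_{0,2nk}^{op}$ as a linear combination of simple tensors $a\otimes b^{op}$ and compressing by $p_n$ on both sides produces exactly the spanning family displayed in~(1).

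Case~(2) runs the same way and gives $\mc{M}_0'\cap\mc{M}_{2n+1}=p_n\bigl((M_0\otimes M_0^{op})'\cap(M_{2nk}\boxtimes M_{2nk}^{op})\bigr)p_n$; the new content is to compute $(M_0\otimes M_0^{op})'\cap(M_{2nk}\boxtimes M_{2nk}^{op})$. For this I would work inside the diagrammatic model $(\mc{A}_{2n+1},\star_q)$ of Theorem~\ref{symmlift}, where the $\boxtimes$-product is the $q$-twisted one: using Proposition~\ref{depthspan} to control the spanning set of $\mc{V}_{2n+1}(s,t)$ together with the capping and connecting relations of Proposition~\ref{q-skein}, Proposition~\ref{q-connect} and Corollary~\ref{2q-p}, one shows that an element commuting with all of $M_0\otimes M_0^{op}$ must consist of a box $a$ on top and a box $b$ on the bottom joined through a single copy of $p$, with $a,b$ in the $2nk$-strand box space; compressing by $p_n$ yields picture~(2). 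Cases~(3) and~(4) are obtained by running this argument for the shifted tower $\mc{M}_2\subset\mc{M}_3\subset\dotsb$: the first-level copy of $q$ coming from $i_1$ becomes the $q$-caps in the pictures, the level shift replaces $p_n$ by $p_{n-1}$, and the computation is otherwise identical to~(1) and~(2).

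The main obstacle is the $\boxtimes$ step in case~(2) (hence also~(4)): one must prove that the commutant of $M_0\otimes M_0^{op}$ in the symmetric enveloping factor contains nothing beyond the displayed ``$p$-joined'' elements, which is a genuine spanning argument rather than a formal one. This is exactly where the skein calculus of Section~\ref{sec:skein} does the work — the rotation, capping and double-capping relations of Proposition~\ref{q-skein}, Corollary~\ref{2q-p}, Lemma~\ref{2cleaver}, and the spanning statement of Proposition~\ref{depthspan}. A secondary, purely bookkeeping difficulty is checking that the abstract inclusions $\mc{M}_0\subset\mc{M}_n$ and $\mc{M}_1\subset\mc{M}_n$ really take the concrete form used above; this requires some care in iterating Propositions~\ref{eveninclusion},~\ref{oddinclusion} and their odd analogue and in keeping the normalization constants straight, but presents no conceptual difficulty.
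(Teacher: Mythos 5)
Your case (1) coincides with the paper's argument, but the routes you propose for (2)--(4) each replace an easy reduction with a hard direct computation that you do not carry out, and this is where the gap lies. For the odd cases the paper never computes $(M_0\otimes M_0^{op})'\cap(M_{2nk}\boxtimes M_{2nk}^{op})$ at all: it uses the standard identity $\mc M_i'\cap\mc M_{2n+1}=E_{\mc M_{2n+1}}(\mc M_i'\cap\mc M_{2n+2})$ together with the explicit diagrammatic formula for $E_{\mc M_{2n+1}}$ from Proposition \ref{oddjp}, so that (2) and (4) fall out of (1) and (3) by hitting the displayed even-level pictures with one conditional expectation. Your plan instead asks for the relative commutant of $M_0\otimes M_0^{op}$ inside the symmetric enveloping factor, computed by a spanning argument over all of $\mc V_{2n+1}(s,t)$; showing that \emph{nothing beyond} the $p$-joined elements commutes would require controlling arbitrary words in the $\star_q$-product (not just the finitely many diagrams the skein relations act on), and you give no mechanism for that. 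This is precisely the step the conditional-expectation reduction is designed to avoid.

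The second missing piece is case (3). The paper derives it from (1): an element $x\in\mc M_1'\cap\mc M_{2n}$ already lies in $p_n(P_{2nk}\otimes P_{2nk}^{op})p_n$, and writing out its commutation with the image of the $\mc M_1$-generators (which carry a single $q$ via the iterated inclusions) and applying Corollary \ref{2q-p} followed by Corollary \ref{2q-rotate} forces the displayed form. Your ``shifted tower'' substitute does not supply the analogue of the base identification used in (1): for the tower $\mc M_2\subset\mc M_3\subset\dotsb$ the inclusion $\mc M_1\subset\mc M_2$ is not presented as an $N\otimes N^{op}\subset N\boxtimes N^{op}$ construction, so there is no ready-made statement playing the role of $M_0'\cap M_{2nk}=P_{2nk}$, and the claim that ``the computation is otherwise identical'' does not go through as stated. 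You have correctly assembled the toolbox (Corollary \ref{2q-p}, Corollary \ref{2q-rotate}, Lemma \ref{2cleaver}, Proposition \ref{depthspan}), but the two reductions that make the toolbox sufficient --- commutation against the $\mc M_1$-generator for (3), and the downward conditional expectation for (2) and (4) --- are absent, and without them the containments ``$\subseteq$'' in (2)--(4) are not established.
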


\begin{proof}
Since $\mc M_0 \subset \mc M_{2n}$ is identified with the inclusion $M_0 \otimes M_0^{op} \subset p_n(M_{2nk} \otimes M_{2nk}^{op})p_n$, we have $\mc M_0' \cap \mc M_{2n} = p_n(P_{2nk} \otimes P_{2nk}^{op})p_n$.  This proves (1).

For (3), suppose that $x \in \mc M_1' \cap \mc M_{2n}$.  By (1), $x \in p_n(P_{2nk} \otimes P_{2nk}^{op})p_n$.  Since $x$ commutes with $\mc M_1$, we have
\begin{equation*}
\begin{tikzpicture}[scale=.6]
\draw[Box] (0,.25) rectangle (2,1.25); \node at (1,.75) {$p_{n-1}^{(1)}$};
\draw[Box] (0,1.5) rectangle (2,2.5); \node at (1,2) {$q^{(1)}$};
\draw[Box] (0,-1.5) rectangle (2,-2.5); \node at (1,-2) {$q^{(2)}$};
\draw[Box] (0,-.25) rectangle (2,-1.25); \node at (1,-.75) {$p_{n-1}^{(2)}$};
\draw[Box] (2.5,.25) rectangle (4.5,2.5); \node at (3.5,1.375) {$x^{(1)}$}; \node[marked,above left,scale=.8] at (2.5,2.5) {};
\draw[Box] (2.5,-.25) rectangle (4.5,-2.5); \node at (3.5,-1.375) {$x^{(2)}$}; \node[marked,below right,scale=.8] at (4.5,-2.5) {};

\draw[medthick] (2.5,2) -- (2,2); \draw[medthick] (4.5,2) -- (7.5,2);
\draw[medthick] (2.5,-2) -- (2,-2); \draw[medthick] (4.5,-2) -- (7.5,-2);

\draw[medthick] (0,2) -- (-.5,2); \draw[medthick] (1,2.5) -- (1,3);
\draw[medthick] (0,-2) -- (-.5,-2); \draw[medthick] (1,-2.5) -- (1,-3);

\draw[Box] (5,.25) rectangle (7,1.25); \node at (6,.75) {$p_{n-1}^{(1)}$};
\draw[Box] (5,-.25) rectangle (7,-1.25); \node at (6,-.75) {$p_{n-1}^{(2)}$};

\draw[thickline] (0,.75) -- (-.5,.75); \draw[thickline] (2,.75) -- (2.5,.75);
\draw[thickline] (0,-.75) -- (-.5,-.75); \draw[thickline] (2,-.75) -- (2.5,-.75);
\draw[thickline] (4.5,.75) -- (5,.75); \draw[thickline] (7,.75) -- (7.5,.75);
\draw[thickline] (4.5,-.75) -- (5,-.75); \draw[thickline] (7,-.75) -- (7.5,-.75);

\begin{scope}[xshift = 10cm]
\draw[Box] (0,.25) rectangle (2,1.25); \node at (1,.75) {$p_{n-1}^{(1)}$};
\draw[Box] (5,1.5) rectangle (7,2.5); \node at (6,2) {$q^{(1)}$};
\draw[Box] (5,-1.5) rectangle (7,-2.5); \node at (6,-2) {$q^{(2)}$};
\draw[Box] (0,-.25) rectangle (2,-1.25); \node at (1,-.75) {$p_{n-1}^{(2)}$};
\draw[Box] (2.5,.25) rectangle (4.5,2.5); \node at (3.5,1.375) {$x^{(1)}$}; \node[marked,above left,scale=.8] at (2.5,2.5) {};
\draw[Box] (2.5,-.25) rectangle (4.5,-2.5); \node at (3.5,-1.375) {$x^{(2)}$}; \node[marked,below right,scale=.8] at (4.5,-2.5) {};

\draw[medthick] (4.5,2) -- (5,2); \draw[medthick] (7,2) -- (7.5,2);
\draw[medthick] (4.5,-2) -- (5,-2); \draw[medthick] (7,-2) -- (7.5,-2);

\draw[medthick] (2.5,2) -- (-.5,2); \draw[medthick] (6,2.5) -- (6,3);
\draw[medthick] (2.5,-2) -- (-.5,-2); \draw[medthick] (6,-2.5) -- (6,-3);

\draw[Box] (5,.25) rectangle (7,1.25); \node at (6,.75) {$p_{n-1}^{(1)}$};
\draw[Box] (5,-.25) rectangle (7,-1.25); \node at (6,-.75) {$p_{n-1}^{(2)}$};

\draw[thickline] (0,.75) -- (-.5,.75); \draw[thickline] (2,.75) -- (2.5,.75);
\draw[thickline] (0,-.75) -- (-.5,-.75); \draw[thickline] (2,-.75) -- (2.5,-.75);
\draw[thickline] (4.5,.75) -- (5,.75); \draw[thickline] (7,.75) -- (7.5,.75);
\draw[thickline] (4.5,-.75) -- (5,-.75); \draw[thickline] (7,-.75) -- (7.5,-.75);

\node[left] at (-1,0) {$=$};
\end{scope}

\end{tikzpicture}
\end{equation*}
where we are using Sweedler notation $x = x^{(1)} \otimes x^{(2)}$.
It follows that
\begin{equation*}
\begin{tikzpicture}[scale=.6]
\draw[Box] (0,.25) rectangle (2,1.25); \node at (1,.75) {$p_{n-1}^{(1)}$};
\draw[Box] (0,1.5) rectangle (2,2.5); \node at (1,2) {$q^{(1)}$};
\draw[Box] (0,-1.5) rectangle (2,-2.5); \node at (1,-2) {$q^{(2)}$};
\draw[Box] (-2.5,1.5) rectangle (-.5,2.5); \node at (-1.5,2) {$q^{(1)}$};
\draw[Box] (-2.5,-1.5) rectangle (-.5,-2.5); \node at (-1.5,-2) {$q^{(2)}$};
\draw[Box] (0,-.25) rectangle (2,-1.25); \node at (1,-.75) {$p_{n-1}^{(2)}$};
\draw[Box] (2.5,.25) rectangle (4.5,2.5); \node at (3.5,1.375) {$x^{(1)}$}; \node[marked,above left,scale=.8] at (2.5,2.5) {};
\draw[Box] (2.5,-.25) rectangle (4.5,-2.5); \node at (3.5,-1.375) {$x^{(2)}$}; \node[marked,below right,scale=.8] at (4.5,-2.5) {};

\draw[medthick] (2.5,2) -- (2,2); \draw[medthick] (4.5,2) -- (7.5,2);
\draw[medthick] (2.5,-2) -- (2,-2); \draw[medthick] (4.5,-2) -- (7.5,-2);

\draw[medthick] (0,2) -- (-.5,2); \draw[medthick] (1,2.5) arc(0:90:.5cm) -- (-1,3) arc(90:180:.5cm);
\draw[medthick] (0,-2) -- (-.5,-2); \draw[medthick] (1,-2.5) arc(0:-90:.5cm) -- (-1,-3) arc(270:180:.5cm);
\draw[medthick] (-2.5,2) -- (-3,2); \draw[medthick] (-2.5,-2) -- (-3,-2);

\draw[Box] (5,.25) rectangle (7,1.25); \node at (6,.75) {$p_{n-1}^{(1)}$};
\draw[Box] (5,-.25) rectangle (7,-1.25); \node at (6,-.75) {$p_{n-1}^{(2)}$};

\draw[thickline] (0,.75) -- (-3,.75); \draw[thickline] (2,.75) -- (2.5,.75);
\draw[thickline] (0,-.75) -- (-3,-.75); \draw[thickline] (2,-.75) -- (2.5,-.75);
\draw[thickline] (4.5,.75) -- (5,.75); \draw[thickline] (7,.75) -- (7.5,.75);
\draw[thickline] (4.5,-.75) -- (5,-.75); \draw[thickline] (7,-.75) -- (7.5,-.75);

\node[left] at (-3.5,0) {$x = $};
\begin{scope}[xshift = 10cm]
\draw[Box] (0,.25) rectangle (2,1.25); \node at (1,.75) {$p_{n-1}^{(1)}$};
\draw[Box] (0,1.5) rectangle (2,2.5); \node at (1,2) {$q^{(1)}$};
\draw[Box] (0,-1.5) rectangle (2,-2.5); \node at (1,-2) {$q^{(2)}$};
\draw[Box] (5,1.5) rectangle (7,2.5); \node at (6,2) {$q^{(1)}$};
\draw[Box] (5,-1.5) rectangle (7,-2.5); \node at (6,-2) {$q^{(2)}$};
\draw[Box] (0,-.25) rectangle (2,-1.25); \node at (1,-.75) {$p_{n-1}^{(2)}$};
\draw[Box] (2.5,.25) rectangle (4.5,2.5); \node at (3.5,1.375) {$x^{(1)}$}; \node[marked,above left,scale=.8] at (2.5,2.5) {};
\draw[Box] (2.5,-.25) rectangle (4.5,-2.5); \node at (3.5,-1.375) {$x^{(2)}$}; \node[marked,below right,scale=.8] at (4.5,-2.5) {};

\draw[medthick] (4.5,2) -- (5,2); \draw[medthick] (7,2) -- (7.5,2);
\draw[medthick] (4.5,-2) -- (5,-2); \draw[medthick] (7,-2) -- (7.5,-2);

\draw[medthick] (2.5,2) -- (2,2); \draw[medthick] (6,2.5) arc(0:90:.5cm) -- (1.5,3) arc(90:180:.5cm);
\draw[medthick] (2.5,-2) -- (2,-2); \draw[medthick] (6,-2.5) arc(0:-90:.5cm) -- (1.5,-3) arc(270:180:.5cm);
\draw[medthick] (0,2) -- (-.5,2); \draw[medthick] (0,-2)-- (-.5,-2);

\draw[Box] (5,.25) rectangle (7,1.25); \node at (6,.75) {$p_{n-1}^{(1)}$};
\draw[Box] (5,-.25) rectangle (7,-1.25); \node at (6,-.75) {$p_{n-1}^{(2)}$};

\draw[thickline] (0,.75) -- (-.5,.75); \draw[thickline] (2,.75) -- (2.5,.75);
\draw[thickline] (0,-.75) -- (-.5,-.75); \draw[thickline] (2,-.75) -- (2.5,-.75);
\draw[thickline] (4.5,.75) -- (5,.75); \draw[thickline] (7,.75) -- (7.5,.75);
\draw[thickline] (4.5,-.75) -- (5,-.75); \draw[thickline] (7,-.75) -- (7.5,-.75);

\node[left] at (-1,0) {$=$};
\end{scope}

\end{tikzpicture}
\end{equation*}
where we have applied Corollary \ref{2q-p} for the first equality.  Applying Corollary \ref{2q-rotate} shows that $x$ is of the desired form, which proves (3).

Finally, since $\mc M_i' \cap \mc M_{2n+1} = E_{\mc M_{2n+1}}(\mc M_i' \cap \mc M_{2n+2})$ for $i = 0,1$, (2) and (4) then follow from (1) and (3) by Proposition \ref{oddjp}.
\end{proof}

As a corollary we can compute the conditional expectation $E_{\mc M_1'}(x)$ for $x$ in $\mc M_0' \cap \mc M_n$. The proof is a straightforward diagrammatic argument and is left to the reader.
\begin{corollary} \label{m1expect}
If $x = x^{(1)} \otimes x^{(2)} \in \mc M_0' \cap \mc M_{2n}$, then 
\begin{equation*}
\begin{tikzpicture}[scale=.6]
\draw[Box] (0,.25) rectangle (2,1.25); \node at (1,.75) {$p_{n-1}^{(1)}$};
\draw[Box] (0,-.25) rectangle (2,-1.25); \node at (1,-.75) {$p_{n-1}^{(2)}$};
\draw[Box] (2.5,.25) rectangle (4.5,1.75); \node at (3.5,1) {$x^{(1)}$}; \node[marked,above left,scale=.8] at (2.5,1.75) {};
\draw[Box] (2.5,-.25) rectangle (4.5,-1.75); \node at (3.5,-1) {$x^{(2)}$}; \node[marked,below right,scale=.8] at (4.5,-1.75) {};
\draw[Box] (2.5,2.5) rectangle (4.5,3.25); \node[scale=.9] at (3.5,2.875) {$q^{(1)}$};
\draw[Box] (2.5,3.75) rectangle (4.5,4.5); \node[scale=.9] at (3.5,4.125) {$q^{(1)}$};
\draw[Box] (2.5,-2.5) rectangle (4.5,-3.25); \node[scale=.9] at (3.5,-2.875) {$q^{(2)}$};
\draw[Box] (2.5,-3.75) rectangle (4.5,-4.5); \node[scale=.9] at (3.5,-4.125) {$q^{(2)}$};

\draw[medthick] (2.5,1.25) arc(270:90:.375cm and .5cm) arc(-90:0:.5cm and .25cm);
\draw[medthick] (4.5,1.25) arc(-90:90:.375cm and .5cm) arc(270:180:.5cm and .25cm);
\draw[medthick] (2.5,-1.25) arc(90:270:.375cm and .5cm) arc(90:0:.5cm and .25cm);
\draw[medthick] (4.5,-1.25) arc(90:-90:.375cm and .5cm) arc(90:180:.5cm and .25cm);

\draw[Box] (5,.25) rectangle (7,1.25); \node at (6,.75) {$p_{n-1}^{(1)}$};
\draw[Box] (5,-.25) rectangle (7,-1.25); \node at (6,-.75) {$p_{n-1}^{(2)}$};

\draw[thickline] (2.5,4.125) -- (-.5,4.125); \draw[thickline] (4.5,4.125) -- (7.5,4.125);
\draw[thickline] (2.5,-4.125) -- (-.5,-4.125); \draw[thickline] (4.5,-4.125) -- (7.5,-4.125);
\draw[thickline] (3.5,3.25) -- (3.5,3.75); \draw[thickline] (3.5,-3.25) -- (3.5,-3.75);

\draw[thickline] (0,.75) -- (-.5,.75); \draw[thickline] (2,.75) -- (2.5,.75);
\draw[thickline] (0,-.75) -- (-.5,-.75); \draw[thickline] (2,-.75) -- (2.5,-.75);
\draw[thickline] (4.5,.75) -- (5,.75); \draw[thickline] (7,.75) -- (7.5,.75);
\draw[thickline] (4.5,-.75) -- (5,-.75); \draw[thickline] (7,-.75) -- (7.5,-.75);
\node[left] at (-.75,.0) {$E_{\mc M_{1}'}(x) =$};

\end{tikzpicture}
\end{equation*}
Likewise, if $x = x^{(1)} \otimes x^{(2)} \in \mc M_0' \cap \mc M_{2n+1}$ then 
\begin{equation*}
\begin{tikzpicture}[scale=.6]
\begin{scope}[yshift=2cm]
\draw[Box] (0,.25) rectangle (2,1.25); \node at (1,.75) {$p_{n-1}^{(1)}$};

\draw[Box] (2.5,0) rectangle (4.5,1.75); \node at (3.5,.875) {$x^{(1)}$}; \node[marked,above left,scale=.8] at (2.5,1.75) {};
\draw[Box] (2.5,2.5) rectangle (4.5,3.25); \node[scale=.9] at (3.5,2.875) {$q^{(1)}$};
\draw[Box] (2.5,3.75) rectangle (4.5,4.5); \node[scale=.9] at (3.5,4.125) {$q^{(1)}$};
\draw[medthick] (2.5,1.25) arc(270:90:.375cm and .5cm) arc(-90:0:.5cm and .25cm);
\draw[medthick] (4.5,1.25) arc(-90:90:.375cm and .5cm) arc(270:180:.5cm and .25cm);

\draw[Box] (5,.25) rectangle (7,1.25); \node at (6,.75) {$p_{n-1}^{(1)}$};

\draw[thickline] (3.5,3.25) -- (3.5,3.75);
\draw[thickline] (2.5,4.125) -- (-.5,4.125); \draw[thickline] (4.5,4.125) -- (7.5,4.125);

\draw[thickline] (0,.75) -- (-.5,.75); \draw[thickline] (2,.75) -- (2.5,.75);
\draw[thickline] (4.5,.75) -- (5,.75); \draw[thickline] (7,.75) -- (7.5,.75);
\end{scope}

\draw[Box] (3,.5) rectangle (4,1.5); \node[scale=.9] at (3.5,1) {$p^{(1)}$};
\draw[Box] (3,-.5) rectangle (4,-1.5); \node[scale=.9] at (3.5,-1) {$p^{(2)}$};

\draw[medthick] (2.5,2.25) arc(90:270:.25cm) arc(90:0:.75cm and .25cm); \draw[medthick] (4.5,2.25) arc(90:-90:.25cm) arc(90:180:.75cm and .25cm);
\draw[medthick] (2.5,-2.25) arc(270:90:.25cm) arc(-90:0:.75cm and .25cm); \draw[medthick] (4.5,-2.25) arc(-90:90:.25cm) arc(270:180:.75cm and .25cm);

\draw[medthick] (3.25,.5) arc(0:-90:.25cm) -- (-.5,.25); \draw[medthick] (3.75,.5) arc(180:270:.25cm) -- (7.5,.25);
\draw[medthick] (3.25,-.5) arc(0:90:.25cm) -- (-.5,-.25); \draw[medthick] (3.75,-.5) arc(180:90:.25cm) -- (7.5,-.25);

\begin{scope}[yshift=-2cm]
\draw[Box] (0,-.25) rectangle (2,-1.25); \node at (1,-.75) {$p_{n-1}^{(2)}$};
\draw[Box] (2.5,0) rectangle (4.5,-1.75); \node at (3.5,-.875) {$x^{(2)}$}; \node[marked,below right,scale=.8] at (4.5,-1.75) {};
\draw[Box] (2.5,-2.5) rectangle (4.5,-3.25); \node[scale=.9] at (3.5,-2.875) {$q^{(2)}$};
\draw[Box] (2.5,-3.75) rectangle (4.5,-4.5); \node[scale=.9] at (3.5,-4.125) {$q^{(2)}$};

\draw[medthick] (2.5,-1.25) arc(90:270:.375cm and .5cm) arc(90:0:.5cm and .25cm);
\draw[medthick] (4.5,-1.25) arc(90:-90:.375cm and .5cm) arc(90:180:.5cm and .25cm);

\draw[Box] (5,-.25) rectangle (7,-1.25); \node at (6,-.75) {$p_{n-1}^{(2)}$};

\draw[thickline] (3.5,-3.25) -- (3.5,-3.75);
\draw[thickline] (2.5,-4.125) -- (-.5,-4.125); \draw[thickline] (4.5,-4.125) -- (7.5,-4.125);

\draw[thickline] (0,-.75) -- (-.5,-.75); \draw[thickline] (2,-.75) -- (2.5,-.75);
\draw[thickline] (4.5,-.75) -- (5,-.75); \draw[thickline] (7,-.75) -- (7.5,-.75);
\end{scope}
\node[left] at (-.75,.0) {$E_{\mc M_{1}'}(x) =$};

\end{tikzpicture}
\end{equation*}
\end{corollary}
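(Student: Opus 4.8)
The plan is to pin down $E_{\mc M_1'}(x)$ by an orthogonality condition. Since $\mc P$ is finite-depth the relative commutants are finite-dimensional, and standard facts about Jones towers (see e.g. \cite{ghj}) identify the restriction of $E_{\mc M_1'}$ to $\mc M_0' \cap \mc M_n$ with the unique $\varphi_n$-preserving conditional expectation onto $\mc M_1' \cap \mc M_n$, i.e. with the orthogonal projection onto $\mc M_1' \cap \mc M_n$ for the inner product $\langle a, b\rangle = \varphi_n(b^\dagger a)$. Let $\Phi(x)$ denote the element defined by the right-hand side of the displayed formula (defined on simple tensors $x = x^{(1)} \otimes x^{(2)}$ and extended linearly, as usual). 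It then suffices to prove: (i) $\Phi(x) \in \mc M_1' \cap \mc M_n$; and (ii) $\varphi_n\bigl(\Phi(x)\,y^\dagger\bigr) = \varphi_n\bigl(x\,y^\dagger\bigr)$ for every $y$ in the spanning families of $\mc M_1' \cap \mc M_n$ listed in Proposition~\ref{commutants}.

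Step (i) is immediate from Proposition~\ref{commutants}: for $x \in \mc M_0' \cap \mc M_{2n} = p_n(P_{2nk}\otimes P_{2nk}^{op})p_n$ the diagram for $\Phi(x)$ is precisely of the shape in Proposition~\ref{commutants}(3), where the box ``$a$'' is the sub-tangle assembled from $p_{n-1}^{(1)}$, $x^{(1)}$, the inner caps and the lower $q^{(1)}$-box (and similarly for ``$b$''); the odd case matches Proposition~\ref{commutants}(4) in the same way.

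For step (ii) one evaluates both $\varphi_n(x\,y^\dagger)$ and $\varphi_n(\Phi(x)\,y^\dagger)$ diagrammatically: in each case one stacks the tangle for $y^\dagger$ (again of the standard $q$-sandwich form from Proposition~\ref{commutants}) against the tangle for $x$, resp. $\Phi(x)$, and then closes the picture up through the defining diagram of $\varphi_n$, which itself contributes a pair of $q$-boxes, a $*$-box and the Temperley--Lieb sums. The two resulting closed diagrams are isotopic once the extra pair of $q$-boxes carried by $\Phi(x)$ is absorbed: adjacent $q$-boxes capped together collapse to $p$-boxes by Corollary~\ref{2q-p}, a $q$-box sitting against a $p$-box is removed by Proposition~\ref{q-skein}(1), the remaining caps are resolved by Proposition~\ref{q-skein}(3),(4), and finally the $p$-boxes are eliminated using Proposition~\ref{p-skein}(1),(4) and Lemma~\ref{*-proj}, leaving the same closed diagram on both sides. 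Since the families of Proposition~\ref{commutants} span $\mc M_1' \cap \mc M_n$ and $\varphi_n$ is faithful, (i) and (ii) together give $\Phi(x) = E_{\mc M_1'}(x)$. The odd-index formula follows either by repeating this manipulation or by applying $E_{\mc M_{2n+1}}$ (Proposition~\ref{oddjp}), which commutes with $E_{\mc M_1'}$, to reduce it to the even case.

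The one genuinely delicate point --- and the reason this is left to the reader rather than being entirely mechanical --- is the bookkeeping of scalars: each use of a $p$- or $q$-skein relation produces powers of $I$, $n_*$, $\delta$ and factors $\mu_v/n_v$, while $\varphi_n$ is renormalized by $\delta^{4nk}I^{-n}$ and its closed-up form carries $I^{-\frac{2n+1}{2}}n_*^{-1/2}$, so one must check that all of these constants cancel exactly (not merely up to a nonzero scalar) on the two sides. Everything else is planar isotopy together with the skein relations already established.
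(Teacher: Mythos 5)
The paper offers no argument here---it declares the proof ``a straightforward diagrammatic argument'' and leaves it to the reader---so there is no authorial proof to compare against; what you propose is the natural way to fill that gap, and your overall strategy is correct. Identifying $E_{\mc M_1'}|_{\mc M_0'\cap\mc M_n}$ with the $\varphi_n$-preserving conditional expectation onto $\mc M_1'\cap\mc M_n$ is standard, step (i) is indeed immediate from Proposition \ref{commutants} (the inner $q$-box is absorbed into the ``$a$''-box, the outer one plays the role of the single $q$ in the spanning form), and the toolkit you list for step (ii) --- Corollary \ref{2q-p}, Proposition \ref{q-skein}(1),(3),(4), Proposition \ref{p-skein}(1),(4), Lemma \ref{*-proj} --- is exactly what is needed. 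The reduction of the odd case via $E_{\mc M_{2n+1}}$ and the commuting-square property mirrors how the paper itself deduces parts (2),(4) of Proposition \ref{commutants} from (1),(3), so that part is fine as well.

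The one soft spot is that step (ii), which is the entire content of the ``straightforward diagrammatic argument,'' is described rather than carried out, and you yourself flag the scalar bookkeeping as the delicate point. Let me suggest a repackaging that makes the constants take care of themselves: write $\Phi$ for the linear map defined by the displayed diagram and verify instead that (a) $\Phi$ maps into $\mc M_1'\cap\mc M_n$ (your step (i)); (b) $\Phi$ restricts to the identity on $\mc M_1'\cap\mc M_n$ --- for $y$ in the spanning family of Proposition \ref{commutants}(3), the two new $q$-pairs collapse against $y$'s own $q$-pair by Corollary \ref{2q-p}, Corollary \ref{2q-rotate} and Proposition \ref{q-skein}(1), i.e.\ the computation in the proof of Proposition \ref{commutants}(3) read backwards; and (c) $\Phi$ is self-adjoint for the $\varphi_n$-inner product, which is a scalar-free symmetry statement: in the closed trace diagram for $\varphi_n(\Phi(x)y^\dagger)$ the attached $q$-pairs can be slid around the closure from $x$ to $y$ using the rotational invariance of $q$ (Proposition \ref{q-skein}(2)). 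A self-adjoint idempotent with range $\mc M_1'\cap\mc M_n$ is the orthogonal projection, hence equals $E_{\mc M_1'}$, with no normalization left to check. Either route works, but (a)--(c) isolates the only genuinely diagrammatic input and avoids tracking powers of $I$, $n_*$ and $\delta$ through the trace closure.
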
\qed

\section{The planar algebra}

In this section we construct the planar algebra $Sym(\mc P)$ of the subfactor $\mc M_0 \subset \mc M_1$.  By Theorem \ref{symmlift}, this is isomorphic to the planar algebra of $M_0 \otimes M_0^{op} \subset M_0 \boxtimes M_0^{op}$.  We explain how to recover the planar algebra of the asymptotic inclusion in Remark \ref{m0vm1} at the end of this section.

It will be convenient to express this in terms of the planar algebra of the inclusion $M_0 \otimes M_0^{op} \subset M_k \otimes M_k^{op}$, which we will now describe.

First we construct the planar algebra of $M_0^{op} \subset M_1^{op}$.  For $n \geq 1$ let $R_n$ be the following tangle:
\begin{equation*}
\begin{tikzpicture}[scale = .5]
\draw[Box] (0,0) rectangle (1,1); \node[marked, above left,scale=.75] at (0,1) {};
\draw[Box] (-1,-1) rectangle (2,2); \node[marked, above left,scale=.9] at (-1,2) {};
\draw[thickline] (.5,1) arc(180:0:.5cm) -- (1.5,-1); \draw[thickline] (.5,0) arc(360:180:.5cm) -- (-.5,2);
\end{tikzpicture}
\end{equation*}
where there are $n$ strings on the top and bottom of the input box, and the region adjacent to the left side of the outer box is unshaded.  Let $R_n^*$ be the same tangle, but with the region adjacent to the left side of the inner box unshaded.  Define $P^{rev}_{n}$ to be $P_{n,+}$ if $n$ is even and $P_{n,-}$ if $n$ is odd.  Note that $Z_{R_n}$ maps $P_n^{rev}$ onto $P_n$.  Let $T$ be a tangle with $2n$ marked points on the boundary of its outer disc, and with input discs $D_1,\dotsc,D_m$ such that $D_i$ has $2n_i$ marked points on its boundary.  We then define 
\begin{equation*}
 Z_T^{rev}: P^{rev}_{n_1} \otimes \dotsb \otimes P^{rev}_{n_m} \to P^{rev}_n
\end{equation*}
by
\begin{equation*}
 Z_T^{rev}(x_1 \otimes \dotsb \otimes x_m) = Z_{R_n^*}(Z_T(Z_{R_{n_1}}(x_1) \otimes \dotsb \otimes Z_{R_{n_m}}(x_m))).
\end{equation*}
It is not hard to see that $\mc P^{rev} = (P^{rev}_n)_{n \geq 0}$ is the planar algebra for $M_0^{op} \subset M_1^{op}$, see \cite{palg}.

Now the planar algebra of $M_0 \otimes M_0^{op} \subset M_1 \otimes M_1^{op}$ is the tensor product $\mc P \otimes \mc P^{rev}$.  We refer to \cite{palg} for details, but note that in particular we have $(\mc P \otimes \mc P^{rev})_n = P_n \otimes P^{rev}_n$.  

Finally, the planar algebra of $M_0 \otimes M_0^{op} \subset M_k \otimes M_k^{op}$ is the \textit{$k$-cabling} $\mc C_k(\mc P \otimes \mc P^{rev})$ of the planar algebra of $M_0 \otimes M_0^{op} \subset M_1 \otimes M_1^{op}$.  Again we will refer to \cite{palg} for the definition, but note that $(\mc C_k(\mc P \otimes \mc P^{rev}))_n = P_{nk} \otimes P^{rev}_{nk}$.  In particular, the elements $p$ and $q$ of Section \ref{sec:skein} are now elements of the $2$- and $3$-box spaces of this planar algebra, respectively.  Let us now interpret some of the skein relations from that section in terms of this planar algebra.

\begin{proposition}\label{cabled_skein}
The following skein relations hold in $\mc C_k(\mc P \otimes \mc P^{rev})$:
\begin{enumerate}
 \item 
\begin{equation*}
 \begin{tikzpicture}[very thick,scale=.9]
 \draw[Box,thin] (-.25,-.25) rectangle (.25,.25); \node[scale=.9] at (0,0) {$p$}; \node[marked,above left, scale=.8] at (-.25,.25) {};
\draw (-.5,-.1) -- (-.25,-.1); \draw(-.5,.1) -- (-.25,.1);
\draw (.25,.1) -- (.5,.1); \draw (.25,-.1) -- (.5,-.1);
\draw[Box] (-.5,-.5) rectangle (.5,.5); \node[marked,above left] at (-.5,.5) {};
\begin{scope}[xshift=1.75cm]
 \draw[Box,thin] (-.25,-.25) rectangle (.25,.25); \node[scale=.9] at (0,0) {$p$}; \node[marked,below right, scale=.8] at (.25,-.25) {};
\draw (-.5,-.1) -- (-.25,-.1); \draw(-.5,.1) -- (-.25,.1);
\draw (.25,.1) -- (.5,.1); \draw (.25,-.1) -- (.5,-.1); 
\draw[Box] (-.5,-.5) rectangle (.5,.5); \node[marked,above left] at (-.5,.5) {};

\node[left] at (-.55,0) {$=$};
\end{scope}
\begin{scope}[xshift=6cm]
 \draw[Box,thin] (-.25,-.25) rectangle (.25,.25); \node[scale=.9] at (0,0) {$q$}; \node[marked,above left, scale=.8] at (-.25,.25) {};
\draw (-.5,-.1) -- (-.25,-.1); \draw(-.5,.1) -- (-.25,.1);
\draw (.25,.1) -- (.5,.1); \draw (.25,-.1) -- (.5,-.1);
\draw (-.1,.25) -- (-.1,.5); \draw (.1,.25) -- (.1,.5);
\draw[Box] (-.5,-.5) rectangle (.5,.5); \node[marked,above left] at (-.5,.5) {};
\begin{scope}[xshift=1.75cm]
 \draw[Box,thin] (-.25,-.25) rectangle (.25,.25); \node[scale=.9] at (0,0) {$q$}; \node[marked,above right, scale=.8] at (.25,.25) {};
\draw (-.5,-.1) -- (-.25,-.1); \draw(-.5,.1) -- (-.25,.1);
\draw (.25,.1) -- (.5,.1); \draw (.25,-.1) -- (.5,-.1); 
\draw (-.1,.25) -- (-.1,.5); \draw (.1,.25) -- (.1,.5);

\draw[Box] (-.5,-.5) rectangle (.5,.5); \node[marked,above left] at (-.5,.5) {};
\node[left] at (-.55,0) {$=$};
\end{scope}
\node at (-2.25,0) {and};
\end{scope}
 \end{tikzpicture}
\end{equation*}
\item
\begin{equation*}
\begin{tikzpicture}[very thick]
\draw[Box,thin] (-.25,-.25) rectangle (.25,.25); \node[scale=.9] at (0,0) {$p$};
\draw[Box,thin] (.5,-.25) rectangle (1,.25); \node[scale=.9] at (.75,0) {$q$};
\draw (-.5,-.1) -- (-.25,-.1); \draw(-.5,.1) -- (-.25,.1);
\draw (.25,.1) -- (.5,.1); \draw (.25,-.1) -- (.5,-.1);
\draw (1,.1) -- (1.25,.1); \draw (1,-.1) -- (1.25,-.1);
\draw (.65,.25) -- (.65,.5); \draw (.85,.25) -- (.85,.5);
\draw[Box] (-.5,-.4) rectangle (1.25,.5); \node[marked,scale=.9,above left] at (-.5,.5) {};
\begin{scope}[xshift=2.15cm]
\draw[Box,thin] (.5,-.25) rectangle (1,.25); \node[scale=.9] at (.75,0) {$q$};
\draw (.25,.1) -- (.5,.1); \draw (.25,-.1) -- (.5,-.1);
\draw (1,.1) -- (1.25,.1); \draw (1,-.1) -- (1.25,-.1);
\draw (.65,.25) -- (.65,.5); \draw (.85,.25) -- (.85,.5);
\draw[Box] (.25,-.4) rectangle (1.25,.5);  \node[marked,scale=.9,above left] at (.25,.5) {};
\node[left] at (0,0) {$=$};
\end{scope}
\begin{scope}[xshift=5cm]
\draw[Box,thin] (-.25,-.25) rectangle (.25,.25); \node[scale=.9] at (0,0) {$q$};
\draw[Box,thin] (.5,-.25) rectangle (1,.25); \node[scale=.9] at (.75,0) {$p$};
\draw (-.5,-.1) -- (-.25,-.1); \draw(-.5,.1) -- (-.25,.1);
\draw (.25,.1) -- (.5,.1); \draw (.25,-.1) -- (.5,-.1);
\draw (1,.1) -- (1.25,.1); \draw (1,-.1) -- (1.25,-.1);
\draw (-.1,.25) -- (-.1,.5); \draw (.1,.25) -- (.1,.5);
\draw[Box] (-.5,-.4) rectangle (1.25,.5);\node[marked,scale=.9,above left] at (-.5,.5) {};
\node[left] at (-.75,0) {$=$};
\end{scope}
\end{tikzpicture}
\end{equation*}

\item
\begin{equation*}
\begin{tikzpicture}[very thick]
\draw[Box,thin] (0,0) rectangle (.5,.75); \node at (.25,.375) {$q$}; 
\draw[Box,thin] (1,0) rectangle (1.5,.75); \node at (1.25,.375) {$q$};
\draw (.5,.6) -- (1,.6); \draw (.5,.45) -- (1,.45);
\draw (.5,.3) -- (1,.3); \draw (.5,.15) -- (1,.15);
\draw (0,.25) -- (-.25,.25); \draw (0,.5) -- (-.25,.5);
\draw (1.5,.25) -- (1.75,.25); \draw (1.5,.5) -- (1.75,.5);
\draw[Box] (-.25,-.15) rectangle (1.75,.9); \node[marked,scale=.9,above left] at (-.25,.9) {};
\begin{scope}[xshift=3.35cm,yshift=.375cm]
 \draw[Box,thin] (-.25,-.25) rectangle (.25,.25); \node[scale=.9] at (0,0) {$p$};
\draw (-.5,-.1) -- (-.25,-.1); \draw(-.5,.1) -- (-.25,.1);
\draw (.25,.1) -- (.5,.1); \draw (.25,-.1) -- (.5,-.1);
\draw[Box] (-.5,-.4) rectangle (.5,.4); \node[marked,scale=.9,above left] at (-.5,.4) {};
\node[left] at (-.75,0) {$=$};
\end{scope}
\end{tikzpicture}
\end{equation*}
\item
\begin{equation*}
\begin{tikzpicture}[very thick]
\draw[Box,thin] (-.25,-.25) rectangle (.25,.25); \node[scale=.9] at (0,0) {$q$};
\draw[Box,thin] (.5,-.25) rectangle (1,.25); \node[scale=.9] at (.75,0) {$q$};
\draw (-.5,-.1) -- (-.25,-.1); \draw(-.5,.1) -- (-.25,.1);
\draw (.25,.1) -- (.5,.1); \draw (.25,-.1) -- (.5,-.1);
\draw (1,.1) -- (1.25,.1); \draw (1,-.1) -- (1.25,-.1);
\draw (-.1,.25) -- (-.1,.5); \draw (.1,.25) -- (.1,.5);
\draw (.65,.25) -- (.65,.5); \draw (.85,.25) -- (.85,.5);
\draw[Box] (-.5,-.4) rectangle (1.25,.5); \node[marked,scale=.9,above left] at (-.5,.5) {};
\begin{scope}[xshift=3.125cm,yshift=-.375cm]
 \draw[Box,thin] (-.25,-.25) rectangle (.25,.25); \node[scale=.9] at (0,0) {$q$};
\draw[Box,thin] (-.375,.5) rectangle (.375,1); \node[scale=.9] at (0,.75) {$q$};
\draw (-.5,-.1) -- (-.25,-.1); \draw(-.5,.1) -- (-.25,.1);
\draw (.25,.1) -- (.5,.1); \draw (.25,-.1) -- (.5,-.1);
\draw (-.1,.25) -- (-.1,.5); \draw (.1,.25) -- (.1,.5);
\draw (-.225,1) -- (-.225,1.25); \draw (-.075,1) -- (-.075,1.25);
\draw (.075,1) -- (.075,1.25); \draw (.225,1) -- (.225,1.25);
\draw[Box] (-.5,-.4) rectangle (.5,1.25); \node[marked,scale=.9,above left] at (-.5,1.25) {};
\node[left] at (-.875,.375) {$=$};
\end{scope}
\end{tikzpicture}
\end{equation*}

\item For any $x \in (\mc C_k(\mc P \otimes \mc P^{rev}))_{2}$ we have
\begin{equation*}
\begin{tikzpicture}[very thick]
\draw[Box,thin] (0,0) rectangle (.5,.75); \node at (.25,.375) {$q$}; 
\draw[Box,thin] (.75,0) rectangle (1.25,.75); \node at (1,.375) {$q$};
\draw[Box,thin] (1.5,0) rectangle (2,.75); \node at (1.75,.375) {$x$}; \node[marked,scale=.7,below left] at (1.5,0) {};
\foreach \x in {-.25,1.25} {
\foreach \y in {.125,.25,.5,.625} {
\draw (\x,\y) -- (\x+.25,\y);
}}
 \draw (.5,.45) -- (.75,.45); \draw (.5,.3) -- (.75,.3); 

\draw[Box] (-.25,-.15) rectangle (2.15,.9); \node[marked,scale=.9,above left] at (-.25,.9) {};

\begin{scope}[xshift=3.75cm,yshift=.375cm]

\begin{scope}[xshift=-.125cm]
\draw  (-.0625,0) -- ++(0,.25)  arc(0:90:.125cm) -- (-.375,.375); \draw (.0625,0) -- ++ (0,.25) arc(0:90:.25cm) -- (-.375,.5);
\draw (-.0625,0) -- ++ (0,-.25) arc(0:-90:.125cm) -- (-.375,-.375); \draw (.0625,0) -- ++ (0,-.25) arc(0:-90:.25cm) -- (-.375,-.5);
\node[cpr,draw,fill=white,scale=.8] at (0,0) {$p$};
\end{scope}

\draw[Box,thin] (.75,-.625) rectangle (1.25,.625); \node at (1,0) {$x$}; \node[marked,scale=.7,below left] at (.75,-.625) {};
\begin{scope}[xshift=.375cm,xscale=-1]
\draw  (-.0625,0) -- ++(0,.25)  arc(0:90:.125cm) -- (-.375,.375); \draw (.0625,0) -- ++ (0,.25) arc(0:90:.25cm) -- (-.375,.5);
\draw (-.0625,0) -- ++ (0,-.25) arc(0:-90:.125cm) -- (-.375,-.375); \draw (.0625,0) -- ++ (0,-.25) arc(0:-90:.25cm) -- (-.375,-.5);
\node[cpr,draw,fill=white,scale=.8] at (0,0) {$p$};
\end{scope}

\draw[Box] (-.5,-.75) rectangle (1.375,.75); \node[marked,scale=.9,above left] at (-.5,.75) {};
\node[left] at (-.75,0) {$=$};
\end{scope}
\end{tikzpicture}
\end{equation*}

\end{enumerate}

\end{proposition}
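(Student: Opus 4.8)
The plan is to observe that each of (1)--(5) is nothing but one of the skein relations of Section~\ref{sec:skein}, transcribed into the language of the cabled tensor product planar algebra $\mc C_k(\mc P \otimes \mc P^{rev})$. So no new computation is needed; the work is entirely in setting up the dictionary between the two kinds of pictures and then matching each relation against its source.

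First I would make that dictionary precise. An $n$-box of $\mc C_k(\mc P \otimes \mc P^{rev})$ is $P_{nk} \otimes P^{rev}_{nk}$, and under the reversal maps $Z_{R}$, $Z_{R}^{rev}$ of this section the second tensor leg carries the opposite shading. Thus a single box of $\mc C_k(\mc P \otimes \mc P^{rev})$, drawn with the distinguished corner adjacent to an unshaded region, corresponds — after uncabling and reversing the second leg — to exactly the two-box ``Sweedler'' diagrams $p^{(1)}\otimes p^{(2)}$ and $q^{(1)}\otimes q^{(2)}$ used in Section~\ref{sec:skein}: the $\mc P$-leg drawn in the usual convention, and the reversed $\mc P^{rev}$-leg drawn below it as in the embedding $x\otimes y^{op}$ of Section~\ref{sec:background}. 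Under this dictionary a strand of $\mc C_k(\mc P \otimes \mc P^{rev})$ is a cable of $k$ strands, capping a strand corresponds to capping a cable, and the shading of a region records which of the two legs the adjacent cable belongs to. With this in hand, each of (1)--(5) is a transcription: (1) is the rotational invariance of $p$ and of $q$, i.e.\ Proposition~\ref{p-skein}(2) together with Proposition~\ref{q-skein}(2); (2) is the compatibility of $q$ with $p$, Proposition~\ref{q-skein}(1); (3) is Corollary~\ref{2q-p}; and (4) is Corollary~\ref{2q-rotate}, after using the rotation built into (1) to match the two side-by-side copies of $q$ with the two stacked ones.

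For (5) I would run the ``4--cleaver'' of Corollary~\ref{4cleaver} on the two adjacent copies of $q$: summing over the internal vertex as there and re-expressing the result via Proposition~\ref{q-connect}, the left-hand side becomes a diagram in which the four external legs of the two $q$'s have been replaced by strands labelled by Perron--Frobenius vertices with $x$ inserted along them; collapsing those labelled strands back against copies of $p$ using Corollary~\ref{2q-p} and Proposition~\ref{p-skein}(1), and sliding things into place with Proposition~\ref{q-skein}(1)--(2), produces precisely the right-hand side, with the two (one reflected) copies of $x$ bracketed by $p$'s. Alternatively, (5) may be derived purely from the already-established cabled relations (1)--(4) together with Proposition~\ref{p-skein}.

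The main obstacle is not any individual equation but the bookkeeping of shadings and marked points: because the reversal functor inverts the shading on the $\mc P^{rev}$-leg, one must check that when a Section~\ref{sec:skein} relation is read inside $\mc C_k(\mc P \otimes \mc P^{rev})$ the distinguished intervals and unshaded regions land exactly where the diagrams in the statement place them. Once this compatibility is verified on $p$ and on a single copy of $q$, it propagates to all of (1)--(5) automatically, so I would devote most of the write-up to the dictionary and then dispatch (1)--(5) in a line apiece, leaving the routine shading checks to the reader as elsewhere in the paper.
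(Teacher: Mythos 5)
Your proposal is essentially the paper's (largely implicit) argument: the paper offers no proof of this proposition beyond the remark that (5) follows from Proposition \ref{q-skein} (4) and (5), precisely because items (1)--(4) are, as you say, transcriptions of Proposition \ref{p-skein}(2)/Proposition \ref{q-skein}(2), Proposition \ref{q-skein}(1), Corollary \ref{2q-p} and Corollary \ref{2q-rotate} under the dictionary between boxes of $\mc C_k(\mc P\otimes\mc P^{rev})$ and the Sweedler pairs $p^{(1)}\otimes p^{(2)}$, $q^{(1)}\otimes q^{(2)}$; your identifications there are all correct. The one place you genuinely diverge is (5). The paper's route is local: Proposition \ref{q-skein}(4) collapses the second $q$ together with the attached $2$-box $x$ into a scalar times a $*$-labelled cable, and Proposition \ref{q-skein}(5) then turns the first $q$ with that $*$-cable into $p$. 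Your route is global, via Corollary \ref{4cleaver}: it does work, because pairing the $4$-box $q\,\text{--}\,q$ (glued along one cable) against arbitrary elements is exactly what Corollary \ref{4cleaver} computes, and the resulting sum of labelled-strand pictures factors over the two $q$'s and matches $p\sqcup p$ block by block --- but the lemma that converts labelled strands back into $p$ is Lemma \ref{2cleaver}, not Corollary \ref{2q-p} or Proposition \ref{p-skein}(1) as you cite. I would also not rely on your parenthetical claim that (5) follows from the cabled relations (1)--(4) alone: (5) quantifies over an arbitrary $2$-box $x$, whereas (1)--(4) are closed identities among $p$ and $q$, so some input that pairs $q$ against arbitrary elements (Proposition \ref{q-skein}(3)--(5), Lemma \ref{2cleaver}, or Corollary \ref{4cleaver}) is genuinely needed.
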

Because of (1) we will usually not label the marked interval for $p$ and $q$, but assume that it occurs at one of the corners.  Note that (5) follows from combining (4) and (5) of Proposition \ref{q-skein}.  The following important relation follows from (3) and (4) above.

\begin{lemma}\label{daisychain}
For $n \geq 3$ we have
\begin{equation*}
 \begin{tikzpicture}[scale=.75, very thick]
 \foreach \x in {0,.75,1.5,3,3.75}{
  \draw[Box,thin] (\x,0) rectangle (\x+.5,.5); \node at (\x+.25,.25) {$q$};
\draw (\x+.2,0) -- (\x +.2,-.25); \draw (\x +.3,0) -- (\x + .3,-.25);
}
\foreach \x in {.5,1.25,2,2.75,3.5}{ \foreach \y in {.2,.3}{
\draw (\x,\y) -- (\x+.25,\y);
}}
\draw (0,.3) arc(270:90:.2cm) -- (4.25,.7) arc(90:-90:.2cm);
\draw (0,.2) arc(270:90:.3cm) -- (4.25,.8) arc(90:-90:.3cm);
\node[scale=.6] at (2.5,.25) {$\dotsb$};

\draw[Box] (-.5,-.25) rectangle (4.75,1); \node[marked,above left=.01cm] at (-.5,1) {};
\begin{scope}[xshift=6.25cm]
 \foreach \x in {.75,1.5,3,3.75}{
  \draw[Box,thin] (\x,0) rectangle (\x+.5,.5); \node at (\x+.25,.25) {$q$};
\draw (\x+.2,0) -- (\x +.2,-.25); \draw (\x +.3,0) -- (\x + .3,-.25);
}
\foreach \x in {1.25,2,2.75,3.5}{ \foreach \y in {.2,.3}{
\draw (\x,\y) -- (\x+.25,\y);
}}
\draw (.75,.3) arc(90:180:.3cm) -- (.45,-.25); \draw (.75,.2) arc(90:180:.2cm) -- (.55,-.25);

\draw (.2,-.25) -- (.2,.5) arc(180:90:.2cm) -- (4.25,.7) arc(90:-90:.2cm);
\draw (.1,-.25) -- (.1,.5) arc(180:90:.3cm) -- (4.25,.8) arc(90:-90:.3cm);
\node[scale=.6] at (2.5,.25) {$\dotsb$};

\draw[Box] (-.25,-.25) rectangle (4.75,1); \node[marked,above left=.01cm] at (-.25,1) {};
\node[left] at (-.5,.375) {$=$};
\end{scope}
\end{tikzpicture}
\end{equation*}
where there are $n$ $q$'s appearing on the left hand side of the equation, and $n-2$ on the right hand side.
\end{lemma}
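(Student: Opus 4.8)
The plan is to argue by induction on $n$, using only relations (2), (3) and (4) of Proposition~\ref{cabled_skein} together with the rotational invariance (1). It is convenient to picture $q$ as a fully symmetric trivalent vertex with three ``legs'': then the left--hand side of the lemma is a necklace of $n$ such vertices, joined cyclically along one leg each, with the third leg of every vertex running to the bottom boundary, and the right--hand side is a linear chain of $n-2$ such vertices with all unused legs running to the bottom boundary. In this language relation (4) re--associates the two ways of decomposing a four--legged region into a pair of trivalent vertices, relation (3) replaces a ``bigon'' (two vertices joined along two legs) by a $p$, and relation (2) absorbs a $p$ into an adjacent $q$.

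For the base case $n=3$ the three vertices form a triangle, so the first two are \emph{both} joined to the third. Applying relation (4) to the first two vertices, and choosing the re--association that collects the two legs pointing at the third vertex onto a single new vertex, produces a bigon between that new vertex and the third one; replacing the bigon by a $p$ with relation (3) and absorbing the $p$ with relation (2) leaves a single $q$ with all three legs at the bottom, which is the right--hand side for $n=3$.

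For the inductive step, assume the lemma for $n\ge 3$ and take the necklace of $n+1$ vertices. Apply relation (4) to one adjacent pair, choosing the re--association that puts their two ``necklace'' legs onto a new vertex $A$ and their two ``bottom'' legs onto a new vertex $B$, with $A$ joined to $B$ by an edge; the result is a necklace of $n$ vertices in which the leg of $A$ that should run to the bottom first passes through $B$, and $B$ sends two strands to the bottom. Since the daisychain relation is an identity in $\mc C_k(\mc P\otimes\mc P^{rev})$, it remains valid after post--composing with the box $B$ on one boundary strand, so the induction hypothesis collapses this to a linear chain of $n-2$ vertices one of whose bottom legs runs through $B$; using the cyclic symmetry of the necklace to place $B$ at an end, together with the rotational invariance of $q$, the vertex $B$ becomes the new end vertex of a linear chain of $n-1$ vertices with all unused legs at the bottom, which is the right--hand side for $n+1$.

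I expect the only real difficulty to be diagrammatic bookkeeping: one must pick the correct re--association in relation (4) at each stage (so that a bigon actually appears in the base case and so that the two pendant legs are collected correctly in the inductive step), and one must check that after applying the induction hypothesis and sliding $B$ to the end the two ``over the top'' strands and the direct ``down--left'' strands of the right--hand side land on the intended vertices. No scalars need to be tracked, since $I$ and the Perron--Frobenius weights are already incorporated into relations (2)--(4).
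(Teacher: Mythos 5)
Your proof is correct and follows essentially the same route as the paper: apply relation (4) of Proposition \ref{cabled_skein} to one adjacent pair in the necklace so as to group the two bottom legs onto a pendant vertex and the two necklace legs onto a vertex that shortens the cycle by one, settle $n=3$ with relations (3) and (2), and induct. The paper simply fixes the pair (the second and third $q$ from the left) so that the pendant vertex lands directly in the end position of the chain, which avoids the appeal to cyclic symmetry at the end of your inductive step, but this is only a difference in bookkeeping.
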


\begin{proof}
By applying relation (4) to the second and third $q$ from left, we see that the left hand side is equal to
\begin{equation*}
 \begin{tikzpicture}[scale=.75, very thick]
 \foreach \x in {0,2,3.5,4.25}{
  \draw[Box,thin] (\x,0) rectangle (\x+.5,.5); \node at (\x+.25,.25) {$q$};
\draw (\x+.2,0) -- (\x +.2,-1); \draw (\x +.3,0) -- (\x + .3,-1);
}
\foreach \x in {.5,2.5,3.25,4}{ \foreach \y in {.2,.3}{
\draw (\x,\y) -- (\x+.25,\y);
}}
\draw[Box,thin] (1,0) rectangle (1.5,.5); \node at (1.25,.25) {$q$};
\draw[Box,thin] (1,-.75) rectangle (1.5,-.25); \node at (1.25,-.5) {$q$};
\draw (.5,.2) -- (1,.2); \draw (.5,.3) -- (1,.3);
\draw(1.5,.3) -- (2,.3); \draw (1.5,.2) -- (2,.2);
\draw(1.2,0) -- (1.2,-.25); \draw (1.3,0) -- (1.3,-.25);
\draw(1,-.45) arc(90:180:.3cm) -- (.7,-1); \draw(1,-.55) arc(90:180:.2cm) -- (.8,-1);
\draw(1.5,-.45) arc(90:0:.3cm) -- (1.8,-1); \draw (1.5,-.55) arc(90:0:.2cm) -- (1.7,-1); 
\draw (0,.3) arc(270:90:.2cm) -- (4.75,.7) arc(90:-90:.2cm);
\draw (0,.2) arc(270:90:.3cm) -- (4.75,.8) arc(90:-90:.3cm);
\node[scale=.6] at (3,.25) {$\dotsb$};

\draw[Box] (-.5,-1) rectangle (5.25,1); \node[marked,above left=.01cm] at (-.5,1) {};
\end{tikzpicture}
\end{equation*}
Now if $n = 3$ then the statement follows from relation (3) above, otherwise it follows from induction on $n$.
\end{proof}

Define $Sym(\mc P)_{n} \subset \mc C_k(\mc P \otimes \mc P^{rev})_{n}$ to be the range of the partially labelled tangle
\begin{equation*}
\begin{tikzpicture}[yscale =.6, xscale = .75,very thick]
\draw[Box] (0,0) rectangle (3,1); \node[marked,scale=.9] at (-.05,1.05) {};
\draw[Box] (-.5,-.5) rectangle (3.5,2.5); \node[marked] at (-.55,2.55) {};

\draw (.25,1) -- (.25,2.5); \draw(.5,1) -- (.5,2.5); \node[cpr]  at (.375,1.75) {$p$};
\draw (1,1) -- (1,2.5); \draw(1.25,1) -- (1.25,2.5); \node[cpr] at (1.125,1.75) {$p$};
\draw (2.5,1) -- (2.5,2.5); \draw(2.75,1) -- (2.75,2.5); \node[cpr] at (2.625,1.75) {$p$};
\node at (1.875,1.75) {$\dotsb$};
\end{tikzpicture}
\end{equation*}
By Proposition \ref{commutants}, we have an identification of $Sym(\mc P)_n$ with $\mc M_0' \cap \mc M_{n}$ (as vector spaces).

For $n \geq 1$ define partially labelled tangles $F_n$ and $C_n$ by
\begin{equation*}
 \begin{tikzpicture}[yscale =.6, xscale = .75,very thick]
\draw[Box] (-.125,0) rectangle (3.625,1); \node[marked,scale=.9] at (-.175,1.05) {};
\draw[Box] (-.5,-.5) rectangle (4,2.5); \node[marked] at (-.55,2.55) {};

\draw (.25,1) -- (.25,1.5); \draw(.5,1) -- (.5,1.5); 
\draw (.1,1.5) -- (.1,2.5); \draw (.25,1.5) -- (.25,2.5);
\draw (.5,1.5) -- (.5,2.5); \draw (.65,1.5) -- (.65,2.5);
\node[cpr]  at (.375,1.75) {$\;q\;$};
\draw (1.25,1) -- (1.25,1.5); \draw(1.5,1) -- (1.5,1.5); 
\draw (1.1,1.5) -- (1.1,2.5); \draw (1.25,1.5) -- (1.25,2.5);
\draw (1.5,1.5) -- (1.5,2.5); \draw (1.65,1.5) -- (1.65,2.5);
\node[cpr] at (1.375,1.75) {$\;q\;$};

\node at (2.25,1.75) {$\dotsb$};

\draw (3,1) -- (3,1.5); \draw(3.25,1) -- (3.25,1.5); 
\draw (2.85,1.5) -- (2.85,2.5); \draw (3,1.5) -- (3,2.5);
\draw (3.25,1.5) -- (3.25,2.5); \draw (3.4,1.5) -- (3.4,2.5);
\node[cpr] at (3.125,1.75) {$\;q\;$};
\node[left] at (-.75,1.25) {$F_n = $};
\begin{scope}[xshift=7.5cm]
\draw[Box] (-.125,0) rectangle (3.625,1); \node[marked,scale=.9] at (-.175,1.05) {};
\draw[Box] (-.5,-.5) rectangle (4,2.5); \node[marked] at (-.55,2.55) {};

\draw (.25,1.5) -- (.25,2.5); \draw(.5,1.5) -- (.5,2.5); 
\draw (.1,1) -- (.1,1.5); \draw (.25,1) -- (.25,1.5);
\draw (.5,1) -- (.5,1.5); \draw (.65,1) -- (.65,1.5);
\node[cpr]  at (.375,1.75) {$\;q\;$};
\draw (1.25,1.5) -- (1.25,2.5); \draw(1.5,1.5) -- (1.5,2.5); 
\draw (1.1,1) -- (1.1,1.5); \draw (1.25,1) -- (1.25,1.5);
\draw (1.5,1) -- (1.5,1.5); \draw (1.65,1) -- (1.65,1.5);
\node[cpr] at (1.375,1.75) {$\;q\;$};

\node at (2.25,1.75) {$\dotsb$};

\draw (3,1.5) -- (3,2.5); \draw(3.25,1.5) -- (3.25,2.5); 
\draw (2.85,1) -- (2.85,1.5); \draw (3,1) -- (3,1.5);
\draw (3.25,1) -- (3.25,1.5); \draw (3.4,1) -- (3.4,1.5);
\node[cpr] at (3.125,1.75) {$\;q\;$}; 
\node[left] at (-.75,1.25) {$C_n = $};
\end{scope}
 \end{tikzpicture}
\end{equation*}
where the number of $q$'s in each tangle is $n$.  By relation (2) above, the linear maps associated to $F_n$ and $C_n$ restrict to maps $Z_{F_n}:Sym(\mc P)_n \to Sym(\mc P)_{2n}$ and $Z_{C_n}:Sym(\mc P)_{2n} \to Sym(\mc P)_n$.  Note that $Z_{C_n} \circ Z_{F_n}$ restricts to the identity on $Sym(\mc P)_n$ by (3) of Proposition \ref{cabled_skein}.

Now let $T$ be a planar tangle with $2n$ marked points on the boundary of its outer disc, and with input discs $D_1,\dotsc,D_m$ such that $D_i$ has $2n_i$ marked points on its boundary.  We define a ``spin factor'' (cf. \cite{jon3}) $\sigma(T)$ as follows.  First isotope $T$ so that the output and input rectangles each have as many marked points on the top as on the bottom (with no marked points on the sides), and such that the left side is the distinguished interval.  $\sigma(T)$ is then defined to be the product over all local extrema of all the strings of $I^{\pm 1/4}$, where the sign is taken to be positive if the convex region adjacent to the extreme point is unshaded, and negative if it is shaded.  If $n = 0$, we also multiply by $I^{-1}$ if the region adjacent to the boundary is shaded.

Let $\widetilde{T}$ be the tangle obtained by doubling the strings of $T$.  We then define a linear map
\begin{equation*}
 Z_T^{Sym}: Sym(\mc P)_{n_1} \otimes \dotsb \otimes Sym(\mc P)_{n_m} \to Sym(\mc P)_n
\end{equation*}
by
\begin{equation*}
Z_T^{Sym}(x_1 \otimes \dotsb \otimes x_m) = \sigma(T) \cdot Z_{C_n}(Z_{\widetilde T}(Z_{F_{n_1}}(x_1) \otimes \dotsb \otimes Z_{F_{n_m}}(x_m))).
\end{equation*}

In other words, $Z_T^{Sym}$ is equal to $\sigma(T) \cdot Z_{f(T)}$, where $f(T)$ is the partially labelled tangle obtained from $T$ by doubling the number of strings and composing with the tangles $C_n$ and $F_{n_i}$ in the appropriate manner. 

Let us now compute this action explicitly for several important classes of tangles.
\begin{example}\label{keytangles} In $Sym(\mc P)$, we have the following:
\begin{enumerate}
\item Identity:  If $n$ is even we have
\begin{equation*}
\begin{tikzpicture}[scale=.75,very thick]
\draw[thickline] (0,.75) --node[rcount] {$n$} (1,.75);
\draw[Box] (0,.25) rectangle (1,1.25); \node[marked,above left = .02cm] at (0,1.25) {};
\begin{scope}[xshift = 2.5cm,yshift = -1.25cm]
\draw[Box,thin] (0,2.25) rectangle (.5,3); \node at (.25,2.625) {$q$}; 
\draw[Box,thin] (0,1) rectangle (.5,1.75); \node at (.25,1.375) {$q$};
\draw[Box,thin] (1.5,2.25) rectangle (2,3); \node at (1.75,2.625) {$q$};
\draw[Box,thin] (1.5,1) rectangle (2,1.75); \node at (1.75,1.375) {$q$};
\draw (.5,2.85) -- (1.5,2.85); \draw (.5,2.7) -- (1.5,2.7);
\draw (.5,2.55) -- (1.5,2.55); \draw (.5,2.4) -- (1.5,2.4);
\draw (0,2.75) -- (-.25,2.75); \draw (0,2.5) -- (-.25,2.5);
\draw (2,2.75) -- (2.25,2.75); \draw (2,2.5) -- (2.25,2.5);
\draw (.5,1.6) -- (1.5,1.6); \draw (.5,1.45) -- (1.5,1.45);
\draw (.5,1.3) -- (1.5,1.3); \draw (.5,1.15) -- (1.5,1.15);
\draw (0,1.5) -- (-.25,1.5); \draw (2,1.5) -- (2.25,1.5);
\draw (0,1.25) -- (-.25,1.25); \draw (2,1.25) -- (2.25,1.25);

\node[rotate=90,scale = .9] at (1,2) {$\dotsb$};
\node[left] at (-.5,2) {$\mapsto$};
\end{scope}
\begin{scope}[xshift = 6.5cm,yshift = -1.25cm]
\draw[Box,thin] (0,2.375) rectangle (.5,2.875); \node[scale=.8] at (.25,2.625) {$p$}; 
\draw[Box,thin] (0,1.125) rectangle (.5,1.625); \node[scale=.8] at (.25,1.375) {$p$};

\foreach \x in {-.5,.5} {\foreach \y in {2.675,2.575,1.425,1.325} {
\draw (\x,\y) -- (\x+.5,\y);
}}

\node[rotate=90,scale = .8] at (.25,2) {$\dotsb$};
\node[left] at (-.75,2) {$=$};
\node[right] at (1.25,2) {$= 1_{\mc M_{n}}$};
\end{scope}

\end{tikzpicture}
\end{equation*}
Likewise if $n$ is odd we have
\begin{equation*}
\begin{tikzpicture}[scale=.75,very thick]
\draw[thickline] (0,.75) --node[rcount] {$n$} (1,.75);
\draw[Box] (0,.25) rectangle (1,1.25); \node[marked,above left = .02cm] at (0,1.25) {};
\begin{scope}[xshift = 2.25cm,yshift = -.75cm]
\draw[Box,thin] (.75,2.375) rectangle (1.25,2.875); \node[scale=.8] at (1,2.625) {$p$}; 
\draw[Box,thin] (.75,1.125) rectangle (1.25,1.625); \node[scale=.8] at (1,1.375) {$p$};
\draw[Box,thin] (.5,.125) rectangle (1.5,.625); \node at (1,.375) {$q$};

\draw (.75,.125) arc(0:-90:.125cm) -- (0,0); \draw (1.25,.125) arc(180:270:.125cm) -- (2,0);
\draw (.7,.625) arc(180:0:.3cm); \draw(.8,.625) arc(180:0:.2cm);

\foreach \x in {0,1.25} {\foreach \y in {2.675,2.575,1.425,1.325} {
\draw (\x,\y) -- (\x+.75,\y);
}}

\node[rotate=90,scale = .8] at (1,2) {$\dotsb$};
\node[left] at (-.25,1.5) {$\mapsto$};
\node[right] at (2.25,1.5) {$= 1_{\mc M_{n}}$};
\end{scope}

\end{tikzpicture}
\end{equation*}

\item Multiplication:  Let $x,y \in Sym(\mc P)_n$.  If $n$ is even we have
\begin{equation*}
 \begin{tikzpicture}[scale=.75,very thick]
\draw[Box] (0,-.5) rectangle (2,.5); \node at (1,0) {$x$}; \node[marked,above left] at (0,.5) {};
\draw[Box] (3,-.5) rectangle (5,.5); \node at (4,0) {$y$}; \node[marked, above left] at (3,.5) {};
\draw[thickline] (2,0) --node[rcount] {$n$} (3,0);
\draw[thickline] (0,0) -- (-.5,0); \draw[thickline] (5,0) -- (5.5,0);

\begin{scope}[xshift=8cm,yshift=-2.25cm]
\draw[Box] (.5,1.25) rectangle (1.5,3.25); \node at (1,2.25) {$x$}; \node[marked,above left] at (.5,3.25) {};
\draw[Box] (2.5,1.25) rectangle (3.5,3.25); \node at (3,2.25) {$y$}; \node[marked, above left] at (2.5,3.25) {};
\foreach \x in {-.25,1.75,3.75}{\foreach \y in {1.5,2.5} {
\draw[Box,thin] (\x,\y) rectangle (\x+.5,\y+.5); 
\node[scale=.8] at (\x+.25,\y+.25) {$p$}; 
\draw (\x,\y+.2) -- (\x-.25,\y+.2); \draw(\x-.25,\y + .4) -- (\x,\y+.4); \draw(\x+.5,\y+.2) -- (\x+.75,\y+.2); \draw(\x+.5,\y+.4) -- (\x+.75,\y+.4);
}}
\node[rotate=90,scale=.6] at (0,2.25) {$\dotsb$}; \node[rotate=90,scale=.6] at (2,2.25) {$\dotsb$}; \node[rotate=90,scale=.6] at (4,2.25) {$\dotsb$};
\node[left] at (-1,2.25) {$\mapsto$};
\node[right] at (4.65,2.25) {$= x \wedge y$};
\end{scope}
 \end{tikzpicture}
\end{equation*}

Likewise, if $n$ is odd we have
\begin{equation*}
 \begin{tikzpicture}[scale=.75,very thick]
\draw[Box] (0,-.5) rectangle (2,.5); \node at (1,0) {$x$}; \node[marked,above left] at (0,.5) {};
\draw[Box] (3,-.5) rectangle (5,.5); \node at (4,0) {$y$}; \node[marked, above left] at (3,.5) {};
\draw[thickline] (2,0) --node[rcount] {$n$} (3,0);
\draw[thickline] (0,0) -- (-.5,0); \draw[thickline] (5,0) -- (5.5,0);

\begin{scope}[xshift=8cm,yshift=-1.5cm]
\draw[Box] (.5,1) rectangle (1.5,3.25); \node at (1,2.25) {$x$}; \node[marked,above left] at (.5,3.25) {};
\draw[Box] (2.5,1) rectangle (3.5,3.25); \node at (3,2.25) {$y$}; \node[marked, above left] at (2.5,3.25) {};
\draw[Box] (1.5,0) rectangle (2.5,.5); \node at (2,.25) {$q$};
\draw (.5,1.25) arc(90:270:.2cm) -- (1.5,.85) arc(90:0:.2cm) -- (1.7,.5);
\draw (1.5,1.25) arc(90:0:.35cm) -- (1.85,.5);
\draw (2.5,1.25) arc(90:180:.35cm) -- (2.15,.5);
\draw (3.5,1.25) arc(90:-90:.2cm) -- (2.5,.85) arc(90:180:.2cm) -- (2.3,.5);
\draw (1.75,0) arc(0:-90:.25cm) -- (-.5,-.25);
\draw(2.25,0) arc(180:270:.25cm) -- (4.5,-.25);

\foreach \x in {-.25,1.75,3.75}{\foreach \y in {1.5,2.5} {
\draw[Box,thin] (\x,\y) rectangle (\x+.5,\y+.5); 
\node[scale=.8] at (\x+.25,\y+.25) {$p$}; 
\draw (\x,\y+.2) -- (\x-.25,\y+.2); \draw(\x-.25,\y + .3) -- (\x,\y+.3); \draw(\x+.5,\y+.2) -- (\x+.75,\y+.2); \draw(\x+.5,\y+.3) -- (\x+.75,\y+.3);
}}
\node[rotate=90,scale=.6] at (0,2.25) {$\dotsb$}; \node[rotate=90,scale=.6] at (2,2.25) {$\dotsb$}; \node[rotate=90,scale=.6] at (4,2.25) {$\dotsb$};
\node[left] at (-1,1.5) {$\mapsto$};
\node[right] at (4.65,1.5) {$= x \star_q y$};
\end{scope}
 \end{tikzpicture}
\end{equation*}
Note that we have applied Lemma \ref{daisychain} here.

\item Inclusions:  Let $x \in Sym(\mc P)_n$.  If $n$ is even we have
\begin{equation*}
 \begin{tikzpicture}[scale=.75,very thick]
\draw[Box] (0,-.5) rectangle (2,.5); \node at (1,0) {$x$}; \node[marked,above left] at (0,.5) {};
\draw[thickline] (2,0) -- (2.5,0);
\draw[thickline] (0,0) -- (-.5,0);
\draw (-.5,-.75) -- (2.5,-.75);
\draw[Box] (-.5,-1) rectangle (2.5,.85); \node[marked,above left] at (-.5,.85) {};
\begin{scope}[xshift = 4.35cm,yshift = -1.7cm]
\draw[Box] (.5,1.25) rectangle (1.5,3.25); \node at (1,2.25) {$x$}; \node[marked,above left] at (.5,3.25) {};
\foreach \x in {-.25,1.75}{\foreach \y in {1.5,2.5} {
\draw[Box,thin] (\x,\y) rectangle (\x+.5,\y+.5); 
\node[scale=.8] at (\x+.25,\y+.25) {$p$}; 
\draw (\x,\y+.2) -- (\x-.25,\y+.2); \draw(\x-.25,\y + .3) -- (\x,\y+.3); \draw(\x+.5,\y+.2) -- (\x+.75,\y+.2); \draw(\x+.5,\y+.3) -- (\x+.75,\y+.3);
}}
\node[rotate=90,scale=.6] at (0,2.25) {$\dotsb$}; \node[rotate=90,scale=.6] at (2,2.25) {$\dotsb$};
\draw[Box,thin] (.5,.125) rectangle (1.5,.625); \node at (1,.375) {$q$};

\draw (.75,.125) arc(0:-90:.125cm) -- (-.5,0); \draw (1.25,.125) arc(180:270:.125cm) -- (2.5,0);
\draw (.7,.625) arc(180:0:.3cm); \draw(.8,.625) arc(180:0:.2cm);

\node[left] at (-.75,1.625) {$\mapsto$};
\node[right] at (2.65,1.625) {$= i_{n}(x) $};

\end{scope}  
 \end{tikzpicture}
\end{equation*}
If $n$ is odd we have
\begin{equation*}
 \begin{tikzpicture}[scale=.75,very thick]
\draw[Box] (0,-.5) rectangle (2,.5); \node at (1,0) {$x$}; \node[marked,above left] at (0,.5) {};
\draw[thickline] (2,0) -- (2.5,0);
\draw[thickline] (0,0) -- (-.5,0);
\draw (-.5,-.75) -- (2.5,-.75);
\draw[Box] (-.5,-1) rectangle (2.5,.85); \node[marked,above left] at (-.5,.85) {};
\begin{scope}[xshift = 4.35cm,yshift = -1.8cm]
\draw[Box] (.5,1) rectangle (1.5,3.25); \node at (1,2.125) {$x$}; \node[marked,above left] at (.5,3.25) {};
\foreach \x in {-.25,1.75}{\foreach \y in {1.5,2.5} {
\draw[Box,thin] (\x,\y) rectangle (\x+.5,\y+.5); 
\node[scale=.8] at (\x+.25,\y+.25) {$p$};
\draw (\x,\y+.2) -- (\x-.25,\y+.2); \draw(\x-.25,\y + .3) -- (\x,\y+.3); \draw(\x+.5,\y+.2) -- (\x+.75,\y+.2); \draw(\x+.5,\y+.3) -- (\x+.75,\y+.3);
}}
\node[rotate=90,scale=.6] at (0,2.25) {$\dotsb$}; \node[rotate=90,scale=.6] at (2,2.25) {$\dotsb$};

\draw[Box,thin] (.5,.125) rectangle (1.5,.625); \node at (1,.375) {$q$};

\draw (.5,1.25) arc(90:270:.2cm) arc (90:0:.375cm and .225cm); \draw(1.5,1.25) arc(90:-90:.2cm) arc(90:180:.375cm and .225cm);
\foreach \x in {-.5,1.5}{\foreach \y in {.325,.425}{\draw(\x,\y) -- (\x+1,\y);  }}

\node[left] at (-.75,1.6) {$\mapsto$};
\node[right] at (2.65,1.6) {$= i_{n}(x) $};
\end{scope}  

 \end{tikzpicture}
\end{equation*}
Again we have applied  Lemma 4.2 in this case.
 \item Jones projections:  If $n$ is even we have
\begin{equation*}
\begin{tikzpicture}[scale=.75,very thick]
\draw[thickline] (0,.75) --node[rcount] {$n$} (1,.75);
\draw[thick] (0,.4) arc(90:-90:.2cm); \draw[thick] (1,.4) arc(90:270:.2cm);
\draw[Box] (0,-.25) rectangle (1,1.2); \node[marked,above left = .02cm] at (0,1.2) {};
\begin{scope}[xshift = 4.25cm,yshift = -1cm]
\draw[Box,thin] (0,0) rectangle (.5,.75); \node at (.25,.375) {$q$};
\draw[Box,thin] (1.5,0) rectangle (2,.75); \node at (1.75,.375) {$q$};
\draw[Box,thin] (.75,2.375) rectangle (1.25,2.875); \node[scale=.8] at (1,2.625) {$p$}; 
\draw[Box,thin] (.75,1.125) rectangle (1.25,1.625); \node[scale=.8] at (1,1.375) {$p$};

\foreach \x in {-.25,1.25} {\foreach \y in {2.675,2.575,1.425,1.325} {
\draw (\x,\y) -- (\x+1,\y);
}}

\draw (.5,.5) arc(90:-90:.125cm); \draw (.5,.6) arc(90:-90:.225cm);
\draw (1.5,.5) arc(90:270:.125cm); \draw (1.5,.6) arc(90:270:.225cm);
\draw (0,.5) -- (-.25,.5); \draw (0,.25) -- (-.25,.25);
\draw (2,.5) -- (2.25,.5); \draw (2,.25) -- (2.25,.25);

\node[rotate=90,scale = .9] at (1,2) {$\dotsc$};
\node[left] at (-.5,1.5) {$= I^{-1/2} \; \cdot$};
\node[right] at (2.5,1.5) {$ = I^{1/2}\cdot \e_n$};
\end{scope}
\end{tikzpicture}
\end{equation*}
Likewise, if $n$ is odd we have
\begin{equation*}
\begin{tikzpicture}[scale=.75,very thick]
\draw[thickline] (0,.75) --node[rcount] {$n$} (1,.75);
\draw[thick] (0,.4) arc(90:-90:.2cm); \draw[thick] (1,.4) arc(90:270:.2cm);
\draw[Box] (0,-.25) rectangle (1,1.2); \node[marked,above left = .02cm] at (0,1.2) {};
\begin{scope}[xshift = 4.25cm,yshift = -1cm]
\draw[Box,thin] (.5,.125) rectangle (1.5,.625); \node at (1,.375) {$q$};

\draw[Box,thin] (.75,2.375) rectangle (1.25,2.875); \node[scale=.8] at (1,2.625) {$p$}; 
\draw[Box,thin] (.75,1.125) rectangle (1.25,1.625); \node[scale=.8] at (1,1.375) {$p$};

\foreach \x in {-.25,1.25} {\foreach \y in {2.675,2.575,1.425,1.325} {
\draw (\x,\y) -- (\x+1,\y);
}}

\foreach \x in {-.25,1.5}{\foreach \y in {.325,.425}{\draw(\x,\y) -- (\x+.75,\y);  }}

\draw (.875,.125) arc(0:-90:.125cm) -- (-.25,0);
\draw (1.125,.125) arc(180:270:.125cm) -- (2.25,0);

\node[rotate=90,scale = .9] at (1,2) {$\dotsc$};
\node[left] at (-.5,1.5) {$= I^{1/2} \; \cdot$};
\node[right] at (2.5,1.5) {$ = I^{1/2}\cdot \e_n$};
\end{scope}
\end{tikzpicture}
\end{equation*}
where we have applied Lemma 4.2.  
\item Conditional expectation onto $\mc M_1'$:  Let $x \in Sym(\mc P)_n$.  If $n$ is even we have
\begin{equation*}
 \begin{tikzpicture}[scale=.75,very thick]
\draw[Box] (.25,-.75) rectangle (1.75,.25); \node at (1,-.25) {$x$}; \node[marked,above left,scale=.75] at (.3,.2) {};
\draw[thickline] (1.75,-.375) -- (2.5,-.375);
\draw[thickline] (.25,-.375) -- (-.5,-.375);

\draw (.25,0) arc(270:90:.25cm) -- (1.75,.5) arc(90:-90:.25cm);
\draw (-.5,.75) -- (2.5,.75);
\draw[Box] (-.5,-1) rectangle (2.5,1); \node[marked,above left] at (-.5,1) {};
\begin{scope}[xshift = 5.75cm,yshift = -3.2cm]
\draw[Box] (.5,1.25) rectangle (1.5,3.75); \node at (1,2.5) {$x$}; \node[marked,above left,scale=.75] at (.5,3.75) {};
\foreach \x in {-.25,1.75}{\foreach \y in {1.5,2.5} {
\draw[Box,thin] (\x,\y) rectangle (\x+.5,\y+.5); 
\node[scale=.8] at (\x+.25,\y+.25) {$p$}; 
\draw (\x,\y+.2) -- (\x-.25,\y+.2); \draw(\x-.25,\y + .3) -- (\x,\y+.3); \draw(\x+.5,\y+.2) -- (\x+.75,\y+.2); \draw(\x+.5,\y+.3) -- (\x+.75,\y+.3);
}}
\node[rotate=90,scale=.6] at (0,2.25) {$\dotsb$}; \node[rotate=90,scale=.6] at (2,2.25) {$\dotsb$};

\draw[Box,thin] (.75,4) rectangle (1.25,4.5); \node[scale=.8] at (1,4.25) {$q$};
\draw[Box,thin] (.75,4.75) rectangle (1.25,5.25); \node[scale=.8] at (1,5) {$q$};

\draw (.5,3.4) arc(270:90:.375cm) -- (.75,4.15); \draw(.5,3.2) arc(270:90:.575cm) -- (.75,4.35);
\draw (1.5,3.4) arc(-90:90:.375cm) -- (1.25,4.15); \draw(1.5,3.2) arc(-90:90:.575cm) -- (1.25,4.35);

\draw (.9,4.5) -- (.9,4.75); \draw (1.1,4.5) -- (1.1,4.75);
\draw (.75,4.9) -- (-.5,4.9); \draw (.75,5.1) -- (-.5,5.1);
\draw (1.25,4.9) -- (2.5,4.9); \draw (1.25,5.1) -- (2.5,5.1);
\node[left] at (-.75,3.125) {$\mapsto I^{1/2} \; \cdot $};
\node[right] at (3.15,3) {$= E_{\mc M_1'}(x)$};
\end{scope}  
 \end{tikzpicture}
\end{equation*}
where we have applied Lemma \ref{daisychain}.  Likewise, if $n$ is odd we have
\begin{equation*}
 \begin{tikzpicture}[scale=.75,very thick]
\draw[Box] (.25,-.75) rectangle (1.75,.25); \node at (1,-.25) {$x$}; \node[marked,above left,scale=.75] at (.3,.2) {};
\draw[thickline] (1.75,-.375) -- (2.5,-.375);
\draw[thickline] (.25,-.375) -- (-.5,-.375);

\draw (.25,0) arc(270:90:.25cm) -- (1.75,.5) arc(90:-90:.25cm);
\draw (-.5,.75) -- (2.5,.75);
\draw[Box] (-.5,-1) rectangle (2.5,1); \node[marked,above left] at (-.5,1) {};
\begin{scope}[xshift = 5.75cm,yshift = -2.75cm]
\draw[Box] (.5,1) rectangle (1.5,3.75); \node at (1,2.375) {$x$}; \node[marked,above left,scale=.75] at (.5,3.75) {};
\foreach \x in {-.25,1.75}{\foreach \y in {1.5,2.5} {
\draw[Box,thin] (\x,\y) rectangle (\x+.5,\y+.5); 
\node[scale=.8] at (\x+.25,\y+.25) {$p$}; 
\draw (\x,\y+.2) -- (\x-.25,\y+.2); \draw(\x-.25,\y + .3) -- (\x,\y+.3); \draw(\x+.5,\y+.2) -- (\x+.75,\y+.2); \draw(\x+.5,\y+.3) -- (\x+.75,\y+.3);
}}
\node[rotate=90,scale=.6] at (0,2.25) {$\dotsb$}; \node[rotate=90,scale=.6] at (2,2.25) {$\dotsb$};

\draw[Box,thin] (.75,4) rectangle (1.25,4.5); \node[scale=.8] at (1,4.25) {$q$};
\draw[Box,thin] (.75,4.75) rectangle (1.25,5.25); \node[scale=.8] at (1,5) {$q$};

\draw (.5,3.4) arc(270:90:.375cm) -- (.75,4.15); \draw(.5,3.2) arc(270:90:.575cm) -- (.75,4.35);
\draw (1.5,3.4) arc(-90:90:.375cm) -- (1.25,4.15); \draw(1.5,3.2) arc(-90:90:.575cm) -- (1.25,4.35);

\draw (.9,4.5) -- (.9,4.75); \draw (1.1,4.5) -- (1.1,4.75);
\draw (.75,4.9) -- (-.5,4.9); \draw (.75,5.1) -- (-.5,5.1);
\draw (1.25,4.9) -- (2.5,4.9); \draw (1.25,5.1) -- (2.5,5.1);

\draw[Box,thin] (.75, .25) rectangle (1.25,.75); \node[scale=.8] at (1,.5) {$p$};

\draw (.5,1.35) arc(90:270:.25cm) -- (.8,.85) arc(90:0:.1cm);
\draw (1.5,1.35) arc(90:-90:.25cm) -- (1.2,.85) arc(90:180:.1cm);
\draw (.9,.25) arc(0:-90:.25cm) -- (-.5,0);
\draw (1.1,.25) arc(180:270:.25cm) -- (2.5,0);
\node[left] at (-.75,2.675) {$\mapsto I^{1/2} \; \cdot$};
\node[right] at (3.15,2.675) {$= E_{\mc M_1'}(x)$};

\end{scope}  
 \end{tikzpicture}
\end{equation*}
In both cases we are applying Corollary \ref{m1expect}.

\item Trace: Let $x \in Sym(\mc P)_n$.  If $n$ is even we have
\begin{equation*}
\begin{tikzpicture}[scale=.75,very thick]
\draw[Box] (0,0) rectangle (1,1); \node at (.5,.5) {$x$}; \node[marked,above left,scale=.75] at (0,1) {};
\draw[thickline] (1,.5) arc(90:-90:.5cm) --node[rcount] {$n$} (0,-.5) arc(270:90:.5cm);
\begin{scope}[xshift=4cm,yshift=-1.6cm]
\draw[Box] (.5,1.25) rectangle (1.5,3.25); \node at (1,2.25) {$x$}; \node[marked,above left] at (.5,3.25) {};
\foreach \x in {-.25,1.75}{\foreach \y in {1.5,2.5} {
\draw[Box,thin] (\x,\y) rectangle (\x+.5,\y+.5); 
\node[scale=.8] at (\x+.25,\y+.25) {$p$}; 
}}

\foreach \x in {.5,1.75}{\foreach \y in {1.5,2.5} {
\draw (\x,\y+.2) -- (\x-.25,\y+.2); \draw(\x-.25,\y + .3) -- (\x,\y+.3);
\node[rotate=90,scale=.6] at (0,2.25) {$\dotsb$}; \node[rotate=90,scale=.6] at (2,2.25) {$\dotsb$};}}

\draw (2.25,1.7) arc(90:-90:.25cm and .3cm) -- (-.25,1.1) arc(270:90:.25cm and .3cm);
\draw (2.25,1.8) arc(90:-90:.35cm and .4cm) -- (-.25,1) arc(270:90:.35cm and .4cm);

\draw (2.25,2.7) arc(90:-90:.7cm and 1.2cm) -- (-.25,.3) arc(270:90:.7cm and 1.2cm);
\node[rotate=90,scale=.75] at (1,.6) {$\dotsb$};
\draw (2.25,2.8) arc(90:-90:.8cm and 1.3cm) -- (-.25,.2) arc(270:90:.8cm and 1.3cm);

\node[left] at (-1.25,1.75) {$\mapsto$};
\node[right] at (3.35,1.75) {$= \varphi_n(x) $};
\end{scope}

\end{tikzpicture}
\end{equation*}
Likewise if $n$ is odd we have
\begin{equation*}
\begin{tikzpicture}[scale=.75,very thick]
\draw[Box] (0,0) rectangle (1,1); \node at (.5,.5) {$x$}; \node[marked,above left,scale=.75] at (0,1) {};
\draw[thickline] (1,.5) arc(90:-90:.5cm) --node[rcount] {$n$} (0,-.5) arc(270:90:.5cm);
\begin{scope}[xshift=6cm,yshift=-.8cm]
\draw[Box] (.5,1) rectangle (1.5,3.25); \node at (1,2.125) {$x$}; \node[marked,above left] at (.5,3.25) {};
\foreach \x in {-.25,1.75}{\foreach \y in {1.5,2.5} {
\draw[Box,thin] (\x,\y) rectangle (\x+.5,\y+.5); 
\node[scale=.8] at (\x+.25,\y+.25) {$p$}; 
}}

\foreach \x in {.5,1.75}{\foreach \y in {1.5,2.5} {
\draw (\x,\y+.2) -- (\x-.25,\y+.2); \draw(\x-.25,\y + .3) -- (\x,\y+.3);
\node[rotate=90,scale=.6] at (0,2.25) {$\dotsb$}; \node[rotate=90,scale=.6] at (2,2.25) {$\dotsb$};}}

\draw (2.25,1.7) arc(90:-90:.35cm and 1.05cm) -- (-.25,-.4) arc(270:90:.35cm and 1.05cm);
\draw (2.25,1.8) arc(90:-90:.45cm and 1.15cm) -- (-.25,-.5) arc(270:90:.45cm and 1.15cm);

\node[rotate=90,scale=.75] at (1,-.9) {$\dotsb$};

\draw (2.25,2.7) arc(90:-90:.8cm and 1.95cm) -- (-.25,-1.2) arc(270:90:.8cm and 1.95cm);
\draw (2.25,2.8) arc(90:-90:.9cm and 2.05cm) -- (-.25,-1.3) arc(270:90:.9cm and 2.05cm);

\node[rotate=90,scale=.6] at (0,2.25) {$\dotsb$}; \node[rotate=90,scale=.6] at (2,2.25) {$\dotsb$};

\draw[Box,thin] (.5,.125) rectangle (1.5,.625); \node at (1,.375) {$q$};

\draw (.5,1.25) arc(90:270:.2cm) arc (90:0:.375cm and .225cm); \draw(1.5,1.25) arc(90:-90:.2cm) arc(90:180:.375cm and .225cm);
\draw (.7,.125) arc(180:360:.3cm); \draw (.85,.125) arc(180:360:.15cm);

\node[left] at (-1.35,1) {$\mapsto I^{-1/2} \; \cdot$};
\node[right] at (3.35,1) {$= \varphi_n(x) $};
\end{scope}

\end{tikzpicture}
\end{equation*}
\end{enumerate}

\end{example}

We will now show that $Sym(\mc P)$ is indeed the planar algebra of $M_0 \otimes M_0^{op} \subset M_0 \boxtimes M_0^{op}$.  First we need the following technical lemma, which will imply the compatibility of gluing for $Sym(\mc P)$.  

\begin{lemma}\label{gluing}
Let $T$ be a planar tangle which is partially labelled by $q$'s.  Beginning from the marked interval and moving clockwise, number the outgoing strings of each $q$ by $1,\dotsc,6$.  Assume that the following condition is satisfied: for each $q$ appearing in $T$, there is another one such that strings $1$ and $2$ of the first are connected to strings $4$ and $3$ of the second, respectively.  Suppose that $T$ contains a subtangle of the following form:
\begin{equation*}
 \begin{tikzpicture}[thick,yscale=.75]
\draw (.25,1) -- (.25,1.5) ; \draw(.5,1) -- (.5,1.5); 
\draw (.1,1.5) -- (.1,2.5) node[above=.05cm,scale=.6] {$1$}; \draw (.25,1.5) -- (.25,2.5) node[above=.05cm,scale=.6] {$2$};
\draw (.5,1.5) -- (.5,2.5) node[above=.05cm,scale=.6] {$3$}; \draw (.65,1.5) -- (.65,2.5) node[above=.05cm,scale=.6] {$4$};
\node[cpr,scale=1.2] (q1) at (.375,1.75) {$\;q\;$}; \node[marked,above left, scale=.7] at (q1.north west) {};
\begin{scope}[yscale=-1,yshift=-2.375cm]
 \draw (.25,1) -- (.25,1.5) ; \draw(.5,1) -- (.5,1.5) ;
\draw (.1,1.5) -- (.1,2.5) node[below=.05cm,scale=.6] {$4$}; \draw (.25,1.5) -- (.25,2.5) node[below=.05cm,scale=.6] {$3$};
\draw (.5,1.5) -- (.5,2.5) node[below=.05cm,scale=.6] {$2$}; \draw (.65,1.5) -- (.65,2.5) node[below=.05cm,scale=.6] {$1$};
\node[cpr,scale=1.2] (q2) at (.375,1.75) {$\;q\;$}; \node[marked, below right, scale=.7] at (q2.south east) {};
\end{scope}
\draw[Box,dashed] (-.25,-.125) rectangle (1,2.5); \node[marked,scale=.8,above left] at (-.25,2.5) {};
\end{tikzpicture}
\end{equation*}
Let $T'$ be the tangle obtained from $T$ by removing this subtangle and replacing it with the following:
\begin{equation*}
 \begin{tikzpicture}[thick,yscale=.75]
\draw (0,0) -- (0,2); \draw (.15,0) -- (.15,2);
\draw (.6,0) -- (.6,2); \draw (.75,0) -- (.75,2);

\node[cpr,draw,fill=white] at (.075,1) {$p$};
\node[cpr,draw,fill=white] at (.675,1) {$p$};
\draw[Box,dashed] (-.25,0) rectangle (1,2); \node[marked,scale=.8,above left] at (-.25,2) {};
\end{tikzpicture}
\end{equation*}
Then in the planar algebra $\mc C_k(\mc P \otimes \mc P^{rev})$, we have $Z_T = Z_{T'}$.
\end{lemma}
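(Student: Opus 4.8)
The plan is to deduce $Z_T = Z_{T'}$ from the two-$q$ skein relations of Section~\ref{sec:skein}, with the pairing hypothesis playing the role of supplying a second leg along which the two $q$'s of the displayed subtangle are joined. Note first why some global hypothesis is unavoidable: two copies of $q$ joined along a single (doubled) leg are \emph{not} equal to a pair of $p$'s on the nose, so the replacement cannot be justified purely locally. What makes it work is that, by hypothesis, strings $1,2$ of the upper $q$ (call it $q_1$) are joined \emph{through the rest of $T$} to strings $4,3$ of the lower $q$ (call it $q_2$), so that $q_1$ and $q_2$ are in fact joined along two of their three legs.

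First I would invoke compatibility of $Z_{(-)}$ with gluing of tangles: since $T$ and $T'$ coincide outside a disc, it is enough to prove the identity after isotoping into view, inside a slightly larger sub-disc of $T$, the two arcs that the hypothesis controls. Dragging the arc from strings $1,2$ of $q_1$ to strings $4,3$ of $q_2$ down alongside the subtangle exhibits an enlarged sub-tangle in which $q_1$ and $q_2$ are joined along the leg already shown together with this second leg, the third leg of each remaining free. After normalizing the marked intervals of $q_1$ and $q_2$ to the standard corner using Proposition~\ref{q-skein}(2) and Proposition~\ref{cabled_skein}(1), and if necessary moving the two boxes into the ``ladder'' position via Corollary~\ref{2q-rotate}, this is precisely the left-hand side of Corollary~\ref{2q-p} (equivalently, relation~(3) of Proposition~\ref{cabled_skein}).

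Applying Corollary~\ref{2q-p} collapses $q_1, q_2$ (and the arc dragged in) to a single copy of $p$ on the two free legs. Reversing the isotopy and comparing with $T'$: one of the two $p$'s produced by the replacement is exactly this copy of $p$, while the second $p$ together with the arc it now bounds closes up into a loop carrying $p$, which by Proposition~\ref{p-skein}(1) (and the rotational invariance of $p$) evaluates to the scalar that cancels the scalar produced when the dragged-in arcs are reabsorbed on the $Z_T$ side. Lemma~\ref{daisychain} is available if the arcs pass through intermediate $q$'s and need to be slid past them before the collapse. Hence $Z_T = Z_{T'}$.

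I expect the only real difficulty to be the bookkeeping: checking that, with the marked-interval conventions recorded in the statement (strings $1,2 \leftrightarrow 4,3$, in that order), the rotations used are exactly those permitted by Proposition~\ref{q-skein}(2) and Proposition~\ref{cabled_skein}(1), and that the powers of $I$ coming out of Corollary~\ref{2q-p} and out of closing the residual $p$-loop (Proposition~\ref{p-skein}(1)) match up to give net scalar $1$. Once the configuration is correctly oriented the identity is immediate from Corollary~\ref{2q-p}.
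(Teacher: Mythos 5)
Your argument rests on a reading of the hypothesis that is strictly stronger than what is assumed, and the gap sits exactly where the real work of the proof lies. The hypothesis says that for each $q$ in $T$ there is \emph{some} other $q$ whose strings $4,3$ receive strings $1,2$ of the first; it does not say that the two $q$'s of the displayed subtangle are each other's partners. In the one place the lemma is used --- proving $Z_{f(T\circ_i S)}=Z_{f(T)\circ_i f(S)}$ --- the $n$ copies of the subtangle are arranged around the glued circle, and the partner of each $q$ lies in a \emph{different} copy: the assignment $q\mapsto(\text{partner of }q)$ is a permutation of the $q$'s whose cycles run around the circle, so the configuration you reduce to (where $q_1$ and $q_2$ are joined along a second leg and Corollary \ref{2q-p} applies directly) is essentially never the one at hand. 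The paper's proof instead traces the cycle of this permutation starting from the bottom $q$, splits into two cases according to whether that cycle contains the top $q$ or not, controls the planar position of the resulting chain by a winding-number argument, and then collapses the chain using Lemma \ref{daisychain} (itself an induction via Proposition \ref{cabled_skein}(4) with (3) as base case), together with relation (5) of Proposition \ref{cabled_skein} in the second case. Your parenthetical that Lemma \ref{daisychain} ``is available if the arcs pass through intermediate $q$'s'' names the phenomenon but does not engage with it: the intermediate $q$'s are not a nuisance to be slid past, they are the generic situation, and neither the case split nor the winding-number normalization appears in your argument.

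Even in the special case you do treat, the endgame does not close. If $q_1$ and $q_2$ were joined along two legs, Corollary \ref{2q-p} would produce a \emph{single} $p$ on the four remaining strings, whereas $T'$ contains two $p$'s. The extra $p$ in $T'$ then sits on a closed loop (its top endpoints are joined to its bottom endpoints by the very arc you dragged in), and by Proposition \ref{p-skein}(1) a closed loop carrying $p$ contributes a factor of $I$, not $1$; there is no compensating scalar on the $Z_T$ side of your computation, which as written would therefore yield $Z_{T'}=I\cdot Z_T$. So the claimed cancellation needs to be reexamined; the identity $Z_T=Z_{T'}$ really does depend on the global combinatorics of how the $q$'s are chained together and cannot be extracted from the local two-box relation alone.
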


\begin{proof}
We may assume without loss of generality that $T$ is connected.  Indeed, suppose there were some connected component consisting of internal discs and strings.  If the subtangle in question is contained in this, then we may restrict to this connected component.  Otherwise we may remove this component, which for fixed inputs will contribute the same multiplicative factor to both $Z_T$ and $Z_{T'}$.

Label the upper and lower $q$'s appearing in the diagram by $q_t$ and $q_b$, respectively.  Now consider following strings $1$ and $2$ of the $q_b$: by assumption we will arrive at strings $4$ and $3$ of another $q$.  Following strings $1$ and $2$ of this new $q$ we would arrive at strings $4$ and $3$ of another $q$, and so on.  Eventually we must arrive back at strings $4$ and $3$ of $q_b$.  There are two cases to consider, depending on whether this path meets $q_t$ or not.  First suppose that it does not.  Then the winding number of this path around $q_t$ is either 0 or 1. We may assume without loss of generality that it is equal to 0.  Indeed, if it is equal to 1, then consider the path obtained by starting at strings $1$ and $2$ of the $q_t$ and following the same procedure.  Then this path does not meet $q_b$, and it is easy to see that the winding number around $q_b$ must be equal to $0$ by planarity.  We may then rotate the picture by 180 degrees, reversing the roles of $q_t$ and $q_b$.

We may now isotope the diagram to obtain the following picture:
\begin{equation*}
 \begin{tikzpicture}[scale=.75, very thick]
 \foreach \x in {0,.75,1.5,3,3.75}{
  \draw[Box,thin] (\x,0) rectangle (\x+.5,.5); \node at (\x+.25,.25) {$q$};
\draw (\x+.2,0) -- (\x +.2,-.25); \draw (\x +.3,0) -- (\x + .3,-.25);
la}
\foreach \x in {.5,1.25,2,2.75,3.5}{ \foreach \y in {.2,.3}{
\draw (\x,\y) -- (\x+.25,\y);
}}
\node[scale=.6] at (2.5,.25) {$\dotsb$};

\draw (0,.3) arc(270:90:.45cm) -- (4.25,1.2) arc(90:-90:.45cm);
\draw (0,.2) arc(270:90:.55cm) -- (4.25,1.3) arc(90:-90:.55cm);

\begin{scope}[yshift=.25cm]
\draw[Box,thin,fill=white] (1.875,.75) rectangle (2.375,1.25); \node[scale=.8] (qb) at (2.125,1) {$q_b$}; \node[marked,scale=.6,below=-.07] at (qb.south) {};
\draw[Box,thin,fill=white] (1.75,1.5) rectangle (2.5,2); \node[scale=.8] (qt) at (2.125,1.75) {$q_t$}; \node[marked,scale=.6,left=.08] at (qt.west) {};
\draw (2.075,1.25) -- ++(0,.25); \draw (2.175,1.25) -- ++(0,.25);
\foreach \x in {.125,.25,.5,.625} {\draw (1.75 + \x,2) -- (1.75 + \x,2.25);}
\end{scope}
\draw[Box,dashed] (-.75,-.25) rectangle (5,2.5); \node[marked,above left=.01cm] at (-.75,2.5) {};
\end{tikzpicture}
\end{equation*}
Note that we are using the assumption that $T$ is connected here, otherwise there could be something appearing in the region adjacent to the marked interval of $q_b$.  By applying relation (4) of Proposition \ref{cabled_skein}, we may modify $T$ as follows (without effecting $Z_T$):
\begin{equation*}
 \begin{tikzpicture}[scale=.75, very thick]
 \foreach \x in {0,.75,1.5,3,3.75}{
  \draw[Box,thin] (\x,0) rectangle (\x+.5,.5); \node at (\x+.25,.25) {$q$};
\draw (\x+.2,0) -- (\x +.2,-.25); \draw (\x +.3,0) -- (\x + .3,-.25);
}
\foreach \x in {.5,1.25,2,2.75,3.5}{ \foreach \y in {.2,.3}{
\draw (\x,\y) -- (\x+.25,\y);
}}
\draw (0,.3) arc(270:90:.325cm) -- (4.25,.95) arc(90:-90:.325cm);
\draw (0,.2) arc(270:90:.425cm) -- (4.25,1.05) arc(90:-90:.425cm);

\begin{scope}[xshift=.375cm]
\draw (2.075,1.25) -- ++(0,.25); \draw (2.175,1.25) -- ++(0,.25);
\draw[Box,thin,fill=white] (1.875,.75) rectangle (2.375,1.25); \node at (2.125,1) {$q$};
\end{scope}
\begin{scope}[xshift=-.375cm]
\draw (2.075,1.25) -- ++(0,.25); \draw (2.175,1.25) -- ++(0,.25);
\draw[Box,thin,fill=white] (1.875,.75) rectangle (2.375,1.25); \node at (2.125,1) {$q$};
\end{scope}
\node[scale=.6] at (2.5,.25) {$\dotsb$};

\draw[Box,dashed] (-.75,-.25) rectangle (5,1.5); \node[marked,above left=.01cm] at (-.75,1.5) {};
\end{tikzpicture}
\end{equation*}
The result then follows from an application of Lemma \ref{daisychain} (or Proposition \ref{cabled_skein} (3) if there are only 2 $q$'s appearing). 

Now consider the second case, where the path constructed above contains $q_t$ as well as $q_b$.  Consider the part of the path which begins at string $1$ of $q_b$ and ends at string $4$ of $q_t$.  Suppose that we extend this downward to connect back to $q_b$ to obtain a closed path.  Then the winding number of this path around any point in region adjacent to the marked interval of $q_b$ is either $0$ or $1$.  As above, by reversing the roles of $q_b$ and $q_t$ if necessary, we may assume that it is equal to $1$.  

After isotoping, we may then arrange the diagram as follows:
\begin{equation*}
 \begin{tikzpicture}[scale=.75, very thick]
 \foreach \x in {0,.75,1.5,3,3.75}{
  \draw[Box,thin] (\x,0) rectangle (\x+.5,.5); \node at (\x+.25,.25) {$q$};
\draw (\x+.2,0) -- (\x +.2,-.25); \draw (\x +.3,0) -- (\x + .3,-.25);
}
\foreach \x in {.5,1.25,2,2.75,3.5}{ \foreach \y in {.2,.3}{
\draw (\x,\y) -- (\x+.25,\y);
}}

\draw[Box,dashed] (-.25,-1) rectangle (4.5,-.25); \node at (2.125,-.625) {$S$};

\draw[Box,thin] (-1,0) rectangle (-.5,.5); \node at (-.75,.25) {$q_t$};

\draw[Box,thin] (-1,-1.5) rectangle (-.5,-1); \node at (-.75,-1.25) {$q_b$};
\draw (-.5,-1.2) -- ++(5.15,0)  arc(-90:0:.15cm) -- ++(0,1.1) arc (0:90:.15cm) -- ++(-.4,0);
\draw (-.5,-1.3) -- ++(5.15,0)  arc(-90:0:.25cm) -- ++(0,1.1) arc(0:90:.25cm) -- ++(-.4,0);

\draw (-.7,-1) -- ++(0,1); \draw (-.8,-1) -- ++(0,1);
\foreach \y in {-1.2,-1.3,.2,.3} {\draw (-1,\y) -- ++(-.25,0);}

\draw (-.5,.2) -- ++(.5,0); \draw (-.5,.3) -- ++(.5,0);

\node[scale=.6] at (2.5,.25) {$\dotsb$};

\draw[Box,dashed] (-1.25,-1.75) rectangle (5.5,.75); \node[marked,above left=.01cm] at (-1.25,.75) {};
\end{tikzpicture}
\end{equation*}
where $S$ is a subtangle of $T$.   Apply relation (4) of Proposition \ref{cabled_skein} to obtain the following:
\begin{equation*}
 \begin{tikzpicture}[scale=.75, very thick]
 \foreach \x in {0,.75,1.5,3,3.75}{
  \draw[Box,thin] (\x,0) rectangle (\x+.5,.5); \node at (\x+.25,.25) {$q$};
\draw (\x+.2,0) -- (\x +.2,-.25); \draw (\x +.3,0) -- (\x + .3,-.25);
}
\foreach \x in {.5,1.25,2,2.75,3.5}{ \foreach \y in {.2,.3}{
\draw (\x,\y) -- (\x+.25,\y);
}}

\draw[Box,dashed] (-.25,-1) rectangle (4.5,-.25); \node at (2.125,-.625) {$S$};

\draw[Box,thin] (-1.5,-.875) rectangle (-1,-.125); \node at (-1.25,-.5) {$q$};

\draw (-1,-.125-.15)-- ++(.25,0) arc(-90:0:.15cm)  arc(180:90:.425cm) -- ++(.175,0);
\draw(-1,-.125-.25) -- ++(.25,0) arc(-90:0:.25cm)  arc(180:90:.325cm) -- ++(.175,0);

\draw(-1,-.875+.15) -- ++(.25,0) arc(90:0:.15cm) -- ++(0,-.25) arc(180:270:.25cm) -- ++(5,0) arc(-90:0:.25cm) -- ++(0,1.175) arc(0:90:.25cm) -- ++(-.4,0);
\draw(-1,-.875+.25) -- ++(.25,0) arc(90:0:.25cm) --++(0,-.25) arc(180:270:.15cm) -- ++(5,0) arc(-90:0:.15cm) -- ++(0,1.175) arc(0:90:.15cm) -- ++(-.4,0);

\draw[Box,thin] (-2.25,-.875) rectangle (-1.75,-.125); \node at (-2,-.5) {$q$};
\draw (-1.75,-.875+.325) -- ++(.25,0); \draw (-1.75,-.875+.425) -- ++(.25,0);

\foreach \y in {.15,.25,.5,.6} {\draw (-2.25,-.875+\y) --++(-.25,0);}

\node[scale=.6] at (2.5,.25) {$\dotsb$};

\draw[Box,dashed] (-2.5,-2.25) rectangle (5.75,1.25); \node[marked,above left=.01cm] at (-2.5,1.25) {};
\draw[Box,dashed,thin] (-.75,-1.75) rectangle (5.25,.75);
\end{tikzpicture}
\end{equation*}
The result then follows from (5) of Proposition \ref{cabled_skein}.
\end{proof}

\begin{remark}
Note that the tangle $T'$ obtained from $T$ as above still satisfies the hypotheses of the Lemma (modulo an application of Proposition \ref{cabled_skein} (2) if necessary).  So by iterating this procedure, we may make this replacement for every occurrence of the subdiagram which appeared in the statement of the lemma.

Note that if $T$ is fully labelled by $q$'s and satisfies the hypotheses of the Lemma, then by iterating this procedure and applying Proposition \ref{cabled_skein} we can compute the partition function $Z_T$.  In particular, we see that in the (unshaded) planar algebra generated by $q$ in the unshaded 2-cabling of $\mc C_k(\mc P \otimes \mc P^{rev})$ we have the relation
\begin{equation*}
 \begin{tikzpicture}[yscale=.33,xscale=.5]
  \draw[Box] (0,0) rectangle (1,2); \node at (.5,1) {$q$}; \node[marked,above left] at (0,2) {};
\draw[Box] (2,0) rectangle (3,2); \node at (2.5,1) {$q$}; \node[marked,above right] at (3,2) {};

\foreach \x in {-.5,3} {\foreach \y in {.75,1.25} {\draw[thick] (\x,\y) --++(.5,0); }}
\draw[thick] (1,1) --++(1,0);

\begin{scope}[xshift=5.75cm]
\draw[thick] (0,1.25) --++(2,0);
\draw[thick] (0,.75) --++(2,0);
\node[left] at (-.5,1) {$=$};
\end{scope}
 \end{tikzpicture}
\end{equation*}
which together with the rotational invariance of $q$ and the loop parameter $I$ determines the partition function.
\end{remark}

\begin{theorem}
$Sym(\mc P)$ is a spherical C$^*$-planar algebra, and the identification of $Sym(\mc P)_n$ with $\mc M_0' \cap \mc M_n$ is an isomorphism between $Sym(\mc P)$ and the planar algebra of the inclusion $M_0 \otimes M_0^{op} \subset M_0 \boxtimes M_0^{op}$.
\end{theorem}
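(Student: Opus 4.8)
The plan is to establish two things: that $T \mapsto Z_T^{Sym}$ genuinely defines a planar algebra structure on the spaces $Sym(\mc P)_n$, and that, transported through the identification $Sym(\mc P)_n \simeq \mc M_0' \cap \mc M_n$ of Proposition \ref{commutants}, this structure agrees with the standard invariant of $\mc M_0 \subset \mc M_1$ --- which by Theorem \ref{symmlift} is the inclusion $M_0 \otimes M_0^{op} \subset M_0 \boxtimes M_0^{op}$. The second point is packaged by Example \ref{keytangles}: the subfactor planar algebra of $\mc M_0 \subset \mc M_1$ is, as an abstract planar algebra, determined by its filtered $*$-algebra $\bigoplus_n (\mc M_0' \cap \mc M_n)$ together with the action of the generating tangles --- the two multiplication tangles, the inclusion tangles, the one-sided capping (conditional expectation) tangles, the Jones projection tangles, and the trace tangle. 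Example \ref{keytangles} computes that on $Sym(\mc P)$ these act respectively as $\wedge$ and $\star_q$, as the inclusions $i_n$, as $E_{\mc M_1'}$ (and, combining with Propositions \ref{evenjp} and \ref{oddjp}, as the $E_{\mc M_{n-1}}$), as $I^{1/2}\,\e_n$ --- the scalar $I^{\pm 1/2}$ being absorbed into the spin factor $\sigma$ --- and as $\varphi_n$. So once the first point is in place, the identification is a planar algebra isomorphism of $Sym(\mc P)$ onto $\mc P(\mc M_0 \subset \mc M_1)$.

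For the first point, the well-definedness of each individual map $Z_T^{Sym} = \sigma(T)\cdot Z_{f(T)}$ is immediate: $\mc C_k(\mc P \otimes \mc P^{rev})$ is already a planar algebra, $f(T)$ is a partially labelled tangle in it, and $\sigma(T)$ depends only on the isotopy class of $T$ by the standard spin-factor computation (cf.\ \cite{jon3}). That $Z_T^{Sym}$ takes values in the subspace $Sym(\mc P)_n$ is a consequence of relation (2) of Proposition \ref{cabled_skein}: the outermost layer of $f(T)$ is made of the $q$'s of $C_n$, each of which absorbs a $p$ on its boundary, so capping by $p$'s --- the defining tangle for $Sym(\mc P)_n$ --- changes nothing. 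The substance is compatibility with gluing. Given tangles $T_1$ and $T_2$, the composite $f(T_1) \circ_i f(T_2)$ coincides with $f(T_1 \circ_i T_2)$ except for an extra $F_{n_i}$ glued to $C_{n_i}$ along the gluing circle; this is a chain of $q$'s coming from $F_{n_i}$ glued to a chain of $q$'s coming from $C_{n_i}$, and (possibly after an application of Proposition \ref{cabled_skein}(2)) the pairing is exactly the ``strings $1,2$ of one $q$ meet strings $4,3$ of the next'' configuration of Lemma \ref{gluing}. Iterating Lemma \ref{gluing} replaces each such pair by a pair of $p$'s, and then Lemma \ref{daisychain} together with Proposition \ref{cabled_skein}(3) absorbs the remaining $p$'s, leaving $f(T_1 \circ_i T_2)$; the spin factors multiply, $\sigma(T_1 \circ_i T_2) = \sigma(T_1)\sigma(T_2)$, by counting local extrema, and any closed loops produced in the collapse are evaluated as in the Remark following Lemma \ref{gluing}, contributing the appropriate power of $I$. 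Hence $Z^{Sym}_{T_1 \circ_i T_2} = Z^{Sym}_{T_1} \circ_i Z^{Sym}_{T_2}$, and $Sym(\mc P)$ is a planar algebra.

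The remaining structural features follow easily. The empty-input identity tangle acts as the unit of $\mc M_n$ by Example \ref{keytangles}(1). The planar-algebra involution $*$ is intertwined with $\dagger$, since this holds in $\mc C_k(\mc P \otimes \mc P^{rev})$ and $F_n$, $C_n$ are mutually adjoint tangles; thus $Sym(\mc P)$ is a $*$-planar algebra, and it is a C$^*$-planar algebra because the sesquilinear form it carries is $\langle x, y \rangle = \varphi_n(y^\dagger x)$, which is positive and faithful by Theorem \ref{tower} (equivalently Theorem \ref{symmlift}). Sphericality --- that the two closures of a $1$-box give the same scalar --- follows from the rotational invariance of $p$ and of $q$ (Proposition \ref{p-skein}(2) and Proposition \ref{q-skein}(2)) together with the sphericality of $\mc P$, hence of $\mc P \otimes \mc P^{rev}$ and of its $k$-cabling. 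Combined with the first two paragraphs, this proves the theorem.

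I expect the gluing step of the second paragraph to be the main obstacle. One must check that the $F_{n_i}$-to-$C_{n_i}$ pattern appearing in $f(T_1) \circ_i f(T_2)$ is always exactly the configuration demanded by Lemma \ref{gluing} --- a careful but purely combinatorial verification of the string-doubling conventions (the cyclic order of legs around each $q$, the location of the marked intervals) --- and that the iterated application of Lemmas \ref{gluing} and \ref{daisychain} really reduces to $f(T_1 \circ_i T_2)$ no matter how the gluing circle sits in the ambient tangle. The winding-number case analysis inside the proof of Lemma \ref{gluing} is precisely what makes this last point robust.
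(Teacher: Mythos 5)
Your proposal is correct and follows essentially the same route as the paper's proof: the substance is the compatibility with gluing, reduced to the configuration handled by Lemma \ref{gluing} (with the leftover $p$'s absorbed by the adjacent $q$'s via Proposition \ref{cabled_skein}), the multiplicativity of the spin factor, and then the identification with $\mc M_0' \cap \mc M_n$ via Example \ref{keytangles} together with Jones' theorem that the planar algebra structure on the tower of relative commutants is determined by the action of those generating tangles. The remaining C$^*$ and sphericality properties are inherited from $\mc C_k(\mc P \otimes \mc P^{rev})$ exactly as you describe.
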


\begin{proof}
Since $\mc C_k(\mc P \otimes \mc P^{rev})$ is a planar algebra, it is clear that the action $Z^{Sym}_T$ depends only on the isotopy class of $T$.  We must show that the action is compatible with the gluing of tangles.  So suppose that we have two tangles $T$ and $S$, such that the $i$-th input disc of $T$ and the output disc of $S$ both have the same number of marked points $2n$.  Now we have $Z^{Sym}_{T \circ_i S} = \sigma(T \circ_i S) Z_{f(T \circ_i S)}$ while $Z^{Sym}_{T} \circ_i Z^{Sym}_{S} = \sigma(T) \sigma(S) Z_{f(T)} \circ Z_{f(S)} = \sigma(T)\sigma(S) Z_{f(T) \circ_i f(S)}$.  It is clear that $\sigma(T \circ_i S) = \sigma(T) \sigma(S)$, so it remains to show that $Z_{f(T) \circ_i f(S)} = Z_{f(T \circ_i S)}$.  

Now in $f(T) \circ_i f(S)$ there are $n$ copies of the diagram from the statement of Lemma \ref{gluing} which are arranged around the former boundary of the $i$-th input disc of $\widetilde T$.  Since $T \circ_i S$ is a shaded tangle, it is not hard to see that $f(T) \circ_i f(S)$ satisfies the conditions of that lemma, and so we may make the indicated replacements.  After applying (2) of Proposition \ref{cabled_skein} if needed, the resulting tangle is then clearly isotopic to $f(T \circ_i S)$, and hence we have $Z_{f(T \circ_i S)} = Z_{f(T) \circ_i f(S)}$ as desired.  This proves that $Sym(\mc P)$ is a planar algebra.  

The remaining properties of a spherical $C^*$-planar algebra follow easily from the corresponding properties of $\mc C_k(\mc P \otimes \mc P^{rev})$ (note that one has to be careful about the spin factors when checking sphericality).

Now  in Example \ref{keytangles}, we have shown that the identification of $Sym(\mc P)_n$ with $\mc M_0' \cap \mc M_n$ is compatible with the action of several classes of tangles.  But it is a well-known result of Jones  (see \cite[Theorem 4.2.1]{palg}) that the planar algebra structure on $(\mc M_0' \cap \mc M_n)_{n \geq 0}$ is uniquely determined by the action of these tangles.  So it follows that this identification gives an isomorphism of planar algebras between $Sym(\mc P)$ and $\mc P(\mc M_0 \subset \mc M_1)$, which completes the proof.
\end{proof}

\begin{remark}\label{m0vm1}
We have worked with the inclusion $M_0 \otimes M_0^{op} \subset M_0 \boxtimes M_0^{op}$ instead of the symmetric enveloping inclusion $M_1 \otimes M_1^{op} \subset M_1 \boxtimes M_1^{op}$ for simplicity.  However, we can now recover the planar algebra of the symmetric enveloping inclusion (or asymptotic inclusion) as follows.  Since $M_0 \otimes M_0^{op} \subset M_0 \boxtimes M_0^{op}$ is isomorphic to the compression by $e_0 \otimes e_0^{op}$ of the inclusion $M_2 \otimes M_2^{op} \subset M_2 \boxtimes M_2^{op}$, it follows that these inclusions have the same planar algebra.  The latter subfactor is isomorphic to the symmetric enveloping inclusion for $M_1 \subset M_2$ by \cite{cjs}.  The planar algebra of $M_1 \subset M_2$ is $\mc P^{op}$, the dual planar algebra of $\mc P$ in which the shadings are reversed \cite{palg}.  As discussed by Popa \cite{pop1}, the planar algebra of the symmetric enveloping inclusion of a subfactor depends only on the planar algebra of that subfactor.  By duality, it now follows that the planar algebra of the symmetric enveloping inclusion $M_1 \otimes M_1^{op} \subset M_1 \boxtimes M_1^{op}$ is isomorphic to $Sym(\mc P^{op})$.  
\end{remark}

\section{Fusion rules and the affine category}

In this section we compute the fusion rules for the asymptotic inclusion, recovering some results of Ocneanu and Evans-Kawahigashi \cite{evk}.  In particular, we show that the fusion rules for $\mc M_1-\mc M_1$-bimodules are described by the affine category of $\mc P$. We remark that it has recently been shown by Das, Ghosh and Gupta \cite{dgg2} that the category of affine Hilbert representations is equivalent to the Drinfeld center of the fusion category associated to $\mc P$ (see also \cite{dgg1}).  

\smallskip
\noindent\textbf{The affine category:}  We briefly recall the definition of the \textit{affine category} of a planar algebra $\mc P$, for further details see \cite{annular}, \cite{ghosh}. 

\begin{definition}
An \textit{affine $(n,m)$-tangle} $T$ is a planar tangle with outer box equal to $\{(x,y) \in \R^2: \max(|x|,|y|) = 2\}$, a distinguished inner box equal to $\{(x,y) \in \R^2 :\max(|x|,|y|) = 1\}$, with marked points $\{(-2, \frac{2i}{n}):  -n \leq i < n\}$ (resp. $\{(-1,\frac{i}{m}): -m \leq k < m\}$) on the boundary of the outer box (resp. distinguished inner box) and with distinguished intervals occuring at the top of the outer box and distinguished inner box.  
\end{definition}

Typically one must specify the shading near the boundary of the outer (resp. distinguished inner box) when $n = 0$ (resp. $m =0$).  Here we will only be concerned with the `positive' part of the affine category, where these regions are assumed to be unshaded.

If $\mc P$ is  a planar algebra we say that an affine $(n,m)$-tangle $T$ is $\mc P$-\textit{labelled} if we have an assignment of elements of $\mc P$ to each input box of $T$, except for the distinguished one.  Note that $T$ then determines a linear map $P_{m} \to P_n$ by assigning the element of $P_m$ to the distinguished input box and applying $Z_T$.

Define the (positive) affine category $Aff^+(\mc P)$ to have one object for each $k \geq 0$, and with morphisms equal to isotopy classes of $\mc P$-labelled affine tangles, where the isotopies are required to fix the outer and distinguished inner boxes.  If $T$ (resp. $S$) is an affine $(n,k)$ (resp. $(k,m)$) tangle, the composition $TS$ is the affine $(n,m)$-tangle obtained by scaling $S$ by a factor of $1/2$, composing the resulting diagram with $T$ and then rescaling the resulting tangle.  Let $F\mc A^+(\mc P)$ be the linearization of this category, i.e. the objects are the same but the morphisms are vector spaces with bases given by isotopy classes of affine tangles.  

\begin{remark}
The distinction between the affine category and the \textit{annular category} of \cite{annular} is that annular tangles are taken up to isotopies which are not required to fix the outer and distinguished inner boxes.
\end{remark}

For $k \geq 0$ we have a natural map $\psi_k$ from (isotopy classes of) labelled planar $(n+m+2k)$-tangles to (isotopy classes of) affine $(n,m)$-tangles given by:
\begin{equation*}
\begin{tikzpicture}[scale=.75]
\draw[Box] (0,0) rectangle (1,1); \node at (.5,.5) {$T$}; \node[marked,above left] at (0,1) {};
\draw[thickline] (-.5,.5) -- (0,.5);
\draw[thickline] (1,.5) -- (1.5,.5);
\draw[thickline] (.5,1) -- node[rcount] {$2k$} (.5,2);
\draw[thickline] (.5,0) -- node[rcount] {$2k$} (.5,-1);
\draw[Box] (-.5,-1) rectangle (1.5,2); \node[marked,above left] at (-.5,2) {};
\begin{scope}[xshift=5.75cm]
\draw[Box] (-.5,-1.25) rectangle (3.5,2.25); \node[marked,above left] at (-.5,2.25) {};
\draw[Box] (0,0) rectangle (1,1); \node at (.5,.5){$T$}; \node[marked,above left] at (0,1) {};
\draw[Box] (1.5,0) rectangle (2.5,1); \node[marked,above left] at (1.5,1) {};
\draw[thickline] (.5,1) arc(180:0:1.25cm and .75cm) -- node[rcount] {$2k$} (3,0) arc(0:-180:1.25cm and .75cm);
\draw[thickline] (1,.5) -- (1.5,.5); \draw[thickline] (0,.5) -- (-.5,.5);
\node[left] at (-1,0.5) {$\mapsto \psi_k(T) = $};
\end{scope}
\end{tikzpicture} 
\end{equation*}
It is easy to see that every isotopy class of affine $(n,m)$-tangles is in the range of $\psi_k$ for $k$ sufficiently large.  

Define $\mc R \subset F\mc A^+(\mc P)$ by
\begin{equation*}
\mc R = \mathrm{span}\biggl\{\sum \psi_{k}(T_i): T_i \text{ are $\mc P$-labelled $(n+m+2k)$-tangles such that }\sum Z_{T_i} = 0\biggr\}.  
\end{equation*}

\begin{definition}
The (positive) \textit{affine algebroid} $\mc A^+(\mc P) = \{\mc A(\mc P)_{n,m}: n,m \geq 0\}$ is the quotient of $F\mc A^+(\mc P)$ by $\mc R$.
\end{definition}

In other words, $\mc A^+(\mc P)$ is the quotient of the universal affine algebroid of $\mc P$ by all relations which hold in a contractible disc.  It is easy to see that composition of affine tangles passes to $\mc A^+(\mc P)$.    It is clear from the definitions that $\psi_k$ gives a well-defined linear map $P_{n+m+2k} \to \mc A(\mc P)_{n,m}$.  Moreover, we have the following description of the kernel.

\begin{lemma}\label{affineiso}
The kernel of $\psi_k:P_{n+m+2k} \to \mc A(\mc P)_{n,m}$ is spanned by elements of the form
\begin{equation*}
\begin{tikzpicture}[scale=.75]
 \draw[Box] (0,0) rectangle (1,1); \node at (.5,.5) {$x$}; \node[marked,scale=.8,above left] at (0,1) {};
\draw[Box] (0,1.25) rectangle (1,1.75); \node at (.5,1.5) {$y$}; \node[marked,scale=.6,above left] at (0,1.75) {};

\foreach \y in {2,1.25,0} {\draw[thickline] (.5,\y) -- ++(0,-.25);}
\draw[thickline] (-.5,.5) -- (0,.5); \draw[thickline] (1,.5) -- (1.5,.5);

\draw[Box] (-.5,-.25) rectangle (1.5,2); \node[marked,scale=.9,above left] at (-.5,2) {};

\begin{scope}[xshift=3.25cm]
 \draw[Box] (0,.75) rectangle (1,1.75); \node at (.5,1.25) {$x$}; \node[marked,scale=.8,above left] at (0,1.75) {};
\draw[Box] (0,0) rectangle (1,.5); \node at (.5,.25) {$y$}; \node[marked,scale=.6,above left] at (0,.5) {};

\foreach \y in {2,.75,0} {\draw[thickline] (.5,\y) -- ++(0,-.25);}
\draw[thickline] (-.5,1.25) -- (0,1.25); \draw[thickline] (1,1.25) -- (1.5,1.25);

\draw[Box] (-.5,-.25) rectangle (1.5,2); \node[marked,scale=.9,above left] at (-.5,2) {};

\node[left] at (-.75,.875) {$-$};
\end{scope}
\end{tikzpicture}
\end{equation*}
for $x \in P_{2n+k}$ and $y \in P_{2k}$.
\end{lemma}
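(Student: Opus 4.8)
Write $K \subseteq P_{n+m+2k}$ for the span of the elements displayed in the statement; the plan is to prove the two inclusions $K \subseteq \Ker\psi_k$ and $\Ker\psi_k \subseteq K$ separately. The first is the routine half, and I would dispatch it at once: applying $\psi_k$ to either of the two diagrams produces the \emph{same} labelled affine $(n,m)$-tangle. Indeed, the part of the boundary of the $(n+m+2k)$-tangle that becomes the $2k$ strings of $\psi_k(\,\cdot\,)$ running once around the distinguished inner box is, in both diagrams, exactly the bundle of strings through which the input disc $y$ is threaded, and the two diagrams differ only in which side of the distinguished inner box the box $y$ sits on along this bundle. Sliding $y$ the rest of the way around the distinguished inner box is an isotopy of the annular region fixing both the outer and the distinguished inner box, so the two labelled affine tangles coincide already in $F\mc A^+(\mc P)$, hence a fortiori in $\mc A^+(\mc P)$; thus their difference lies in $\Ker\psi_k$ and $K \subseteq \Ker\psi_k$.

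For the reverse inclusion I would exhibit a left inverse of the induced surjection $\bar\psi_k \colon P_{n+m+2k}/K \to \mc A(\mc P)_{n,m}$ (a dimension count is unavailable, since $\mc A(\mc P)_{n,m}$ is typically infinite dimensional). The idea is to build a ``cutting'' map $\rho$ on the free affine algebroid and then show it descends. Given a labelled affine $(n,m)$-tangle $A$, pick an embedded arc $\gamma$ from the outer box to the distinguished inner box that is transverse to the strings of $A$, misses every labelled input disc, and meets the strings in $2\ell$ points; cutting $A$ open along $\gamma$ gives a labelled planar $(n+m+2\ell)$-tangle, and when $\ell \le k$ I push its class into $P_{n+m+2k}$ by adjoining trivial parallel through-strings. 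Put $\rho(A) := [\,\mathrm{cut}_\gamma(A)\,] \in P_{n+m+2k}/K$. Three things then need checking: (a) $\rho$ is independent of $\gamma$ --- any two cutting arcs are related by an ambient isotopy of the annulus, which breaks into planar isotopies of the cut tangle (invisible in $P$) together with elementary moves sweeping $\gamma$ past a single labelled input disc, and each such move changes $\mathrm{cut}_\gamma(A)$ by precisely one of the displayed generators of $K$; (b) $\rho$ annihilates $\mc R$ --- a relator $\sum_i \psi_{k'}(T_i)$ with $\sum_i Z_{T_i}=0$ may be cut along the radial arc crossing all $2k'$ wrapping strings, which recovers the $T_i$ themselves, so $\rho$ sends it to the class of $\sum_i T_i$ pushed into $P_{n+m+2k}$, and this vanishes because it is a fixed linear (planar) operation applied to the relation $\sum_i Z_{T_i}=0$; (c) $\rho\circ\psi_k=\mathrm{id}$ --- cutting $\psi_k(T)$ along the radial arc returns $T$. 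Granting (a)--(c), $\rho$ descends to $\mc A(\mc P)_{n,m}$ and satisfies $\rho\circ\bar\psi_k=\mathrm{id}$, so $\bar\psi_k$ is injective and $\Ker\psi_k=K$.

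The main obstacle is the bookkeeping of the cabling level hidden in the construction of $\rho$: a general affine tangle need not admit a cutting arc meeting its strings in at most $2k$ points, so a priori $\mathrm{cut}_\gamma(A)$ may land in $P_{n+m+2\ell}$ with $\ell>k$, where there is no canonical map to $P_{n+m+2k}$. I would resolve this by first proving everything for $k$ large (where $\psi_k$ is onto and every affine tangle that occurs has a sufficiently short cut), and then descending along the tower of inclusions $P_{n+m+2k}\hookrightarrow P_{n+m+2(k+1)}$: these inclusions intertwine the maps $\psi_\bullet$ up to the same annular-slide ambiguity and carry $K$ into $K$, so the kernel description propagates down to all $k\ge 0$. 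The second point calling for care, inside (a), is verifying that sweeping the cut arc past one labelled disc produces \emph{exactly} one displayed generator rather than a more complicated combination --- a finite diagrammatic check, but it is the step that pins down the precise shape of the generators in the statement.
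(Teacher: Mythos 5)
Your first inclusion ($K\subseteq\Ker\psi_k$, obtained by sliding $y$ once around the annulus) is correct, and matches what the paper implicitly uses. The gap is in the retraction $\rho$. To conclude that $\bar\psi_k$ is injective you must define $\rho$ on all of $F\mc A^+(\mc P)_{n,m}$ and show it kills $\mc R$; but, as you yourself note, a general labelled affine tangle---and, just as fatally, a general relator $\sum_i\psi_{k'}(T_i)\in\mc R$ with $k'>k$---admits no cutting arc meeting the strings in at most $2k$ points, so $\mathrm{cut}_\gamma$ does not land in $P_{n+m+2k}$. Your proposed repair (``prove it for $k$ large and descend'') does not close this: no single finite $k$ accommodates all affine tangles, since winding is unbounded, and the descent step requires precisely that the inclusion $\iota\colon P_{n+m+2k}\hookrightarrow P_{n+m+2(k+1)}$ satisfy $\iota^{-1}(K_{k+1})=K_k$, which is essentially the lemma over again; the partial-trace one would normally use to prove such a descent is unavailable here, because joining a top wrapping point of the rectangle to a bottom one forces the string past the $n+m$ lateral marked points and so is not a planar operation. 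A second, related omission sits inside your step (a): two transverse cutting arcs for the same affine tangle are related not only by disc-sweeping moves and ambient isotopy but also by tangency (finger) moves, which change the number of intersection points with the strings. These are exactly the moves that push you out of $P_{n+m+2k}$, and your case analysis does not address them.

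For contrast, the paper avoids constructing a retraction altogether: it first works in the universal planar algebra $\mc P(L)$ with $L=P$, where $P_{n+m+2k}$ and $\mc A(\mc P)_{n,m}$ are replaced by free vector spaces on isotopy classes of labelled tangles. There $\psi_k$ carries basis elements to basis elements, so its kernel is \emph{automatically} spanned by differences $T_1-T_2$ of $(n+m+2k)$-tangles with isotopic affine images---the ``which $k$'' problem never arises because both terms live at level $k$ by hypothesis---and the only remaining content is the topological fact that such pairs are generated by the disc-sliding move of the statement. One then descends to $\mc P$ by quotienting by $\Ker Z$, using that $\mc R$ is by definition the image of $\Ker Z$ under $\psi$. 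If you wish to salvage your retraction, its natural target is the colimit $\varinjlim_\ell P_{n+m+2\ell}/K_\ell$, but you would then still owe the descent statement above, so the free-algebra route is strictly less work.
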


\begin{proof}
We prove this first for the universal planar algebra $\mc P(L)$ with labelling set $L = P$ (cf. \cite{palg}).  In this case $P_{n+m+2k}$ is replaced by the vector space with basis given by isotopy classes of $\mc P$-labelled $n+m+2k$-tangles, and $\mc A(\mc P)_{n,m}$ is replaced by the vector space with basis given by isotopy classes of $\mc P$-labelled affine $(n,m)$-tangles.  Since $\psi_k$ maps basis elements to basis elements, its kernel is spanned by elements of the form $T_1 - T_2$, where $T_1,T_2$ are $\mc P$-labelled planar $n+m+2k$ tangles such that $\psi_k(T_1)$ is isotopic to $\psi_k(T_2)$.  By a standard topological argument it follows that the kernel is spanned by elements of the form appearing in the statement of the lemma, where $x$ is a labelled $n+2k$-tangle and $y$ is a labelled $2k$-tangle.  Now taking the quotient by the kernel of the partition function $Z:\mc P(L) \to \mc P$, we see that the result holds for $\mc P$ as well.
\end{proof}

Note that by Proposition \ref{depthspan}, if $\mc P$ is finite-depth then $\psi_{k}$ is surjective if $2k \geq depth(\mc P)$.  In particular,  if $\mc P$ is finite-depth then $\mc A(\mc P)_{n,m}$ is finite-dimensional for every $n,m \geq 0$. 

A module over $\mc A^+(\mc P)$ is a graded vector space $V = (V_n)$ together with an action of affine tangles.  So to each affine $(n,m)$-tangle there is an associated linear map $V_m \to V_n$, which is compatible with composition of affine tangles.  In particular, $\mc P^+ = (P_n)_{n \geq 0}$ is a module over $\mc A^+(\mc P)$.  The \textit{weight} of a module is the least value of $n$ such that $V_n$ is non-trivial.

If each $V_n$ is a finite-dimensional Hilbert space, and we have
\begin{equation*}
\langle av, w \rangle = \langle v,a^* w\rangle
\end{equation*}
for all $v,w \in V$ and $a \in \mc A^+(\mc P)$, then we say that $V$ is a Hilbert module over $\mc A^+(\mc P)$.  For Hilbert modules the notions of \textit{indecomposability} and \textit{irreducibility} are equivalent, see \cite{annular}, \cite{ghosh}.

\medskip
\noindent\textbf{Affine representations and Ocneanu's tube algebra:}

We will now assume that $\mc P$ is finite-depth.  Let $2k \geq depth(\mc P)$.  For each $v \in \Gamma_+$, fix a minimal projection $t_v$ in the central component of $P_{2k}$ corresponding to $v$.  Recall that $\tr_{P_{2k}}(t_v) = \delta^{-2k}\mu_v$.   Let $t = \sum t_v$, then Ocneanu's \textit{tube algebra} is the compression
\begin{equation*}
 Tube(\mc P) = t(\mc A(\mc P)_k) t = \bigoplus_{v,w \in \Gamma_+} Tube(\mc P)_{v,w},
\end{equation*}
where we set $Tube(\mc P)_{v,w} = t_v(\mc A(\mc P)_k)t_w$.  

\begin{remark}
Ocneanu defined the tube algebra \cite{evk} using Turaev-Viro topological quantum field theory, but it is not hard to see that our definition is equivalent.  
\end{remark}

For $v \in \Gamma_+$ and $n \geq 0$, define $H_n(v) \subset P_{n+k}$ to be the range of the tangle:
\begin{equation*}
 \begin{tikzpicture}[scale=.65]
  \draw[Box] (0,0) rectangle (1,2); \node[marked,above left, scale=.9] at (0,2) {};
\draw[verythickline] (0,1) --node[cpr,scale=.8] {$2n$} (-1.25,1); \draw[thickline] (1,1) --node[cpr,scale=.8] (t){$t_v$} (2.25,1); \node[marked,scale=.7,above left] at (t.north west) {};
 \end{tikzpicture}
\end{equation*}
$H_n(v)$ is a Hilbert space with inner product $\langle x,y \rangle_{H_n(v)} =  \frac{\delta^{2k}}{\mu_v}\langle x,y \rangle_{P_{n+k}}$.

Observe that 
\begin{equation*}
 \biggl(\bigoplus_{v,w \in \Gamma_+} Tube(\mc P)_{v,w} \otimes (H_n(v) \otimes H_m(w)^*)\biggr)_{n,m \geq 0}
\end{equation*}
has a natural algebroid structure given as follows: the product of $a \otimes x_1 \otimes y_1$ in $Tube(\mc P)_{v,w} \otimes (H_n(v) \otimes H_k(w)^*)$ and $b \otimes x_2 \otimes y_2$ in $Tube(\mc P)_{w',z} \otimes (H_k(w') \otimes H_m(z)^*)$ is given by
\begin{equation*}
 (a \otimes x_1 \otimes y_1)(b \otimes x_2 \otimes y_2) = \delta_{w,w'} \langle y_1,x_2 \rangle_{H_k(w)} \cdot (ab \otimes x_1 \otimes y_2).
\end{equation*}

For $n,m \geq 0$ define $\phi_{n,m}: \bigoplus_{v,w \in \Gamma_+} Tube(\mc P)_{v,w} \otimes (H_n(v) \otimes H_m(w)^*) \to \mc A(\mc P)_{n,m}$ by
\begin{equation*}
\begin{tikzpicture}[scale=.75]
\draw[Box] (-2.5,-1.25) rectangle (5.5,2.25); \node[marked,above left] at (-2.5,2.25) {};
\draw[Box] (0,0) rectangle (1,1); \node at (.5,.5){$a$}; \node[marked,above left,scale=.9] at (0,1) {};
\draw[Box] (2.25,0) rectangle (3,1); \node at (2.625,.5) {$y^*$}; \node[marked,above right,scale=.9] at (3,1) {};
\draw[Box] (-2,0) rectangle (-1.25,1); \node at (-1.625,.5) {$x$}; \node[marked,above left,scale=.9] at (-2,1) {};
\draw[Box] (3.5,0) rectangle (4.5,1); \node[marked,above left] at (3.5,1) {};
\draw[thickline] (.5,1) arc(180:90:1.25cm and .75cm) -- ++(2,0) arc(90:0:1.25cm and .75cm)--  (5,0) arc(0:-90:1.25cm and .75cm) -- ++(-2,0) arc(270:180:1.25cm and .75cm);

\draw[thickline] (1,.5) --node[cpr,scale=.75] (tw) {$t_w$} (2.25,.5);  \node[marked,scale=.6,above right] at (tw.north east) {};
\draw[thickline] (3,.5) -- (3.5,.5); \draw[thickline] (-2,.5) -- (-2.5,.5);
\draw[thickline] (0,.5) --node[cpr,scale=.75] (tv) {$t_v$} (-1.25,.5); \node[marked,scale=.6,above left] at (tv.north west) {};
\node[left] at (-3,0.5) {$\phi_{n,m}(a \otimes x \otimes y^*) = $};
\end{tikzpicture}
\end{equation*}
for $a \otimes x \otimes y^* \in Tube(\mc P)_{v,w} \otimes (H_n(v) \otimes H_m(w)^*)$. 

\begin{proposition}\label{tube-algebroid}
$\phi = (\phi_{n,m})$ gives an isomorphism of algebroids
\begin{equation*}
 \mc A^+(\mc P) \simeq \biggl(\bigoplus_{v,w \in \Gamma_+} Tube(\mc P)_{v,w} \otimes (H_n(v) \otimes H_m(w)^*) \biggr)_{n,m}.
\end{equation*}
\end{proposition}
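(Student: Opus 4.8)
The plan is to check, in order, that $\phi=(\phi_{n,m})$ is well defined, that it is a homomorphism of algebroids, that each $\phi_{n,m}$ is surjective, and that each $\phi_{n,m}$ is injective; only the last of these requires any real work, and even there the crux is a well-definedness statement rather than a computation.

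First, $\phi_{n,m}$ is manifestly linear, and it is well defined because the picture defining $\phi_{n,m}(a\otimes x\otimes y^{*})$ is built by inserting $a$, $x$, and $y^{*}$ into fixed internal discs of one partially labelled affine $(n,m)$-tangle: any relation satisfied by $a$ inside $\mc A(\mc P)_{k}$, and the fact that $x\in H_{n}(v)$, $y\in H_{m}(w)$ already carry the caps $t_{v}$, $t_{w}$, propagate unchanged to the output. For the homomorphism property one superimposes $\phi_{n,k}(a\otimes x_{1}\otimes y_{1}^{*})$ on the outside of $\phi_{k,m}(b\otimes x_{2}\otimes y_{2}^{*})$ and reads off the resulting affine $(n,m)$-tangle: the sub-diagram containing $y_{1}^{*}$, the connecting band, the two copies of $t_{w}$ and $x_{2}$ closes into a contractible disc whose partition function is precisely $\langle y_{1},x_{2}\rangle_{H_{k}(w)}$ (this is exactly where the rescaling of the inner product on $H_{k}(w)$ is used), the two tube pieces fuse along $t_{w}$ into $ab\in Tube(\mc P)_{v,z}$, and the outer $2k$-bands merge; the central projections force the factor $\delta_{w,w'}$. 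This is the product on the right-hand algebroid.

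For surjectivity, recall that since $2k\ge depth(\mc P)$ the map $\psi_{k}\colon P_{n+m+2k}\to\mc A(\mc P)_{n,m}$ is onto (Proposition \ref{depthspan}), so it is enough to write each $\psi_{k}(z)$ as a sum of elements $\phi_{n,m}(a\otimes x\otimes y^{*})$. In the annular picture of $\psi_{k}(z)$ the distinguished box is encircled by a band of $2k$ strands emanating from $z$. Applying Proposition \ref{depthspan} a second time to the part of $z$ the band leaves near the outer boundary, and a third time near the distinguished box, one may cut the band with the identity of $P_{2k}$ expanded as $\sum_{v}v$, and then, choosing matrix units in each central summand, replace each central projection $v$ by the single minimal projection $t_{v}$ at the cost of partial isometries that are absorbed into the adjacent pieces. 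What remains is a finite sum, over pairs $(v,w)$, of diagrams of precisely the shape of $\phi_{n,m}(a\otimes x\otimes y^{*})$ with $x\in H_{n}(v)$, $y\in H_{m}(w)$, and $a\in Tube(\mc P)_{v,w}$.

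The same cutting procedure, run in reverse, defines a candidate inverse $\theta_{n,m}\colon\mc A(\mc P)_{n,m}\to\bigoplus_{v,w}Tube(\mc P)_{v,w}\otimes\bigl(H_{n}(v)\otimes H_{m}(w)^{*}\bigr)$, and $\theta_{n,m}\circ\phi_{n,m}$ and $\phi_{n,m}\circ\theta_{n,m}$ are then seen diagrammatically to be the identity; equivalently, once surjectivity is known it suffices to observe that the same band decomposition computes $\dim\mc A(\mc P)_{n,m}=\sum_{v,w}\dim Tube(\mc P)_{v,w}\cdot\dim H_{n}(v)\cdot\dim H_{m}(w)$, all terms finite because $\mc P$ is finite depth. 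The step I expect to be the real obstacle is verifying that $\theta_{n,m}$ is \emph{well defined}: it must not depend on the representative $z$ nor on the matrix units used to pass from $v$ to $t_{v}$. Independence of $z$ reduces to checking that the spanning set of $\ker\psi_{k}$ described in Lemma \ref{affineiso} --- differences $x\cdot y-y\cdot x$ read around the annular core --- is annihilated by the cutting map, a ``sliding around the band'' argument; independence of the matrix-unit choices is a short $C^{*}$-computation using minimality of the $t_{v}$ and the normalisations of the $H_{n}(v)$. With this bookkeeping in place the proposition follows.
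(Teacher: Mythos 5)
Your treatment of well-definedness, the homomorphism property, and surjectivity matches the paper's: the composition identity is exactly the closed-disc computation of $\langle y_1,x_2\rangle_{H_k(w)}$ using minimality of the $t_v$, and surjectivity comes from Proposition \ref{depthspan} together with writing $1_{P_{2k}}$ as a combination of elements $x t_v y$. The divergence, and the gap, is in injectivity.

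Your primary route is to build an explicit inverse $\theta_{n,m}$ by ``cutting the band,'' and you correctly identify that its well-definedness (independence of the representative modulo $\ker\psi_k$ and of the matrix-unit choices) is the crux --- but you then leave precisely that step as a sketch (``a sliding around the band argument,'' ``a short $C^*$-computation''). Your fallback, the dimension count $\dim\mc A(\mc P)_{n,m}=\sum_{v,w}\dim Tube(\mc P)_{v,w}\cdot\dim H_n(v)\cdot\dim H_m(w)$, is circular: once surjectivity is known, that equality is \emph{equivalent} to the injectivity you are trying to prove, and you offer no independent computation of the left-hand side. So as written the injectivity argument is not complete. The paper avoids all of this with a short algebraic trick that exploits the homomorphism property you have already established: writing $z=\sum_{v,w}a_{v,w}\otimes x_v\otimes y_w^*$, one multiplies on the left by $t_v\otimes t_v\otimes x_v^*$ and on the right by $t_w\otimes y_w\otimes t_w$ inside the right-hand algebroid; applying $\phi$ and using that it is a homomorphism, one finds $\phi_{k,k}\bigl(a_{v,w}\otimes |x_v|^2 t_v\otimes |y_w|^2 t_w\bigr)=|x_v|^2|y_w|^2\,a_{v,w}$ (viewing $Tube(\mc P)_{v,w}=t_v\mc A(\mc P)_k t_w$ inside $\mc A(\mc P)_{k,k}$), so $\phi_{n,m}(z)=0$ forces each component to vanish. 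No inverse map and no analysis of $\ker\psi_k$ is needed. I would recommend replacing your injectivity paragraph with this argument, or else actually carrying out the verification that the cutting map annihilates the spanning set of $\ker\psi_k$ from Lemma \ref{affineiso}.
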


\begin{proof}
That $(\phi_{n,m})$ is compatible with composition follows easily from the identity
\begin{equation*}
 \begin{tikzpicture}[scale=.65]
  \draw[Box] (0,0) rectangle (1,2); \node at (.5,1) {$y^*$}; \node[marked,scale=.8,above left] at (0,2) {};
\draw[Box] (1.5,0) rectangle (2.5,2); \node at (2,1) {$x$}; \node[marked,scale=.8,above left] at (1.5,2) {};
\draw[thickline] (1,1) -- (1.5,1);
\draw[thickline] (2.5,1) --node[cpr,scale=.75] (tw) {$t_w$} (4,1); \node[marked,scale=.6,above left] at (tw.north west) {};
\draw[thickline] (0,1) --node[cpr,scale=.75] (tv) {$t_v$} (-1.5,1); \node[marked,scale=.6,above left] at (tv.north west) {};

\node[right] at (4.5,1) {$= \delta_{v,w} \langle y,x \rangle_{H_n(v)} \cdot t_v,$};
 \end{tikzpicture}
\end{equation*}
which holds since $t_v,t_w$ are minimal projections.

It remains to show that $\phi_{n,m}$ is a linear isomorphism for each $n,m$.  Suppose we have an element 
\begin{equation*}
 z = \sum_{v,w \in \Gamma_+} a_{v,w} \otimes x_v \otimes y_w^* \in \bigoplus_{v,w \in \Gamma_+} Tube(\mc P)_{v,w} \otimes (H_n(v) \otimes H_m(w)^*).
\end{equation*}
Then we have 
\begin{equation*}
\phi_{n,m}( (t_v \otimes t_v \otimes x_v^*) z (t_w \otimes y_w \otimes t_w)) = \phi_{k,k}(a_{v,w} \otimes |x_v|^2\cdot t_v \otimes |y_w|^2\cdot t_w) = |x_v|^2 |y_w|^2 \cdot a_{v,w}.
\end{equation*}  
By the above it follows that $\phi_{n,m}(z) = 0$ implies $z =0$, so $\phi_{n,m}$ is injective.  Surjectivity follows from Proposition \ref{depthspan} and the fact that $1_{P_{2k}}$ is contained in the linear span of $\{xt_v y: x,y \in P_{2k}, v \in \Gamma_+\}$.
 
\end{proof}

We can now determine the irreducible Hilbert modules over $\mc A^+(\mc P)$.  It is clear that any minimal central projection of $Tube(\mc P)$ must be contained in $Tube(\mc P)_{v,v}$ for some $v \in \Gamma_+$.  Suppose that $p$ is such a projection, and let $V$ be a finite-dimensional Hilbert space on which $p(Tube(\mc P))p$ acts irreducibly.  Extend this action to $Tube(\mc P)$ by letting $1-p$ act by zero.  For $n \geq 0$ define $V_n = V \otimes H_n(v)$.  Then $\mc V = (V_n)_{n \geq 0}$ has a natural Hilbert module structure over
\begin{equation*}
 \biggl(\bigoplus_{w,z \in \Gamma_+} Tube(\mc P)_{w,z} \otimes (H_n(w) \otimes H_m(z)^*)\biggr)_{n,m}
\end{equation*}
We can use the isomorphism of Proposition \ref{tube-algebroid} to make $\mc V$ into a Hilbert module over $\mc A^+(\mc P)$.

\begin{theorem}
$\mc V = (V_n)_{n \geq 0}$ is an irreducible Hilbert module over $\mc A^+(\mc P)$, of weight $d(*,v) / 2$.  Moreover, any irreducible Hilbert module over $\mc A^+(\mc P)$ is of this form.
\end{theorem}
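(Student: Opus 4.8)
The plan is to transport the problem, via the algebroid isomorphism of Proposition~\ref{tube-algebroid}, to the concrete algebroid
\begin{equation*}
\mc B = \Bigl(\textstyle\bigoplus_{v,w \in \Gamma_+} Tube(\mc P)_{v,w} \otimes \bigl(H_n(v) \otimes H_m(w)^*\bigr)\Bigr)_{n,m \geq 0},
\end{equation*}
and then to observe that the Hilbert spaces $H_n(v)$ act only as multiplicity spaces, so that $\mc B$ is Morita equivalent, in a $*$-preserving way, to the finite-dimensional $C^*$-algebra $Tube(\mc P) = \bigoplus_{v,w} Tube(\mc P)_{v,w}$ equipped with its total multiplication.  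Concretely, the idempotents $\epsilon^v_n := t_v \otimes \mathrm{id}_{H_n(v)} \in \mc B_{n,n}$ (ranging over those $v$ with $H_n(v)\neq 0$) are mutually orthogonal with sum $1_{\mc B_{n,n}}$, and $\epsilon^v_n\,\mc B_{n,n'}\,\epsilon^{v'}_{n'} \cong Tube(\mc P)_{v,v'} \otimes \bigl(H_n(v)\otimes H_{n'}(v')^*\bigr)$.  Hence a Hilbert module $\mc U = (U_n)$ over $\mc B$ is the same datum as a unital $*$-representation $\rho$ of $Tube(\mc P)$ on a finite-dimensional Hilbert space, with $U_n \cong \bigoplus_{v\in\Gamma_+} \rho(v)\otimes H_n(v)$, where $\rho(v)$ denotes the range of $\rho(t_v)$.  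I would check directly from the definition of the $\mc B$-action that under this dictionary the module axioms and the adjoint relation $\langle a\xi,\eta\rangle = \langle\xi,a^*\eta\rangle$ correspond precisely to functoriality and $*$-preservation of $\rho$; the facts that this correspondence preserves indecomposability and that every Hilbert module over the affine algebroid arises in this way are part of the representation theory of affine algebroids in \cite{annular}, \cite{ghosh}, which I would invoke.

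Granting this, the theorem is a translation.  Irreducible $*$-representations of the finite-dimensional $C^*$-algebra $Tube(\mc P)$ are indexed by its minimal central projections $p$; by the fact recalled before the statement, each $p$ lies in $Tube(\mc P)_{v,v}$ for a unique $v\in\Gamma_+$, so $p$ annihilates $t_{v'}$ for $v'\neq v$ and the associated representation $\rho^{(p)}$ is supported on the single object $v$, with $\rho^{(p)}(v)$ equal to the irreducible module $V$ of the simple summand $p\,Tube(\mc P)\,p$.  The corresponding Hilbert module over $\mc A^+(\mc P)$ is then exactly $\bigl(V\otimes H_n(v)\bigr)_{n\geq 0} = \mc V$; this re-proves that $\mc V$ is a Hilbert module, that it is irreducible (being attached to an irreducible $\rho^{(p)}$), and that every irreducible Hilbert module over $\mc A^+(\mc P)$ is of this form.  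Finally, the weight of $\mc V$ is the least $n$ with $V\otimes H_n(v)\neq 0$, equivalently the least $n$ with $H_n(v)\neq 0$ since $V\neq 0$; as $H_n(v)\subset P_{n+k}$ is the range of the tangle joining $2n$ through-strings to a minimal projection $t_v$ in the component of $P_{2k}$ labelled by $v$, a standard Temperley--Lieb/principal-graph count shows $H_n(v)\neq 0$ exactly when there is a path of length $2n$ from $*$ to $v$ in $\Gamma$, i.e.\ when $2n\geq d(*,v)$, and since $v$ is even the minimal such $n$ is $d(*,v)/2$.

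The step I expect to be the main obstacle is the first one: making precise the $*$-preserving Morita equivalence so that it genuinely identifies the \emph{Hilbert}-module category of $\mc A^+(\mc P)$ with that of $Tube(\mc P)$, transporting irreducibility, completeness and the adjoint relation.  This is careful bookkeeping rather than a new idea---the general statements about Hilbert modules over affine algebroids, including the equivalence of indecomposability and irreducibility, are available in \cite{annular}, \cite{ghosh}---but it is the point at which care is required; everything downstream is either elementary linear algebra or a standard fact about the principal graph.
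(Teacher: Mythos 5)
Your proposal is correct and follows essentially the same route as the paper: both reduce the classification, via the algebroid isomorphism of Proposition \ref{tube-algebroid}, to the representation theory of the finite-dimensional $C^*$-algebra $Tube(\mc P)$, and both outsource the general facts about Hilbert modules over affine algebroids (irreducibility detected on the graded pieces, and an irreducible Hilbert module being determined by any single nonzero graded piece) to \cite{annular} and \cite{ghosh}. The only real difference is packaging --- the paper argues level by level, noting that $V_n = V \otimes H_n(v)$ is irreducible over $\mc A(\mc P)_n$ because it is the tensor product of an irreducible $Tube(\mc P)$-module with the action of the full matrix algebra $H_n(v) \otimes H_n(v)^*$ on $H_n(v)$, whereas you phrase the same reduction as a global $*$-preserving Morita equivalence --- and your weight computation agrees with the paper's.
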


\begin{proof}
For fixed $n \geq 0$,  the tensor product of the actions of $Tube(\mc P)$ on $V$ and $H_n(v) \otimes H_n^*(v) \simeq \mc B(H_n(v))$ on $H_n(v)$ is clearly irreducible, and therefore the action of $\mc A(\mc P)_n$ on $V_n = V \otimes H_n(v)$ is irreducible as well.  It follows from \cite{annular} that $V = (V_n)$ is an irreducible Hilbert module over $\mc A^+(\mc P)$.  The weight of this module is the smallest value of $n$ for which $H_n(v)$ is non-trivial, which is equal to $d(*,v) / 2$.  

Now by \cite{annular}, any irreducible Hilbert module $\mc V = (V_n)$ over $\mc A^+(\mc P)$ must have the property that $V_n$ is an irreducible module over $\mc A(\mc P)_n$ for each $n \geq 0$.  Moreover, if $\mc W = (W_n)$ is another irreducible Hilbert module such that $V_n$ and $W_n$ are isomorphic $\mc  A(\mc P)_n$-modules for some $n$, then $\mc V$ and $\mc W$ are isomorphic modules over $\mc A^+(\mc P)$.  Since the isomorphism classes of irreducible modules over the multimatrix algebra $\mc A(\mc P)_n$ correspond to central projections, it follows from Proposition \ref{tube-algebroid} that every irreducible Hilbert module over $\mc A^+(\mc P)$ is of the form described above.
\end{proof}

\noindent\textbf{The principal and dual principal graphs:}

We will relate the lattice of higher relative commutants for $\mc M_0 \subset \mc M_1$ with the affine category, which will allow us to compute the principal and dual principal graphs.  First we need to further analyze the map $\psi_k$ defined above.  We will assume that $2k \geq depth(\mc P)$, so that $\psi_k$ is surjective.  

First we show that the kernel is determined by the projection $p = p^{(1)} \otimes p^{(2)}$ from Section \ref{sec:skein}.  For this we need the following lemma.

\begin{lemma}\label{pcommutator}
Let $x \in P_{2(n+k)}$ and $y \in P_{2k}$, then we have
\begin{equation*}
\begin{tikzpicture}[scale=.75]
 \draw[Box] (0,0) rectangle (1,1); \node at (.5,.5) {$x$}; \node[marked,scale=.8,above left] at (0,1) {};
\draw[Box] (0,1.25) rectangle (1,1.75); \node at (.5,1.5) {$y$}; \node[marked,scale=.6,above left] at (0,1.75) {};
\draw[Box] (0,2) rectangle (1,2.75); \node at (.5,2.375) {$p^{(1)}$}; \node[marked,scale=.7,above left] at (0,2.75) {};
\draw[Box] (0,-.25) rectangle (1,-1); \node at (.5,-.625) {$p^{(2)}$}; \node[marked,scale=.7,below left] at (0,-1) {};

\foreach \y in {3,2,1.25,0,-1} {\draw[thickline] (.5,\y) -- ++(0,-.25);}
\draw[thickline] (-.5,.5) -- (0,.5); \draw[thickline] (1,.5) -- (1.5,.5);
\draw[Box] (-.5,-1.25) rectangle (1.5,3); \node[marked,scale=.9,above left] at (-.5,3) {};

\begin{scope}[xshift=4cm]
 \draw[Box] (0,.75) rectangle (1,1.75); \node at (.5,1.25) {$x$}; \node[marked,scale=.8,above left] at (0,1.75) {};
\draw[Box] (0,0) rectangle (1,.5); \node at (.5,.25) {$y$}; \node[marked,scale=.6,above left] at (0,.5) {};
\draw[Box] (0,2) rectangle (1,2.75); \node at (.5,2.375) {$p^{(1)}$}; \node[marked,scale=.7,above left] at (0,2.75) {};
\draw[Box] (0,-.25) rectangle (1,-1); \node at (.5,-.625) {$p^{(2)}$}; \node[marked,scale=.7,below left] at (0,-1) {};

\foreach \y in {3,2,.75,0,-1} {\draw[thickline] (.5,\y) -- ++(0,-.25);}
\draw[thickline] (-.5,1.25) -- (0,1.25); \draw[thickline] (1,1.25) -- (1.5,1.25);
\draw[Box] (-.5,-1.25) rectangle (1.5,3); \node[marked,scale=.9,above left] at (-.5,3) {};
\node[left] at (-1,.875) {$=$};
\end{scope}
\end{tikzpicture}
\end{equation*}

\end{lemma}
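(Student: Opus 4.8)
The plan is to prove the identity by direct computation with matrix units, exactly in the spirit of the proofs of Lemma~\ref{2cleaver} and Proposition~\ref{p-skein}(1),(4). First I would expand $p = \sum_{v \in \Gamma_+} p_v$. Since only a single copy of $p$ appears, its two Sweedler legs $p^{(1)}$ and $p^{(2)}$ are forced to the same vertex $v$, and substituting $p_v = \tfrac{1}{n_v}\sum_{i,j} e_{ij}(v)\otimes e_{ji}(v)^{op}$ turns each side into a sum, over $v$ and over a pair of matrix indices $i,j$, of a diagram in which $e_{ij}(v)$ sits at the top and $e_{ji}(v)^{op}$ at the bottom, the $2k$-bundle runs vertically through $y$ and $x$, and the remaining $2n$ strands of $x$ pass straight out to the boundary.

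The key step is the local move that drags the matrix units through this picture. Using only the matrix-unit relation $e_{ij}(v)e_{k\ell}(v) = \delta_{jk}e_{i\ell}(v)$ together with the minimality of the projections $e_{jj}(v)$ in the $v$-component of $P_{2k}$ — so that $e_{jj}(v)\,z\,e_{jj}(v) = \mathrm{tr}_v(z)\,e_{jj}(v)$ for $z$ in that component, $\mathrm{tr}_v$ denoting the appropriate normalized trace — both the left-hand and the right-hand expressions collapse to one and the same thing: a single sum over $v$ in which $x$ and $y$ appear composed through a $v$-labelled strand, with the $2n$ strands of $x$ still inert. This is the same bookkeeping that underlies Lemma~\ref{2cleaver}, but carried out in the present open-ended diagram rather than in a fully closed one.

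The main obstacle is precisely this bookkeeping: Lemma~\ref{2cleaver} is stated only for $x,y \in P_{2k}$ in a fully closed (hence scalar) configuration, so it cannot be quoted verbatim, and one must rerun the matrix-unit computation with the $2n$ through-strands of $x$ carried along explicitly, checking that they play no role in the resolution of the $p_v$'s (the legs of $p_v$ attach only to the $2k$-bundle) and that the rearrangement performed is literally identical on the two sides, so that the resulting expressions agree term by term. I would also note that no cross-terms with $v \neq v'$ occur, since the single copy of $p$ ties both of its legs to the same index $v$ — which is exactly what allows $y$ to be slid past $x$. Alternatively one could use Proposition~\ref{depthspan} to reduce to the case where $x$ is built from $P_{2k}$-boxes and Jones projections, but checking that the relation propagates through such a decomposition is not entirely automatic, so the direct computation seems cleanest.
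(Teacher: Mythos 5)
Your overall strategy (expand $p$ into matrix units and compute directly) can be salvaged, but the mechanism you invoke is not the one that makes the lemma work, and as written there is a gap at the key step. The collapse $e_{jj}(v)\,z\,e_{jj}(v)=\tr_v(z)\,e_{jj}(v)$ applies only when the two legs of $p_v$ are composed around an element $z$ of the $v$-component of $P_{2k}$; that is exactly the closed configuration of Lemma \ref{2cleaver}. In the present diagram the two legs of $p$ point outward to the outer boundary and are never composed with each other, and what separates them is $x\in P_{2(n+k)}$, which is not an element of $P_{2k}$ at all. So no trace collapse occurs, and the asserted common form (``$x$ and $y$ composed through a $v$-labelled strand'') is not what either side equals. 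To bring the picture into a shape where a \ref{2cleaver}-type resolution is possible one must first decompose $x$ --- which is precisely the Proposition \ref{depthspan} reduction you set aside as avoidable. That is in fact the paper's route: reduce $x$ to the spanning form of Proposition \ref{depthspan}, apply Lemma \ref{2cleaver} once, rotate the top and bottom components by $180$ degrees and apply it again, and observe that the same procedure on the right-hand side produces the identical diagram.

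There \emph{is} a genuinely direct matrix-unit argument, but it rests on a different identity from the one you cite. Since $e_{ij}(v)\,y=e_{ij}(v)\,(vyv)$ and $y\,e_{ji}(v)=(vyv)\,e_{ji}(v)$, only the $v$-component of $y$ contributes, and the matrix-unit relations alone give, for each $v$,
\[
\sum_{i,j}\bigl(e_{ij}(v)\,y\bigr)\otimes e_{ji}(v)^{op}\;=\;\sum_{i,j}e_{ij}(v)\otimes\bigl(y\,e_{ji}(v)\bigr)^{op}
\]
in $P_{2k}\otimes P_{2k}^{op}$: expanding $vyv$ in the matrix units, both sides become the same double sum after relabelling indices. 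This says that absorbing $y$ into the leg of $p$ sitting above $x$ equals emitting $y$ from the leg sitting below $x$, so $y$ is transported \emph{around} $x$ through $p$ rather than \emph{through} $x$, and the lemma follows by substituting this identity into the tangle, with $x$ a passive spectator and the $2n$ through-strands genuinely irrelevant. If you take this route you must still verify that the vertical stackings in the diagram realize the multiplication (and the $op$-twist on the lower, upside-down leg) for which the $e_{ij}(v)$ are matrix units --- this is where the convention-checking lives --- but neither minimality of the $e_{jj}(v)$ nor any normalized trace enters. Either repair the mechanism along these lines, or follow the paper and accept the depthspan reduction plus two applications of Lemma \ref{2cleaver}.
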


\begin{proof}
By Proposition \ref{depthspan}, it suffices to show that
\begin{equation*}
\begin{tikzpicture}[scale=.75]
\draw[Box] (0,1.25) rectangle (1,1.75); \node at (.5,1.5) {$y$}; \node[marked,scale=.6,above left] at (0,1.75) {};
\draw[Box] (0,2) rectangle (1,2.75); \node at (.5,2.375) {$p^{(1)}$}; \node[marked,scale=.7,above left] at (0,2.75) {};
\draw[Box] (0,.25) rectangle (1,1); \node at (.5,.625) {$x_1$}; \node[marked,scale=.8,above left] at (0,1) {};
\draw[Box] (0,-.75) rectangle (1,0); \node at (.5,-.375) {$x_2$}; \node[marked,scale=.8,below right] at (1,-.75) {};

\draw[Box] (2,.25) rectangle (3,1); \node at (2.5,.625) {$z_1$}; \node[marked,scale=.8,above right] at (3,1) {};
\draw[Box] (2,-.75) rectangle (3,0); \node at (2.5,-.375) {$z_2$}; \node[marked,scale=.8,below left] at (2,-.75) {};

\draw[Box] (0,-1) rectangle (1,-1.75); \node at (.5,-1.375) {$p^{(2)}$}; \node[marked,scale=.7,below left] at (0,-1.75) {};

\foreach \y in {2,1.25,.25,-.75} {\draw[thickline] (.5,\y) -- ++(0,-.25);}
\draw[thickline] (2.5,.25) -- (2.5,0);

\draw[thickline] (.5,2.75) arc(180:90:.375cm) --++(1.25,0) arc(90:0:.375cm) -- ++(0,-1.75);
\draw[thickline] (.5,-1.75) arc(180:270:.375cm) -- ++(1.25,0) arc(-90:0:.375cm) -- ++(0,1);
\draw[thickline] (0,-.375) arc(90:180:.375cm) -- ++(0,-1.5) arc(180:270:.375cm) -- ++(3,0) arc(-90:0:.375cm) -- ++(0,1.5) arc(0:90:.375cm);
\draw[thickline] (1,.625) -- (2,.625);

\begin{scope}[xshift=5.5cm]
\draw[Box] (0,-.75) rectangle (1,-.25); \node at (.5,-.5) {$y$}; \node[marked,scale=.6,above left] at (0,-.25) {};
\draw[Box] (0,2) rectangle (1,2.75); \node at (.5,2.375) {$p^{(1)}$}; \node[marked,scale=.7,above left] at (0,2.75) {};
\draw[Box] (0,1) rectangle (1,1.75); \node at (.5,1.375) {$x_1$}; \node[marked,scale=.8,above left] at (0,1.75) {};
\draw[Box] (0,0) rectangle (1,.75); \node at (.5,.375) {$x_2$}; \node[marked,scale=.8,below right] at (1,0) {};

\draw[Box] (2,1) rectangle (3,1.75); \node at (2.5,1.375) {$z_1$}; \node[marked,scale=.8,above right] at (3,1.75) {};
\draw[Box] (2,0) rectangle (3,.75); \node at (2.5,.375) {$z_2$}; \node[marked,scale=.8,below left] at (2,0) {};

\draw[Box] (0,-1) rectangle (1,-1.75); \node at (.5,-1.375) {$p^{(2)}$}; \node[marked,scale=.7,below left] at (0,-1.75) {};

\foreach \y in {2,1,0,-.75} {\draw[thickline] (.5,\y) -- ++(0,-.25);}
\draw[thickline] (2.5,1) -- (2.5,.75);

\draw[thickline] (.5,2.75) arc(180:90:.375cm) --++(1.25,0) arc(90:0:.375cm) -- ++(0,-1);
\draw[thickline] (.5,-1.75) arc(180:270:.375cm) -- ++(1.25,0) arc(-90:0:.375cm) -- ++(0,1.75);
\draw[thickline] (0,.375) arc(90:180:.375cm) -- ++(0,-2.5) arc(180:270:.375cm) -- ++(3,0) arc(-90:0:.375cm) -- ++(0,2.5) arc(0:90:.375cm);
\draw[thickline] (1,1.375) -- (2,1.375);
\node[left] at (-1,.5) {$=$};

\end{scope}
\end{tikzpicture}
\end{equation*}
 for $x_1,x_2,z_1,z_2 \in P_{n+2k}$.  By Lemma \ref{2cleaver}, the left hand side is equal to
\begin{equation*}
\begin{tikzpicture}[scale=.75]
\draw[Box] (0,1.25) rectangle (1,1.75); \node at (.5,1.5) {$y$}; \node[marked,scale=.6,above left] at (0,1.75) {};
\draw[Box] (0,2) rectangle (1,2.75); \node at (.5,2.375) {$p^{(1)}$}; \node[marked,scale=.7,above left] at (0,2.75) {};
\draw[Box] (0,.25) rectangle (1,1); \node at (.5,.625) {$x_1$}; \node[marked,scale=.8,above left] at (0,1) {};
\draw[Box] (2,.25) rectangle (3,1); \node at (2.5,.625) {$z_1$}; \node[marked,scale=.8,above right] at (3,1) {};
\draw[thickline] (.5,2.75) arc(180:90:.375cm) --++(1.25,0) arc(90:0:.375cm) -- ++(0,-1.75);
\draw[thickline] (1,.625) -- (2,.625);

\begin{scope}[yshift=-2.25cm]
\draw[Box] (2,-.75) rectangle (3,0); \node at (2.5,-.375) {$z_2$}; \node[marked,scale=.8,below left] at (2,-.75) {};
\draw[Box] (0,-.75) rectangle (1,0); \node at (.5,-.375) {$x_2$}; \node[marked,scale=.8,below right] at (1,-.75) {};
\draw[thickline] (.5,-1.75) arc(180:270:.375cm) -- ++(1.25,0) arc(-90:0:.375cm) -- ++(0,1);
\draw[thickline] (0,-.375) arc(90:180:.375cm) -- ++(0,-1.5) arc(180:270:.375cm) -- ++(3,0) arc(-90:0:.375cm) -- ++(0,1.5) arc(0:90:.375cm);
\draw[Box] (0,-1) rectangle (1,-1.75); \node at (.5,-1.375) {$p^{(2)}$}; \node[marked,scale=.7,below left] at (0,-1.75) {};
\end{scope}

\foreach \y in {2,1.25,-3} {\draw[thickline] (.5,\y) -- ++(0,-.25);}

\draw[Box] (1,-.75) rectangle (2,0); \node at (1.5,-.375) {$p_v^{(1)}$}; \node[marked,scale=.7,above left] at (1,0) {};
\draw[Box] (1,-2) rectangle (2,-1.25); \node at (1.5,-1.625) {$p_v^{(2)}$}; \node[marked,scale=.7,below right] at (2,-2) {};

\draw[thickline] (.5,.25) arc(180:270:.5cm and .625cm); \draw[thickline] (2.5,.25) arc(0:-90:.5cm and .625cm);
\draw[thickline] (.5,-2.25) arc(180:90:.5 and .625); \draw[thickline] (2.5,-2.25) arc(0:90:.5 and .625);

\node[left] at (-1,-1.25) {$\displaystyle\sum_{v \in \Gamma_+} \frac{n_v}{\mu_v} \cdot$};

\begin{scope}[xshift = 9.5cm,yshift = -1.5cm]
\draw[Box] (0,1) rectangle (1,1.75); \node at (.5,1.375) {$z_1$}; \node[marked,scale=.8,below left] at (0,1) {};
\draw[Box] (2,1) rectangle (3,1.75); \node at (2.5,1.375) {$x_1$}; \node[marked,scale=.8, below right] at (3,1) {};
\draw[Box] (.25,.25) rectangle (.75,.75); \node at (.5,.5) {$w$}; \node[marked,scale=.8,right=.05] at (.75,.5) {};
\draw[Box] (2,.25) rectangle (3,.75); \node at (2.5,.5) {$y$}; \node[marked,scale=.8,below right] at (3,.25) {};
\draw[Box] (0,0) rectangle (1,.-.75); \node at (.5,.-.375) {$z_2$}; \node[marked,scale=.8,above right] at (1,0) {};
\draw[Box](2,0) rectangle (3,-.75); \node at (2.5,-.375) {$x_2$}; \node[marked,scale=.8,above left] at (2,0) {};
\draw[Box] (1,2) rectangle (2,2.75); \node at (1.5,2.375) {$p_v^{(1)}$}; \node[marked,scale=.6,below right] at (2,2) {};
\draw[Box] (1,-1) rectangle (2,-1.75); \node at (1.5,-1.375) {$p_v^{(2)}$}; \node[marked,scale=.6,above left] at (1,-1) {};

\foreach \x in {.5,2.5} {\foreach \y in {1,.25} {   
\draw[thickline] (\x,\y) -- ++(0,-.25);  }}

\draw[thickline] (.5,1.75) arc(180:90:.5 and .625); \draw[thickline] (2.5,1.75) arc(0:90:.5 and .625);
\draw[thickline] (.5,-.75) arc(180:270:.5 and .625); \draw[thickline] (2.5,.-.75) arc(0:-90:.5 and .625);
\draw[thickline] (1,1.375) -- (2,1.375);
\draw[thickline] (0,-.375) arc(90:180:.375cm) --++(0,-1) arc(180:270:.375cm) -- ++(3,0) arc(-90:0:.375) --++(0,1) arc(0:90:.375);

\node[left] at (-1,.25) {$= \displaystyle\sum_{v,w \in \Gamma_+} \frac{n_v\cdot\mu_w}{\mu_v\cdot n_w}$};
\end{scope}

\end{tikzpicture}
\end{equation*}
Where the equality above follows from rotating the top and bottom components on the left hand side by 180 degrees and then applying Lemma \ref{2cleaver} again.  But if we applied the same procedure to the right hand side of the first equation it is clear that we would obtain the same diagram, so the result follows.
\end{proof}

Given $x \in P_{2(n+k)}$, define $p \cdot x$ by
\begin{equation*}
\begin{tikzpicture}[scale=.65]
\draw[Box] (0,0) rectangle (1,1); \node at (.5,.5) {$x$}; \node[marked,above left,scale=.8] at (0,1) {};
\draw[Box] (0,1.5) rectangle (1,2.5); \node at(.5,2) {$p^{(1)}$}; \node[marked,above left, scale=.7] at (0,2.5) {};
\draw[Box] (0,-.5) rectangle (1,-1.5); \node at (.5,-1) {$p^{(2)}$}; \node[marked,below left,scale=.7] at (0,-1.5) {};

\foreach \y in {0,1.5} {\draw[thickline] (.5,\y) -- ++(0,-.5);}
\foreach \y in {-1.5,2.75} {\draw[thickline] (.5,\y) -- ++(0,-.25);}
\foreach \x in {-.5,1}{\draw[thickline] (\x,.5) --++(.5,0); }

\draw[Box] (-.5,-1.75) rectangle (1.5,2.75);

\node[left] at (-1,.5) {$p \cdot x =$};
\end{tikzpicture}
\end{equation*}

\begin{proposition}
If $x \in P_{2(n+k)}$ then $\psi_k(x) = \psi_k(p\cdot x)$.  Moreover, if $\psi_k(x) = 0$ then $p \cdot x = 0$.
\end{proposition}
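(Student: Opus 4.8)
The plan is to prove the two assertions separately: the first by a direct diagrammatic computation that collapses $p^{(1)}$ and $p^{(2)}$ once the cabling strings are closed up, and the second by combining the description of $\ker\psi_k$ in Lemma~\ref{affineiso} with the commutation identity of Lemma~\ref{pcommutator}. For the first assertion, $\psi_k(x)=\psi_k(p\cdot x)$, I would draw the picture of $\psi_k(p\cdot x)$ and note that $\psi_k$ closes up precisely the $2k$ ``cabling'' strings on the top of $x$ with the $2k$ cabling strings on the bottom, by $2k$ arcs running around the distinguished inner box. In $p\cdot x$ the box $p^{(1)}$ has been placed on the top cabling strings and $p^{(2)}$ on the bottom ones, so after applying $\psi_k$ the boxes $p^{(1)}$ and $p^{(2)}$ become joined along all $2k$ of those strings by the closing arcs. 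Sliding $p^{(1)}$ along these arcs so that it becomes adjacent to $p^{(2)}$ (a valid isotopy, since it never meets the inner box, and possibly invoking the rotational invariance of $p$, Proposition~\ref{p-skein}(2), to absorb the resulting rotation), the composite of $p^{(1)}$ and $p^{(2)}$ simplifies to the identity on the $2k$ strings by Proposition~\ref{p-skein}(4) --- this is exactly the computation $\sum_v \tfrac1{n_v}\sum_{ij} e_{ij}(v)e_{ji}(v)=\sum_v v=1$ appearing in the proof of that proposition. What remains is $\psi_k(x)$. Conceptually this just says that $\psi_k$ only ever sees the ``$p$-compressed'' part of its argument, $p$ being an identity on the relevant higher relative commutant.

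For the second assertion it suffices to show that the linear map $x\mapsto p\cdot x$ annihilates $\ker\psi_k$. By Lemma~\ref{affineiso}, $\ker\psi_k$ is spanned by the differences $[A]-[B]$, where $[A]$ is a tangle with a box labelled by some $y\in P_{2k}$ stacked above a box labelled by some $x'$ on the cabling strings, and $[B]$ is the same tangle with $y$ moved below $x'$. Applying $p\cdot(-)$ to such a generator places $p^{(1)}$ above and $p^{(2)}$ below the stack, producing exactly the two sides of the identity in Lemma~\ref{pcommutator}, with the ``$x$'' and ``$y$'' there taken to be $x'$ and $y$ (after, if necessary, absorbing the outer strings of $x'$ to match the box sizes in the statement of that lemma). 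Hence $p\cdot[A]=p\cdot[B]$ for each generator, so $p\cdot(\ker\psi_k)=0$, and therefore $\psi_k(x)=0$ implies $p\cdot x=0$.

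The main obstacle is the bookkeeping in the first assertion: one must check carefully which strings $\psi_k$ closes up, confirm that these coincide with the strings carrying $p^{(1)}$ and $p^{(2)}$ in $p\cdot x$, and track shadings and orientations so that the reduction is a genuine instance of Proposition~\ref{p-skein}(4) (as opposed to needing further manipulation). The second assertion is essentially formal once Lemma~\ref{pcommutator} is in hand, since that lemma has been designed precisely so that the affine-relation generators $[A]-[B]$ of Lemma~\ref{affineiso} collapse under compression by $p$.
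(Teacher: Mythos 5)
Your proposal is correct and follows exactly the paper's proof: the first assertion is reduced to Proposition \ref{p-skein}(4) after observing that $\psi_k$ joins the strings carrying $p^{(1)}$ and $p^{(2)}$, and the second assertion is obtained by combining the description of $\ker\psi_k$ in Lemma \ref{affineiso} with the commutation identity of Lemma \ref{pcommutator} to show that $p\cdot(-)$ annihilates the kernel. The only difference is that you spell out the bookkeeping that the paper leaves implicit.
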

 
\begin{proof}
The first statement follows immediately from Proposition \ref{p-skein} (4).  The second follows from combining Lemmas \ref{affineiso} and \ref{pcommutator}.
\end{proof}

It follows that $\psi_k$ restricts to a linear isomorphism of $\{p \cdot x: x \in P_{2(n+k)}\}$ onto $\mc A(\mc P)_n$.  We now want to pull back the multiplication from $\mc A(\mc P)_n$.  To do this it is convenient to modify the map $\psi_k$.  So we define $\tilde \psi_k:P_{2(n+k)} \to \mc A(\mc P)_n$ by
\begin{equation*}
\begin{tikzpicture}[scale=.75]
\draw[Box] (-.5,-1.25) rectangle (3.6,2.25); \node[marked,above left] at (-.5,2.25) {};
\draw[Box] (0,0) rectangle (1,1); \node at (.5,.5){$x$}; \node[marked,above left] at (0,1) {};
\draw[Box] (1.5,0) rectangle (2.5,1); \node[marked,above left] at (1.5,1) {};
\draw[thickline] (.5,1) arc(180:0:1.25cm and .75cm) -- node[cpr](v) {$v$} (3,0) arc(0:-180:1.25cm and .75cm); \node[right=.03,scale=.6,marked] at (v.east) {};
\draw[thickline] (1,.5) -- (1.5,.5); \draw[thickline] (0,.5) -- (-.5,.5);
\node[left] at (-1,0.3) {$\tilde\psi_k(x) = \displaystyle\sum_{v \in \Gamma_+} \sqrt{\frac{\mu_v}{n_v}}\;\cdot$};
\end{tikzpicture} 
\end{equation*}
It is clear that $\tilde \psi_k$ has the same kernel as $\psi_k$, so $\tilde \psi_k$ is a linear isomorphism when restricted to $\{p \cdot x: x \in P_{2(n+k)}\}$.  For $n$ even we also define $\tilde \psi_k^{op}:P_{2(n+k)} \to \mc A(\mc P)_n$ by
\begin{equation*}
\begin{tikzpicture}[scale=.75]
\draw[Box] (0,0) rectangle (1,1); \node[marked,above left, scale=.9] at (0,1) {};
\draw[Box] (2,0) rectangle (3,1); \node[marked,above left, scale=.9] at (2,1) {}; \node at (2.5,.5) {$x$};

\draw[thickline] (2,.75) arc(270:180:.375cm) arc(0:90:.375cm) -- ++(-1.25,0) arc(90:270:.375cm);
\draw[thickline] (2,.25) arc(90:180:.375cm) arc(0:-90:.375cm) --++(-1.25,0) arc(270:90:.375cm);

\draw[thickline] (3,.75) arc(-90:90:.45cm and .75cm) -- ++(-3.75,0) arc(90:180:.75cm) arc(0:-90:.75cm);
\draw[thickline] (3,.25) arc(90:-90:.45cm and .75cm) --++(-3.75,0) arc(270:180:.75cm) arc(0:90:.75cm);

\draw[thickline] (2.5,1) arc(0:90:.9cm) -- ++(-1.75,0) arc(90:180:.9cm) --node[cpr] (v) {$v$} ++(0,-1) arc(180:270:.9cm) -- ++(1.75,0) arc(-90:0:.9cm); \node[marked,scale=.6,left=.05] at (v.west) {};

\draw[Box] (-2.25,-1.625) rectangle (4,2.625); \node[marked,scale=.9,above left] at (-2.25,2.625) {};

\node[left] at(-2.5,.5) {$\tilde \psi_k^{op}(x) = \displaystyle \sum_{v \in \Gamma_+} \sqrt{\frac{\mu_v}{n_v}}\;\cdot$};
\end{tikzpicture}
\end{equation*}
Note that we require $n$ even because of the shading.  It is clear that $\tilde \psi_k^{op}$ is also a linear isomorphism when restricted to $\{p\cdot x: x \in P_{2(n+k)}\}$.

Now for $x,y \in P_{2(n+k)}$ define
\begin{equation*}
\begin{tikzpicture}[scale=.65]
\draw[Box] (0,0) rectangle (1,1); \node at (.5,.5) {$x$}; \node[marked,scale=.8,above left] at (0,1) {};
\draw[Box] (2,0) rectangle (3,1); \node at (2.5,.5) {$y$}; \node[marked,scale=.8,above left] at (2,1) {};
\draw[Box] (1,1.25) rectangle (2,2); \node[scale=.9] at (1.5,1.625) {$q^{(1)}$}; \node[marked,scale=.6,above left] at (1,2) {};

\draw[Box] (1,-.25) rectangle (2,-1); \node[scale=.9] at (1.5,-.625) {$q^{(2)}$}; \node[marked,scale=.6,below left] at (1,-1) {};
\draw[thickline] (.5,1) arc(180:90:.5cm and .625cm); \draw[thickline] (2,1.625) arc(90:0:.5cm and .625cm);
\draw[thickline] (.5,0) arc(180:270:.5cm and .625cm); \draw[thickline] (2,-.625) arc(-90:0:.5cm and .625cm);

\draw[thickline] (1.5,2) --++(0,.25); \draw[thickline] (1.5,-1) --++(0,-.25);

\draw[thickline] (-.5,.5) -- (0,.5); \draw[thickline] (1,.5) -- (2,.5); \draw[thickline] (3,.5) -- (3.5,.5);

\draw[Box] (-.5,-1.25) rectangle (3.5,2.25); \node[marked,scale=.9,above left] at (-.5,2.25) {};

\node[left] at (-1,.5) {$x \star_q y = $};
\end{tikzpicture}
\end{equation*}

\begin{proposition}
For $x, y \in P_{2(n+k)}$ we have 
\begin{align*}
\tilde \psi_k(x \star_q y) &= \tilde \psi_k(x) \cdot \tilde \psi_k(y)\\
\tilde \psi_k^{op}(y \star_q x) &= \tilde \psi_k^{op}(x) \cdot \tilde \psi_k^{op}(y).
\end{align*}
\end{proposition}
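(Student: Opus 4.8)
The plan is to prove both identities by unwinding the definitions into diagrams and matching them with the $q$-skein relations of Section \ref{sec:skein}, deducing the second identity from the first by a reflection argument. Both sides of each identity are linear in $x$ and $y$, so by Proposition \ref{depthspan} it suffices to treat $x,y$ in a fixed spanning set of ``cleaver'' elements, which is also what makes the relevant skein relations applicable; since moreover $\tilde\psi_k$ has the same kernel as $\psi_k$ and $\psi_k(x)=\psi_k(p\cdot x)$, one may if convenient assume $x=p\cdot x$, $y=p\cdot y$ and invoke the compatibility of $p$ with $q$ (Proposition \ref{q-skein}(1)).

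For the first identity I would begin by expanding the right-hand side. By definition of composition in $\mc A^+(\mc P)$, the morphism $\tilde\psi_k(x)\cdot\tilde\psi_k(y)$ is obtained by rescaling $\tilde\psi_k(y)$ and inserting it into the distinguished inner box of $\tilde\psi_k(x)$; unwinding the definition of $\tilde\psi_k$, this equals the sum over $v_1,v_2\in\Gamma_+$, with weight $\sqrt{\mu_{v_1}\mu_{v_2}/(n_{v_1}n_{v_2})}$, of an affine tangle containing the boxes $x$ and $y$ (with the output side of $x$ joined to the input side of $y$), a $v_2$-winding strand running from the top of $y$ around the innermost box to the bottom of $y$, and a $v_1$-winding strand running from the top of $x$ around the whole $\tilde\psi_k(y)$-region to the bottom of $x$. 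On the other side, $\tilde\psi_k(x\star_q y)$ is, directly from the definitions of $\star_q$ and $\tilde\psi_k$, the sum over $v',v,w,z\in\Gamma_+$, with weight $\sqrt{\mu_{v'}/n_{v'}}$, of the affine tangle in which $x$ and $y$ are joined along their top strands by $q^{(1)}_{v,w,z}$ and along their bottom strands by $q^{(2)}_{v,w,z}$, carrying a single $v'$-winding strand around the result. The crux is to identify these two expressions. After isotoping the nested tangle so that the two winding strands $v_1,v_2$ together with the $2k$ strands joining $x$ to $y$ lie in one small disc, the local picture is exactly the configuration governed by Proposition \ref{q-connect} --- equivalently by the defining property of $q_{v,w,z}$ in Proposition \ref{qv-def}. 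Reading Proposition \ref{q-connect} in the direction ``two parallel $w$- and $z$-strands equal $I^{1/2}\sqrt{n_wn_z/(\mu_w\mu_z)}\sum_v\sqrt{\mu_v/n_v}$ times $q_{v,w,z}$ with its $v$-leg closed into a loop'' and applying it with $w=v_1$, $z=v_2$ replaces the pair of winding strands by $q^{(1)}_{v,v_1,v_2}$, $q^{(2)}_{v,v_1,v_2}$ and a closed $v$-loop, which (after matching up the $\Gamma_+$-indices) is precisely the $q$-legs and the single winding strand of $\tilde\psi_k(x\star_q y)$. The powers of $I$ and the ratios $\mu_{(\cdot)}/n_{(\cdot)}$ coming from the two copies of $\tilde\psi_k$, from $q$, and from Proposition \ref{q-connect} should then cancel to leave exactly the weight $\sqrt{\mu_{v'}/n_{v'}}$; this scalar bookkeeping, together with correctly identifying which index of $\Gamma_+$ plays which role, is the step requiring the most care. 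Should a single application of Proposition \ref{q-connect} not suffice, one reduces exactly as in the proof of that proposition: span $x,y$ by cleaver elements and invoke Corollary \ref{4cleaver} followed by Proposition \ref{p-skein}(4).

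For the second identity I would not repeat the computation from scratch. The map $\tilde\psi_k^{op}$ differs from $\tilde\psi_k$ by a planar reflection --- the more elaborate winding in its definition being forced only by the shading, which is why $n$ must be even --- and under this reflection affine composition is carried to the opposite composition while $\star_q$ is carried to $\star_q$ with its two arguments transposed; here one uses the rotational and reflectional symmetry of $q$ (Proposition \ref{q-skein}(2) together with Proposition \ref{cabled_skein}(1)) and of $p$ (Proposition \ref{p-skein}(2)). Granting this, $\tilde\psi_k^{op}(y\star_q x)=\tilde\psi_k^{op}(x)\cdot\tilde\psi_k^{op}(y)$ follows formally from the first identity; if the reflection does not literally carry $\tilde\psi_k$ to $\tilde\psi_k^{op}$, one runs the argument of the first part verbatim with the roles of top and bottom (and the shading of the unbounded region) interchanged. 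Throughout, the main obstacle is not conceptual but combinatorial: tracking the nesting of the two affine annuli through the isotopy and checking that all scalar prefactors combine correctly.
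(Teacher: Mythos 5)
Your argument is correct and takes essentially the same route as the paper: the paper's entire proof is the single sentence ``This follows from Proposition \ref{q-connect},'' and your proposal is precisely a careful unwinding of that application (expand both sides, isotope the two nested winding strands into the configuration of Proposition \ref{q-connect}, match the scalars and $\Gamma_+$-indices), with the $\tilde\psi_k^{op}$ identity obtained by the symmetric/reflected version of the same computation. Nothing further is needed.
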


\begin{proof}
 This follows from Proposition \ref{q-connect}.
\end{proof}

\begin{corollary}\label{affine_lift}
We have an embeddings $\mc A(\mc P)_{n} \hookrightarrow P_{2n+4k}$ and $\mc A(\mc P)_{n} \hookrightarrow P_{2n+4k}^{op}$ given by
\begin{equation*}
 \begin{tikzpicture}[scale=.65]
\draw[Box] (0,0) rectangle (1,1); \node at (.5,.5) {$x$}; \node[marked,above left,scale=.8] at (0,1) {};
\draw[Box] (0,1.5) rectangle (1,2.5); \node at(.5,2) {$q^{(1)}$}; \node[marked,above left, scale=.7] at (0,2.5) {};
\draw[Box] (0,-.5) rectangle (1,-1.5); \node at (.5,-1) {$q^{(2)}$}; \node[marked,below left,scale=.7] at (0,-1.5) {};

\foreach \y in {0,1.5} {\draw[thickline] (.5,\y) -- ++(0,-.5);}
\foreach \x in {-.5,1}{\foreach \y in {-1,.5,2} {\draw[thickline] (\x,\y) --++(.5,0); }}

\draw[Box] (-.5,-1.75) rectangle (1.5,2.75);

\node[left] at (-1,.5) {$\tilde \psi_k(x) \mapsto $};

\begin{scope}[xshift=8.25cm]
\draw[Box] (0,0) rectangle (1,1); \node at (.5,.5) {$x$}; \node[marked,above left,scale=.8] at (0,1) {};
\draw[Box] (0,1.5) rectangle (1,2.5); \node at(.5,2) {$q^{(1)}$}; \node[marked,above left, scale=.7] at (0,2.5) {};
\draw[Box] (0,-.5) rectangle (1,-1.5); \node at (.5,-1) {$q^{(2)}$}; \node[marked,below left,scale=.7] at (0,-1.5) {};

\foreach \y in {0,1.5} {\draw[thickline] (.5,\y) -- ++(0,-.5);}
\foreach \x in {-.5,1}{\foreach \y in {-1,.5,2} {\draw[thickline] (\x,\y) --++(.5,0); }}

\draw[Box] (-.5,-1.75) rectangle (1.5,2.75);

\node[left] at (-1,.5) {and $\;\;\;\;\tilde \psi_k^{op}(x) \mapsto $};
\end{scope}

\end{tikzpicture}
\end{equation*}
for $x \in P_{2(n+k)}$.
\end{corollary}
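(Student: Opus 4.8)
The map under consideration sends $\tilde\psi_k(x)$, for $x\in P_{2(n+k)}$, to the element $\Theta(x)\in P_{2n+4k}$ depicted (the box $x$ capped above by $q^{(1)}$ and below by $q^{(2)}$), and in the even case $\Theta^{op}(x)\in P_{2n+4k}^{op}$ to the analogous picture. The plan is to verify that $\Theta$ (resp. $\Theta^{op}$) descends to a well-defined, multiplicative, injective map on $\mc A(\mc P)_n$, each of the three points being a short consequence of the skein calculus already set up.

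\textbf{Well-definedness.} Since $\tilde\psi_k$ is onto $\mc A(\mc P)_n$ by Proposition \ref{depthspan}, it suffices to show that $\tilde\psi_k(x)=0$ forces $\Theta(x)=0$. Inserting $p^{(1)}$ (resp. $p^{(2)}$) between $x$ and $q^{(1)}$ (resp. between $x$ and $q^{(2)}$) leaves $\Theta(x)$ unchanged by the $p$--$q$ compatibility of Proposition \ref{q-skein}(1), so $\Theta(x)$ depends only on $p\cdot x$. By the two Propositions immediately preceding the corollary one has $\ker\tilde\psi_k=\ker\psi_k=\{x:p\cdot x=0\}$, and therefore $\tilde\psi_k(x)=0\Rightarrow p\cdot x=0\Rightarrow\Theta(x)=0$; the same reasoning applies verbatim to $\Theta^{op}$.

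\textbf{Multiplicativity and injectivity.} By the Proposition that $\tilde\psi_k(x\star_q y)=\tilde\psi_k(x)\,\tilde\psi_k(y)$ (resp. $\tilde\psi_k^{op}(y\star_q x)=\tilde\psi_k^{op}(x)\,\tilde\psi_k^{op}(y)$), multiplicativity of $\Theta$ reduces to showing that the standard product of $P_{2n+4k}$ restricted to the image of $\Theta$ corresponds to $\star_q$, i.e. $\Theta(x\star_q y)=\Theta(x)\,\Theta(y)$. Stacking $\Theta(x)$ onto $\Theta(y)$ produces, along the seam, a ``$q^{(2)}$ over $q^{(1)}$'' configuration where $\Theta(x\star_q y)$ carries a single $q$; this collapses to one copy of $q$ by the rotation and capping relations of Proposition \ref{q-skein}(2)--(5), used once more in conjunction with the $p$-compatibility of Proposition \ref{q-skein}(1). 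For injectivity, given $x$ with $\Theta(\tilde\psi_k(x))=0$ I cap the free vertical leg of $q^{(1)}$ and of $q^{(2)}$ in $\Theta(x)$; by the double-capping relation of Proposition \ref{q-skein}(4) each $q$ is converted into a nonzero scalar multiple of the corresponding leg of $p$ (together with a harmless $\ast$-labelled box), so the capped diagram is a nonzero multiple of $p\cdot x$. Hence $\Theta(\tilde\psi_k(x))=0$ forces $p\cdot x=0$, i.e. $\tilde\psi_k(x)=0$, and $\Theta$ is injective. Combining the three points, $\Theta$ is an injective algebra homomorphism $\mc A(\mc P)_n\hookrightarrow P_{2n+4k}$, which is the assertion of the corollary.

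The $\Theta^{op}$ statement (for $n$ even) is obtained by running the identical argument with $\tilde\psi_k^{op}$ in place of $\tilde\psi_k$ and the second identity of the preceding Proposition. I expect the main obstacle to be precisely the seam computation in the multiplicativity step: tracking the shadings, the marked intervals, and above all the reversal of multiplication order built into $\tilde\psi_k^{op}$, so that $\star_q$ on the source matches the ambient (opposite) product of $P_{2n+4k}^{op}$.
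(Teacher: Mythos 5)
Your overall architecture matches the paper's: the map is well defined because $\ker\tilde\psi_k=\{x:p\cdot x=0\}$ and the diagram only sees $p\cdot x$, and multiplicativity reduces, via the proposition $\tilde\psi_k(x\star_q y)=\tilde\psi_k(x)\tilde\psi_k(y)$, to a seam identity comparing the product of two labelled diagrams with the diagram of $x\star_q y$. However, the seam step as you describe it has a genuine gap. The product $\Theta(x)\,\Theta(y)$ contains \emph{two full copies} of $q$ placed side by side (the two $q^{(1)}$'s adjacent on one side, the two $q^{(2)}$'s on the other), while $\Theta(x\star_q y)$ contains the \emph{same two copies} arranged in nested position (an outer pair coming from $\Theta$ and an inner pair coming from $\star_q$). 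Nothing ``collapses to one copy of $q$''; what is needed is the re-pairing identity of Corollary \ref{2q-rotate}, which converts the side-by-side configuration into the nested one. That identity is \emph{not} a consequence of Proposition \ref{q-skein}(2)--(5) alone: it rests on Corollary \ref{4cleaver} and Proposition \ref{q-connect} (i.e.\ on the defining property of $q_{v,w,z}$ in Proposition \ref{qv-def}), and this is precisely the ingredient the paper's one-line proof cites. As written, your argument would stall at the seam.

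Two smaller points. First, in $\Theta(x)$ the $q^{(1)}$ and $q^{(2)}$ boxes have no free vertical legs -- their middle groups of strings are plugged into the top and bottom of $x$ -- so the injectivity argument ``cap the free vertical leg of each $q$ and apply double capping'' does not parse on the actual diagram. A correct version of what you want is to compose $\Theta(x)$ with a second pair of $q$'s and invoke Corollary \ref{2q-p}, which converts the two nested $q$-pairs into $p$ and recovers $p\cdot x$ up to a nonzero scalar; injectivity then follows since $\tilde\psi_k$ is a linear isomorphism on $\{p\cdot x\}$. Second, for the $\tilde\psi_k^{op}$ case note that the order reversal built into $\tilde\psi_k^{op}$ ($\tilde\psi_k^{op}(y\star_q x)=\tilde\psi_k^{op}(x)\tilde\psi_k^{op}(y)$) is exactly what makes the same seam computation land in $P_{2n+4k}^{op}$ rather than $P_{2n+4k}$; you correctly flag this as the delicate point but do not carry it out.
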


\begin{proof}
This follows from Corollary \ref{2q-rotate} and the previous proposition.
\end{proof}

\begin{theorem}\label{commsquare}
For $n \geq 2$ we have an isomorphism of commuting squares:
\begin{equation*}
\begin{bmatrix}
\mc M_0' \cap \mc M_{2n-1} & \subset & \mc M_0' \cap \mc M_{2n}\\
\cup & &\cup\\
 \mc M_1' \cap \mc M_{2n-1} &\subset & \mc M_1' \cap \mc M_{2n}
\end{bmatrix} \simeq \begin{bmatrix}
p_{n-1}(P_{4(n-1)k})p_{n-1} & \subset & p_n(P_{2nk} \otimes P_{2nk}^{op})p_n \\
\cup & & \cup\\
p_{n-2}(\mc A(\mc P)_{2(n-2)k})p_{n-2} &\subset &p_{n-1}(P_{4(n-1)k}^{op})p_{n-1} 
\end{bmatrix}
\end{equation*}

\end{theorem}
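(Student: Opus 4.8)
The plan is to notice that the left-hand square is simply the standard commuting square of higher relative commutants sitting in the Jones tower $\mc M_0\subset\mc M_1\subset\mc M_2\subset\dotsb$ of Theorem~\ref{tower}. By Proposition~\ref{oddjp} there is a trace-preserving conditional expectation $E_{\mc M_{2n-1}}\colon\mc M_{2n}\to\mc M_{2n-1}$, and since it is $\mc M_1$-bimodular it maps $\mc M_0'\cap\mc M_{2n}$ onto $\mc M_0'\cap\mc M_{2n-1}$ and $\mc M_1'\cap\mc M_{2n}$ onto $\mc M_1'\cap\mc M_{2n-1}$, so the left-hand array is automatically a commuting square. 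Hence the whole content is to produce a $*$-algebra isomorphism carrying each corner onto the corresponding corner of the right-hand array and intertwining the four inclusions; commutativity of the right-hand square then transports across.

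I would read off the four corner identifications from the explicit spanning sets of Proposition~\ref{commutants}, combined with the lift maps $\tilde\psi_k$, $\tilde\psi_k^{op}$ and the embeddings $\mc A(\mc P)_m\hookrightarrow P_{2m+4k}$, $\mc A(\mc P)_m\hookrightarrow P_{2m+4k}^{op}$ of Corollary~\ref{affine_lift}. The top-right identification $\mc M_0'\cap\mc M_{2n}=p_n(P_{2nk}\otimes P_{2nk}^{op})p_n$ is Proposition~\ref{commutants}(1). For the top-left corner one takes the spanning diagram of Proposition~\ref{commutants}(2), absorbs the outer $p_{n-1}$'s and the middle layer of $p$'s, and recognizes the result as an element of $p_{n-1}(P_{4(n-1)k})p_{n-1}$; the identity $\tilde\psi_k(x\star_q y)=\tilde\psi_k(x)\cdot\tilde\psi_k(y)$ together with Corollary~\ref{2q-rotate} shows that the twisted multiplication $\star_q$ of $\mc M_{2n-1}$ corresponds to the ordinary product. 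For the bottom-right corner, the element of $\mc M_1'\cap\mc M_{2n}$ displayed in Proposition~\ref{commutants}(3) already carries a $q$-cap above and below, so by Corollary~\ref{affine_lift} it is exactly the image of a box of $P_{4(n-1)k}^{op}$ compressed by $p_{n-1}$. For the bottom-left corner one uses Proposition~\ref{commutants}(4) together with Corollary~\ref{m1expect}: relative to the $\mc M_0'$-case there is one extra layer of $q$-cappings, and once the $p_{n-2}$'s are removed this layer is precisely the capping $\psi_k$ that defines the affine algebroid, so the image is $p_{n-2}(\mc A(\mc P)_{2(n-2)k})p_{n-2}$; injectivity follows from Lemmas~\ref{affineiso} and~\ref{pcommutator} (after multiplying by $p$ the kernel of $\psi_k$ vanishes) and surjectivity from Proposition~\ref{depthspan}.

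Finally I would check compatibility with the inclusions. The vertical maps are the inclusions of relative commutants and match by construction. The horizontal maps are the restrictions of $i_{2n-1}\colon\mc M_{2n-1}\hookrightarrow\mc M_{2n}$ of Proposition~\ref{oddinclusion}; composing its defining diagram with the four corner identifications and simplifying with Propositions~\ref{q-skein} and~\ref{cabled_skein}, one sees that on the bottom row $i_{2n-1}$ becomes the affine lift $\mc A(\mc P)_{2(n-2)k}\hookrightarrow P_{4(n-1)k}^{op}$ of Corollary~\ref{affine_lift}, and on the top row the obvious inclusion $p_{n-1}(P_{4(n-1)k})p_{n-1}\hookrightarrow p_n(P_{2nk}\otimes P_{2nk}^{op})p_n$. \textbf{The main obstacle} I expect is the bottom-left corner together with the bottom horizontal map: one has to thread the definition of composition in $\mc A^+(\mc P)$ (rescaling and stacking affine tangles modulo the relations of $\mc R$) through the nested $q$- and $p$-cappings so that multiplicativity of the identification, and its exact unrescaled agreement with Corollary~\ref{affine_lift}, both come out; this requires carefully balancing the spin and normalization factors in $\varphi_{2n-1}$ against the inner products on the Hilbert spaces $H_n(v)$ used to define $\mc A^+(\mc P)$.
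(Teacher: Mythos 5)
Your proposal is correct and follows essentially the same route as the paper, whose own proof simply cites Proposition \ref{commutants} and Theorem \ref{symmlift} for the corner identifications and leaves the compatibility of the inclusions (checked via the description of $Sym(\mc P)$) to the reader. Your more detailed treatment of the bottom-left corner via $\tilde\psi_k$, Corollary \ref{affine_lift}, and Lemmas \ref{affineiso} and \ref{pcommutator} is exactly the machinery the paper develops immediately before the theorem for this purpose.
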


\begin{proof}
This isomorphisms follow from Proposition \ref{commutants} and Theorem \ref{symmlift}.  The compatibility of the inclusions can be easily checked using the description of $Sym(\mc P)$ from the previous section, the details are left to the reader.
\end{proof}

\begin{remark}
We also have an isomorphism of the dual squares
\begin{equation*}
\begin{bmatrix}
\mc M_0' \cap \mc M_{2n} & \subset & \mc M_0' \cap \mc M_{2n+1}\\
\cup & &\cup\\
 \mc M_1' \cap \mc M_{2n} &\subset & \mc M_1' \cap \mc M_{2n+1}
\end{bmatrix} \simeq \begin{bmatrix}
p_n(P_{2nk} \otimes P_{2nk}^{op})p_n & \subset & p_{n}(P_{4nk})p_{n}  \\
\cup & & \cup\\
p_{n-1}(P_{4(n-1)k}^{op})p_{n-1} &\subset & p_{n-1}(\mc A(\mc P)_{2(n-1)k})p_{n-1} 
\end{bmatrix}
\end{equation*}
where the inclusion $P^{op}_{4(n-1)k} \subset \mc A(\mc P)_{2(n-1)k}$ is given by:
\begin{equation*}
\begin{tikzpicture}[scale=.75]
\draw[Box] (0,0) rectangle (1,1); \node[marked,above left, scale=.9] at (0,1) {};
\draw[Box] (2,0) rectangle (3,1); \node[marked,above left, scale=.9] at (2,1) {}; \node at (2.5,.5) {$x$};

\draw[thickline] (2,.75) arc(270:180:.375cm) arc(0:90:.375cm) -- ++(-1.25,0) arc(90:270:.375cm);
\draw[thickline] (2,.25) arc(90:180:.375cm) arc(0:-90:.375cm) --++(-1.25,0) arc(270:90:.375cm);

\draw[thickline] (3,.75) arc(-90:90:.55cm ) -- ++(-3.25,0) arc(90:180:.55cm) arc(0:-90:.55cm);
\draw[thickline] (3,.25) arc(90:-90:.55cm) --++(-3.25,0) arc(270:180:.55cm) arc(0:90:.55cm);

\draw[Box] (-1.35,-1.25) rectangle (3.875,2.25); \node[marked,scale=.9,above left] at (-1.35,2.25) {};

\node[left] at(-1.625,.5) {$x^{op} \mapsto$};
\end{tikzpicture}
\end{equation*}

\end{remark}

We can now compute the principal and dual principal graphs by determining the Bratelli diagrams for the above inclusions.

\begin{proposition}\label{brat}
We have the following Bratelli diagrams.
\begin{enumerate}
 \item For $n \geq 2$ the Bratelli diagram of $P_{4(n-1)k} \subset P_{2nk} \otimes P_{2nk}^{op}$ is as follows. The even vertices are indexed by pairs of $M_0-M_0$ bimodules $(X_v,\overline X_w)$ for $v,w \in \Gamma_+$.  The odd vertices are indexed by $M_0-M_0$ bimodules $X_z$, $z \in \Gamma_+$.  The number of edges between $(X_v,\overline X_w)$ and $X_z$ is equal to the multiplicity of $X_z$ in $X_v \otimes \overline X_w$, i.e. the dimension of the range of the partially labelled tangle:
\begin{equation*}
\begin{tikzpicture}[scale=.75]
\draw[Box] (0,0) rectangle (1,2); \node[marked,scale=.9,above left] at (0,2) {};
\draw[thickline] (1,1.5) --node[cpr] (tv) {$t_v$} ++(1.5,0); \node[marked,scale=.6,above left] at (tv.north west) {};
\draw[thickline] (1,.5) --node[cpr] (tw) {$t_w$} ++(1.5,0); \node[marked,scale=.6,below right] at (tw.south east) {};
\draw[thickline] (0,1) --node[cpr] (tz) {$t_z$} ++(-1.5,0); \node[marked,scale=.6,above left] at (tz.north west) {};
\draw[Box] (-1.5,-.375) rectangle (2.5,2.375); \node[marked,scale=.9,above left] at (-1.5,2.375) {};
\end{tikzpicture}
\end{equation*}
\item For $n \geq 1$ the Bratelli diagram of $  P_{4nk}^{op} \subset \mc A(\mc P)_{2nk}$ is as follows.  The odd vertices are indexed by $M_0-M_0$ bimodules $\overline X_z$ for $z \in \Gamma_+$.  The even vertices are indexed by irreducible Hilbert modules over $\mc A^+(\mc P)$, or equivalently by minimal central projections in Ocneanu's tube algebra $Tube(\mc P)$.  Given an irreducible Hilbert module $V$ over $\mc A^+(\mc P)$, let $\pi_V$ be a minimal projection in the central component of $Tube(\mc P)$ corresponding to $V$ via Theorem \ref{affineiso}.  Then the number of edges between $\overline X_z$ and $V$ is equal to the dimension of the range of the following partially labelled affine tangle, viewed as a linear map $P_{4k} \to \mc A(\mc P)_{k}$.
\begin{equation*}
 \begin{tikzpicture}[scale=.75]
\draw[Box] (-2,0) rectangle (-1,1); \node at (-1.5,.5) {$\pi_V$}; \node[marked,above left,scale=.9] at (-2,1) {};
\draw[Box](-4,0) rectangle (-3,1); \node[marked,above left, scale=.9] at (-4,1) {};
\draw[Box] (0,0) rectangle (1,1); \node[marked,above left, scale=.9] at (0,1) {};
\draw[Box] (3,0) rectangle (4,1); \node[marked,above left, scale=.9] at (3,1) {}; \node at (3.5,.5) {$t_z$};

\draw[thickline] (3,.75) arc(270:180:.5cm and .75cm) arc(0:90:.625cm) -- ++(-5.875,0) arc(90:270:.5cm and .6875cm);
\draw[thickline] (3,.25) arc(90:180:.5cm and .75cm) arc(0:-90:.625cm) --++(-5.875,0) arc(270:90:.5 and .6875);

\draw[thickline] (4,.75) arc(-90:90:.45cm and .875cm) -- ++(-8.25,0) arc(90:180:.625cm and .9cm) arc(0:-90:.625cm and .875cm);
\draw[thickline] (4,.25) arc(90:-90:.45cm and .875cm) --++(-8.25,0) arc(270:180:.625cm and .9cm) arc(0:90:.625cm and .875cm);

\draw[thickline] (-3,.5) -- (-2,.5); \draw[thickline] (-1,.5) -- (0,.5);

\draw[thickline] (-1.5,1) arc(180:90:.375cm) --++(2.25,0) arc(90:0:.375cm) --++(0,-1) arc(0:-90:.375cm) --++(-2.25,0) arc(270:180:.375cm);
\draw[thickline] (-3.5,1) arc(180:90:.75cm) --++(4,0) arc(90:0:.75cm) --++(0,-1) arc(0:-90:.75cm) --++(-4,0) arc(270:180:.75cm);

\draw[Box] (-5.5,-2) rectangle (5,3); \node[marked,scale=.9,above left] at (-5.5,3) {};
\end{tikzpicture}
\end{equation*}

\end{enumerate}

\end{proposition}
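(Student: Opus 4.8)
The plan is to obtain both Bratteli diagrams by reading them off the explicit descriptions of the algebras and inclusions already in hand, and then to translate the answer into the language of $M_0$-$M_0$ bimodules (for (1)) and of Ocneanu's tube algebra and affine Hilbert modules (for (2)). Throughout I will use the elementary fact that for an inclusion $A\subseteq B$ of finite-dimensional C$^*$-algebras, if $f$ is a minimal projection in a simple summand $A_s$ of $A$ and $e$ is a minimal projection in a simple summand $B_b$ of $B$, then the multiplicity of the edge joining $s$ to $b$ equals $\dim_{\C}(eBf)$. So in each case the work is: (i) identify the simple summands (centers) of the two algebras; (ii) choose convenient minimal projections; (iii) compute the corner $eBf$ diagrammatically; (iv) recognize that dimension as the stated quantity.

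For (1), first note that $4(n-1)k\geq 2k\geq\mathrm{depth}(\mc P)$ when $n\geq 2$, so by Proposition \ref{depthspan} every $v\in\Gamma_+$ occurs in $P_{4(n-1)k}$ and its minimal central projections are exactly the $z\in\Gamma_+$, which I label by $X_z$; likewise the minimal central projections of $P_{2nk}\otimes P_{2nk}^{op}$ are the $v\otimes w^{op}$, labelled by the pairs $(X_v,\overline{X_w})$. I will take $f_z$ to be the image of a copy of $t_z$ stabilized by Jones projections into $P_{4(n-1)k}$ — stabilization preserves minimality within a fixed central component — and $e_{(v,w)}$ the image of a similarly stabilized $t_v\otimes t_w^{op}$. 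Then, chasing these projections through the identifications of Theorem \ref{symmlift}, Proposition \ref{commutants} and Theorem \ref{commsquare}, and using the skein relations of Section \ref{sec:skein} — in particular Propositions \ref{p-skein}, \ref{q-skein}, their cabled forms in Proposition \ref{cabled_skein}, and Corollary \ref{2q-rotate} — to cancel all the $p$'s, $q$'s and excess strings, I expect the corner $e_{(v,w)}\,(P_{2nk}\otimes P_{2nk}^{op})\,f_z$ to be linearly isomorphic to the range of precisely the displayed tangle built from $t_v$, $t_w$, $t_z$. That range is, by the standard diagrammatic calculus of bimodule intertwiners (the calculus by which $\Gamma$ and the fusion rules are defined, cf. \cite{ghj}), the space realizing $X_z$ inside $X_v\otimes_{M_0}\overline{X_w}$, whence its dimension is the claimed multiplicity.

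For (2), again $4nk\geq 2k\geq\mathrm{depth}(\mc P)$ for $n\geq 1$, so the minimal central projections of $P_{4nk}^{op}$ are indexed by $\Gamma_+$, which I label by the conjugates $\overline{X_z}$. For $\mc A(\mc P)_{2nk}$ I will use Proposition \ref{tube-algebroid}: as an algebra it is $\bigoplus_{v,w\in\Gamma_+}Tube(\mc P)_{v,w}\otimes(H_{2nk}(v)\otimes H_{2nk}(w)^*)$, and since $H_{2nk}(v)\neq 0$ for every $v\in\Gamma_+$, this is isomorphic to $\bigoplus_V\mathrm{End}\bigl(\bigoplus_v U_{V,v}\otimes H_{2nk}(v)\bigr)$, the sum over the minimal central projections $V$ of $Tube(\mc P)$; by the classification of irreducible Hilbert $\mc A^+(\mc P)$-modules these $V$ are exactly the irreducible Hilbert modules, giving the even vertices. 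I will then take $f_z$ to be the image of a stabilized $t_z^{op}$ under the embedding $x^{op}\mapsto\tilde\psi_k^{op}(x)$ appearing in the dual square of Theorem \ref{commsquare}, and $e_V=\phi_{2nk,2nk}(\pi_V\otimes\xi\otimes\xi^*)$ for a minimal projection $\pi_V$ in the $V$-component of $Tube(\mc P)$ and a unit vector $\xi$ in the relevant $H_{2nk}$. Under the isomorphism $\phi$ the corner $e_V\,\mc A(\mc P)_{2nk}\,f_z$ becomes a one-dimensional space tensored with a corner of $Tube(\mc P)$, and stripping off the $2(n-1)k$ extra strings — using that string-addition $H_k(v)\to H_{2nk}(v)$ is an isomorphism onto its span intertwining the algebroid actions — reduces the count to $n=1$, where, unwinding the definitions of $Tube(\mc P)$ and of $\mc A(\mc P)_k$, the corner is exactly the range of the displayed affine tangle with $\pi_V$ and $t_z$, viewed as a map $P_{4k}\to\mc A(\mc P)_k$; its dimension is the number of edges.

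The routine but delicate part will be (1): carrying the embedding $P_{4(n-1)k}\hookrightarrow P_{2nk}\otimes P_{2nk}^{op}$ through the whole chain of identifications and performing the skein cancellations so that the corner matches the stated tangle exactly (getting the normalizations and surviving strings right). Part (2) is comparatively formal given Proposition \ref{tube-algebroid} and the classification of irreducible Hilbert modules; there the only point requiring genuine care is the stabilization/destabilization that reduces the edge count to the $n=1$ affine tangle.
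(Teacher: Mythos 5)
Your proposal is correct and follows essentially the same route as the paper: the paper's (two-sentence) proof likewise invokes the standard fact that the edge multiplicity between central components equals $\dim\{i(p)bq : b\in B\}$ for minimal projections $p,q$, and then reads the answer off the identifications of Theorem \ref{commsquare}, the subsequent remark, and Proposition \ref{tube-algebroid}. You have simply spelled out the corner computations and the choice of minimal projections that the paper leaves to the reader.
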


\begin{proof}
Recall that if $i:A \to B$ is a unital inclusion of multi-matrix algebras and $p \in A$ (resp. $q \in B$) is a minimal projection in the central component labelled $v$ (resp. $w$) then the number of edges between $v$ and $w$ in the Bratelli diagram of the inclusion is equal to the dimension of the subspace $\{i(p)bq: b \in B\} \subset B$.  The results then follow from the previous proposition and the remark which followed.  
\end{proof}

\begin{remark}
It follows from Theorem \ref{commsquare} that the principal (resp. dual principal) graph of $M_0 \otimes M_0^{op} \subset M_0 \boxtimes M_0^{op}$ is the connected component of $(X_*,X_*)$ (resp. $X_*$) of the Bratelli diagram described in (1) (resp. (2)) above. 

The vertices in the Bratelli diagrams above which are not in the principal or dual principal graphs do not correspond to bimodules arising from the Jones tower $\mc M_n$.   However, these `missing' bimodules are easily constructed.  Indeed, recall that we have $\mc M_3 \simeq p(M_{2k} \boxtimes M_{2k}^{op})p$.  This gives $M_{2k} \boxtimes M_{2k}^{op}$ a $\mc M_i$-$\mc M_j$ bimodule structure for $i,j = 0,1$.  By adapting the arguments above it can be proved that there is an isomorphism of commuting squares: 
\begin{equation*}
\begin{bmatrix}
 \Hom_{\mc M_0,\mc M_1}\bigl(L^2\bigl(M_{2k} \boxtimes M_{2k}^{op}\bigr)\bigr) & \subset & \Hom_{\mc M_0,\mc M_0}\bigl(L^2\bigl(M_{2k} \boxtimes M_{2k}^{op}\bigr)\bigr)\\
\cup & &\cup\\
 \Hom_{\mc M_1,\mc M_1}\bigl(L^2\bigl(M_{2k} \boxtimes M_{2k}^{op}\bigr)\bigr) &\subset &  \Hom_{\mc M_1,\mc M_0}\bigl(L^2\bigl(M_{2k} \boxtimes M_{2k}^{op}\bigr)\bigr)
\end{bmatrix} \simeq \begin{bmatrix}
P_{8k} & \subset & P_{6k} \otimes P_{6k}^{op} \\
\cup & & \cup\\
\mc A(\mc P)_{2k} &\subset &P_{8k}^{op}
\end{bmatrix}
\end{equation*}
It follows that the fusion graphs of irreducible bimodules which appear in $L^2(M_{2k} \boxtimes M_{2k})$ are precisely the Bratelli diagrams of Proposition \ref{brat}.
\end{remark}

\medskip
\noindent\textbf{Fusion rules for$\mc M_1$-$\mc M_1$ bimodules:}

It is well-known that the quantum double of a fusion category is always braided. In the subfactor setting, it was likewise shown by Ocneanu \cite{ocn} and Evans-Kawahigashi \cite{evk} that the fusion category of bimodules arising from the asymptotic inclusion is braided (see also \cite{izumi,mug}).  Below we show how the fusion rules and braiding for $\mc M_1-\mc M_1$ bimodules can be seen in our framework.

Define $v_n \in \mc A(\mc P)_{2nk}$ by
\begin{equation*}
\begin{tikzpicture}[scale=.75]
\draw[Box] (0,0) rectangle (1,1); \node[marked,scale=.8,above left] at (0,1) {};
\draw[Box] (-1,-.625) rectangle (2,1.625);  \node[marked,scale=.9,above left] at (-1,1.625) {};

\draw[thickline] (-1,.25) arc(-90:0:.5cm and .25cm) arc(180:90:.5cm and .25cm);
\draw[thickline] (-1,.75) arc(-90:0:.5 and .3) arc(180:90:.5 and .3) -- ++(1,0) arc(90:0:.35) --++(0,-.95) arc(0:-90:.35) -- ++(-1,0) arc(270:90:.3cm);
\end{tikzpicture}
\end{equation*}

For $x \in P_{2(n+k)}$ define $x^{op}$ by
\begin{equation*}
 \begin{tikzpicture}
  \draw[Box] (0,0) rectangle (1,1); \node at (.5,.5) {$x^{op}$}; \node[marked,above left, scale =.9] at (0,1) {};
\draw[thickline] (-.5,.5) -- (0,.5); \draw[thickline] (1,.5) -- (1.5,.5);
\draw[thickline] (.5,1) -- (.5,1.25); \draw[thickline] (.5,0) -- (.5,-.25);
\draw[Box](-.5,-.25) rectangle (1.5,1.25); \node[marked,above left] at (-.5,1.25) {};

\begin{scope}[xshift=3cm]
   \draw[Box] (0,0) rectangle (1,1); \node at (.5,.5) {$x$}; \node[marked,below right, scale =.9] at (1,0) {};
\draw[thickline] (-.5,.5) -- (0,.5); \draw[thickline] (1,.5) -- (1.5,.5);
\draw[thickline] (.5,1) -- (.5,1.25); \draw[thickline] (.5,0) -- (.5,-.25);
\draw[Box](-.5,-.25) rectangle (1.5,1.25); \node[marked,above left] at (-.5,1.25) {};
\node[left] at (-.75,.5) {$=$};
\end{scope}

 \end{tikzpicture}
\end{equation*}
Observe that we have
\begin{align*}
 \tilde \psi_k(x^{op}) &= v_n^*\bigl(\tilde \psi_k(x)\bigr)v_n\\
\tilde \psi_k^{op}(x^{op}) &= v_n^*\bigl(\tilde \psi_k^{op}(x)\bigr)v_n
\end{align*}

Define $i_n: \mc A(\mc P)_{2nk} \otimes \mc A(\mc P)_{2nk}\hookrightarrow \mc A(\mc P)_{4(n+1)k}$ by
\begin{equation*}
\begin{tikzpicture}[scale=.75]
\draw[Box] (0,0) rectangle (1,1); \node at (.5,.5) {$x$}; \node[marked,above left,scale=.8] at (0,1) {};
\draw[Box] (0,1.25) rectangle (1,2); \node at (.5,1.625) {$q^{(1)}$}; \node[marked,above left, scale=.8] at (0,2) {};
\draw[Box] (0,-.25) rectangle (1,-1); \node at (.5,-.625) {$q^{(2)}$}; \node[marked,below right,scale=.8] at (1,-1) {};
\foreach \x in {-.5,1} {\foreach \y in {-.625,.5,1.625} {\draw[thickline] (\x,\y) -- ++(.5,0);}}
\draw[thickline] (.5,-.25) --++(0,.25); \draw[thickline] (.5,1) --++(0,.25);

\begin{scope}[xshift=4cm]
 \draw[Box] (0,0) rectangle (1,1); \node at (.5,.5) {$y$}; \node[marked,above left,scale=.8] at (0,1) {};
\draw[Box] (0,1.25) rectangle (1,2); \node at (.5,1.625) {$q^{(1)}$}; \node[marked,above left, scale=.8] at (0,2) {};
\draw[Box] (0,-.25) rectangle (1,-1); \node at (.5,-.625) {$q^{(2)}$}; \node[marked,below right,scale=.8] at (1,-1) {};
\foreach \x in {-.5,1} {\foreach \y in {-.625,1.625} {\draw[thickline] (\x,\y) -- ++(.5,0);}}
\draw[thickline] (.5,-.25) --++(0,.25); \draw[thickline] (.5,1) --++(0,.25);

\draw[thickline] (1,.5) --++(.5,0);
\draw[medthick] (0,.75) --++(-.5,0);
\draw[medthick] (0,.25) --++(-.5,0);
\end{scope}

\draw[Box] (1.5,-1.25) rectangle (3.5,2.25); \node[marked,right=.03,scale=.9] at (3.5,.5) {};
\draw[Box] (-.5,-1.5) rectangle (5.5,2.5); \node[marked,above left] at (-.5,2.5) {};

\node[left] at (-1,.5) {$(\tilde \psi_k(x) \otimes \tilde \psi_k^{op}(y)) \mapsto$};
\end{tikzpicture}
\end{equation*}

We then have
\begin{equation*}
v_{2(n+1)}^*\bigl(i_n\bigl(\tilde \psi_k(x) \otimes \tilde \psi_k^{op}(y^{op})\bigr)\bigr)v_{2(n+1)} = i_n\bigl(\tilde \psi_k(y) \otimes \tilde \psi_k^{op}(x^{op})\bigr),
\end{equation*}
which is easy to see by drawing the appropriate diagram.  It follows that if we set 
\begin{equation*}
u_n = v_{2(n+1)}\cdot i_n(v_n \otimes v_n)
\end{equation*}
then we have
\begin{equation*}
 u_n^*(i_n(x \otimes y))u_n = i_n(y \otimes x)
\end{equation*}
for any $x,y \in \mc A(\mc P)_{2nk}$. 

Recall that we have $L^2(\mc M_{2n}) \simeq L^2(\mc M_{n}) \otimes_{\mc M_0} L^2(\mc M_{n})$ as $\mc M_1-\mc M_1$ bimodules.  Moreover, the inclusion $\Hom_{\mc M_1,\mc M_1}(L^2(\mc M_{n})) \otimes \Hom_{\mc M_1,\mc M_1}(L^2(\mc M_{n})) \hookrightarrow \Hom_{\mc M_1,\mc M_1}(L^2(\mc M_{n}))$ is given by the following tangle (the top-left corner is shaded):
\begin{equation*}
\begin{tikzpicture}[scale=.5]
 \draw[Box] (0,0) rectangle (2,1); \node at (1,.5) {$x$}; \node[marked,above left,scale=.8] at (0,1) {}; 
\draw[Box] (0,2) rectangle (2,3); \node at (1,2.5) {$y$}; \node[marked,above left,scale=.8] at (0,3) {};

\foreach \y in {1.375,1.625} {\draw[thick] (-.5,\y) -- ++(3,0);}
\foreach \y in {.5,2.5} {\foreach \x in {-.5,2} {\draw[thickline] (\x,\y) --++(.5,0); }}

\draw[Box] (-.5,-.25) rectangle (2.5,3.375); \node[marked,above left, scale=.9] at (-.5,3.375) {};
\node[left] at (-1,1.5) {$x \otimes y \mapsto$};
\end{tikzpicture}
\end{equation*}
\begin{proposition}\label{tensinclude}
The following diagram is commutative:
\begin{equation*}
\xymatrix{\mc A(\mc P)_{2(n-2)k} \otimes \mc A(\mc P)_{2(n-2)k} \ar[r]^{i_n} & \mc A(\mc P)_{4(n-1)k}\\
\Hom_{\mc M_1,\mc M_1} L^2(\mc M_{n}) \otimes \Hom_{\mc M_1,\mc M_1}L^2(\mc M_{n}) \ar[u] \ar[r] & \Hom_{\mc M_1,\mc M_1} L^2(\mc M_{2n}) \ar[u]
}
\end{equation*}

\end{proposition}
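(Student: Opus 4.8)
The plan is to evaluate the lower-left composite of the square explicitly in the planar model and recognize it as $i_n$. Recall that $\langle \mc M_{2n}, e_{\mc M_0}\rangle \simeq \mc M_{4n}$, so $L^2(\mc M_{2n}) \simeq L^2(\mc M_n) \otimes_{\mc M_0} L^2(\mc M_n)$ as $\mc M_1$-$\mc M_1$ bimodules, and under this identification the relative tensor product of intertwiners is implemented by the stacking tangle displayed just before the statement. First I would compose this with the isomorphisms $\Hom_{\mc M_1,\mc M_1} L^2(\mc M_n) \simeq \mc M_1' \cap \mc M_{2n-1}$ and with the identification of Theorem \ref{commsquare} and the remark following it, under which $\mc M_1' \cap \mc M_{2n-1}$ is carried onto a compression of $\mc A(\mc P)_{2(n-2)k}$ realized via the maps $\psi_k$, $\tilde\psi_k$ and $\tilde\psi_k^{op}$ of this section. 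The upshot is that the lower-left composite sends $\tilde\psi_k(x) \otimes \tilde\psi_k^{op}(y)$ to the image of $x$ and $y$, viewed as $Sym(\mc P)$-boxes, under the doubled form of the stacking tangle.

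Next I would expand that doubled tangle. Each $Sym(\mc P)$-representative is a box with doubled strings capped by copies of $p$; stacking two of them and closing the middle strand --- which corresponds to the relative tensor over $\mc M_0$ --- produces a $p$ between the two boxes, which by Corollary \ref{2q-p} and Proposition \ref{cabled_skein} (3) may be replaced by a pair $q^{(1)}$, $q^{(2)}$. Sliding these $q$'s past the capping $p$'s with Corollary \ref{2q-rotate} and collapsing the resulting chain of $q$-boxes with Lemma \ref{daisychain} then brings the diagram into exactly the form defining $i_n$ on the pair $\tilde\psi_k(x) \otimes \tilde\psi_k^{op}(y)$, with the ``op''-strand of the second factor routed around precisely as there. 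The asymmetry of the two slots --- $\tilde\psi_k$ on the left, $\tilde\psi_k^{op}$ on the right --- is forced by the shading, since $\mc M_1$ sits at the shading opposite to $\mc M_0$; this is the reason $\tilde\psi_k^{op}$ was introduced, and the reason $i_n$ was set up to accept a pair of this form. The normalization constants $I^{-1/2}(\mu_v / n_v)^{1/2}$ entering $\tilde\psi_k$, $\tilde\psi_k^{op}$ and the loop factors $I$ picked up on closing the middle strand cancel in the same way as in Propositions \ref{qv-def} and \ref{q-connect}, since these constants are already built into $q$; likewise the spin factors $\sigma(\cdot)$ on the two sides agree once the slots are matched.

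The hard part will be the second step: checking that ``closing up the $\mc M_0$-strand between two stacked $Sym(\mc P)$-intertwiners'' genuinely reproduces the $q$-gluing of $i_n$. This comes down to verifying that the explicit basic-construction projection $\e_\bullet$ of Section 3, the bimodule identification $L^2(\mc M_{2n}) \simeq L^2(\mc M_n) \otimes_{\mc M_0} L^2(\mc M_n)$, and the affine-tangle presentation of $\mc A(\mc P)$ all interact with $\Psi_n$ exactly as the skein relations demand. There is nothing conceptually delicate here, but one must keep track of several families of strings and their shadings at once; I would carry this out by first pinning down all conventions in a small case and then appealing to Lemma \ref{daisychain} to pass to an arbitrary number of $q$-boxes.
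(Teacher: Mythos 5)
Your proposal follows essentially the same route as the paper, whose entire proof is the one-line instruction to compute the displayed stacking tangle in $Sym(\mc P)$ using the description from the previous section; your outline simply fleshes out that computation, invoking the correct skein relations (Corollary \ref{2q-p}, Proposition \ref{cabled_skein}, Corollary \ref{2q-rotate}, Lemma \ref{daisychain}) to reduce the doubled tangle to the defining diagram of $i_n$. The bookkeeping you flag as the remaining work is exactly what the paper also leaves to the reader.
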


\begin{proof}
This can be verified by computing the tangle above in the planar algebra $Sym(\mc P)$, using the description from the previous section.
\end{proof}

We are now able to recover the following description of the fusion rules for $\mc M_1-\mc M_1$-bimodules due to Ocneanu \cite{ocn} and Evans-Kawahigashi \cite{evk}.

\begin{corollary}
Let $V,W,Z$ be irreducible $\mc M_1-\mc M_1$-bimodules (corresponding to irreducible Hilbert modules over $\mc A^+(\mc P)$).  Let $\pi_V,\pi_W,\pi_Z$ be minimal projections in the central components of $Tube(\mc P)$ corresponding to $V,W,Z$.  Let $\widetilde \pi_V,\widetilde \pi_W \in P_{4k}$ be such that $\pi_V = \psi_k(\widetilde \pi_V)$ and $\pi_W = \psi_k^{op}(\widetilde \pi_W)$.  Then the multiplicity of $Z$ in $V \otimes_{\mc M_1} W$ is the dimension of the range of the following tangle, viewed as a linear map from $P_{8k}$ to $\mc A(\mc P)_{6k}$:
\begin{equation*}
\begin{tikzpicture}[scale=.75]
\draw[Box] (0,0) rectangle (1,1); \node at (.5,.5) {$\widetilde \pi_V$}; \node[marked,above left,scale=.8] at (0,1) {};
\draw[Box] (0,1.25) rectangle (1,2); \node at (.5,1.625) {$q^{(1)}$}; \node[marked,above left, scale=.8] at (0,2) {};
\draw[Box] (0,-.25) rectangle (1,-1); \node at (.5,-.625) {$q^{(2)}$}; \node[marked,below right,scale=.8] at (1,-1) {};
\foreach \x in {-.5,1} {\foreach \y in {-.625,.5,1.625} {\draw[thickline] (\x,\y) -- ++(.5,0);}}
\draw[thickline] (.5,-.25) --++(0,.25); \draw[thickline] (.5,1) --++(0,.25);

\begin{scope}[xshift=7.5cm]
 \draw[Box] (0,0) rectangle (1,1); \node at (.5,.5) {$\widetilde \pi_W$}; \node[marked,above left,scale=.8] at (0,1) {};
\draw[Box] (0,1.25) rectangle (1,2); \node at (.5,1.625) {$q^{(1)}$}; \node[marked,above left, scale=.8] at (0,2) {};
\draw[Box] (0,-.25) rectangle (1,-1); \node at (.5,-.625) {$q^{(2)}$}; \node[marked,below right,scale=.8] at (1,-1) {};
\foreach \x in {1} {\foreach \y in {-.625,1.625} {\draw[thickline] (\x,\y) -- ++(.5,0);}}
\draw[thickline] (.5,-.25) --++(0,.25); \draw[thickline] (.5,1) --++(0,.25);

\draw[thickline] (1,.5) --++(.5,0);
\draw[medthick] (0,.75) --++(-.5,0) arc(270:180:.25cm and .25cm) --++(0,.75) arc(0:90:.25cm and .5cm) -- ++(-4,0);
\draw[medthick] (0,.25) --++(-.5,0) arc(90:180:.25cm and .25cm) --++(0,-.75) arc(0:-90:.25cm and .5cm) --++(-4,0);
\draw[thickline] (0,1.625) --++(-.25,0) arc(270:180:.25cm) --++(0,.5) arc(0:90:.25cm) --++(-4.25,0);
\draw[thickline] (0,-.625) --++(-.25,0) arc(90:180:.25cm) --++(0,-.5) arc(0:-90:.25cm) --++(-4.25,0);
\end{scope}

\draw[Box] (1.5,-2) rectangle (2.5,3); \node[marked,scale=.9,right=.05] at (2.5,2) {};
\draw[thickline] (2.5,1.625) --++(3.5,0) arc(90:0:.25cm) --++(0,-1.75) arc(0:-90:.25cm) --++(-3.5,0);

\draw[Box] (3,0) rectangle (4,1); \node at (3.5,.5) {$\pi_Z$}; \node[marked,above left,scale=.8] at (3,1) {};
\draw[Box] (4.5,0) rectangle (5.5,1); \node[marked,above left,scale=.8] at (4.5,.9) {};

\foreach \x in {1,2.5,4} {\draw[thickline] (\x,.5) -- ++(.5,0);}
\draw[thickline] (3.5,1) arc(180:90:.25cm) -- ++(1.875,0) arc(90:0:.25cm) -- ++ (0,-1) arc(0:-90:.25cm) -- ++(-1.875,0) arc(270:180:.25cm);

\draw[Box] (-.5,-2.25) rectangle (9,3.25); \node[marked,above left] at (-.5,3.25) {};
\end{tikzpicture}
\end{equation*}
Moreover, the fusion category of $\mc M_1 - \mc M_1$ bimodules is braided.  \qed
\end{corollary}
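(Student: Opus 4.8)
The plan is to reduce the computation of fusion multiplicities to a dimension count in the affine algebroid $\mc A^+(\mc P)$ using the bimodule identifications already established, and then to read off the braiding from the flip elements $u_n$ constructed above.

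First I would recall the standard description of fusion multiplicities. Since $\mc P$ is finite-depth, every irreducible $\mc M_1$-$\mc M_1$ bimodule appears in $L^2(\mc M_n)$ for $n$ large, and by Theorem~\ref{commsquare} together with the dual-square remark following it, $\Hom_{\mc M_1,\mc M_1}(L^2(\mc M_n))$ is identified with a corner of $\mc A(\mc P)_{2(n-2)k}$; under this identification minimal central projections correspond to irreducible Hilbert modules over $\mc A^+(\mc P)$, equivalently to minimal central projections $\pi_V,\pi_W,\pi_Z$ of $Tube(\mc P)$ (Proposition~\ref{tube-algebroid}), which by Proposition~\ref{brat}(2) lift to elements $\widetilde\pi_V,\widetilde\pi_W\in P_{4k}$ with $\pi_V=\psi_k(\widetilde\pi_V)$ and $\pi_W=\psi_k^{op}(\widetilde\pi_W)$. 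Using $L^2(\mc M_{2n})\simeq L^2(\mc M_n)\otimes_{\mc M_0}L^2(\mc M_n)$ as $\mc M_1$-$\mc M_1$ bimodules, the multiplicity of $Z$ in $V\otimes_{\mc M_1}W$ equals the dimension of $\pi_Z\cdot\Hom_{\mc M_1,\mc M_1}(L^2(\mc M_{2n}))\cdot i_n(\pi_V\otimes\pi_W)$, the usual expression for the multiplicity of an irreducible in a Connes tensor product in terms of the inclusion of relative commutants.

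The key step is Proposition~\ref{tensinclude}, which identifies that inclusion with the explicit map $i_n\colon\mc A(\mc P)_{2(n-2)k}\otimes\mc A(\mc P)_{2(n-2)k}\to\mc A(\mc P)_{4(n-1)k}$. Substituting the lifts $\widetilde\pi_V,\widetilde\pi_W$ and unwinding the definitions of $i_n$, $\tilde\psi_k$, $\tilde\psi_k^{op}$ and $v_n$ — all built from the element $q$ of Section~\ref{sec:skein} — the space whose dimension is wanted becomes exactly the range of the partially labelled affine tangle displayed in the statement, viewed as a linear map $P_{8k}\to\mc A(\mc P)_{6k}$. A stability check (the extra through-strings appearing when $k$ is enlarged can be absorbed using the $p$-relations, cf. Proposition~\ref{p-skein}(4) and Lemma~\ref{affineiso}, exactly as in the proof of Proposition~\ref{brat}) shows the dimension is independent of $n\ge 2$, so one may compute at the minimal value and the tangle description is canonical.

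For the braiding, recall that $u_n=v_{2(n+1)}\cdot i_n(v_n\otimes v_n)$ satisfies $u_n^*(i_n(x\otimes y))u_n=i_n(y\otimes x)$ for all $x,y\in\mc A(\mc P)_{2nk}$. Reading $i_n$ through the identification above, $u_n$ yields for each pair $V,W$ of sub-bimodules of $L^2(\mc M_n)$ a unitary intertwiner $c_{V,W}\colon V\otimes_{\mc M_1}W\to W\otimes_{\mc M_1}V$; naturality in both variables is immediate from the flip relation, and the two hexagon identities reduce, after drawing the relevant tangles, to planar-isotopy relations for $q$ and $v_n$ — principally the rotation and connecting relations of Proposition~\ref{q-skein}, Corollary~\ref{2q-rotate}, and the associativity of $\star_q$ — so that $\{c_{V,W}\}$ is a braiding on the fusion category of $\mc M_1$-$\mc M_1$ bimodules. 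The main obstacle is precisely this hexagon verification: the dimension formula and the naturality of $u_n$ are essentially bookkeeping once Proposition~\ref{tensinclude} is in hand, but checking that the two triple-braidings agree uses the definition of $v_n$ and the skein calculus for $q$ in an essential way. In effect this step amounts to showing that $q$ equips each $X_v$ with a half-braiding, i.e. realizes $X_v$ as an object of the Drinfeld center, and it is there that the bulk of the work lies.
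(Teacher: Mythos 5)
Your proposal matches the paper's argument: the multiplicity formula is obtained exactly as you describe, by computing the Bratteli diagram of the inclusion $\Hom_{\mc M_1,\mc M_1}(L^2(\mc M_n))\otimes\Hom_{\mc M_1,\mc M_1}(L^2(\mc M_n))\hookrightarrow\Hom_{\mc M_1,\mc M_1}(L^2(\mc M_{2n}))$ via the dimension count $\dim\{i(p)\,b\,q\}$ from the proof of Proposition~\ref{brat}, with Proposition~\ref{tensinclude} supplying the identification of that inclusion with $i_n$ on the affine algebroid side; and the braiding is likewise read off from the unitary $p_{n-1}u_np_{n-1}$ implementing the flip. Your extra remarks on the hexagon identities and the stability in $n$ go beyond what the paper records but are consistent with it.
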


\begin{proof}
For the first statement we compute the Bratelli diagram of the inclusion 
\begin{equation*}
\Hom_{\mc M_1,\mc M_1}(L^2(\mc M_{n})) \otimes \Hom_{\mc M_1,\mc M_1}(L^2(\mc M_{n})) \hookrightarrow \Hom_{\mc M_1,\mc M_1}(L^2(\mc M_{n})),
\end{equation*}
using the description from the proof of Proposition \ref{brat}.  As discussed above, the unitary operator $p_{n-1}u_np_{n-1} \in \Hom_{\mc M_1,\mc M_1}(L^2(\mc M_{2n}))$ implements the braiding for this inclusion.
\end{proof}

\def\cprime{$'$}

\end{document}